\newtheorem{theo}{Theorem}[section]
\newtheorem{coro}[theo]{Corollary}
\newtheorem{prop}[theo]{Proposition}
\newtheorem{lemm}[theo]{Lemma}
\newtheorem{claim}{Claim}
\newtheorem{defi}[theo]{Definition}
\newtheorem{rema}[theo]{Remark}
\newcommand{\BibTeX}{{\scshape Bib}\kern-.08em\TeX}
\newcommand{\T}{\S\kern .15em\relax }
\newcommand{\AMS}{$\mathcal{A}$\kern-.1667em\lower.5ex\hbox
        {$\mathcal{M}$}\kern-.125em$\mathcal{S}$}
\newtheorem{observ}{Observation}
\newtheorem{question}{Question}
\newcommand{\Vor}{\mathrm{Vor}}
\newcommand{\Crit}{\mathrm{Crit}}
\newcommand{\Ext}{\mathrm{Ext}}
\newcommand{\Pic}{\mathrm{Pic}}
\newcommand{\Gr}{\mathrm{Gr}}
\newcommand{\Del}{\mathrm{Del}}
\newcommand{\conv}{\mathrm{Conv}}
\newcommand{\Cov}{\mathrm{Cov}}
\newcommand{\A}{\mathcal A}
\newcommand{\B}{\mathcal B}
\title[Riemann-Roch for Sub-Lattices of the Root Lattice $A_n$]{Riemann-Roch for Sub-Lattices \\ of the Root Lattice $A_n$}
\author{Omid Amini}
\address{CNRS - DMA, \'Ecole Normale Sup\'erieure, Paris, France}
\author{Madhusudan Manjunath}
\address{Max-Planck Institut f\"ur Informatik, Saarbr\"ucken, Germany}
\email{oamini@math.ens.fr, manjun@mpi-inf.mpg.de}
\begin{document}
\maketitle

\begin{abstract}
Recently, Baker and Norine ({\it Advances in Mathematics}, 215({\bf 2}): 766-788, 2007) found new analogies between graphs and Riemann surfaces by developing a Riemann-Roch machinery on a finite graph $G$. In this paper, we develop a general Riemann-Roch Theory for sub-lattices of the root lattice $A_n$ by following the work of Baker and Norine, and establish connections between the Riemann-Roch theory and the Voronoi diagrams of lattices under certain simplicial distance functions. In this way, we rediscover the work of Baker and Norine from a geometric point of view and generalise their results to other sub-lattices of $A_n$. In particular, we provide a geometric approach for the study of the Laplacian of graphs. 
We also discuss some problems on classification of lattices with a Riemann-Roch formula as well as some related algorithmic issues. 
\end{abstract}
\tableofcontents
\section{Introduction}
\label{sec:intro}

 Recently, Baker and Norine~\cite{BaNo07} proved a graph theoretic analogue of the classical Riemann-Roch theorem for curves in algebraic geometry. The proof is combinatorial and makes use of chip-firing games~\cite{BLS91} and parking functions on graphs. Several papers later extended the results of Baker and Norine to tropical curves~\cite{GK08,HKN07,MZ07}. The question treated in this paper is to characterize those lattices which admit a Riemann-Roch theorem for the corresponding analogue of the rank-function defined by Baker and Norine.   

\subsection{Chip-Firing Game} Let $G=(V,E)$ be a finite connected (multi-)graph with the set of vertices $V$ and  the set of edges $E$. We suppose that $G$ does not have loops. The chip-firing game is the following game played on the set of vertices of $G$: At the initial configuration of the game, each
vertex of the graph is assigned an integer number of chips. A vertex can have a positive number of chips in its possession or can be assigned a negative number meaning that the vertex is in debt with the amount described by the absolute value of that number. At each step of the chip-firing game, a vertex in the graph can decide to \emph{fire}: firing means the vertex gives one chip along each edge incident with it, to its neighbours. Thus, after the firing made by a vertex $v$ of degree $d_v$, the integer assigned to $v$ decreases by $d_v$, while the integer associated to each vertex $u$ connected by $k_u$ (parallel) edges to $v$ increases by $k_u$. The objective of the vertices of the graph is to come up with a configuration in which no vertex is in debt, i.e., a configuration in which all the integers associated to vertices become non-negative. 

\vspace{.2cm}

\noindent \textbf{Problem.} \emph{Given an intial configuration, is there a finite sequence of chip-firings such that
eventually each vertex has a non-negative number of chips?}
 
 \vspace{.2cm}

 Let $\deg(\mathcal C)$, degree of $\mathcal C$,  be the total number of chips present in the game, i.e., the sum of the integers associated to the vertices of the graph. It is clear that degree remains unchanged through each step of the game, thus, a necessary condition for a positive answer to the above question is to have a non-negative degree.
 
\subsection{Riemann-Roch Theorem For Graphs} To each given chip-firing configuration $\mathcal C$, Baker and Norine associate a rank $r(\mathcal C)$ as follows. The rank of $\mathcal C$ is $-1$ if there is no way to obtain a configuration in which all the vertices have non-negative weights. And otherwise, $r(\mathcal C)$ is the maximum non-negative integer $r$ such that removing any set of $r$ chips from the game (in an arbitrary way), the obtained configuration can be still transformed via a sequence of chip-firings to a configuration where no vertex is in debt. In particular, note that $r(\mathcal C) \geq 0$ iff there is a sequence of chip-firings which results in a configuration with non-negative number of chips at each vertex.

 The main theorem of~\cite{BaNo07} is a duality theorem for the rank function $r(.)$. Let $\mathcal K$ be the \emph{canonical} configuration defined as follows: $\mathcal K$ is the configuration of chips in which every vertex $v$ of degree $d_v$ is assigned $d_v -2$ chips. Given a chip-firing configuration $\mathcal C$, the configuration $\mathcal K \setminus \mathcal C$ is defined as follows: a vertex $v$ of degree $d_v$ is assigned $d_v -2 - c_v$ chips in $\mathcal K \setminus \mathcal C$ if $v$ is assigned $c_v$ chips in $\mathcal C$. 

\noindent Recall that the genus $g$ of a connected graph $G$ with $n+1$ vertices and $m$ edges is $g := m-n$. 
\begin{theo}[Riemann-Roch theorem for graphs; Baker-Norine~\cite{BaNo07}]\label{BaNo07}
For every configuration $\mathcal C$, we have
\[r(\mathcal C) - r(\mathcal K \setminus \mathcal C) = \deg(\mathcal C) - g+1\, .\] 
\end{theo}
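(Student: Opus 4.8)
The plan is to translate the statement into divisor-theoretic language and run a Baker--Norine type argument whose combinatorial core is the theory of $q$-reduced divisors. Identify a configuration $\mathcal C$ with the divisor $D=\sum_{v}\mathcal C(v)\,[v]$ in $\mathrm{Div}(G):=\mathbb Z^{V}$; let $\mathrm{Prin}(G)\subseteq\mathrm{Div}(G)$ be the subgroup spanned by the rows of the Laplacian of $G$ (the divisor change produced by firing a single vertex), write $D\sim D'$ when $D-D'\in\mathrm{Prin}(G)$, and set $\Pic(G):=\mathrm{Div}(G)/\mathrm{Prin}(G)$. Call $D$ \emph{effective} if all its coefficients are $\geq 0$, and put $|D|:=\{E\geq 0:E\sim D\}$. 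Unwinding Baker and Norine's definition, $r(D)\geq j$ holds iff $|D-E|\neq\emptyset$ for every effective $E$ of degree $j$; in particular $r(D)\geq 0\iff|D|\neq\emptyset$, and $\deg$ descends to $\Pic(G)$. Under this dictionary $\mathcal K\setminus\mathcal C$ corresponds to $K-D$ with $K=\sum_{v}(d_{v}-2)[v]$ and $\deg K=2g-2$. One checks that $D\mapsto r(D)-r(K-D)-\deg D+g-1$ changes sign under $D\mapsto K-D$, so it would even suffice to prove one inequality, but the route below yields equality outright.

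Fix a base vertex $q$. The first and main block of work is the theory of $q$-reduced divisors: using Dhar's burning algorithm one shows that every class of $\Pic(G)$ has a unique $q$-reduced representative $D_{q}$, and that $|D|\neq\emptyset$ iff $D_{q}(q)\geq 0$. From this I extract two consequences. First, the restriction of a $q$-reduced divisor to $V\setminus\{q\}$ is a $q$-parking function; every $q$-parking function is dominated by a \emph{maximal} one; and maximal $q$-parking functions have total mass exactly $g$ and biject with acyclic orientations of $G$ having $q$ as their unique source (via $P_{O}(v)=\mathrm{indeg}_{O}(v)-1$). Hence a $q$-reduced divisor with $D_{q}(q)<0$ has degree $\leq g-1$, so $\deg D\geq g\Rightarrow|D|\neq\emptyset$; removing chips then gives Riemann's inequality $r(D)\geq\deg D-g$ for all $D$. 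Second, put $\mathcal N:=\{[\nu_{O}]:O\text{ an acyclic orientation of }G\}$ with $\nu_{O}:=\sum_{v}(\mathrm{indeg}_{O}(v)-1)[v]$, so $\deg\nu_{O}=m-n=g-1$; then one proves $\mathcal N=\{[D]:\deg D=g-1,\ |D|=\emptyset\}$, and more precisely: $|D|=\emptyset$ iff there exist $\nu\in\mathcal N$ and an effective $F$ with $\nu\sim D+F$. (The direction ``$\Leftarrow$'' is immediate; for ``$\Rightarrow$'' complete the parking part of $D_{q}$ to a maximal one and take the associated $\nu_{O}$, noting $\nu_{O}-D_{q}\geq 0$.)

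Feeding this dichotomy into the definition of the rank and collapsing the quantifiers ``for all effective $E$ of degree $j$'' and ``there exists $\nu\in\mathcal N$'' into one yields the closed formula
\[
r(D)\;=\;\min_{\nu\in\mathcal N}\deg^{+}(D-\nu)\;-\;1,
\]
where $\nu$ runs over all divisors in the classes of $\mathcal N$ and $\deg^{+}(F):=\sum_{v}\max(F(v),0)$ is the least degree of an effective divisor $\geq F$. Two short facts then finish the proof. The reversal $\bar O$ of an acyclic orientation is acyclic and $K-\nu_{O}=\sum_{v}(\mathrm{outdeg}_{O}(v)-1)[v]=\nu_{\bar O}$, so $\mathcal N$ is stable under $D\mapsto K-D$; and $\deg^{+}(F)-\deg^{+}(-F)=\deg F$ for every divisor $F$. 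Substituting $\nu\mapsto K-\nu$ in the formula for $r(K-D)$ gives $r(K-D)=\min_{\nu\in\mathcal N}\deg^{+}(\nu-D)-1$, and since for each fixed $\nu$ one has $\deg^{+}(D-\nu)=\deg^{+}(\nu-D)+\deg D-g+1$, taking minima yields $r(D)-r(K-D)=\deg D-g+1$, as claimed.

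The bulk of the work, and the real obstacle, is the first block: establishing existence and uniqueness of $q$-reduced representatives and, above all, the correspondence between maximal $q$-parking functions and acyclic orientations with a unique source, which is what pins the degree to $g-1$ and simultaneously powers Riemann's inequality and the description of $\mathcal N$. Once that is in place, the reversal identity $K-\nu_{O}=\nu_{\bar O}$ is a one-line check and the closing computation is purely formal.
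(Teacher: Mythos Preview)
Your argument is correct and is essentially the original Baker--Norine proof: you build the combinatorial machinery of $q$-reduced divisors and Dhar's algorithm, identify the non-effective degree-$(g-1)$ classes with the divisors $\nu_O$ coming from acyclic orientations with a unique source, and then close with the formula $r(D)=\min_{\nu}\deg^{+}(D-\nu)-1$ together with the orientation-reversal symmetry $K-\nu_O=\nu_{\bar O}$.

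The paper deliberately takes a different route, avoiding reduced divisors altogether. It arrives at the \emph{same} closed formula (Lemma~\ref{rank_lem}, with $\nu$ ranging over the extremal points $\Ext(L)$ of the Sigma-Region) by geometric means: $\Sigma(L)$ is analysed via the Voronoi diagram of $L$ under a simplicial distance function, and its extremal points are the critical points of that distance. The involution $\nu\mapsto\bar\nu$ on $\Ext(L)$ is packaged abstractly as ``reflection invariance'' of the lattice; for $L_G$ the paper computes $\Ext^c(L_G)=\{\nu^{\pi}+p:\pi\in S_{n+1},\ p\in L_G\}$ (Theorem~\ref{thm:main-laplacian}) and the involution is $\pi\mapsto\bar\pi$, the geometric counterpart of your orientation reversal. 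The final $\deg^{+}$ manipulation is then literally the same---the paper itself notes that this last step mirrors Baker--Norine.

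What each approach buys: your route is shorter, self-contained for graphs, and makes the combinatorics of parking functions and acyclic orientations visible. The paper's route generalises: it isolates two lattice-theoretic properties (uniformity and reflection invariance) that are \emph{equivalent} to having a Riemann--Roch formula, applies to non-graphical sub-lattices of $A_n$, and yields as a by-product an explicit description of the Voronoi and Delaunay structure of $L_G$.
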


\vspace{.2cm}

The existing proof of the Riemann-Roch theorem for graphs (and its extension to metric graphs and tropical curves~\cite{GK08,HKN07,MZ07}) is based on a family of specific configurations which are called reduced. We refer to~\cite{BaNo07} for more details, explaining the origin of the name given to this theorem in its connections with the Riemann-roch theorem for algebraic curves. 

\noindent Here we just cite some direct consequences of the above theorem for the chip-firing game. 
\begin{itemize}
\item If a configuration $\mathcal C$ contains more than $g$ chips, there is a sequence of chip-firings which produces a configuration where no vertex is in debt (more generally, one has $r(\mathcal C) \geq \deg(\mathcal C) -g$).
\item $r(\mathcal K) = g-1$ (note that $\deg(K) = 2g-2$).
\end{itemize}

\subsection{Reformulation in Terms of the Laplacian Lattice}Recall that a lattice is a discrete subgroup of the abelian group $(\mathbb R^{n},+)$ for some integer $n$ (e.g., the lattice $\mathbb Z^n \subset \mathbb R^n$), and the rank of a lattice is its rank considered as a free abelian group. A sub-lattice of $\mathbb Z^n$ is called integral in this paper.

Let $G=(V,E)$ be a given undirected connected (multi-)graph and $V=\{v_0,\dots,v_n\}$. The {\it Laplacian} of $G$ is the matrix $Q=D-A$, where $D$ is the diagonal matrix whose $(i,i)-$th entry is the degree of $v_i$, and $A$ is the {\it adjacency matrix} of $G$ whose $(i,j)-$th entry is the number of edges between $v_i$ and $v_j$. It is well-known and easy to verify that $Q$ is symmetric, has rank $n$, and that the kernel of $Q$ is spanned by the vector whose entries are all equal to $1$, c.f.~\cite{Bi93}.

\noindent The {\it Laplacian lattice} $L_G$ of $G$ is defined as the image of $\mathbb Z^{n+1}$ under the linear map defined by $Q$, i.e., $L_G:=Q(\mathbb Z^{n+1})$,  c.f., \cite{BHN97}. Since $G$ is a connected graph, $L_G$ is a sub-lattice of the root lattice $A_n$ of full-rank equal to $n$, where $A_n \subset \mathbb R^{n+1}$ is the lattice defined as follows\footnote{Root refers here to root systems in the classification theory of simple Lie algebras~\cite{Bourbaki}}:
$$A_n := \Bigl\{\,x = (x_0,\dots,x_n)\in \mathbb Z^{n+1} \ | \ \sum x_i =0\,\Bigr\}.$$
 Note that $A_n$ is a discrete sub-group of the hyperplane 
 $$H_0= \Bigl\{\,x = (x_0,\dots,x_n)\in \mathbb R^{n+1}|\sum x_i =0\,\Bigr\}$$ 
 of $\mathbb R^{n+1}$ and has rank $n$ .

To each configuration $\mathcal C$, it is straightforward to associate a point $D_\mathcal C$ in $\mathbb Z^{n+1}$: $D_\mathcal C$ is  the vector with coordinates equal to the number of chips given to the vertices of $G$.  For a sequence of chip-firings on $\mathcal C$ resulting in another configuration $\mathcal C'$, it is easy to see that there exists a vector $v \in L_G$ such that $D_{\mathcal C'} = D_{\mathcal C} +v$. Conversely,  if $D_{\mathcal C'} = D_{\mathcal C}+v$ for a vector $v \in L_G$, then there is a sequence of chip-firings transforming $\mathcal C$ to $\mathcal C'$. Using this equivalence, it is possible to transform the chip-firing game and the statement of the Riemann-Roch theorem to a statement about $\mathbb Z^{n+1}$ and the Laplacian lattice $L_G \subset A_n$.
\begin{rema}\rm
Laplacian of graphs and their spectral theory have been well studied. The Laplacian captures information about the geometry and combinatorics of the graph $G$, for example, it provides bounds on the expansion of $G$ (we refer to the survey~\cite{HLW06}) or on the quasi-randomness properties of the graph, see~\cite{Ch97}. 
The famous Matrix Tree Theorem states that {\it the cardinality of the (finite) Picard group $\mathrm{Pic}(G) := A_n/L_G $ is the number of spanning trees of $G$}. 
\end{rema}
\subsection{Linear Systems of Integral Points and the Rank Function}
Let $L$ be a sub-lattice of $A_n$ of full-rank (e.g., $L=L_G$).
Define an equivalence relation $\sim$ on the set of points of $\mathbb{Z}^{n+1}$ as follows:    
$D \sim D'$ if and only if $D-D' \in L$.  This equivalence relation is referred to as {\it linear equivalence} and the equivalence classes are denoted by $\mathbb{Z}^{n+1}/L_G$. 
We say that a point $E$ in $\mathbb Z^{n+1}$ is {\it effective} or {\it non-negative}, if all the coordinates are non-negative. For a point $D \in \mathbb Z^{n+1}$, the linear system associated to $D$ is the set $|D|$ of all effective points linearly equivalent to $D$: $$|D| = \Bigl\{\:E \in \mathbb Z^{n+1}\::\: E \geq 0,\: \: E \sim D\:\Bigr\}.$$

The {\it rank} of an integral point $D\in\mathbb Z^{n+1}$, denoted by $r(D)$, is defined by setting $r(D)=-1$, if $|D|=\emptyset$, and then declaring that for each integer $s\geq0$, $r(D)\geq s$ if and only if $|D-E|\neq \emptyset$ for all effective integral points $E$ of degree $s$. Observe that $r(D)$ is well-defined and only depends on the linear equivalence class of $D$. Note that $r(D)$ can be defined as follows:
$$r(D)=\min\:\Bigl\{\:\deg(E)\:|\:|D-E|=\emptyset,\: E \geq 0\Bigr\}-1.$$
Obviously, $\deg(D)$ is a trivial upper bound for $r(D)$.

\subsection{Extension of the Riemann-Roch Theorem to Sub-lattices of $A_n$}
The main aim of this paper is to provide a characterization of the sub-lattices of $A_n$ which admit a Riemann-Roch theorem with respect to the rank-function defined above. In the mean-while, our approach provides a geometric proof of the theorem of Baker and Norine, Theorem~\ref{BaNo07}. 

We show that Riemann-Roch theory associated to a full rank sub-lattice $L$ of $A_n$ is related to the study of the Voronoi diagram of the lattice $L$ in the hyperplane $H_0$ under a certain simplicial distance function. The whole theory is then captured by the corresponding critical points of this simplicial distance function. 

\noindent We associate two geometric invariants to each such sub-lattice of $A_n$, the {\it min-} and the {\it max-genus}, denoted respectively by $g_{min}$ and $g_{max}$. 
Two main characteristic properties for a given sub-lattice of $A_n$ are then defined. The first one is what we call {\it Reflection Invariance}, and one of our results here is a weak Riemann-Roch theorem for reflection-invariant sub-lattices of $A_n$ of full-rank $n$.
\begin{theo}[Weak Riemann-Roch] \label{rrineq_theo} 
Let $L$ be a reflection invariant sub-lattice of $A_n$ of rank $n$. There exists a point $K\in\mathbb Z^{n+1}$, called \emph{canonical}, such that for every point $D \in \mathbb{Z}^{n+1}$, we have
\begin{equation*}3g_{min}-2g_{max}-1\: \leq\: r(K-D)-r(D)+\deg(D)\: \leq\:g_{max}-1\:	.
\end{equation*}
\end{theo}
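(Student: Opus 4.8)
The plan is to reduce the two-sided inequality to a single statement about the existence of "enough" points in a linear system, by exploiting reflection invariance to identify the right canonical point $K$, and then to control $r(D)$ from below and above by comparing with a suitable lattice simplex whose size is governed by $g_{min}$ and $g_{max}$. First I would make precise the notion of reflection invariance: the lattice $L$ is stable under the involution of $H_0$ induced by $x\mapsto -x$ (up to translation), and I would use this to produce a canonical point $K$ with the property that $D\mapsto K-D$ is compatible with linear equivalence, i.e. $D\sim D'$ implies $K-D\sim K-D'$, so that $r(K-D)$ depends only on the class of $D$. This is the analogue of the canonical divisor in Baker–Norine, and reflection invariance is exactly what makes the map $D\mapsto K-D$ an involution on $\mathbb Z^{n+1}/L$.

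Next I would set up the geometric dictionary promised in the introduction: for a point $D$, whether $|D|\neq\emptyset$ is decided by whether the coset $D+L$ meets the nonnegative orthant, which in turn is controlled by where $D$ sits relative to the Voronoi cell of $L$ under the simplicial distance function. Concretely, I expect $g_{max}$ to be defined so that every $D$ with $\deg(D)\geq g_{max}$ has $|D|\neq\emptyset$ (the Riemann-type bound $r(D)\geq\deg(D)-g_{max}$ should fall out of a covering-radius estimate for the simplicial distance), while $g_{min}$ governs the complementary extremal direction. The upper bound $r(K-D)-r(D)+\deg(D)\le g_{max}-1$ would then follow by applying $r(E)\ge\deg(E)-g_{max}$ with $E=K-D$ together with $\deg(K)=2g_{max}-2$ (or whatever the precise value of $\deg K$ turns out to be) and the trivial bound $r(D)\le\deg(D)$: one gets $r(K-D)\ge\deg(K)-\deg(D)-g_{max}$, and rearranging gives the claimed upper estimate after substituting $\deg(K)$. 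For the lower bound I would run the same computation with the roles exchanged — bounding $r(D)$ from above by its "dual" Riemann estimate applied to $K-D$ — but now the slack between $g_{min}$ and $g_{max}$ enters because the two genera need not coincide, which is precisely why one only gets the weaker constant $3g_{min}-2g_{max}-1$ rather than the sharp $g-1$.

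The key intermediate step, then, is the one-sided estimate
\[
r(D)\ \geq\ \deg(D)-g_{max}\qquad\text{for all }D\in\mathbb Z^{n+1},
\]
together with its refinement near the boundary of effectivity in terms of $g_{min}$; I would prove this by the Voronoi/critical-point machinery, showing that a point fails to be effective only if it lies, after translation by $L$, outside a dilated standard simplex whose lattice width is bounded by the covering radius, and that this covering radius is captured by the max-genus. Combining the lower estimate for $r(D)$ with the lower estimate for $r(K-D)$ yields both inequalities of the theorem after elementary algebra using $\deg(D)+\deg(K-D)=\deg(K)$.

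I expect the main obstacle to be establishing the lower bound for the genus-type quantities, i.e. showing that the relevant covering radius of $L$ under the simplicial distance is at most $g_{max}$ and at least (in the dual direction) something controlled by $g_{min}$ — in other words, proving that the geometric invariants $g_{min},g_{max}$ really do sandwich the combinatorial behaviour of the rank function. Once that geometric input is in place, deriving the displayed double inequality is a short computation; without reflection invariance one cannot even form the symmetric object $K-D$, so that hypothesis is used essentially, not cosmetically. The reason the bound is only "weak" is that the argument treats the two extremal directions of the Voronoi cell independently and loses the constant $2(g_{max}-g_{min})$ in doing so; a genuine Riemann–Roch equality will require the further hypothesis (presumably "uniform reflection invariance" or the equality $g_{min}=g_{max}$) forcing the critical configuration to be symmetric.
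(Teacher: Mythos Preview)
Your approach has a genuine gap: the Riemann inequality $r(D)\ge\deg(D)-g_{max}$ together with the trivial bound $r(D)\le\deg(D)$ cannot produce a bound on $r(K-D)-r(D)+\deg(D)$ that is independent of $D$. If you try the upper inequality via $r(K-D)\le\deg(K-D)$ and $r(D)\ge\deg(D)-g_{max}$, you get
\[
r(K-D)-r(D)+\deg(D)\ \le\ \deg(K)-\deg(D)+g_{max},
\]
which still depends on $\deg(D)$; the lower bound fails the same way. Two global one-sided estimates are too coarse: they lose the link between $r(D)$ and $r(K-D)$.

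You also misread the hypothesis. Reflection invariance is \emph{not} $L=-L$ (every lattice has that); it says that the set $\Crit(L)$ of critical points of the simplicial distance function---equivalently, by Corollary~\ref{cor:crit}, the projection $\pi_0(\Ext(L))$---satisfies $-\Crit(L)=\Crit(L)+t$ for some fixed $t$. This is what gives an involution $\nu\mapsto\bar\nu$ on $\Ext(L)$ with $\pi_0(\nu+\bar\nu)$ constant, and one then takes $K:=-\nu_0-\bar\nu_0$ for a $\nu_0$ maximising $\deg(\nu+\bar\nu)$.

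The actual engine of the proof is Lemma~\ref{rank_lem}, the exact formula
\[
r(D)+1=\min_{\nu\in\Ext(L)}\deg^{+}(\nu+D).
\]
Because $\nu+\bar\nu=-K-E_\nu$ for some effective $E_\nu$ with $\deg(E_\nu)\le 2(g_{max}-g_{min})$, one has $K-D+\bar\nu=-\nu-D-E_\nu$, and the elementary identity $\deg^{+}(-x)-\deg^{+}(x)=-\deg(x)$ yields, \emph{for every} $\nu$,
\[
3g_{min}-2g_{max}-\deg(D)-1\ \le\ \deg^{+}(K-D+\bar\nu)-\deg^{+}(\nu+D)\ \le\ g_{max}-\deg(D)-1.
\]
Since $\nu\mapsto\bar\nu$ is a bijection on $\Ext(L)$, taking minima over $\nu$ gives the theorem directly. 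The missing idea in your sketch is this pointwise pairing of the extremal points that witness $r(D)$ with those that witness $r(K-D)$; a covering-radius argument only sees the worst extremal point on each side separately, which is why your bounds never close up.
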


\vspace{.2cm}

The second characteristic property is called {\it Uniformity} and simply means $g_{min} = g_{max}$. It is straightforward to derive a Riemann-Roch theorem for uniform reflection-invariant sub-lattices of $A_n$ of rank $n$ from Theorem~\ref{rrineq_theo} above.  
\begin{theo}[Riemann-Roch] \label{rr_theo} 
Let $L$ be a uniform reflection invariant sub-lattice of $A_n$. Then there exists a point $K\in\mathbb Z^{n+1}$, called \emph{canonical}, such that for every point $D \in \mathbb{Z}^{n+1}$, we have
\begin{equation*} r(K-D)-r(D)+\deg(D) = g-1,
\end{equation*}
where $g = g_{min}=g_{max}$.
\end{theo}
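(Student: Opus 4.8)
The plan is to deduce Theorem~\ref{rr_theo} directly from the weak Riemann-Roch theorem, Theorem~\ref{rrineq_theo}, by exploiting the uniformity hypothesis. Since a uniform reflection invariant sub-lattice $L$ of $A_n$ is in particular a reflection invariant sub-lattice of rank $n$, Theorem~\ref{rrineq_theo} applies and furnishes a canonical point $K\in\mathbb{Z}^{n+1}$ such that, for every $D\in\mathbb{Z}^{n+1}$,
\[
3g_{min}-2g_{max}-1\;\leq\;r(K-D)-r(D)+\deg(D)\;\leq\;g_{max}-1 .
\]
I would simply take this same point $K$ as the canonical point in the statement of Theorem~\ref{rr_theo}.

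The key step is then purely arithmetic. Uniformity means $g_{min}=g_{max}$, so, writing $g:=g_{min}=g_{max}$, the left-hand bound becomes $3g-2g-1=g-1$ and the right-hand bound becomes $g-1$ as well. Hence the quantity $r(K-D)-r(D)+\deg(D)$ is squeezed between $g-1$ and $g-1$, which forces the equality $r(K-D)-r(D)+\deg(D)=g-1$ for every $D\in\mathbb{Z}^{n+1}$; this is exactly the asserted Riemann-Roch identity. The one minor thing worth making explicit is that $K$, $g_{min}$ and $g_{max}$ depend only on the lattice $L$ and not on $D$, so the identity holds uniformly over all $D$ with a single choice of $K$.

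The main obstacle does not lie in this deduction, which is essentially immediate, but in the ingredients already assumed: the proof of the weak inequality, Theorem~\ref{rrineq_theo}, and the identification of the genus invariants. In particular, to recover Theorem~\ref{BaNo07} one must check that every Laplacian lattice $L_G$ is uniform and reflection invariant and that its lattice-theoretic genus $g_{min}=g_{max}$ agrees with the combinatorial genus $g=m-n$; this is the content that makes the present framework a genuine generalisation rather than a formal restatement. As a sanity check on the formula, when $\deg(D)>2g-2$ one has $\deg(K-D)<0$, hence $r(K-D)=-1$, and the identity gives back $r(D)=\deg(D)-g$, consistent with the consequences of Theorem~\ref{BaNo07} recorded above.
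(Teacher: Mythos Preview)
Your proof is correct and follows essentially the same route as the paper: apply the weak Riemann-Roch inequality (Theorem~\ref{rrineq_theo}) and collapse both bounds to $g-1$ using $g_{min}=g_{max}=g$. The paper additionally records that $\deg(K)=2g-2$ (from $K=-\nu-\bar\nu$ with $\deg(\nu)=\deg(\bar\nu)=-(g-1)$ by uniformity), a fact your sanity check implicitly uses but does not justify; this is not part of the stated theorem, however.
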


 We then show that Laplacian lattices of undirected connected graphs are uniform and reflection invariant, obtaining a geometric proof of the Riemann-Roch theorem for graphs. As a consequence of our results, we provide an explicit description of the Voronoi diagram of lattices generated by Laplacian of connected graphs and discuss some duality concerning the  arrangement of simplices defined by the points of the Laplacian lattice. 
 
In the case of the Laplacian lattices of connected regular digraphs,  we also provide a slightly  stronger statement than Theorem~\ref{rrineq_theo} above. 
 
   The above results also provide a characterization of full-rank sub-lattices of $A_n$ for which a Riemann-Roch formula holds, indeed, these are exactly those lattices which have the uniformity and the reflection-invariance properties. We conjecture that any such lattice is the Laplacian lattice of an oriented multi-graph (as we will see, there are examples of such  lattices which are not the Laplacian lattice of  any unoriented multi-graph).
\subsection{Organisation of the paper.} The paper is structured as follows. Sections~\ref{sec:prel} and~\ref{app:theo-dom} provide the preliminaries. This includes the definition of a geometric region in $\mathbb R^{n+1}$ associated to a given lattice, called the Sigma-region, some results on the shape of this region in terms of the extremal points, and the definition of the min- and max-genus. In Section~\ref{sec:voronoi}, we provide the geometric terminology we need in the following sections for the proof of our main results. This is done in terms of a certain kind of Voronoi diagram, and in particular, some main properties of the Voronoi diagram of sub-lattices of $A_n$ under a certain simplicial distance function are provided in this section. The proof of our Riemann-Roch theorem is provided in Section~\ref{sec:R-R-theo}.  Most of the geometric terminology introduced in the first sections will be needed to define an involution on the set of extremal points of the Sigma-Region, the proof of the Riemann-Roch theorem is then a direct consequence of this and the definition of the min- and max-genus.  It is helpful to note that the main ingredients used directly in the proof of Theorems~\ref{rrineq_theo} and~\ref{rr_theo} are the results of Section~\ref{sec:prel} and Lemma~\ref{crit_lem} (and its Corollary~\ref{cor:crit}). The results of the first sections are then used in treating the examples in Section~\ref{sec:examples}, specially for the Laplacian lattices. We derive in this section a new proof of the main theorem of~\cite{BaNo07}, the Riemann-Roch theorem for graphs.

\noindent Our work raises questions on the classification of sub-lattices of $A_n$ with reflection invariance and/or uniformity properties. In Section~\ref{sec:examples}, we present a complete answer for sub-lattices of $A_2$.
 \noindent Finally, some algorithmic questions are discussed in Section~\ref{sec:algo}, e.g., we show that it is computationally hard to decide if the rank function is non-negative at a given point for a general sub-lattice of $A_n$. This is interesting since in the case of Laplacian lattices of graphs, the problem of deciding if the rank function is non-negative can be solved in polynomial time.

 As we said, in what follows we will assume that $L$ is an integral sub-lattice in $H_0$ of full-rank, i.e., a sub-lattice of $A_n$. But indeed, what we are going to present also works in the more general setting of full rank sub-lattices of $H_0$, though the invariants and rank function defined for these lattices are not integer. We will say a few words on this and some other results in the concluding section.
   
\subsection*{Basic Notations}
A point of $\mathbb R^{n+1}$ with integer coordinates is called an {\it integral point}. By a lattice $L$, we mean a discrete subgroup of $H_0$ of maximum rank. Recall that $H_0$ is the set of all points of $\mathbb R^{n+1}$ such that the sum of their coordinates is zero. The elements of $L$ are called {\it lattice points}. The positive cone in $\mathbb R^{n+1}$ consists of all the points with non-negative coordinates. We can define a partial order in $\mathbb R^{n+1}$ as follows: $a\leq b$ if and only if $b-a$ is in the positive cone, i.e., if each coordinate of $b-a$ is non-negative. In this case we say $b$ {\it dominates} $a$. Also we write $a<b$ if all the coordinates of $b-a$ are strictly positive.   

For a point $v=(v_0,\dots,v_n) \in \mathbb R^{n+1}$, we denote by $v^-$ and $v^+$ the negative and positive parts of $v$ respectively. 
For a point $p=(p_0,\dots,p_n) \in \mathbb{R}^{n+1}$, we define the degree of $p$ as $\deg(p)=\sum_{i=0}^{n}p_i$. 
For each $k$, by $H_k$ we denote the 
 hyperplane consisting of points of degree $k$, i.e.,
$H_k = \{x \in \mathbb R^{n+1}\ | \ \deg(x)=k\}.$
 By $\pi_k$, we denote the projection from $\mathbb R^{n+1}$ onto $H_k$ along $\vec{\mathbf{1}}=(1,\dots,1)$. In particular, $\pi_0$ is the projection onto $H_0$.	
Finally for an integral point $D\in\mathbb Z^{n+1}$, by $N(D)$ we denote the set of all neighbours of $D$ in $\mathbb Z^{n+1}$, which consists of all the points of $\mathbb Z^{n+1}$ which have distance at most one to $D$ in $\ell_{\infty}$ norm.

 In the following, to simplify the presentation, we will use the convention of tropical arithmetic, briefly recalled below. (This is so only  a matter of notation). The tropical semiring $(\mathbb R,\oplus,\otimes)$ is defined as follows: As a
set this is just the real numbers $\mathbb R$. However, one redefines the basic arithmetic operations of
addition and multiplication of real numbers as follows:
$x\:\oplus\:y := \min\:(x,y) \:\ \  \textrm{and}\ \ \  x\:\otimes\:y := x + y.$
In words, the tropical sum of two numbers is their minimum, and the tropical
product of two numbers is their sum. We can extend the tropical sum and the tropical product to vectors by doing the operations coordinate-wise. 

\section{Preliminaries}
\label{sec:prel}
All through this section $L$ will denote a full rank (integral) sub-lattice of $H_0$.

\subsection{Sigma-Region of a Given Sub-lattice $L$ of $A_n$} Every point $D$ in $\mathbb Z^{n+1}$ defines two ``orthogonal'' cones in $\mathbb R^{n+1}$, denoted by $H^-_D$ and $H_D^+$, as follows:
 $H^{-}_D$ is the set of all points in $\mathbb R^{n+1}$ which are dominated by $D$. In other words 
\begin{center} $H^{-}_D=\{\:D' \ |\  D' \in \mathbb{R}^{n+1},  D-D'\geq 0\:\}.$ \end{center}
 Similarly $H^{+}_D$ is the set of points in $\mathbb R^{n+1}$ that dominate $D$. 
 In other words,
\begin{center}$H^{+}_D=\{\:D'\ | \ D' \in \mathbb{R}^{n+1}, D' - D\geq 0\:\}.$\end{center}
 For a cone $\mathcal C$ in $\mathbb R^{n+1}$, we denote by $\mathcal C(\mathbb Z)$ and $\mathcal C(\mathbb Q)$, the set of integral and rational points of the cone respectively. When there is no risk of confusion, we sometimes drop $(\mathbb Z)$ (resp. $(\mathbb Q)$) and only refer to $\mathcal C$ as the set of integral points (resp. rational points) of the cone $\mathcal C$.
\noindent The {\it Sigma-Region} of the lattice $L$ is, roughly speaking, the set of integral points of $\mathbb Z^{n+1}$ that are  
not contained in the cone $H^-_p$ for any point $p \in L$.  
More precisely:
\begin{defi}\rm 
The Sigma-Region of $L$, denoted by $\Sigma(L)$, is defined as follows:
  \begin{align*} 
  \Sigma(L)&=\{\: D \ | \ D\in\mathbb Z^{n+1}\ \&  \ \forall\: p \in L,\: D \nleq p\:\}\\
  & =  \mathbb Z^{n+1} \setminus \bigcup_{p \in L} H_p^-.
   \end{align*}
\end{defi} 

The following lemma shows the relation between the Sigma-Region and the rank of an integral point as defined in the previous section. 

\begin{lemm} \label{ranksig_lem} 
\noindent
\begin{itemize}
\item[(i)] For a point $D$ in $\mathbb{Z}^{n+1}$, $r(D)=-1$ if and only if $-D$ is a point in $\Sigma(L)$. 
\item[(ii)] More generally, $r(D)+1$ is the distance of $-D$ to $\Sigma(L)$ in the $\ell_1$ norm, i.e.,
\begin{align*}
 r(D) \:&\: = dist_{\ell_1}(-D,\Sigma(L))-1:=\inf\{||p+D||_{\ell_1}\ | \ p\in\Sigma(L)\}-1, 
\end{align*} 
 \textrm{where $||x||_{\ell_1} = \sum_{i=0}^n |x_i|$ for every point $x=(x_0,x_1,\dots,x_n)\in\mathbb R^{n+1}$}.
\end{itemize}
\end{lemm}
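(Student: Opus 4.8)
The plan is to read both statements off from the definitions, using part~(i) as a stepping stone towards part~(ii); the only substantive geometric input required is that $\Sigma(L)$ is closed upward under the domination order.

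First I would settle part~(i). A point $E$ belongs to $|D|$ exactly when $E\geq 0$ and $E=D+p$ for some $p\in L$; hence $|D|\neq\emptyset$ if and only if there is a lattice point $p$ with $D+p\geq 0$, that is, with $-D\leq p$. By the very definition of the Sigma-Region this is precisely the negation of the statement $-D\in\Sigma(L)$. Since $r(D)=-1$ means exactly $|D|=\emptyset$, this proves (i).

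For part~(ii) I would start from the reformulation $r(D)=\min\{\deg(E)\ |\ E\geq 0,\ |D-E|=\emptyset\}-1$ recorded earlier. Applying (i) to the integral point $D-E$ turns the condition $|D-E|=\emptyset$ into $E-D\in\Sigma(L)$, and for an effective point $\deg(E)=\|E\|_{\ell_1}$. Setting $p:=E-D$ this already yields
\[
 r(D)+1=\min\bigl\{\,\|p+D\|_{\ell_1}\ \big|\ p\in\Sigma(L),\ p+D\geq 0\,\bigr\}.
\]
So the content of (ii) is that the extra requirement $p+D\geq 0$ can be dropped without affecting the infimum. Here I would use that $\Sigma(L)$ is upward closed: if $q\in\Sigma(L)$ and $q'\geq q$, then $q'\in\Sigma(L)$, since $q'\leq p$ for some $p\in L$ would force $q\leq q'\leq p$. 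Given an arbitrary $p\in\Sigma(L)$, I would replace it by $p':=(p+D)^{+}-D$. Then $p'+D=(p+D)^{+}\geq p+D$, so $p'\geq p$ and therefore $p'\in\Sigma(L)$; also $p'$ is integral, $p'+D=(p+D)^{+}\geq 0$, and $\|p'+D\|_{\ell_1}=\|(p+D)^{+}\|_{\ell_1}\leq\|p+D\|_{\ell_1}$. Hence the unconstrained infimum equals the constrained one, which is $r(D)+1$. Finally I would note that these infima are attained — the values $\|p+D\|_{\ell_1}$ are non-negative integers and $\Sigma(L)$ is non-empty (every integral point of positive degree lies in it, because $L\subseteq H_0$) — and that the case $r(D)=-1$ is consistent, the infimum then being $0$ and attained at $p=-D$, so that (i) is in fact recovered from (ii).

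All the steps are short; the one point that is more than bookkeeping is the last move in part~(ii), namely that truncating the negative part of $p+D$ produces a point still in $\Sigma(L)$ without increasing its $\ell_1$-norm. This is exactly where the upward-closedness of $\Sigma(L)$ enters, combined with the obvious fact that passing from $v$ to $v^{+}$ cannot increase $\|v\|_{\ell_1}$. I do not expect any genuine obstacle beyond getting the direction of this monotonicity right.
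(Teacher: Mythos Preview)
Your proposal is correct and follows essentially the same route as the paper's proof. Both arguments use part~(i) to rewrite the rank as a constrained $\ell_1$-minimum over $\Sigma(L)$ and then invoke the upward closedness of $\Sigma(L)$ together with the truncation $v\mapsto v^{+}$ to remove the constraint $p+D\geq 0$; the paper phrases this last step as a contradiction on a putative minimizer, whereas you phrase it as a direct replacement valid for every $p$, but the content is identical.
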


\begin{figure}[!htb]
\begin{center}
\vspace{-1cm}
\includegraphics[width=0.5\linewidth]{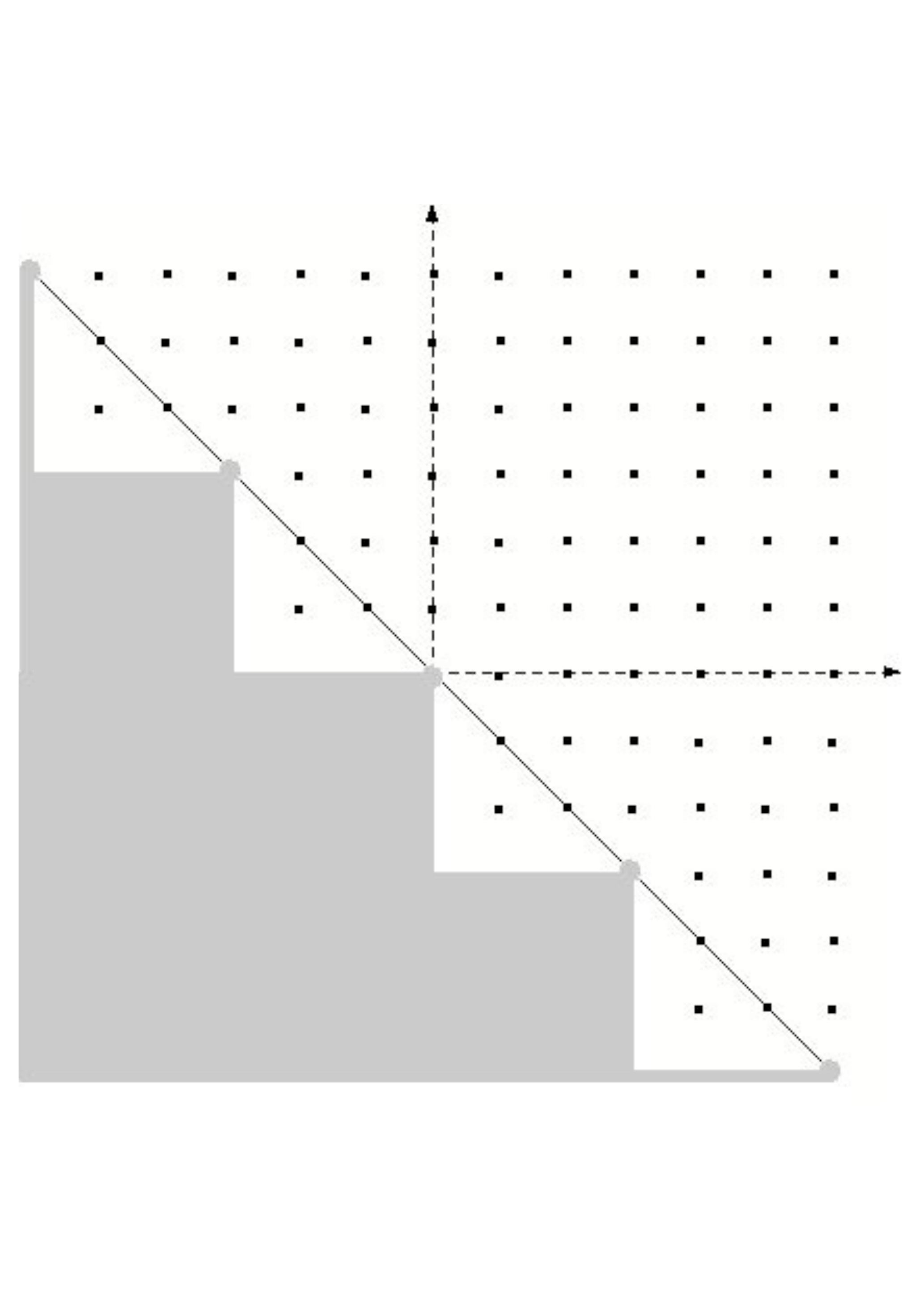}
\vspace{-1cm}
\caption{A finite portion of the Sigma-Region of a sub-lattice of $A_1$. All the black points  belong to the Sigma-Region. The integral points in the grey part are out of the Sigma-Region.}
\end{center}
\end{figure}

 Before presenting the proof of Lemma~\ref{ranksig_lem}, we need the following simple observation.
 \vspace{.5cm}
\begin{observ}~\label{observ-r-trick}
$\forall\: D_1,D_2 \in \mathbb Z^{n+1}$, we have $D_1 \in \Sigma(L) -D_2$ if and only if $D_2 \in \Sigma(L)-D_1$.
\end{observ}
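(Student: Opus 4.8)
The plan is simply to unwind the set-theoretic notation for the translate of a set. By definition, $\Sigma(L)-D_2 = \{\,s-D_2 \ | \ s \in \Sigma(L)\,\}$, so the assertion $D_1 \in \Sigma(L)-D_2$ holds exactly when there is some $s \in \Sigma(L)$ with $D_1 = s - D_2$; that is, exactly when $D_1 + D_2 \in \Sigma(L)$ (note that $D_1,D_2 \in \mathbb Z^{n+1}$ forces $D_1+D_2$ to be an integral point, so this is meaningful). The same computation applied to the other side shows that $D_2 \in \Sigma(L)-D_1$ holds exactly when $D_2 + D_1 \in \Sigma(L)$.

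Now I would observe that $D_1 + D_2 = D_2 + D_1$, since $\mathbb Z^{n+1}$ is an abelian group, so the two conditions $D_1+D_2\in\Sigma(L)$ and $D_2+D_1\in\Sigma(L)$ are literally the same; both equivalences above then chain together to give the claim. There is no genuine obstacle here — the statement is a formal triviality — and the only point to be careful about is the sign convention in the translate (we are translating the whole Sigma-region by $-D_i$, not reflecting it). The reason to isolate this as a separate observation is purely organisational: it is the bookkeeping identity that, combined with Lemma~\ref{ranksig_lem}, lets one move freely between a point $D$ and its negative $-D$ in the arguments that follow.
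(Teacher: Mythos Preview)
Your proof is correct and is exactly the trivial unwinding the paper has in mind; the paper states the observation without proof and treats it as self-evident, and your argument makes explicit the one-line reason, namely that both conditions amount to $D_1+D_2\in\Sigma(L)$.
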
  

\noindent We shall usually use this observation without sometimes mentioning it explicitly.

\begin{proof}[Proof of Lemma~\ref{ranksig_lem}]
\noindent
\begin{itemize}
\item[(i)] Recall that $r(D)=-1$ means that  $|D|=\emptyset$. This in turn means that $D \ngeq p$ for any $p$ in $L$, or equivalently $-D \nleq q$ for any point $q$ in $L$ (because $L=-L$). We infer that $-D$ is a point of $\Sigma(L)$. Conversely, if $-D$ belongs to $\Sigma(L)$, then $-D \nleq q$ for any point $q$ in $L$, or equivalently $D \ngeq p$ for any $p$ in $L$ (because $L=-L$). This implies that $|D|=\emptyset$ and hence $r(D)=-1$.

\item[(ii)]Let $p^*$ be a point in  $\Sigma(L)$ which has minimum $\ell_1$ distance from $-D$, and define $v^*=p^*+D$. Write $v^*=v^{*,+}+v^{*,-}$, where $v^{*,+}$ and $v^{*,-}$ are respectively the positive and the negative parts of $v^*$. We first claim that $v^*$ is an effective integral point, i.e., $v^{*,-}=0$. For the sake of a contradiction, let us assume the contrary, i.e., assume that $||v^{*,-}||_{\ell_1} >0$. Since $-D+v^{*,+}+v^{*,-}=-D+v^* = p^*$ is contained in $\Sigma(L)$, and because $v^{*,-} \leq 0$, the point $p^{*,+}=-D+v^{*,+}$ has to be in $\Sigma(L)$. Also $||v^{*,+}||_{\ell_1} < ||v^*||_{\ell_1}$ (because $||v^*||_{\ell_1} = ||v^{*,+}||_{\ell_1}+||v^{*,-}||_{\ell_1}$ and $||v^{*,-}||_{\ell_1}>0$). We obtain $||D+p^{*,+}||_{\ell_1} =||v^{*,+}||_{\ell_1} < ||D+p^{*}||_{\ell_1}$, which is a contradiction by the choice of $p^*$.
\noindent Therefore, we have 
\begin{align*}
r(D)\:&\:=\min \{\:\deg(v)\ |\ |D-v|= \emptyset,\: v \geq 0\: \}-1 \\
&=\:\min \{\:\deg(v)\ |\ v-D \in \Sigma(L),\: v \geq 0\: \}-1 \,\,  \textrm{ (By the first part of Lemma~\ref{ranksig_lem})}\\
&=\:\min \{\:||v||_{\ell_1}\ |\ v-D \in \Sigma(L),\: v\geq 0\: \}-1 \\ 
&=\:\min \{\:||D+p||_{\ell_1}-1\ |\ p\in\Sigma(L) \ \textrm{and } D+p\geq 0\: \}\\
&=\:dist_{\ell_1}(-D,\Sigma(L))-1 \qquad \qquad \qquad \qquad \qquad \qquad \textrm{(By the above arguments)}. 
\end{align*}
\end{itemize}
\end{proof}

Lemma~\ref{ranksig_lem} shows the importance of understanding the geometry of the Sigma-Region for the study of the rank function. This will be our aim in the rest of this section and in Section~\ref{sec:voronoi}. But we need to introduce another definition before we proceed. Apparently, it is easier to work with a ``continuous'' and ``closed'' version of the Sigma-Region. 

\begin{defi}\rm $\Sigma^{\mathbb{R}}(L)$ is the set of points in $\mathbb{R}^n$ that are not dominated by any point in $L$.
 \begin{align*}
 \Sigma^{\mathbb{R}}(L)\:&=\:\Bigl\{\: p \:|\: p \in \mathbb{R}^{n+1} \:\textrm{and}\: p \nleq q, \: \forall q \in L\: \Bigr\}\\
 &=\mathbb R^{n+1} \setminus \bigcup_{p \in L} H_p^-.
 \end{align*}       
By  $\Sigma^{c}(L)$ we denote the topological closure of $\Sigma^{\mathbb{R}}(L)$ in $\mathbb{R}^{n+1}$.            
 \end{defi}
\begin{rema}\rm
 One advantage of this definition is that it can be used to define the same Riemann-Roch machinery for any full dimensional sub-lattice of $H_0$. Indeed for such a sub-lattice $L$,  it is quite straightforward to associate a real-valued rank function to any point of $\mathbb R^{n+1}$ (c.f. Lemma~\ref{ranksig_lem}). The main theorems of the paper can be proved in this more general setting. As all the examples of interest for us are integral lattices, we have restricted the presentation to sub-lattices of $A_n$.  
 \end{rema}

\subsection{Extremal Points of the Sigma-Region}
\label{sec:extremal}
 We say that a point $p \in \Sigma(L)$ is an {\it extremal} point if it is a local minimum of the degree function. In other words 
\begin{defi}
\rm The set of extremal points of $L$ denoted by $\Ext(L)$ is defined as follows:
\begin{center}
$\Ext(L) :=\{\nu \in \Sigma (L) \ |\ \deg(\nu) \leq \deg(q) \ \forall \ 	q \in N(\nu) \cap \Sigma(L) \}).$
\end{center}
Recall that for every point $D\in\mathbb Z^{n+1}$, $N(D)$ is the set of neighbours of $D$ in $\mathbb Z^{n+1}$, which consists of all the points of $\mathbb Z^{n+1}$ which have distance at most one to $D$ in $\ell_{\infty}$ norm.
\end{defi}

\noindent  We also define extremal points of  $\Sigma^{c}(L)$ as the set of points that are local minima of the degree function and denote it by $\Ext^c(L)$. Local minima here is understood with respect to the topology of $\mathbb R^{n+1}$: $x$ is a local minima if and only if there exists an open ball $B$ containing $x$ such that $x$ is the point of minimum degree in $B \cap \Sigma^c(L)$. The following theorem describes the Sigma-Region of $L$ in terms of its extremal points.

\begin{theo}
\label{dom_thm} Every point of the Sigma-Region dominates an extremal point. In other words, $\Sigma(L)= \cup_{\nu \in \Ext(L)} H^{+}_{\nu}(\mathbb Z)$. Recall that $H^{+}_{\nu}(\mathbb Z)$ is the set of integral points of the cone $H^{+}_{v}$.
\end{theo}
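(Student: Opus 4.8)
The statement is a double inclusion, and $\Sigma(L)\supseteq\bigcup_{\nu\in\Ext(L)}H^{+}_{\nu}(\mathbb Z)$ is the easy half. The only thing to observe is that $\Sigma(L)$ is upward closed for the partial order $\le$ (``stable by domination''): each cone $H^{-}_{p}$ is downward closed, hence so is $\bigcup_{p\in L}H^{-}_{p}$, and therefore its complement $\Sigma(L)$ is upward closed. So if $\nu\in\Ext(L)\subseteq\Sigma(L)$ and $D\ge\nu$, then $D\in\Sigma(L)$.

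For the substantial inclusion I fix $D\in\Sigma(L)$ and aim to produce an extremal point $\nu\le D$. First I would prove finiteness: the set $\Sigma(L)\cap H^{-}_{D}(\mathbb Z)$ is finite (it is nonempty, containing $D$). It is bounded above by $D$, so it suffices to bound it below, and this is where the full rank of $L$ enters. For each index $i$ the cone $C_{i}=\{x\in H_{0}:x_{i}<0 \text{ and } x_{j}>0\ \text{for all } j\ne i\}$ is full-dimensional in $H_{0}$, so $L$ meets it in points with arbitrarily large coordinates; fixing $p^{[i]}\in L\cap C_{i}$ with $p^{[i]}_{j}\ge D_{j}$ for all $j\ne i$, any integral $D'\le D$ with $D'_{i}<p^{[i]}_{i}$ satisfies $D'\le p^{[i]}\in L$ and so lies outside $\Sigma(L)$. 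Hence every $D'\in\Sigma(L)\cap H^{-}_{D}(\mathbb Z)$ obeys $D'_{i}\ge p^{[i]}_{i}$ for all $i$, giving finiteness. Now choose $\nu\in\Sigma(L)\cap H^{-}_{D}(\mathbb Z)$ of minimal degree. Then $\nu$ is a minimal element of the poset $(\Sigma(L),\le)$: any $\mu\in\Sigma(L)$ with $\mu\le\nu$, $\mu\ne\nu$, would lie in $H^{-}_{D}(\mathbb Z)$ with $\deg\mu<\deg\nu$. In particular $\nu-e_{i}\notin\Sigma(L)$ for every $i$, so there is a lattice point $p^{(i)}=\nu-e_{i}+w^{(i)}$ with $w^{(i)}\ge0$; since $\nu\nleq p^{(i)}$ (because $\nu\in\Sigma(L)$) we get $w^{(i)}_{i}=0$, and taking degrees gives $\deg w^{(i)}=1-\deg\nu\ge0$. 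It now remains to show that this $\nu$ is extremal, for then $D\ge\nu\in\Ext(L)$.

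So I would assume, for contradiction, that some $q=\nu+u$ with $u\in\{-1,0,1\}^{n+1}$ lies in $N(\nu)\cap\Sigma(L)$ with $\deg u\le-1$, and set $A=\{i:u_{i}=1\}$, $B=\{i:u_{i}=-1\}$. If $A=\emptyset$ then $q\le\nu-e_{j}$ for $j\in B$, so stability by domination would force $\nu-e_{j}\in\Sigma(L)$, contradicting minimality; hence $A\ne\emptyset$. Using stability by domination I may replace $q$ by $q+e_{j}$ (for $j\in B$) whenever $\deg q\le\deg\nu-2$ — this keeps $q$ in $N(\nu)\cap\Sigma(L)$ and removes an index from $B$ — and so may assume $\deg q=\deg\nu-1$, i.e.\ $|B|=|A|+1$. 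A direct comparison of coordinates shows that for $j\in B$ one has $q\le p^{(j)}$ exactly when $w^{(j)}_{a}\ge1$ for all $a\in A$; if that happened for even one $j$ we would get $q\le p^{(j)}\in L$, contradicting $q\in\Sigma(L)$. Therefore for every $j\in B$ there is $a(j)\in A$ with $w^{(j)}_{a(j)}=0$, and since $|B|>|A|$ two distinct indices $j_{1},j_{2}\in B$ satisfy $a(j_{1})=a(j_{2})$.

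The remaining — and genuinely hard — step is to convert this coincidence into a single lattice point dominating $q$, which contradicts $q\in\Sigma(L)$ and closes the argument. I expect this to be the main obstacle: the naive combinations of the witnesses $p^{(i)}$ (such as $2p^{(j_{1})}-p^{(j_{2})}$, or the coordinatewise maximum $p^{(j_{1})}\vee p^{(j_{2})}$ on the occasions when it happens to be a lattice point) repair the offending $A$-coordinate but wreck a $B$-coordinate, so the real work is to choose the witnessing lattice points carefully — for instance taking the $p^{(i)}$, and a lattice point over $\nu-\sum_{j\in B}e_{j}$, minimal in the relevant (coset)$\cap$(positive cone) — and to run a short exchange/descent argument among them. (Alternatively one can carry out the entire analysis inside the closed region $\Sigma^{c}(L)$, where ``extremal'' is literally a local-minimum condition for the degree and a compactness argument replaces the finiteness step; the combinatorial heart, and its difficulty, is unchanged.) Everything else above is routine bookkeeping.
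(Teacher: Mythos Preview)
Your proposal has a genuine, acknowledged gap: having located a poset-minimal $\nu\in\Sigma(L)$ below $D$ (your finiteness argument for $\Sigma(L)\cap H^{-}_{D}(\mathbb Z)$ is correct and pleasantly direct), you do not manage to show that $\nu$ is extremal in the required sense, namely a local minimum of the degree among $\ell_\infty$-neighbours in $\Sigma(L)$. The pigeonhole set-up with the sets $A,B$ and the witnesses $p^{(j)}$ is sound as far as it goes, but the step you flag as ``genuinely hard'' is the whole point, and the exchange/descent you sketch does not obviously close. Crucially, your parenthetical remark that working in the closed region $\Sigma^{c}(L)$ leaves ``the combinatorial heart, and its difficulty, unchanged'' is exactly wrong: the continuous setting is precisely what dissolves the obstacle, and this is the paper's route.

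The paper first proves the continuous version $\Sigma^{c}(L)=\bigcup_{\nu\in\Ext^{c}(L)}H^{+}_{\nu}$ (Theorem~\ref{cdom_thm}), and its heart is Lemma~\ref{dom_lem}: if $p\in\Sigma^{c}(L)$ admits any feasible direction $d$ of negative degree, then the negative part $d^{-}$ is already feasible. The proof is a two-line comparison of the thresholds $\epsilon_{p,d}(q)$ and $\epsilon_{p,d^{-}}(q)$. Given this, one takes a degree-minimiser $\nu$ in the compact set $\Sigma^{c}(L)\cap H^{-}_{p}$; were $\nu$ not extremal it would admit a feasible $d\le 0$, and then $\nu+\delta d$ would remain in $H^{-}_{p}$ with smaller degree, a contradiction. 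Theorem~\ref{dom_thm} is then deduced by checking that continuous extremal points are integral and that $\Sigma(L)=\Sigma^{c}_{\mathbb Z}(L)+(1,\dots,1)$. The reason the continuous lemma is easy while your discrete attack stalls is locality: feasibility of an infinitesimal direction $d$ at $p$ depends only on the \emph{signs} of the $d_i$ at those indices where $p_i$ coincides with a coordinate of some dominating lattice point, so passing from $d$ to $d^{-}$ costs nothing. An integer move $\nu\mapsto\nu+u$ enjoys no such locality, which is exactly why your witnesses $p^{(j)}$ refuse to combine.
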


Indeed, we first prove the following continuous version of Theorem~\ref{dom_thm}.
\begin{theo}\label{cdom_thm} For any (integral) sub-lattice $L$ of $H_0$, we have $\Sigma^{c}(L)= \cup_{\nu\in \Ext^c(L)} H^{+}_{\nu}$.
\end{theo}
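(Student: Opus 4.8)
The plan is to show the two inclusions $\Sigma^c(L) \supseteq \bigcup_{\nu \in \Ext^c(L)} H^+_\nu$ and $\Sigma^c(L) \subseteq \bigcup_{\nu \in \Ext^c(L)} H^+_\nu$ separately. The first inclusion is the easy direction: each extremal point $\nu$ lies in $\Sigma^c(L)$ by definition, and $\Sigma^c(L)$ is closed under domination from above — indeed if $p \in \Sigma^{\mathbb R}(L)$ and $q \geq p$ then $q \nleq r$ for any $r \in L$ (otherwise $p \leq q \leq r$), so $q \in \Sigma^{\mathbb R}(L)$; passing to the closure, $H^+_\nu \subseteq \Sigma^c(L)$ for every $\nu \in \Ext^c(L)$.

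For the reverse inclusion, take an arbitrary point $x \in \Sigma^c(L)$; I want to produce an extremal point $\nu \in \Ext^c(L)$ with $\nu \leq x$. The natural move is to consider the ``descent region'' $R_x := \{\, y \in \Sigma^c(L) \mid y \leq x \,\}$ and look for a point of minimum degree inside it. The key observation is that $R_x$ is \emph{compact}: it is closed (intersection of the closed set $\Sigma^c(L)$ with the closed cone $H^-_x$), and it is bounded because any $y \leq x$ that is \emph{not} bounded below must eventually be dominated by some lattice point $p \in L$ (here one uses that $L$ has full rank in $H_0$, so the cones $H^-_p$ for $p \in L$ cover all of $\mathbb R^{n+1}$ outside a bounded neighbourhood of any fixed affine subspace direction — more carefully, the complement $\mathbb R^{n+1} \setminus \bigcup_{p\in L} H^-_p$ meets $H^-_x$ in a bounded set, a fact that should be extracted from the preliminaries on the shape of the Sigma-region, or proven directly using that the projection of $L$ along $\vec{\mathbf 1}$ is a full-rank lattice in $H_0$). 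Granting compactness, the continuous degree function attains a minimum on $R_x$ at some point $\nu$.

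It remains to check $\nu \in \Ext^c(L)$, i.e. that $\nu$ is a \emph{local} minimum of the degree function on $\Sigma^c(L)$, not merely a minimum on $R_x$. Suppose not: then every neighbourhood of $\nu$ in $\Sigma^c(L)$ contains a point of strictly smaller degree. Since decreasing the degree means decreasing some coordinate (or moving along a direction with negative degree), and all the points $y$ with $y \leq \nu \leq x$ already lie in $H^-_x$, any nearby point $z \in \Sigma^c(L)$ with $\deg(z) < \deg(\nu)$ that fails to satisfy $z \leq x$ must have some coordinate strictly exceeding the corresponding coordinate of $x$; but one can then ``correct'' $z$ by decreasing that coordinate back down to meet $x$, which only decreases the degree further and, by the domination-closure of $\Sigma^c(L)$ run in reverse... — this is exactly where care is needed, since $\Sigma^c(L)$ is closed under domination from \emph{above}, not below. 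The correct argument is instead: if $z \in \Sigma^c(L)$ is close to $\nu$ with $\deg(z) < \deg(\nu)$, consider the coordinatewise minimum $z \wedge x$; since $z$ is close to $\nu \leq x$, we have $z \wedge x$ close to $\nu$ as well and $z \wedge x \leq x$, and one shows $z \wedge x \in \Sigma^c(L)$ using that $\Sigma^{\mathbb R}(L)$ is a union of upward cones $H^+_\nu$ complement... Actually the clean route is to invoke the analogous \emph{discrete} reasoning or an explicit description: I expect the intended proof uses that $\Sigma^c(L)$ is a union of translates of the positive orthant (a ``staircase'' region), so that membership is determined coordinatewise-downward-closed in a controlled way, making the meet operation legitimate.

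\textbf{Main obstacle.} The crux is the local-minimality verification and the compactness of $R_x$; both hinge on understanding that $\Sigma^c(L)$ has a staircase structure and that its intersection with any lower cone $H^-_x$ is bounded. I expect the authors establish boundedness via the earlier structural results on the shape of the Sigma-region (the subsection ``Extremal Points of the Sigma-Region'' and whatever precedes Theorem~\ref{cdom_thm}), and then the argument above goes through; the passage from the continuous statement (Theorem~\ref{cdom_thm}) to the integral one (Theorem~\ref{dom_thm}) should be a routine rounding argument, observing that $\Ext(L)$ and $\Ext^c(L)$ coincide on integral points and that $H^+_\nu(\mathbb Z) = H^+_\nu \cap \mathbb Z^{n+1}$.
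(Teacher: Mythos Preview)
Your overall strategy is exactly the paper's: restrict to $R_x = \Sigma^c(L)\cap H^-_x$, establish compactness (the paper packages the lower bound on degree as a separate Lemma~\ref{sup_lem}, proved later via the Voronoi machinery), pick a degree-minimizer $\nu$ on $R_x$, and then argue $\nu$ is extremal.

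The genuine gap is precisely where you flag it yourself: the passage from ``$\nu$ minimizes degree on $R_x$'' to ``$\nu$ is a local minimum on all of $\Sigma^c(L)$''. Your meet-correction $z\wedge x$ does not work --- $\Sigma^c(L)$ is upward-closed, not closed under coordinatewise minima --- and you correctly abandon it, but you do not find a replacement. The paper supplies the missing idea as a separate lemma (Lemma~\ref{dom_lem}): \emph{if $\nu\in\Sigma^c(L)$ is not extremal, then there exists a feasible descent direction $d$ with $d\leq 0$ coordinatewise}. This is obtained by showing that whenever a direction $d_0$ is feasible at $\nu$, so is its negative part $d_0^-$ (via an explicit analysis of the function $\epsilon_{p,d}(q)=\inf\{\epsilon>0:\ p+\epsilon d<q\}$). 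Once you have such a $d\in H^-_O$, the point $\nu+\delta d$ satisfies $\nu+\delta d\leq\nu\leq x$, hence lies in $R_x$, while $\deg(\nu+\delta d)<\deg(\nu)$ --- contradicting the choice of $\nu$. That single observation, ``feasible direction $\Rightarrow$ non-positive feasible direction'', is what closes the argument; without it the proof is incomplete.
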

And Theorem~\ref{dom_thm} is derived as a consequence of Theorem~\ref{cdom_thm}. 
The proof of these two theorems are presented in Section~\ref{app:theo-dom}. The proof shows that every extremal point of $\Sigma^c(L)$ is an integral point and $\Sigma(L)=\Sigma_{\mathbb{Z}}^c(L)+(1,\dots,1)$, where $\Sigma_{\mathbb{Z}}^c(L)$ denotes the set of integral points of $\Sigma^c(L)$. We refer to Section~\ref{app:theo-dom} for more details. 
\begin{prop}\label{prop:twosigmas}
We have $\Sigma(L)=\Sigma_{\mathbb{Z}}^c(L)+(1,\dots,1)$ and $\Ext(L) = \Ext^c(L)+(1,\dots,1).$ In particular, $\pi_0(\Ext^c(L)) = \pi_0(\Ext(L))$.
\end{prop}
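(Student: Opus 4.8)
The plan is to deduce Proposition~\ref{prop:twosigmas} directly from Theorem~\ref{cdom_thm} together with the structural remark (already announced in the text) that every extremal point of $\Sigma^c(L)$ is an integral point. The key point to establish first is the shift relation $\Sigma(L)=\Sigma_{\mathbb Z}^c(L)+(1,\dots,1)$, where $\Sigma_{\mathbb Z}^c(L)$ denotes the integral points of $\Sigma^c(L)$. I would prove the two inclusions separately. For ``$\subseteq$'': if $D\in\Sigma(L)$, then $D\nleq p$ for all $p\in L$, and since the lattice points are integral and $D$ is integral, $D\nleq p$ forces some coordinate of $D$ to strictly exceed the corresponding coordinate of $p$, hence in fact $D-(1,\dots,1)\nleq p$ as a point of $\mathbb R^{n+1}$ still fails to be dominated in the strong sense needed — more precisely, $D-(1,\dots,1)$ has, for each $p\in L$, a coordinate that is $\geq$ the corresponding coordinate of $p$, i.e.\ $D-(1,\dots,1)$ is not strictly below $p$; one then checks this places $D-(1,\dots,1)$ in the closure $\Sigma^c(L)$ of $\Sigma^{\mathbb R}(L)$. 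The reverse inclusion is the mirror image: an integral point $q$ in $\Sigma^c(L)$, being a limit of points not dominated by any lattice point, satisfies for every $p\in L$ that no coordinate of $q$ is strictly below $p$ minus an arbitrarily small amount, and integrality upgrades ``$q_i\geq p_i-\eps$ for some $i$'' to ``$q_i\geq p_i$ for some $i$'', so $q+(1,\dots,1)$ has a coordinate strictly exceeding $p_i$ for each $p\in L$, giving $q+(1,\dots,1)\in\Sigma(L)$.

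Once the set-level identity $\Sigma(L)=\Sigma_{\mathbb Z}^c(L)+(1,\dots,1)$ is in hand, the statement about extremal points follows almost formally. Translation by the all-ones vector $(1,\dots,1)$ is a bijection of $\mathbb Z^{n+1}$ that preserves the neighbourhood structure, i.e.\ $N(D+(1,\dots,1)) = N(D)+(1,\dots,1)$, and it shifts the degree function by the constant $n+1$, so it preserves the relation ``$\deg$ is minimal among neighbours lying in the region''. Combining this with the set identity, $\nu$ is an extremal point of $\Sigma(L)$ (a local degree-minimum of $\Sigma(L)$ in the $\mathbb Z^{n+1}$-neighbourhood sense) if and only if $\nu-(1,\dots,1)$ is a local degree-minimum of $\Sigma_{\mathbb Z}^c(L)$ in the same sense. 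It then remains to identify the latter with $\Ext^c(L)$: here I invoke the cited fact that all extremal points of $\Sigma^c(L)$ are integral, so that the continuous local minima of $\deg$ on $\Sigma^c(L)$ coincide with the discrete local minima of $\deg$ on $\Sigma_{\mathbb Z}^c(L)$ — the discrete condition is a priori weaker, but by Theorem~\ref{cdom_thm} every point of $\Sigma^c(L)$ dominates some $\Ext^c$-point, and a discrete local minimum that dominated a strictly lower extremal point would have that point among its integral neighbours after walking down coordinatewise, contradicting discrete minimality. This yields $\Ext(L)=\Ext^c(L)+(1,\dots,1)$.

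The final sentence, $\pi_0(\Ext^c(L))=\pi_0(\Ext(L))$, is then immediate: the projection $\pi_0$ along $\vec{\mathbf 1}=(1,\dots,1)$ kills exactly the translation by $(1,\dots,1)$, so $\pi_0(\Ext(L)) = \pi_0(\Ext^c(L)+(1,\dots,1)) = \pi_0(\Ext^c(L))$.

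The main obstacle I anticipate is the careful handling of the closure operation in the first inclusion: one must check that a point which fails to be \emph{strictly} below every lattice point actually lies in $\Sigma^c(L)$, i.e.\ is approximable by points that fail to be \emph{weakly} below any lattice point. This is where integrality of $L$ and of the points involved is essential, and where one should argue that if $q$ has, for each $p\in L$, some coordinate with $q_i\geq p_i$, then perturbing $q$ slightly upward in a generic direction produces a point not dominated by any $p\in L$ (using that $L$ is discrete, only finitely many $p$ are relevant near $q$). Everything else is bookkeeping about the commuting translation and degree shift, which I would state but not belabour.
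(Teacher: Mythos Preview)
Your argument for the set identity $\Sigma(L)=\Sigma^c_{\mathbb Z}(L)+(1,\dots,1)$ is correct and more direct than the paper's. Both inclusions follow immediately from Lemma~\ref{sigc} (the description of $\Sigma^c(L)$ as the set of points not \emph{strictly} dominated by any lattice point) together with integrality; the paper instead proves the inclusion $\Sigma(L)\subseteq\Sigma^c_{\mathbb Z}(L)+(1,\dots,1)$ by following the ray $p-t(1,\dots,1)$ down to $\partial\Sigma^c(L)$, invoking Theorem~\ref{cdom_thm} to find an extremal point $\nu\in\Ext^c(L)$ dominated by $p$, and then using the integrality of $\nu$ to upgrade $p>\nu$ to $p\geq\nu+(1,\dots,1)$. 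Your route bypasses Theorem~\ref{cdom_thm} and Lemma~\ref{sup_lem} entirely for this step, which is a genuine simplification.

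There is, however, a gap in your argument for $\Ext(L)=\Ext^c(L)+(1,\dots,1)$. The inclusion you label ``a priori'' --- that every continuous local minimum of $\deg$ on $\Sigma^c(L)$ is automatically a discrete local minimum of $\deg$ on $\Sigma^c_{\mathbb Z}(L)$ --- is not immediate: a continuous local minimum $\nu$ controls only a small Euclidean neighbourhood, whereas the discrete condition asks about all integral points at $\ell_\infty$-distance $\leq 1$ from $\nu$, and these need not lie in that small neighbourhood. Concretely, nothing you have said rules out a second $\nu'\in\Ext^c(L)$ with $\nu'\leq\nu+(1,\dots,1)$ and $\deg(\nu')<\deg(\nu)$; any integral $q$ with $\nu'\leq q\leq \nu+(1,\dots,1)$ and $\deg(q)<\deg(\nu)$ would then lie in $N(\nu)\cap\Sigma^c_{\mathbb Z}(L)$ and witness failure of discrete minimality at $\nu$. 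The opposite inclusion (every discrete local minimum lies in $\Ext^c(L)$), which you prove via Theorem~\ref{cdom_thm} and a coordinate walk-down, is correct. The paper does not spell out the missing direction either --- it simply asserts that the proposition follows from the arguments of Section~\ref{app:theo-dom} --- so you are at the same level of rigour as the paper on this point, but you should not present the step as obvious.
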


 The important point about Theorem~\ref{dom_thm} is that one can use it to express $r(D)$ in terms of the extremal points of $\Sigma(L)$. 
For an integral point $D=(d_0,\dots,d_n) \in \mathbb Z^{n+1}$, let us define $\deg^{+}(D) := \deg(D^+)=\sum_{i\: :\: 	d_i \geq 0}d_i$ and $\deg^{-}(D) := \deg(D^-)=\sum_{i\: :\:d_i \leq 0}d_i$. We have:

\begin{lemm}\label{rank_lem} 
For every integral point $D\in \mathbb Z^{n+1}$, 
$$r(D)\:=\:\min\:\{\:\deg^{+}(v+D)\: | \: v \in \emph{Ext}(L)\:\}-1\: .$$
\end{lemm}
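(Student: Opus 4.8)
The plan is to reduce the statement to Lemma~\ref{ranksig_lem}(ii) together with the structural description of $\Sigma(L)$ given by Theorem~\ref{dom_thm}. Recall from Lemma~\ref{ranksig_lem}(ii) that $r(D)+1 = dist_{\ell_1}(-D,\Sigma(L))$, and from the proof there that the infimum is attained at some $p^* \in \Sigma(L)$ with $p^* + D \geq 0$, so that in fact
\[
r(D)+1 \:=\: \min\{\,\|p+D\|_{\ell_1} \ | \ p \in \Sigma(L),\ p+D \geq 0\,\}.
\]
First I would observe that when $p+D \geq 0$ the $\ell_1$ norm $\|p+D\|_{\ell_1}$ is simply $\deg(p+D) = \deg^+(p+D)$ (all coordinates being non-negative), so the quantity above equals $\min\{\deg^+(p+D) \ | \ p \in \Sigma(L),\ p+D \geq 0\}$.

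The next step is to remove the constraint ``$p+D \geq 0$'' and enlarge the minimization to all of $\Sigma(L)$, using $\deg^+$ in place of $\deg$. In one direction this can only decrease the minimum, since $\deg^+(p+D) \leq \deg(p+D) = \|p+D\|_{\ell_1}$ always, and they agree when $p+D \geq 0$. For the other direction, given any $p \in \Sigma(L)$, I would write $v = p+D$, split $v = v^+ + v^-$, and note — exactly as in the proof of Lemma~\ref{ranksig_lem}(ii) — that $p' := p - v^- = D + v^+$ still lies in $\Sigma(L)$ (because $\Sigma(L)$ is upward-closed under domination and $p' \geq p$), that $p' + D' \geq 0$ is automatic... more precisely $p' = D + v^+$ has $v^+ = p' + D - $ wait, one checks $p' + D$ need not be effective; rather the correct normalization is to note $\deg^+(v) = \deg(v^+) = \|v^+\|_{\ell_1} = \|p'+D\|_{\ell_1}$ with $p'+D = v^+ \geq 0$. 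Hence $\min\{\deg^+(p+D) \ | \ p \in \Sigma(L)\}$ is achieved (or matched) at a $p'$ with $p'+D \geq 0$, showing the two minima coincide. Therefore $r(D)+1 = \min\{\deg^+(p+D) \ | \ p \in \Sigma(L)\}$.

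Finally I would invoke Theorem~\ref{dom_thm}: every point of $\Sigma(L)$ dominates some extremal point $\nu \in \Ext(L)$, i.e. $\Sigma(L) = \bigcup_{\nu \in \Ext(L)} H^+_\nu(\mathbb Z)$. Since $\deg^+$ is monotone under domination (if $p \geq q$ then $\deg^+(p+D) \geq \deg^+(q+D)$, because increasing a coordinate never decreases its positive part), the minimum of $\deg^+(p+D)$ over $p \in \Sigma(L)$ is attained at an extremal point. This gives
\[
r(D)+1 \:=\: \min\{\,\deg^+(\nu+D) \ | \ \nu \in \Ext(L)\,\},
\]
which is the claimed formula after subtracting $1$. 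I would also note $\Ext(L)$ is nonempty and the Sigma-region is ``bounded below'' in the relevant direction so the minimum is genuinely attained (this follows from Theorem~\ref{dom_thm} and the fact that $\Sigma(L)$ is a proper nonempty subset of $\mathbb Z^{n+1}$ along any coordinate ray).

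The main obstacle is the middle step: carefully justifying that passing from $\deg$ (on effective representatives) to $\deg^+$ (on all of $\Sigma(L)$) does not change the minimum, i.e. that for every $p \in \Sigma(L)$ the ``positive-part normalization'' $D + (p+D)^+$ again lies in $\Sigma(L)$ and has the expected $\deg^+$. This is essentially the same argument already run in the proof of Lemma~\ref{ranksig_lem}(ii), relying on $\Sigma(L)$ being closed upward under $\leq$; once that is in hand, monotonicity of $\deg^+$ plus Theorem~\ref{dom_thm} finish the proof routinely.
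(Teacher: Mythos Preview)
Your proof is correct and follows essentially the same route as the paper: both combine Lemma~\ref{ranksig_lem} with Theorem~\ref{dom_thm}, the only difference being that the paper parametrizes $p\in\Sigma(L)$ directly as $\nu+E'$ with $\nu\in\Ext(L)$, $E'\geq 0$ and then optimizes $E'$ explicitly, whereas you first remove the effectivity constraint and then invoke monotonicity of $\deg^+$ to pass to extremal points. One small cleanup: your identity should read $p' := p - v^- = -D + v^+$ (not $D+v^+$), and with that sign fixed $p'+D = v^+ \geq 0$ is indeed effective, so the ``wait'' aside is unnecessary.
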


\begin{proof}
First recall that 
\begin{align*}
 r(D)\:&\:=\:\min\{\:\deg(E)\: |\: |D-E|\:=\:\emptyset \ \ \:\textrm{and}\ \ \: E \geq 0\:\}-1 \\
 &\:=\:\min\{\:\deg(E)\:|\: E-D \in \Sigma(L)\: \textrm{and}\ \ \: E \geq 0\:\}-1 \ \ \ \textrm{(By Lemma~\ref{ranksig_lem})}.
\end{align*}
\noindent Let $E \geq 0$ and $p=E-D$ be a point in $\Sigma(L)$. By Theorem \ref{dom_thm}, we know that $p$ is a point in $\Sigma(L)$ if and only if $p=v+E'$ for some point $\nu$ in $\Ext(L)$ and $E'\geq 0$. So we can write $E\:=p+D\:=\:\nu+E'+D$ where $\nu\in \Ext(L)$ and $E'\geq0$.  Hence we have 
\begin{align*}
r(D)=\min\{\:\deg(\nu+E'+D)\: |\: \nu \in \Ext(L),\: E' \geq 0 \: \textrm{and}\: \nu+E'+D \geq 0\:\}-1.
\end{align*}

\noindent We now observe that for every $\nu\in\mathbb Z^{n+1}$, the integral point $E' \geq 0$ of minimum degree such that $E'+\nu+D \geq 0$ has degree exactly $\deg^{+}(-\nu-D)$.
We infer that
\begin{align*}
\deg(\nu+E'+D)\:&\:=\:\deg(E')+\deg(\nu+D)\:=\:\deg^{+}(-\nu-D)+\deg(\nu+D)\\
&\:=\:\deg^{-}(\nu+D)+\deg(\nu+D)\:=\:\deg^{+}(\nu+D).
\end{align*}
We conclude that
$r(D)\:=\:\min \{\:\deg^{+}(v+D)\:|\: \nu \in \Ext(L)\:\}-1,$
and the lemma follows.
\end{proof}

\subsection{Min- and Max-Genus of Sub-Lattices of $A_n$ and Uniform Lattices}\label{sec:min-max-genus}
 We define two notions of genus for full-rank sub-lattices of $A_n$ , {\it min- and max-genus}, in terms of the extremal points of the Sigma-Region of $L$.  (The same definition works for full-rank sub-lattices of $H_0$.)
\begin{defi}[Min- and Max-Genus]
\rm The min- and max-genus of a given sub-lattice $L$ of $A_n$ of dimension $n$, denoted respectively by  $g_{min}$ and $g_{max}$, are defined as follows:
\begin{align*} 
&g_{min}(L)\:=\:\inf\:\{\:-\deg(\nu)\:|\:\nu \in \Ext(L)\:\}+1\:.\ \ \ \  \\ 
&g_{max}(L)\:=\:\sup\{\:-\deg(\nu)\:|\:\nu \in \Ext(L)\:\}+1\:.
\end{align*}
\end{defi}

\begin{rema} \rm There are some other notions of genus associated to a given lattice, e.g., the notion {\it spinor genus} for lattices developed by Eichler (see \cite{Eic52} and \cite{CS98}) in the context of integral quadratic forms. Every sub-lattice of $A_n$ provides a quadratic form in a natural way. But  a priori there is no relation between these notions. 
\end{rema}

\noindent It is clear by definition that $g_{min}\leq g_{max}$. But generally these two numbers could be different.
\begin{defi}\rm A sub-lattice $L \subseteq A_n$ of dimension $n$ is called {\it uniform} if $g_{min}=g_{max}$. The genus of a uniform sub-lattice is $g = g_{min} =g_{max}$.
\end{defi}
 \noindent As we will show later in Section~\ref{sec:examples}, sub-lattices generated by Laplacian of graphs are uniform.

\section{Proofs of Theorem~\ref{dom_thm} and Theorem~\ref{cdom_thm}}\label{app:theo-dom}

 In this section, we present the proofs of Theorem~\ref{dom_thm} and Theorem~\ref{cdom_thm}. This section is quite independent of the rest of this paper and can be skipped in the first reading. 

 Recall that $\Sigma^{\mathbb{R}}(L)$ is the set of points in $\mathbb{R}^{n+1}$ that are not dominated by any point in $L$ and $\Sigma^{c}(L)$ is the topological closure of $\Sigma^{\mathbb{R}}(L)$ in $\mathbb{R}^{n+1}$. Also, recall that $\Ext^c(L)$ denotes the set of extremal points of  $\Sigma^{c}(L)$. These are the set of points which are local minima of the degree function.
 As we said before, instead of working with the Sigma-Region directly, we initially work with $\Sigma^c(L)$. 
We first prove Theorem~\ref{cdom_thm}. Namely, we prove $\Sigma^{c}(L)= \cup_{\nu\in \Ext^c(L)} H^{+}_\nu$.
To prepare for the proof of this theorem, we need a series of lemmas.

\noindent The following lemma provides a description of $\Sigma^c(L)$ in terms of the domination order in $\mathbb R^{n+1}$. Recall that for two points $x=(x_0,\dots,x_n)$ and $y=(y_0,\dots,y_n)$, $x\leq y$ (resp. $x<y$) if $x_i \leq y_i$ (resp. $x_i<y_i$) for all $0\le i\le n$.

\begin{lemm} \label{sigc} $\Sigma^c(L)\:=\:\{\:p\:|\: p \in \mathbb{R}^{n+1}\: \textrm{and}\: \:\: \forall \: q \in L\::\: p \nless q \:\}.$
\end{lemm}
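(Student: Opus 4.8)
The plan is to prove the two inclusions separately. Write $S := \{p \in \mathbb{R}^{n+1} : \forall q \in L,\ p \nless q\}$; the goal is $\Sigma^c(L) = S$.

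First I would show $\Sigma^c(L) \subseteq S$. Since $S$ is closed (its complement $\bigcup_{q \in L} \mathrm{int}(H_q^-)$ is a union of open sets), it suffices to show $\Sigma^{\mathbb{R}}(L) \subseteq S$. This is immediate from the definitions: if $p \in \Sigma^{\mathbb{R}}(L)$ then $p \nleq q$ for all $q \in L$, and $p \nless q$ is a weaker condition, so $p \nless q$ for all $q$, i.e. $p \in S$. Hence $\Sigma^c(L) = \overline{\Sigma^{\mathbb{R}}(L)} \subseteq \overline{S} = S$.

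For the reverse inclusion $S \subseteq \Sigma^c(L)$, the natural approach is a perturbation argument: given $p \in S$, I would show that $p$ is a limit of points in $\Sigma^{\mathbb{R}}(L)$. The candidate approximating sequence is $p_\eps := p + \eps(1,\dots,1)$ (or $p_\eps := p - \eps \vec{\mathbf 1}$ — one must check the sign direction so that strict non-domination is upgraded to non-domination; since $p \nless q$ means some coordinate satisfies $p_i \geq q_i$, adding a small positive multiple of $\vec{\mathbf 1}$ to $p$ keeps that coordinate $\geq q_i$, and in fact we want $p_\eps \nleq q$, which needs $(p_\eps)_i > q_i$ for some $i$; this follows for small $\eps$ from $p_i \geq q_i$ only if we are careful about the boundary case $p_i = q_i$ — so the right perturbation is to add $\eps\vec{\mathbf 1}$, making $(p_\eps)_i = p_i + \eps > q_i$). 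The subtlety is uniformity: we need a single $\eps$ (or rather, for each $\eps$, membership in $\Sigma^{\mathbb R}(L)$) that works for all $q \in L$ simultaneously. This is where the structure of the lattice enters: for $q \in L$ with $q \not\geq p$-componentwise at a controlled gap, only finitely many lattice points $q$ can have all coordinates close to or above those of $p$, because $L$ is discrete and such $q$ would lie in a bounded region (all coordinates in $[\min_i p_i - 1, \text{large}]$ is not bounded above, but the relevant constraint — that $q$ could potentially dominate $p_\eps$ — forces $q_i$ into a bounded window for the finitely many "dangerous" directions). I expect this finiteness/uniformity step to be the main obstacle, and I would handle it by noting that if $p_\eps \leq q$ fails to be ruled out, then $q - p_\eps \geq 0$, so $q$ lies in the cone $H^+_{p_\eps}$; intersecting with the lattice, and using that $p \nless q$ gives for each such $q$ an index $i(q)$ with $p_{i(q)} \geq q_{i(q)}$, hence $q_{i(q)} \leq p_{i(q)}$, so $q$ lies in a cone bounded in that coordinate — combined with $q \geq p_\eps$ (bounded below in all coordinates) and $\deg(q) = 0$, this confines $q$ to a bounded set, hence finitely many choices; then pick $\eps$ smaller than the finitely many positive gaps $p_{i(q)} - q_{i(q)}$...

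Actually, the cleaner route, which I would ultimately adopt, is: fix $p \in S$ and for each $\eps > 0$ show directly $p + \eps\vec{\mathbf 1} \in \Sigma^{\mathbb{R}}(L)$, i.e. $p + \eps \vec{\mathbf 1} \nleq q$ for all $q \in L$. Suppose not: $p + \eps\vec{\mathbf 1} \leq q$ for some $q$, so $p_i + \eps \leq q_i$ for all $i$, hence $p_i < q_i$ for all $i$, i.e. $p < q$, contradicting $p \in S$. Thus $p + \eps\vec{\mathbf 1} \in \Sigma^{\mathbb R}(L)$ for every $\eps > 0$, with no finiteness argument needed at all. Letting $\eps \to 0$ gives $p \in \overline{\Sigma^{\mathbb R}(L)} = \Sigma^c(L)$. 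This disposes of the "main obstacle" entirely — the only point to double-check is that $\Sigma^{\mathbb R}(L)$ is genuinely the set defined with $\nleq$ (not a typo for $\nless$), which it is per the paper's definition, and that $p + \eps\vec{\mathbf 1} \to p$ as $\eps \to 0$, which is clear. I would present the proof in this streamlined two-inclusion form, with the closedness of $S$ for one direction and the $\eps\vec{\mathbf 1}$-perturbation for the other.
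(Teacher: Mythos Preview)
Your proof is correct, and since the paper omits the argument entirely (``Easy and omitted''), your two-inclusion verification---closedness of $S$ for one direction and the $\eps\vec{\mathbf 1}$-perturbation for the other---is precisely the routine check the authors had in mind. The meandering about a potential finiteness obstacle was unnecessary, as you yourself realised; the final streamlined version is the right proof.
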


\begin{proof} Easy and omitted.
 \end{proof}

\begin{lemm} Extremal points of $\Sigma^c(L)$ are contained in $\partial(\Sigma^{c}(L))$. 
\end{lemm}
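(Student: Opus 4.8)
The statement to prove is: every extremal point of $\Sigma^c(L)$ lies on the boundary $\partial(\Sigma^c(L))$. Equivalently, no extremal point lies in the topological interior of $\Sigma^c(L)$. So I want to argue by contradiction: suppose $x \in \Ext^c(L)$ and $x \in \mathrm{int}(\Sigma^c(L))$, and derive a contradiction with the fact that $x$ is a local minimum of $\deg$ on $\Sigma^c(L)$.

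Let me think about why an interior point cannot be a local minimum of the degree function. The degree function $\deg(x) = \sum_i x_i$ is linear and non-constant, and crucially its gradient $\vec{\mathbf 1} = (1,\dots,1)$ is nowhere zero. If $x$ is interior to $\Sigma^c(L)$, then some open ball $B(x,\eps) \subset \Sigma^c(L)$. But then $x - \delta\,\vec{\mathbf 1}$ lies in $B(x,\eps)$ for small $\delta > 0$, hence in $\Sigma^c(L)$, and $\deg(x - \delta\,\vec{\mathbf 1}) = \deg(x) - (n+1)\delta < \deg(x)$. This directly contradicts $x$ being a local minimum of the degree function on $\Sigma^c(L)$ within that ball. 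So the core of the argument is just: a linear functional with nonvanishing gradient attains no local minimum at an interior point of its domain.

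The one subtlety I would need to nail down carefully: the direction $-\vec{\mathbf 1}$ must genuinely stay inside $\Sigma^c(L)$ — but that is immediate once $x$ is interior, since a whole ball around $x$ is contained in $\Sigma^c(L)$ and the segment $\{x - \delta\vec{\mathbf 1} : 0 \le \delta < \eps/\sqrt{n+1}\}$ stays in that ball. (One could alternatively observe directly from Lemma~\ref{sigc} that if $p \in \Sigma^c(L)$ then $p - \delta\vec{\mathbf 1} \in \Sigma^c(L)$ for all $\delta \ge 0$, since $p \nless q$ implies $p - \delta\vec{\mathbf 1} \nless q$ for every $q \in L$; this makes the monotone-decreasing direction available without even invoking interiority, and then interiority is only used to locate enough points of lower degree near $x$ — but the ball argument alone suffices.) There is essentially no obstacle here; the only thing to be slightly careful about is the definition of ``local minimum'' being used — recall it is with respect to the ambient topology of $\mathbb R^{n+1}$, i.e. $x$ minimizes $\deg$ on $B \cap \Sigma^c(L)$ for some open ball $B \ni x$ — so I would take exactly that $B$ (intersected with the ball witnessing interiority) and exhibit the lower-degree point $x - \delta\vec{\mathbf 1}$ inside it, contradicting minimality. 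This completes the proof.
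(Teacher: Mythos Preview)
Your argument is correct and is precisely the natural one: a non-constant linear functional has no interior local minimum, so moving from an interior point in the direction $-\vec{\mathbf 1}$ immediately produces nearby points of strictly smaller degree inside $\Sigma^c(L)$, contradicting extremality. The paper itself omits the proof as ``easy,'' and what you have written is exactly the expected justification.
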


\begin{proof} Easy and omitted.
\end{proof}

Let $p$ be a point in $\Sigma^c(L)$ and let $d$ be a vector in $\mathbb{R}^{n+1}$. We say that $d$ is \emph{feasible} for $p$, if it satisfies the following properties:\\ 
1. $\deg(d)<0$.\\
2. There exists a $\delta_0(p,d)>0$ such that for every $0 \leq \delta \leq \delta_0(p,d)$, $p+\delta d \in \Sigma^c(L)$. By Lemma~\ref{sigc}, this means that $p+\delta d \nless p'$ for all lattice points $p'\in L$.\\

\noindent Furthermore, we define the function $\epsilon_{p,d}\::\:L\rightarrow \mathbb{R} \cup \{\infty\}$ as follows: 
\begin{align*}
\epsilon_{p,d}(q)\:=&\:\inf\:\{\:\epsilon\:|\: \epsilon>0\: \textrm{and}\: p+\epsilon d<q\:\}. 
\end{align*}
Let $I\:=\:\{\:i\:|\: 0 \leq i \leq n \:\:\textrm{and} \:\: p_i \geq q_i \:\}.$ We have the following explicit description of $\epsilon_{p,d}$.

\begin{equation}
\label{eps_eq}
\epsilon_{p,d}(q)=
\begin{cases}
   0 &\text{if $I = \emptyset$.}\\
   \max_{i\in I}\frac{(q_i-p_i)}{d_i} & \text{if $I\neq \emptyset$, $\forall \:\: i\in I, \:\: d_i<0$, and $\exists \:\:  \epsilon>0$ such that $p+\epsilon d<q$,}\\
  \infty & \text{otherwise.}

\end{cases}
\end{equation}
One can easily verify that

\begin{lemm} \label{rel_lem} For a point $p$ in $\Sigma^{c}(L)$,  $\epsilon_{d,p}(q)\: \geq \:\epsilon_{d^{-},p}(q)$ for all $q\in L\:.$ In the only cases when the inequality is strict, we must have $\epsilon_{d,p}(q) =\infty$ and $\epsilon_{d^-,p}(q)>0$.
\end{lemm}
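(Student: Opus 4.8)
The plan is to analyze the formula \eqref{eps_eq} for $\epsilon_{p,d}(q)$ and compare it with the corresponding formula for $\epsilon_{p,d^-}(q)$, where $d^-$ is the negative part of $d$. First I would set up notation: fix $p\in\Sigma^c(L)$ and $q\in L$, and let $I=\{i : p_i\geq q_i\}$ be the index set appearing in the definition. Note that this set $I$ depends only on $p$ and $q$, not on $d$, so it is the same for both $\epsilon_{p,d}(q)$ and $\epsilon_{p,d^-}(q)$. The key observation is that for $i\in I$ we have $q_i-p_i\leq 0$, and since $d^-_i = \min(d_i,0)\leq d_i$, whenever $d_i<0$ we get $0 > d^-_i \le d_i$, hence $\frac{q_i-p_i}{d_i}\geq \frac{q_i-p_i}{d^-_i}\ge 0$ (dividing a non-positive number by a more negative denominator gives a smaller or equal non-negative quotient for $d_i^-$); I would double check the direction of this inequality carefully since that is exactly the content of the lemma.

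Next I would do a case analysis following the three branches of \eqref{eps_eq}. \textbf{Case $I=\emptyset$:} then $\epsilon_{p,d}(q)=0=\epsilon_{p,d^-}(q)$, so equality holds. \textbf{Case $I\neq\emptyset$ and the ``otherwise'' branch applies to $d$}, i.e. $\epsilon_{p,d}(q)=\infty$: then trivially $\epsilon_{p,d}(q)\geq\epsilon_{p,d^-}(q)$, and moreover we are precisely in one of the situations described in the last sentence of the lemma; here I would just need to check $\epsilon_{p,d^-}(q)>0$, which follows because $I\neq\emptyset$ and the finite-value branch of \eqref{eps_eq} for $d^-$ yields $\max_{i\in I}\frac{q_i-p_i}{d^-_i}$, a maximum of non-negative numbers, which is $>0$ unless $q_i=p_i$ for all $i\in I$ — and that degenerate subcase needs a separate brief argument (if $q_i=p_i$ for all $i\in I$ and $q_i>p_i$ for $i\notin I$, then $p<q$ is impossible... actually one checks $\epsilon_{p,d^-}(q)$ is then either $0$ or $\infty$, and I'd verify it cannot create a strict inequality with $\epsilon_{p,d}(q)=\infty$ unless it equals $\infty$ too). \textbf{Case $I\neq\emptyset$ and the middle branch applies to $d$:} then for all $i\in I$, $d_i<0$, so $d^-_i=d_i$ for $i\in I$, which forces the middle branch to apply to $d^-$ as well (the condition ``$\exists\epsilon>0$ with $p+\epsilon d^-<q$'' needs checking, but since $d^-_j\le 0\le$ whatever is needed... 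I would verify $p+\epsilon d<q$ implies $p+\epsilon d^-<q$ coordinatewise using $d^-_j\le d_j$). In this case $\epsilon_{p,d}(q)=\max_{i\in I}\frac{q_i-p_i}{d_i}=\max_{i\in I}\frac{q_i-p_i}{d^-_i}=\epsilon_{p,d^-}(q)$ exactly, since the formulas agree coordinate-by-coordinate on $I$.

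Assembling these cases gives $\epsilon_{p,d}(q)\ge\epsilon_{p,d^-}(q)$ for all $q\in L$, with strict inequality only in the ``otherwise'' branch for $d$ where $\epsilon_{p,d}(q)=\infty$; and in that branch, since $I\ne\emptyset$, the value $\epsilon_{p,d^-}(q)$ is given by the middle branch (non-negative maximum) whenever the existence condition holds, and is $\infty$ otherwise — so whenever the inequality is genuinely strict we indeed have $\epsilon_{p,d}(q)=\infty$ and, after disposing of the degenerate subcase $q_i = p_i$ for all $i \in I$, $\epsilon_{p,d^-}(q)>0$. The main obstacle I anticipate is this degenerate subcase bookkeeping: one must be careful that when $\epsilon_{p,d}(q)=\infty$ it is genuinely because some $d_i\ge 0$ for $i\in I$ (not because the $\exists\epsilon$ condition fails), and then argue that switching to $d^-$, which zeroes out those offending coordinates, either makes the value finite and positive or keeps it $\infty$ — ruling out the possibility $\epsilon_{p,d^-}(q)=0$, which could only happen if $I=\emptyset$, contradicting our assumption. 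The rest is routine verification of coordinatewise inequalities.
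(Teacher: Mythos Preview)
The paper omits the proof entirely (it writes only ``One can easily verify that''), so there is no argument to compare against. Your case analysis on the explicit formula \eqref{eps_eq} is the natural route and the overall structure is correct: Cases $I=\emptyset$ and ``middle branch for $d$'' give equality, and strict inequality can only occur when $\epsilon_{p,d}(q)=\infty$.

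One point needs tightening. You assert in the last paragraph that ``$\epsilon_{p,d^-}(q)=0$ could only happen if $I=\emptyset$''. This is false as stated: the middle branch of \eqref{eps_eq} returns $0$ whenever $I\neq\emptyset$, $d^-_i<0$ for all $i\in I$, and $q_i=p_i$ for all $i\in I$. What actually excludes this in the strict-inequality situation is the following. If the inequality is strict then $\epsilon_{p,d}(q)=\infty$ while $\epsilon_{p,d^-}(q)<\infty$, so the middle branch applies to $d^-$; in particular $d_i^-<0$, hence $d_i<0$, for every $i\in I$. If in addition $q_i=p_i$ for all $i\in I$, then for every small $\epsilon>0$ one has $p_i+\epsilon d_i<p_i=q_i$ on $I$ and $p_i+\epsilon d_i<q_i$ off $I$ (since $p_i<q_i$ there), so $p+\epsilon d<q$ and the middle branch would apply to $d$ as well --- contradicting $\epsilon_{p,d}(q)=\infty$. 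Thus the degenerate subcase cannot coexist with strict inequality, and $\epsilon_{p,d^-}(q)>0$ follows. You flag this subcase but do not carry the argument through; once you do, the proof is complete.

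A related minor point: when $\epsilon_{p,d}(q)=\infty$ because some $d_i\ge 0$ for $i\in I$, then $d_i^-=0$ at that index, so the middle branch fails for $d^-$ too and $\epsilon_{p,d^-}(q)=\infty$ as well --- equality, not strict inequality. Strict inequality therefore arises \emph{only} in the subcase where $d_i<0$ for all $i\in I$ but the ``$\exists\,\epsilon>0$ with $p+\epsilon d<q$'' clause fails for $d$; it then automatically holds for $d^-$ because $d^-_j\le 0$ for every $j$. Your write-up blurs these two subcases.
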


\noindent We now prove the following lemma which links the function $\epsilon_{d,p}$ to the feasibility of $d$ at $p$.

\begin{lemm}\label{feas_lem} For a point $p$ in  $\Sigma^c(L)$ and $d$ in $\mathbb{R}^{n+1}$ with $\deg(d)<0$, $d$ is not feasible for $p$ if and only if 
 $\epsilon_{p,d}(q)=0$ for some $q\in L$.
\end{lemm}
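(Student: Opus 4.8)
The plan is to unwind the definition of feasibility into the two defining conditions and observe that, since $\deg(d)<0$ is assumed as a hypothesis, only condition~2 is at stake. So $d$ fails to be feasible for $p$ precisely when there is no $\delta_0>0$ with $p+\delta d\in\Sigma^c(L)$ for all $0\le\delta\le\delta_0$; equivalently, there is a sequence $\delta_j\downarrow 0$ with $p+\delta_j d\notin\Sigma^c(L)$, i.e.\ (by Lemma~\ref{sigc}) with $p+\delta_j d<q_j$ for some lattice point $q_j\in L$. The goal is to show this happens if and only if $\epsilon_{p,d}(q)=0$ for some single $q\in L$.

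The ``if'' direction is immediate: if $\epsilon_{p,d}(q)=0$ then by definition of the infimum there are arbitrarily small $\epsilon>0$ with $p+\epsilon d<q$, so $p+\epsilon d\notin\Sigma^c(L)$ for such $\epsilon$, and hence no $\delta_0$ as in condition~2 can exist. For the ``only if'' direction I would first reduce the family $\{q_j\}$ to a single lattice point. Here is where discreteness of $L$ enters: for $\epsilon$ in a bounded range, say $0<\epsilon\le 1$, the points $p+\epsilon d$ stay in a bounded region of $\mathbb R^{n+1}$, so any $q\in L$ with $p+\epsilon d<q$ for some such $\epsilon$ must lie in the bounded set $\{q : q> p, \ \|q-p\|\le \|d\|+ \text{const}\}$, which meets the discrete lattice $L$ in finitely many points. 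Thus among the $q_j$ only finitely many distinct lattice points occur, and (passing to a subsequence of the $\delta_j\downarrow 0$) some fixed $q\in L$ satisfies $p+\delta_j d<q$ for a sequence $\delta_j\downarrow 0$. Then $\epsilon_{p,d}(q)\le \delta_j$ for all $j$, forcing $\epsilon_{p,d}(q)=0$, as desired.

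The one point requiring care — and the step I expect to be the main obstacle — is the reduction to a single $q$: one must make sure the lattice points witnessing infeasibility at smaller and smaller scales $\delta_j$ can be taken from a finite set. The cleanest way is the boundedness argument above: the inequality $p+\delta_j d< q$ with $0<\delta_j\le 1$ pins $q$ between $p+\delta_j d$ and infinity coordinatewise, but combined with the fact that $p\in\Sigma^c(L)$ (so $p\not< q$, by Lemma~\ref{sigc}) it actually pins each coordinate $q_i$ into a bounded interval $[p_i, \, p_i + |d_i|]$ (roughly), whence only finitely many candidates $q\in L$ exist. Alternatively one can argue directly: if $d$ is infeasible then for every $\delta_0>0$ there is $0<\delta\le\delta_0$ and $q\in L$ with $p+\delta d< q$; since $p\not< q$ there is an index $i$ with $p_i\ge q_i$, so $i\in I_q$ and $d_i<0$ (from $p_i+\delta d_i< q_i\le p_i$), and then $\epsilon_{p,d}(q)\le \delta \le \delta_0$ by the explicit formula~\eqref{eps_eq}; letting $\delta_0\to 0$ over the finitely many possible $q$ yields a fixed $q$ with $\epsilon_{p,d}(q)=0$. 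This second route again needs the finiteness of the relevant $q$'s, so discreteness of $L$ is unavoidable, but it makes the link to formula~\eqref{eps_eq} transparent.

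Finally I would record that the equivalence is exactly the statement of Lemma~\ref{feas_lem}, noting that the hypothesis $\deg(d)<0$ is used only to ensure condition~1 of feasibility holds automatically, so that feasibility reduces to condition~2. This completes the plan.
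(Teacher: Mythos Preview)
Your ``if'' direction is fine and matches the paper. The problem is in your ``only if'' direction: the finiteness argument for the witnessing lattice points $q_j$ does not work as written. From $p+\delta_j d<q_j$ you only get \emph{lower} bounds $q_{j,i}>p_i+\delta_j d_i$ on each coordinate; the condition $p\nless q_j$ (from $p\in\Sigma^c(L)$) tells you only that \emph{some} coordinate satisfies $q_{j,i}\le p_i$, not that every coordinate lies in an interval like $[p_i,p_i+|d_i|]$. So your bounded set $\{q:q>p,\ \|q-p\|\le\|d\|+\text{const}\}$ is not justified, and in fact need not contain the $q_j$ at all. Your second route has the same missing step, as you acknowledge.

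The gap is repairable, but not by the mechanism you propose: use instead that $q_j\in L\subset H_0$, so $\sum_i q_{j,i}=0$. The coordinatewise lower bounds $q_{j,i}>p_i-|d_i|$ then force an upper bound $q_{j,i}=-\sum_{k\neq i}q_{j,k}< -\sum_{k\neq i}(p_k-|d_k|)$ on every coordinate as well, and now discreteness of $L$ gives finitely many candidates. With this fix your argument goes through.

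The paper takes a different and more direct route for the contrapositive: it shows that whenever $\epsilon_{p,d}(q)>0$ (and finite), one has the \emph{uniform} lower bound $\epsilon_{p,d}(q)\ge \min_{\{i:d_i<0\}}\{p_i\}/|d_i|$, where $\{p_i\}\in(0,1]$ is a fractional-part quantity. This uses that the coordinates of $q$ are integers: if $q_i<p_i$ with $q_i\in\mathbb{Z}$ then $p_i-q_i\ge\{p_i\}$. Thus $\inf_{q}\epsilon_{p,d}(q)>0$ and feasibility follows, with no need to isolate finitely many $q$'s. The paper's argument is shorter and exploits integrality of $L$ directly; your (corrected) approach uses only discreteness plus the zero-degree constraint, and would therefore extend to non-integral full-rank sublattices of $H_0$ as well.
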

\begin{proof} Let $p$ be a point of $\Sigma^c(L)$.

\noindent ($\Rightarrow$). Assume the contrary, then we should have the following properties:
\begin{enumerate}
\item \label{deg_prop1} $\deg(d)\:<\:0\:,$
\item \label{zero_prop1} $\epsilon_{p,d}(q)>0$ for all $q \in L\:,$
\end{enumerate}
We claim that $\inf_{q\in L}\:\{\:\epsilon_{p,d}(q)\:\}\:>\:\delta_0\:,$ for some $\delta_0\:>\:0\:.$
\noindent By the definition of $\epsilon_{p,d}$, if $\epsilon_{p,d}(q)\neq 0$, then $\epsilon_{p,d}(q)$ is at least $\min_{\{i:\:d_i<0\}} \frac{\{p_i\}}{|d_i|}$, where $0<\{p_i\}=p_i-\lceil p_i-1\rceil\leq 1$ is the rational part of $p_i$ if $p_i$ is not integral, and is $1$ if $p_i$ is integral. As the number of indices is finite, we conclude that $\delta_0 =\min_{\{i:\:d_i<0\}} |\frac{\{p_i\}}{d_i}|$ and the claim holds.
\noindent
It follows that $p+\epsilon d\nless q$ for all $q$ in $L$ and for all $0 \leq \epsilon \leq \delta_0 $.  This implies that $d$ is feasible for $p$.
\newline

\noindent($\Leftarrow$). 
If $\epsilon_{p,d}(q)=0$ for some $q\in L$, then there exists a $\delta_0>0$ such that $p+\delta d < p'$ for every $0<\delta \leq \delta_0$. This shows that $d$ is not feasible for $p$. 
\end{proof}

\begin{coro} \label{ext_cor} For a point $p$ in  $\Sigma^c(L)$, $p$ is an extremal point if and only if for every vector $d\in \mathbb{R}^{n+1}$ with $\deg(d)<0$, we have $\epsilon_{p,d}(q)=0$ for some $q$ in $L$.
\end{coro}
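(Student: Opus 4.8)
The plan is to read off the corollary from Lemma~\ref{feas_lem}. By that lemma, for a vector $d$ with $\deg(d)<0$ one has $\epsilon_{p,d}(q)=0$ for some $q\in L$ if and only if $d$ is not feasible for $p$. Hence the condition in the statement -- that for \emph{every} $d$ with $\deg(d)<0$ there is a $q\in L$ with $\epsilon_{p,d}(q)=0$ -- is equivalent to saying that no vector of negative degree is feasible for $p$. So the corollary amounts to the equivalence: $p$ is an extremal point of $\Sigma^c(L)$ (that is, a local minimum of the degree function on $\Sigma^c(L)$) if and only if no $d\in\mathbb R^{n+1}$ with $\deg(d)<0$ is feasible for $p$.

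One direction is immediate. If $d$ is feasible for $p$ with $\deg(d)<0$, then $p+\delta d\in\Sigma^c(L)$ for all $\delta\in[0,\delta_0(p,d)]$ and $\deg(p+\delta d)=\deg(p)+\delta\deg(d)<\deg(p)$ for $\delta\in(0,\delta_0(p,d)]$; since $p+\delta d\to p$ as $\delta\to 0^{+}$, every ball around $p$ contains a point of $\Sigma^c(L)$ of strictly smaller degree than $p$, so $p$ is not extremal. This gives ``$p$ extremal $\Rightarrow$ no feasible direction of negative degree''.

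For the converse I would argue the contrapositive: assuming $p$ is not extremal, I want to exhibit a feasible $d$ with $\deg(d)<0$, and I claim one of the $n+1$ coordinate directions $-e_0,\dots,-e_n$ (each of degree $-1$) already works. Equivalently, I show that if none of $-e_0,\dots,-e_n$ is feasible for $p$, then $p$ is extremal. First, a routine case-check in formula~(\ref{eps_eq}) applied to $d=-e_j$ -- using that $p\in\Sigma^c(L)$ forces $p\nless q$, i.e.\ $I=\{i:p_i\geq q_i\}\neq\emptyset$, by Lemma~\ref{sigc} -- shows that $-e_j$ fails to be feasible for $p$ exactly when there is a lattice point $q^{(j)}\in L$ with $q^{(j)}_j=p_j$ and $q^{(j)}_i>p_i$ for all $i\neq j$. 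Suppose this holds for every $j$, and put $\epsilon_0:=\min\{\,q^{(j)}_i-p_i : 0\leq j\leq n,\ 0\leq i\leq n,\ i\neq j\,\}$, a minimum of finitely many strictly positive reals, so $\epsilon_0>0$. Now take any $x\in\Sigma^c(L)$ with $\|x-p\|_{\ell_\infty}<\epsilon_0$: if $\deg(x)<\deg(p)$ then $\deg(x-p)<0$, so $x_j<p_j$ for some $j$, and then $x_j<p_j=q^{(j)}_j$ while for $i\neq j$ we get $x_i<p_i+\epsilon_0\leq q^{(j)}_i$, whence $x<q^{(j)}\in L$, contradicting $x\in\Sigma^c(L)$. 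Thus $\deg(x)\geq\deg(p)$ on the $\ell_\infty$-ball of radius $\epsilon_0$ about $p$, i.e.\ $p$ is extremal, as wanted. Taking the contrapositive, if $p$ is not extremal then some $-e_j$ is feasible, which is the desired feasible direction of negative degree.

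The only delicate step is this converse. The naive attempt -- pick $x_k\in\Sigma^c(L)$ with $x_k\to p$ and $\deg(x_k)<\deg(p)$, normalise the displacements $x_k-p$ and pass to a convergent subsequence -- fails, because the limiting direction is only guaranteed to have degree $\leq 0$ and may have degree exactly $0$ (and need not be feasible). The way around this, used above, is to avoid limits of directions entirely: it is enough to test the finitely many directions $-e_j$, and the discreteness of $L$ enters only through the finitely many witness points $q^{(j)}$, which is precisely what makes $\epsilon_0$ strictly positive.
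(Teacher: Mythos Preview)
Your proof is correct. The paper states this corollary without proof, treating it as an immediate consequence of Lemma~\ref{feas_lem}; in particular it never spells out the converse implication (``$p$ not extremal $\Rightarrow$ some feasible $d$ with $\deg(d)<0$ exists''), even though it uses exactly this direction in the proof of Lemma~\ref{dom_lem}. You correctly identify that this direction is not entirely automatic---$\Sigma^c(L)$ is not convex, so a nearby point of smaller degree does not by itself yield a feasible \emph{segment}---and your resolution via the finitely many coordinate directions $-e_j$ is clean and sharp. The case analysis of $\epsilon_{p,-e_j}(q)$ is right (the case $I=\emptyset$ is indeed excluded by Lemma~\ref{sigc}), and the $\epsilon_0$-ball argument is the natural way to close it. Incidentally, the paper later uses the same idea in a special case (proof of Theorem~\ref{dom_thm}, where it shows $-e_0$ is feasible at a non-integral coordinate), so your argument is very much in the spirit of the paper, just made explicit where the paper is silent.
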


 Combining Lemma~\ref{rel_lem} and Corollary~\ref{ext_cor}, we obtain the following result:
\begin{lemm} \label{dom_lem} If $p$ is not an extremal point of  $\Sigma^c(L)$, then there exists a vector $d$ in $H^{-}_O$ which is feasible for $p$.
\end{lemm}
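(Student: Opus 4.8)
Let $p \in \Sigma^c(L)$ be a point which is not extremal. The goal is to produce a feasible direction $d$ with $\deg(d) < 0$ that moreover lies in $H^-_O$, i.e. has all coordinates $\le 0$. The natural starting point is Corollary~\ref{ext_cor}: since $p$ is \emph{not} extremal, there exists \emph{some} vector $d \in \mathbb{R}^{n+1}$ with $\deg(d) < 0$ such that $\epsilon_{p,d}(q) \neq 0$ for every $q \in L$. By Lemma~\ref{feas_lem}, this $d$ is feasible for $p$. The only defect of $d$ is that it need not be coordinatewise non-positive. The plan is to replace $d$ by its negative part $d^-$ and show that $d^-$ still works.

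First I would check that $d^-$ has strictly negative degree: indeed $\deg(d^-) \le \deg(d) < 0$ since $d = d^+ + d^-$ with $d^+ \ge 0$, so the degree condition is preserved (and $d^- \ne 0$, else $d = d^+ \ge 0$ would force $\deg(d) \ge 0$). Second, and this is the substantive step, I must verify that $d^-$ is feasible for $p$. By Lemma~\ref{feas_lem} it suffices to show $\epsilon_{p,d^-}(q) \neq 0$ for all $q \in L$. Here is where Lemma~\ref{rel_lem} enters: it tells us $\epsilon_{p,d}(q) \ge \epsilon_{p,d^-}(q)$, with the strict inequality occurring only when $\epsilon_{p,d}(q) = \infty$ and $\epsilon_{p,d^-}(q) > 0$. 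So if $\epsilon_{p,d^-}(q) = 0$ for some $q$, then by Lemma~\ref{rel_lem} we cannot be in the strict case (that case forces $\epsilon_{p,d^-}(q) > 0$), hence $\epsilon_{p,d}(q) = \epsilon_{p,d^-}(q) = 0$, contradicting the choice of $d$. Therefore $\epsilon_{p,d^-}(q) \neq 0$ for all $q \in L$, so $d^-$ is feasible for $p$, and since $d^- \le 0$ we have $d^- \in H^-_O$, as required.

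**The main obstacle.** The only place any care is needed is the interplay with Lemma~\ref{rel_lem}: one must be careful about the case analysis in the definition~\eqref{eps_eq} of $\epsilon_{p,d}$ — in particular whether the index set $I = \{i : p_i \ge q_i\}$ is empty and whether the "$\exists \epsilon > 0$ with $p + \epsilon d < q$" clause holds for $d$ versus for $d^-$. Passing from $d$ to $d^-$ can only make more coordinates of the direction negative, which is exactly the hypothesis needed for the middle branch of~\eqref{eps_eq}; the content of Lemma~\ref{rel_lem} is precisely that this substitution does not create a new zero, and once that lemma is invoked the argument is a short contrapositive. I would present the proof as: take $d$ from Corollary~\ref{ext_cor}, set $d' := d^-$, note $\deg(d') < 0$, and conclude feasibility of $d'$ via the contrapositive of Lemma~\ref{feas_lem} combined with Lemma~\ref{rel_lem}.
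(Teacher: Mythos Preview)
Your proof is correct and follows essentially the same approach as the paper's: take a feasible direction $d_0$ for $p$ (via Corollary~\ref{ext_cor}/Lemma~\ref{feas_lem}), replace it by its negative part $d:=d_0^-$, observe $\deg(d)<0$, and use Lemma~\ref{rel_lem} to conclude that $\epsilon_{p,d}(q)>0$ for all $q\in L$, whence $d\in H^-_O$ is feasible. Your contrapositive reading of Lemma~\ref{rel_lem} is exactly the argument the paper makes.
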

\begin{proof} 
If $p$ is not an extremal point of $\Sigma^c(L)$, then there exists a vector $d_0$ in $\mathbb{R}^{n+1}$ that is feasible for $p$.  By Corollary \ref{ext_cor}, $d_0$ has the following properties:
\begin{enumerate}
\item \label{deg_prop2} $\deg(d_0)<0\:,$
\item \label{zero_prop2} $\epsilon_{d_0,p}(q)>0$ for all $q\in L\:,$
\end{enumerate}
Let $d:=d_0^-$. We have $\deg(d)<0$, since $\deg(d_0)<0$ and $d=d_0^-$. By Lemma \ref{rel_lem}, we have  $\epsilon_{d_0,p}(q)\:\geq\:\epsilon_{d,p}(q)$
for all $q\in L$, and in the only cases for $q$ when the inequality is strict we have $\epsilon_{d,p}(q)>0$. We infer that $d$ also satisfies Properties \ref{deg_prop2} and \ref{zero_prop2}. By Corollary~\ref{ext_cor}, $d$ is also feasible for $p$ and by construction, $d$ belongs to  $H^{-}_O$; the lemma follows.  
\end{proof}

Consider the set $\deg(\Sigma^c(L))=\{\:\deg(p)\:|\: p \in \Sigma^c(L)\:\}.$ The next lemma shows that the degree function is bounded below on the elements of $\Sigma^c(L)$ (by some negative real number). 

\begin{lemm} \label{sup_lem} For an $n-$dimensional sub-lattice $L$ of $A_n$, $\inf(\deg(\Sigma^c(L))$ is finite. 
\end{lemm}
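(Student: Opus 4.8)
The plan is to show that $\Sigma^c(L)$ cannot contain points of arbitrarily negative degree by using the full-rank hypothesis on $L$ to produce, for any candidate point of very negative degree, a lattice point dominating it strictly, thereby contradicting Lemma~\ref{sigc}. First I would fix a basis $q^{(1)},\dots,q^{(n)}$ of $L$ and consider the sublattice structure: since $L$ has rank $n$ inside the $n$-dimensional space $H_0$, the quotient $H_0/(L\otimes\mathbb R)$ is trivial, so $L$ spans $H_0$ as a real vector space. In particular, the vector $e:=(1,0,\dots,0,-1)$ and all similar ``difference'' vectors lie in $H_0=\mathrm{span}_\mathbb R(L)$, hence some positive integer multiple of any element of $A_n$ — and in fact a suitable nonnegative combination — can be realized approximately by lattice elements. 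The cleaner route: because $L\otimes\mathbb Q = H_0\otimes\mathbb Q$, there is a finite set $q^{(1)},\dots,q^{(k)}\in L$ and positive rationals whose combination is strictly negative in every coordinate, i.e.\ there exists $q^\ast\in L$ with $q^\ast<0$ (strictly negative in all coordinates); indeed $-(q+(-q))$-type cancellation together with spanning gives a lattice point that is coordinatewise negative. Concretely, pick $q^\ast\in L$ with all coordinates $\le -1$.

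Given such a $q^\ast$ with $m:=\max_i q^\ast_i<0$, I claim $\inf(\deg(\Sigma^c(L)))\ge \deg(q^\ast) - (n+1)\cdot 0$ fails to be $-\infty$ as follows. Suppose $p\in\Sigma^c(L)$. By Lemma~\ref{sigc}, $p\nless q$ for every $q\in L$; in particular $p\nless q^\ast + t\cdot q^\ast = (t+1)q^\ast$ is false for... — better: for each positive integer $t$, the point $t\,q^\ast\in L$, and $p\nless t q^\ast$ means some coordinate $i$ has $p_i\ge t q^\ast_i$, i.e.\ $p_i \ge t\,q^\ast_i$. Since $q^\ast_i\le m<0$, this gives $p_i\ge t m$, which is a \emph{lower} bound going to $-\infty$, the wrong direction. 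So one must instead bound $p$ from below coordinatewise using a lattice point $q^\prime$ with all coordinates \emph{positive}; but $A_n$ has no such point. The fix is to translate: for any real $c$, the point $q^\ast + c\cdot\vec{\mathbf 1}$ is not in $H_0$ but for $c$ large its coordinates are all positive, however it is not in $L$.

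The correct argument uses the degree-zero constraint directly. Let $p\in\Sigma^c(L)$ and write $s:=\deg(p)$. Since $L$ is full-rank in $H_0$, choose $q^\ast\in L$ coordinatewise $\le -1$ as above, and set $N:=\lceil -s\rceil + (n+1)$. Consider $q:=N q^\ast\in L$; its degree is $N\deg(q^\ast)$ which is very negative, and every coordinate is $\le -N$. By Lemma~\ref{sigc}, $p\nless q$, so there is an index $i_0$ with $p_{i_0}\ge q_{i_0}\ge\text{(very negative)}$ — again useless. The genuine content must be: $\Sigma^c(L)$ is disjoint from $\bigcup_{q\in L}H_q^-$, hence $p\notin H_q^-$ for all $q$, i.e.\ $p\not\le q$; since $L$ contains points $q$ of \emph{every sufficiently negative degree} that are coordinatewise small, and $p\not\le q$ forces $p_{i}>q_{i}$ for \emph{some} $i$ — to get an upper bound on $\deg(p)$ we need a lattice point $q$ with small coordinates so that $p_i>q_i$ is a strong constraint. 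Thus: take $q\in L$ with $\sum q_i = 0$ but $q_i$ very large for $i$ in a fixed index set and very negative elsewhere; choosing $n$ such independent $q$'s and using $p\not\le q$ for each yields a system of inequalities $p_{i(q)}>q_{i(q)}$ that, combined with $\deg p = s$, pins $s$ from below. The main obstacle — and the step I would spend the most care on — is exactly this combinatorial/linear-algebra argument: extracting from the full-rank hypothesis a \emph{finite} family of lattice points whose simultaneous $\nleq$-conditions force a lower bound on $\deg$. I expect this follows from a compactness or polytope argument: $\Sigma^c(L)\cap H_k$ is closed, and if $\deg$ were unbounded below on $\Sigma^c(L)$ one could, after projecting along $\vec{\mathbf 1}$, find a recession direction $d$ with $\deg(d)<0$ lying in $\Sigma^c(L)$'s recession cone, contradicting that $L$ is a \emph{full-rank} lattice (so the recession cone of $\mathbb R^{n+1}\setminus\bigcup H_q^-$ contains no such $d$). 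Formalizing this recession-cone argument, using $L\otimes\mathbb R=H_0$ to rule out feasible negative-degree recession directions, is the crux; everything else is routine.
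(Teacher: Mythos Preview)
Your proposal is not a proof; it is a sequence of attempts, each of which you yourself recognize as failing, followed by a vague gesture toward a recession-cone argument that is never carried out. The first concrete attempt --- producing $q^\ast\in L$ with all coordinates $\le -1$ --- is impossible from the outset: every point of $L\subset A_n\subset H_0$ has coordinate sum zero, so no lattice point is strictly negative in every coordinate. You notice this (``but $A_n$ has no such point''), yet continue to circle the same idea. The later observation that $p\nless tq^\ast$ only yields bounds in the wrong direction is correct, and the closing recession-cone suggestion is the right instinct, but you stop precisely at the step that needs an argument.

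What makes a direct approach work --- and what your proposal is groping toward --- is the $L$-periodicity of $\Sigma^c(L)$: for every $q\in L$ one has $\Sigma^c(L)+q=\Sigma^c(L)$. If $p_k\in\Sigma^c(L)$ had $\deg(p_k)\to-\infty$, translate each $p_k$ by a lattice element so that $\pi_0(p_k)$ lies in a fixed compact fundamental domain of $L$ in $H_0$ (this is exactly where full rank enters). The translated points remain in $\Sigma^c(L)$, have the same degree, and have bounded $H_0$-projection; writing them as $\pi_0(p_k)+\tfrac{\deg(p_k)}{n+1}(1,\dots,1)$ shows every coordinate tends to $-\infty$, so eventually $p_k<O\in L$, contradicting Lemma~\ref{sigc}. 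That is the missing idea. The paper's own proof is different and less elementary: it identifies $\partial\Sigma^c(L)$ with the lower graph of the simplicial distance function $h_{\triangle,L}$, rewrites $\inf(\deg(\Sigma^c(L)))$ as $-(n+1)\sup_{y\in V_\triangle(O)}d_\triangle(y,O)$, and then invokes compactness of the Voronoi cell $V_\triangle(O)$ (Lemma~\ref{compact_lem}).
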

\begin{proof} It is possible to give a direct proof of this lemma. But using our results in Section~\ref{sec:voronoi} allows us to shorten the proof. So we postpone the proof to Section~\ref{sec:voronoi}.
\end{proof}

\noindent We are now in a position to present the proofs of Theorem~\ref{cdom_thm} and Theorem~\ref{dom_thm}.

\begin{proof}[Proof of Theorem~\ref{cdom_thm}] Consider a point $p$ in $\Sigma^{c}(L)$. We should prove the existence of an extremal point $\nu\in \Ext^c(L)$ such that $\nu \leq p$. 

\noindent Consider the cone $H^-_p$. As a consequence of Lemma~\ref{sup_lem}, we infer that the region $\Sigma^c(L)\cap H^-_p$ is a bounded closed subspace of $\mathbb R^{n+1}$, and so it is compact. The degree function $\deg$ restricted to this compact set, achieves its minimum on some point $\nu\in \Sigma^c(L)\cap H^-_p$. We claim that $\nu\in \Ext^c(L)$. Suppose that this is not the case. By Lemma~\ref{dom_lem}, there exists a feasible vector $d\in H^-_O$ for $\nu$, i.e., such that $\nu+\delta d\in \Sigma^c(L)$ for all sufficiently small $\delta>0$. Now it is easy to check that 
\begin{itemize}
\item $\nu+\delta\:d\in H^-_p$ and hence $\nu+\delta d \leq p\:,$
\item $\deg(\nu+\delta\:d)< \deg(\nu)$.
\end{itemize}
This contradicts the choice of $\nu$.
\end{proof}

\begin{proof}[Proof of Theorem \ref{dom_thm}]

In order to establish Theorem~\ref{dom_thm}, we first prove that every point in $\Ext^c(L)$ is an integral point. For the sake of a contradiction, suppose that there exists a non integral point in $\Ext^c(L)$. Let $p=(p_0,\dots,p_n)$ be such a point and suppose without loss of generality that $p_0$ is not integer. We claim that the vector $d=-e_0=(-1,0,0,\dots,0)$ is feasible. Indeed it is easy to check that $\epsilon_{p,d}(q) > 0$ for all $q\in L$, and so by Corollary~\ref{ext_cor} we conclude that $p$ could not be an extremal point of $\Sigma^c(L)$.

\noindent Let $\Sigma^c_{\mathbb{Z}}(L)$ be the set of integral points of $\Sigma^c(L)$. We show that $\Sigma^c_{\mathbb{Z}}(L) + (1,\dots,1) = \Sigma(L)$. Note that as soon as this is proved, Theorem~\ref{cdom_thm} and the fact that extremal points of $\Sigma^c(L)$ are all integral points implies Theorem~\ref{dom_thm}. 
\newline

\noindent We prove $\Sigma^c_{\mathbb{Z}}(L) + (1,\dots,1) \subseteq \Sigma(L)$.--- Let $u=v+(1,\dots,1) \in \Sigma^c_{\mathbb{Z}}(L) + (1,\dots,1)$, for a point $v\in\Sigma^c_{\mathbb{Z}}(L)$. To show $u\in \Sigma(L)$ we should prove that $\forall q\in L:\: u\nleq q$. Suppose that this is not the case and let $q\in L$ be such that $u \leq q$. It follows that $u-(1,\dots,1) < q$ and hence, $v \notin \Sigma^c(L)$, which is a contradiction.
\newline

\noindent We prove $\Sigma(L) \subseteq \Sigma^c_{\mathbb{Z}}(L) + (1,\dots,1)$.--- A point $u$ in $\partial\Sigma^c(L)$ is contained in $H^-(q)$ for some $q$ in $L$ and hence $u \leq q$. We infer that $\Sigma(L)$ is contained in the interior of $\Sigma^c(L)$, and so for each point $p$ of $\Sigma(L)$, every vector in $\mathbb R^{n+1}$ of negative degree will be feasible. 
By Lemma~\ref{sup_lem}, there exists a point $p_c \in \partial\Sigma^C(L)$ such that $p=p_c+t(1,\dots,1)$ for some $t>0$. It follows that $p>p_c$. By Theorem \ref{cdom_thm}, $p_c \in H^+_\nu$ for some $\nu$ in $\Ext^c(L)$.  This implies that  $p>\nu$ for some $\nu \in \Ext^c(L)$. By definition, $p$ is an integral point and we just showed that $\nu$ is also an integral point. Hence we can further deduce that $p \geq \nu+(1,\dots,1)$.  We infer that $p-(1,\dots,1)\geq \nu$ and therefore, $p-(1,\dots,1)\in \Sigma^c(L)$ (because $H_\nu^+ \subset \Sigma^c(L)$). It follows that $p\in \Sigma^c_{\mathbb{Z}}(L) + (1,\dots,1)$. 

The proof of Theorem~\ref{dom_thm} is now complete.
\end{proof}


\section{Voronoi Diagrams of Lattices under Simplicial Distance Functions} 
\label{sec:voronoi}

In this section, we provide some basic properties of the Voronoi diagram of a sub-lattice $L$ of $A_n$ under a simplicial distance function $d_{\triangle}(.\:,.)$ which we define below. The distance function $d_{\triangle}(.\:,.)$ has the following explicit form, and as we will see in this section, is the distance function having the homotheties of the standard simplex in $H_0$ as its balls (which explains the name simplicial distance function).  For two points $p$ and $q$ in $H_0$, the simplicial distance between $p$ and $q$ is defined as follows
$$d_{\triangle}(p,q) :=  \inf \Bigl\{\lambda\,|\, q-p +\lambda (1,\dots,1) \geq 0 \Bigr\}.$$
The basic properties of $d_\triangle$ are better explained in the more general context of polyhedral distance functions that we now explain.

\subsection{Polyhedral Distance Functions and their Voronoi Diagrams} Let $Q$ be a convex polytope in $\mathbb R^n$ with the reference point $O=(0,\dots,0)$ in its interior. The {\it polyhedral distance function} $d_Q(.\:,.)$ between the points of $\mathbb R^n$ is defined as follows:
\[\forall\: p,q\in \mathbb R^n,\: d_{Q}(p,q)\::=\:\inf\{\lambda\geq 0\:|\:q \in p +\lambda.Q\}, \: \textrm{where}\ \ \:\lambda.Q\:=\:\{\:\lambda.x\:|\:x\in Q\:\}.\]
$d_Q$ is not generally symmetric, indeed it is easy to check that $d_Q(.\:,.)$ is symmetric if and only if the polyhedron $Q$ is centrally symmetric i.e., $Q=-Q$. Nevertheless $d_{Q}(.\:,.)$ satisfies the triangle inequality.

\begin{lemm}\label{lin_lem}
For every three points $p,q,r\in \mathbb R^{n}$, we have $d_{Q}(p,q)+d_Q(q,r) \geq d_{Q}(p,r)$.
 In addition, if $q$ is a convex combination of $p$ and $r$, then $d_Q(p,q)+d_Q(q,r)=d_Q(p,r)$.
\end{lemm}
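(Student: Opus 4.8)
The plan is to unwind the definition of $d_Q$ directly. Set $\lambda = d_Q(p,q)$ and $\mu = d_Q(q,r)$; I would first like to know that these infima are attained, or at least that for every $\eps>0$ we have $q \in p + (\lambda+\eps)Q$ and $r \in q + (\mu+\eps)Q$. Since $O$ is in the interior of $Q$, the set $\{\lambda \geq 0 \mid q \in p+\lambda Q\}$ is a closed half-line (an interval of the form $[\lambda,\infty)$), because $p + \lambda Q \subseteq p + \lambda' Q$ whenever $\lambda \le \lambda'$ — this monotonicity uses exactly that $O \in Q$, so $\lambda Q = \lambda Q + (\lambda'-\lambda)\{O\} \subseteq \lambda Q + (\lambda'-\lambda) Q = \lambda' Q$ by convexity. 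Hence the infimum is attained and $q \in p + \lambda Q$, $r \in q + \mu Q$.

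The core step is then a convexity computation. Write $q = p + \lambda x$ with $x \in Q$ and $r = q + \mu y$ with $y \in Q$. Then
\[
r - p = \lambda x + \mu y = (\lambda+\mu)\Bigl(\tfrac{\lambda}{\lambda+\mu}x + \tfrac{\mu}{\lambda+\mu}y\Bigr),
\]
and the bracketed vector is a convex combination of $x,y \in Q$, hence lies in $Q$ by convexity. (If $\lambda+\mu = 0$ then $\lambda=\mu=0$, so $q=p$, $r=q$, and the inequality is trivial.) Therefore $r \in p + (\lambda+\mu)Q$, which gives $d_Q(p,r) \le \lambda+\mu = d_Q(p,q)+d_Q(q,r)$, proving the triangle inequality.

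For the equality statement, suppose $q$ is a convex combination of $p$ and $r$, say $q = (1-t)p + tr$ with $t \in [0,1]$. Then $q - p = t(r-p)$ and $r - q = (1-t)(r-p)$. Writing $r - p = \rho z$ with $\rho = d_Q(p,r)$ and $z \in Q$, we get $q - p = (t\rho) z$ and $r - q = ((1-t)\rho) z$ with $z \in Q$, so $d_Q(p,q) \le t\rho$ and $d_Q(q,r) \le (1-t)\rho$, whence $d_Q(p,q) + d_Q(q,r) \le \rho = d_Q(p,r)$; combined with the triangle inequality already proved, this forces equality. I expect the only mildly delicate point to be the attainment-of-infimum remark, i.e.\ checking that $O \in Q$ together with convexity makes $\lambda \mapsto (p+\lambda Q)$ monotone in the inclusion order and the feasible set closed; everything after that is the one-line convexity manipulation above. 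No compactness or symmetry of $Q$ is needed.
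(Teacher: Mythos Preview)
Your proof is correct and follows essentially the same route as the paper: the triangle inequality via the convex-combination identity $\lambda x+\mu y=(\lambda+\mu)\bigl(\tfrac{\lambda}{\lambda+\mu}x+\tfrac{\mu}{\lambda+\mu}y\bigr)$, and the equality case by writing $r-p=\rho z$ with $z\in Q$ and reading off $d_Q(p,q)\le t\rho$, $d_Q(q,r)\le(1-t)\rho$. Your remark that the infimum is attained (using $O\in Q$ to get monotonicity of $\lambda\mapsto p+\lambda Q$) is a nice point of rigor that the paper leaves implicit.
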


\begin{proof}
To prove the triangle inequality, it will be sufficient to show that if $q\in p+\lambda.Q$ and $r\in q+\mu.Q$, then $r\in p+(\lambda+\mu).Q$. We write $q=p+\lambda.q'$ and $r=q+\mu.r'$ for two points $q'$ and $r'$ in $Q$. We can then write $r=p+\lambda.q'+\mu.r'=p+(\lambda+\mu)(\frac{\lambda}{\lambda+\mu}.q'+\frac{\mu}{\lambda+\mu}.r')$. $Q$ being convex and $\lambda,\mu\geq0$, we infer that $\frac{\lambda}{\lambda+\mu}.q'+\frac{\mu}{\lambda+\mu}.r'\in Q$, and so $r\in p+(\lambda+\mu).Q$. The triangle inequality follows.

\noindent To prove the second part of the lemma, let $t \in [0,1]$ be such that $q=t.p+(1-t).r\:.$ By the triangle inequality, it will be enough to prove that $d_Q(p,q)+d_Q(q,r)\leq d_Q(p,r)$. Let $d_Q(p,r)=\lambda$ so that $r = p+\lambda.r'$ for some point $r'$ in $Q$. We infer first that 
$q=t.p+(1-t).r=t.p+(1-t)(p+\lambda.r')= p+(1-t)\lambda.r'$, which implies that $d_Q(p,q)\leq(1-t)\lambda$. Similarly we have $t.r=t.p+t\lambda.r'=q-(1-t)r+t\lambda.r'$. It follows that $r=q+t\lambda r'$ and so $d_Q(q,r)\leq t\lambda\:.$ We conclude that $d_Q(p,q)+d_Q(q,r)\leq d_Q(p,r)$, and the lemma follows.
\end{proof}

We also observe that the polyhedral metric $d_Q(.\:,.)$ is translation invariant, i.e.,
\begin{lemm}\label{lem:trans-inv} For any two points $p,q$ in $\mathbb R^n$, and for any vector $v\in\mathbb R^n$, we have $d_Q(p,q) = d_Q(p-v,q-v)$. In particular, $d_{Q}(p,q)=d_{Q}(p-q,O)=d_{Q}(O,q-p)$. 
\end{lemm}
\begin{proof} The proof is easy: if $q\in p+\lambda.Q$, then $q-v \in p-v+\lambda.Q$, and vice versa. 
\end{proof}

\begin{rema}\rm
The notion of a polyhedral distance function is essentially the concept of a gauge function of a convex body that has been studied in \cite{Siegel89}. Lemmas \ref{lin_lem} and \ref{lem:trans-inv} can be derived in a straight forward way from the results in \cite{Siegel89}. For the sake of easy reference, we included them here.
\end{rema}

\noindent Consider a discrete subset $\mathcal S$ in $\mathbb{R}^{n}$. For a point $s$ in $\mathcal S$, we define the {\it Voronoi cell} of $s$ with respect to $d_{Q}$  as $V_Q(s)\:=\:\{\: p \in \mathbb{R}^{n}\:|\:d_{Q}(p,s)\leq d_{Q}(p,s')\:\: \textrm{for any other point}\:\: s'\in \mathcal S\:\}\:.$

\noindent The {\it Voronoi diagram} $\Vor_Q(\mathcal S)$  is the decomposition of $\mathbb{R}^n$ induced by the cells $V_{Q}(s)$, for $s\in\mathcal S\:.$ We note however that this need not be a cell decomposition in the usual sense.

We state the following lemma on the shape of cells $V_Q(s)$.

\begin{lemm}\label{lem:star}\cite{CD85} Let $\mathcal S$ be a discrete subset of $\mathbb R^n$ and $\Vor_Q(\mathcal S)$ be the Voronoi cell decomposition of $\mathbb R^n$. For any point $s$ in $\mathcal S$, the Voronoi cell $V_Q(s)$ is a star-shaped polyhedron with $s$ as a kernel. 
\end{lemm}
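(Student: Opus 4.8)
The plan is to show that for any point $p \in V_Q(s)$, the entire segment $[s,p]$ lies in $V_Q(s)$; this is exactly the statement that $V_Q(s)$ is star-shaped with $s$ in its kernel. So fix $p \in V_Q(s)$ and let $q = t\,s + (1-t)\,p$ for some $t \in [0,1]$ be an arbitrary point of the segment. I must verify that $d_Q(q,s) \le d_Q(q,s')$ for every other site $s' \in \mathcal S$. The key tool is Lemma~\ref{lin_lem}: since $q$ is a convex combination of $p$ and $s$, we have the \emph{additivity} $d_Q(p,q) + d_Q(q,s) = d_Q(p,s)$, hence $d_Q(q,s) = d_Q(p,s) - d_Q(p,q)$.

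Now for any competing site $s'$, apply the triangle inequality of Lemma~\ref{lin_lem} along $p \to q \to s'$: $d_Q(p,s') \le d_Q(p,q) + d_Q(q,s')$, i.e. $d_Q(q,s') \ge d_Q(p,s') - d_Q(p,q)$. Combining the two displays,
\[
d_Q(q,s') \;\ge\; d_Q(p,s') - d_Q(p,q) \;\ge\; d_Q(p,s) - d_Q(p,q) \;=\; d_Q(q,s),
\]
where the middle inequality is exactly the hypothesis $p \in V_Q(s)$, namely $d_Q(p,s) \le d_Q(p,s')$. Since $s'$ was arbitrary, $q \in V_Q(s)$, and since $q$ was an arbitrary point of $[s,p]$, the segment lies in $V_Q(s)$. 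That $V_Q(s)$ is a polyhedron follows because each inequality $d_Q(p,s) \le d_Q(p,s')$ defining it is a finite conjunction of linear inequalities (the polytope $Q$ has finitely many facets, so $d_Q(\cdot, \cdot)$ is piecewise linear with finitely many pieces), so each pairwise "bisector region" is a finite union of polyhedra and the cell is an intersection of such sets; restricting to a neighborhood of $s$ the relevant pieces are fixed and one gets a genuine polyhedral description near $s$.

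The main subtlety — and the place I would be most careful — is the polyhedrality claim rather than the star-shapedness, which is a clean two-line application of Lemma~\ref{lin_lem}. Strictly speaking $d_Q$ is only a piecewise-linear (sublinear) gauge, so the region $\{p : d_Q(p,s) \le d_Q(p,s')\}$ need not be convex and "$V_Q(s)$ is a polyhedron" should be read in the sense appropriate to the cited reference~\cite{CD85} (e.g. a star-shaped union of finitely many convex polyhedral pieces, locally polyhedral). For the purposes of this paper only the star-shapedness with kernel $s$ is used, and that part is fully established by the argument above; I would present the polyhedral structure as quoted from~\cite{CD85} and not reprove it in detail.
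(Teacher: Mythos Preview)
Your proof is correct and essentially identical to the paper's: both rely on the additivity part of Lemma~\ref{lin_lem} along the segment from $s$ to the far point together with the triangle inequality toward the competing site $s'$. The only cosmetic difference is that the paper phrases it by contradiction (assume an intermediate point $q$ falls outside $V_Q(s)$ and derive $d_Q(r,s) > d_Q(r,s')$), whereas you argue directly; your handling of the polyhedrality claim---deferring it to the cited reference since only star-shapedness is used downstream---also matches the paper's ``it is easy to see'' treatment.
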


\begin{proof}
It is easy to see that $V_Q(s)$ is a polyhedron. We show that it is star-shaped. Assume the contrary. Then there is a line segment $[s,r]$ and a point $q$ between $s$ and $r$ such that $r \in V_Q(s)$ and $q\notin V_Q(s)$. Suppose that $q$ is contained in $V(s')$ for some $s' \neq s$. We should then have $d_{Q}(q,s) > d_{Q}(q,s')$. By Lemma~\ref{lin_lem}, $d_{Q}(r,s)=d_{Q}(r,q)+d_{Q}(q,s)$. We infer that
\begin{align*} 
d_Q(r,s)\:&=\: d_{Q}(r,q)+d_{Q}(q,s) > d_{Q}(r,q)+d_{Q}(q,s')\ge\: d_{Q}(r,s'),\: \textrm{contradicting} \:r\in V_Q(s).
 \end{align*} 
\end{proof}

\subsection{Voronoi Diagram of Sub-Lattices of $A_n$}\label{sec:voronoi-sub-lattice}
  Voronoi diagrams of root lattices under the Euclidean metric have been studied previously in literature. Conway and Sloane \cite{CS91,CS98}, describe the Voronoi cell structure of root lattices and their duals under the Euclidean metric. 

\noindent Here we study Voronoi diagrams of sub-lattices of $A_n$ under polyhedral distance functions (and later under the simplicial distance functions $d_{\triangle}(.\:,.)$). We will see the importance of this study in the proof of Riemann-Roch Theorem in Section~\ref{sec:R-R-theo}, and in the geometric study of the Laplacian of graphs in Section~\ref{sec:examples}. 

\noindent Let $L$ be a sub-lattice of $A_n$ of full rank. Note that $L$ is a discrete subset of the hyperplane $H_0$ and $H_0\simeq \mathbb R^n$. Let $Q\subset H_0$ be a convex polytope of dimension $n$ in $H_0$.  We will be interested in the Voronoi cell decomposition of the hyperplane $H_0$ under the distance function $d_Q(.\:,.)$ induced by  the points of $L$. The following lemma, which essentially uses the translation-invariance of $d_Q(.\:,.)$, shows that these cells are all simply translations of each other.
\begin{lemm} \label{trans_lem} 
For a point $p$ in $L, V_Q(p) = V_Q(O)+p\:.$ As a consequence, $\textrm{\emph{Vor}}_{Q}(L)=V_Q(O)+L$.
\end{lemm}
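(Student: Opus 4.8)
The plan is to prove $V_Q(p) = V_Q(O) + p$ for each lattice point $p \in L$, and then deduce the statement about the whole Voronoi diagram by ranging over all $p$.

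First I would unwind the definition of $V_Q(p)$. A point $x \in H_0$ lies in $V_Q(p)$ if and only if $d_Q(x,p) \le d_Q(x,p')$ for every other lattice point $p' \in L$. The key observation is that translation by $-p$ sets up a bijection between $L$ and $L - p = L$ (since $L$ is a group, $p \in L$ implies $L - p = L$), so the family of ``competitor'' points is unchanged under this translation. Concretely, I would fix $x \in V_Q(p)$ and show $x - p \in V_Q(O)$: for any lattice point $q \in L$, set $p' = q + p \in L$; by Lemma~\ref{lem:trans-inv} (translation invariance of $d_Q$), $d_Q(x-p, q) = d_Q(x, q+p) = d_Q(x,p')$ and similarly $d_Q(x-p, O) = d_Q(x,p)$. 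Since $x \in V_Q(p)$ gives $d_Q(x,p) \le d_Q(x,p')$, we get $d_Q(x-p,O) \le d_Q(x-p,q)$ for all $q \in L$, i.e. $x - p \in V_Q(O)$. This shows $V_Q(p) \subseteq V_Q(O) + p$; the reverse inclusion is the same argument run backwards (translation invariance is symmetric), so $V_Q(p) = V_Q(O) + p$.

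For the consequence, $\Vor_Q(L)$ is by definition the decomposition of $H_0$ into the cells $\{V_Q(p)\}_{p \in L}$; having just shown each cell is $V_Q(O) + p$, the collection of cells is precisely $\{V_Q(O) + p : p \in L\}$, which is what the notation $V_Q(O) + L$ abbreviates. One should note in passing that everything takes place inside $H_0 \simeq \mathbb{R}^n$, and that $L$ being a full-rank sublattice of $A_n$ means $L \subset H_0$, so the translates $V_Q(O) + p$ stay in $H_0$; this is automatic and needs no separate argument.

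I do not expect a genuine obstacle here: the entire content is the group property $L - p = L$ combined with the translation invariance already established in Lemma~\ref{lem:trans-inv}. The only point requiring a modicum of care is the handling of the phrase ``any \emph{other} point $s' \in \mathcal{S}$'' in the definition of the Voronoi cell — one must check that the bijection $q \mapsto q + p$ of $L$ carries the set $L \setminus \{O\}$ onto $L \setminus \{p\}$, so that the ``other point'' quantifiers on the two sides match up correctly — but this is immediate. The proof is therefore short and essentially a one-line translation argument once Lemma~\ref{lem:trans-inv} is in hand.
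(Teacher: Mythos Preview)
Your proposal is correct and is exactly the argument the paper has in mind: the paper records the proof as ``Easy and omitted,'' and your use of translation invariance (Lemma~\ref{lem:trans-inv}) together with the group property $L-p=L$ is the intended one-line justification.
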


\begin{proof}Easy and omitted.
\end{proof}

\noindent By Lemma~\ref{trans_lem}, to understand the Voronoi cell decomposition of $H_0$, it will be enough to understand the cell $V_{Q}(O)$. We already know that $V_Q(O)$ is a star-shaped polyhedron. The following lemma shows that $V_Q(O)$ is compact, and so it is a (non-necessarily convex) star-shaped polytope.

\begin{lemm} \label{compact_lem} The Voronoi cell $V_{Q}(O)$ is compact.
\end{lemm}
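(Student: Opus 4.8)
The plan is to show that $V_Q(O)$ is both closed and bounded in $H_0 \simeq \mathbb{R}^n$. Closedness is immediate: $V_Q(O)$ is a polyhedron (already noted in Lemma~\ref{lem:star}), being an intersection of the half-spaces $\{p : d_Q(p,O) \leq d_Q(p,s')\}$ over $s' \in L$, each of which is closed because $d_Q(\cdot,\cdot)$ is continuous. So the whole point is boundedness.

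For boundedness I would argue as follows. Fix a basis $s_1,\dots,s_n$ of the lattice $L$. Since $L$ has full rank $n$ in $H_0$, the vectors $\pm s_1,\dots,\pm s_n$ positively span $H_0$; more precisely, there is a finite set of lattice points whose nonnegative combinations cover $H_0$, or equivalently the finitely many points $\{\pm s_i\}$ have $O$ in the interior of their convex hull after suitable scaling. Now suppose $p \in V_Q(O)$. Then for every lattice point $s$ we have $d_Q(p,O) \leq d_Q(p,s)$. Using translation invariance (Lemma~\ref{lem:trans-inv}) and the triangle inequality (Lemma~\ref{lin_lem}), $d_Q(p,s) \leq d_Q(p,O) + d_Q(O,s)$ is the wrong direction; instead I want a lower bound on $d_Q(p,s)$ in terms of $\|p\|$. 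The cleaner route: consider the ray from $O$ through $p$ and let $s$ be a lattice point lying ``on the far side,'' i.e. such that $p$ is a convex combination of $O$ and some point beyond which a lattice point sits. Concretely, pick a lattice point $s$ in the direction of $p$ from $O$ (possible since $L$ is full rank, up to replacing $p$'s direction by a nearby lattice direction and controlling the error). By the second part of Lemma~\ref{lin_lem}, if $p$ lies on the segment $[O,s]$ then $d_Q(O,p) + d_Q(p,s) = d_Q(O,s)$, so $d_Q(p,O) = d_Q(O,p) \le d_Q(p,s)$ forces $d_Q(O,p) \le \tfrac12 d_Q(O,s)$; since $Q$ is a bounded polytope containing $O$ in its interior, $d_Q(O,p)$ is comparable to $\|p\|$, and $d_Q(O,s)$ is bounded by the (finite) diameter-type quantity over a bounded region of $L$, giving $\|p\| \le R$ for a uniform $R$.

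More carefully, to avoid the ``$p$ exactly on a lattice ray'' issue, I would cover $H_0$ by finitely many simplicial cones $C_1,\dots,C_N$ each spanned by $n$ lattice points $s_{j,1},\dots,s_{j,n} \in L$ (possible since $L$ has full rank). Any $p \in H_0$ lies in some $C_j$, so $p = \sum_k \lambda_k s_{j,k}$ with $\lambda_k \ge 0$; let $s := \sum_k \lceil \lambda_k \rceil s_{j,k} \in L$, a lattice point with $s \ge p$ ``in the cone order'' and $d_Q(O,s) \le d_Q(O,p) + (\text{bounded correction depending only on the } s_{j,k} \text{ and } Q)$ by the triangle inequality. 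Then, since $O$, $p$, $s$ are ``collinear enough'' — more precisely $p$ is between $O$ and $s'$ for a suitable $s'$, or one uses monotonicity of $d_Q$ along the cone — one deduces $d_Q(p,s) \le d_Q(O,s) - d_Q(O,p) + c_j$, and combining with $p \in V_Q(O)$, i.e. $d_Q(p,O) \le d_Q(p,s)$, yields $d_Q(O,p) \le \tfrac12(d_Q(O,p) + \text{const})$, hence $d_Q(O,p) \le \text{const}$. As $\{x : d_Q(O,x) \le t\} = t\cdot Q$ is bounded for each $t$, this bounds $p$, and taking the maximum of the finitely many constants over the cones $C_j$ gives a uniform bound. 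Therefore $V_Q(O)$ is bounded, hence compact.

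The main obstacle I anticipate is making the ``collinearity/betweenness'' step rigorous: the equality case in Lemma~\ref{lin_lem} requires $p$ to be a genuine convex combination of $O$ and a point $s$, whereas the lattice point $s$ produced from the cone decomposition only satisfies $s \ge p$ coordinatewise-in-the-cone, not $p \in [O,s]$. The fix is to project: write $s = p + (s-p)$ and note $d_Q$ restricted along the appropriate directions is monotone because $Q$ is a fixed polytope with $O$ in its interior, so $d_Q(p,s)$ is controlled by $d_Q(O,s) - d_Q(O,p)$ up to a bounded additive error absorbing the non-collinearity; alternatively, one can simply use that $d_Q(\cdot,\cdot)$ is a translation-invariant gauge and that for a full-rank lattice the Voronoi cell of a gauge is automatically bounded (this is the standard fact that a lattice gauge-Voronoi cell is a fundamental-domain-like bounded set), citing the triangle inequality and discreteness. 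This last, cleaner phrasing is what I would ultimately write up.
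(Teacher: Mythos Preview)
Your overall strategy—cover $H_0$ by finitely many simplicial cones with lattice generators, then for each $p$ produce a nearby lattice point $s$ and invoke the Voronoi condition—can be made to work, but the execution has two concrete gaps. First, you repeatedly conflate $d_Q(p,O)$ with $d_Q(O,p)$; since $Q$ is not assumed centrally symmetric these are different gauges, and the line ``$d_Q(p,O) = d_Q(O,p)$'' is false in general. Second, the step ``$d_Q(p,s) \le d_Q(O,s) - d_Q(O,p) + c_j$'' is a reverse-triangle-type bound that does not follow from Lemma~\ref{lin_lem}, which only yields equality along genuine segments and otherwise just the ordinary triangle inequality (the wrong direction here). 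Your proposed fixes for non-collinearity are vague, and the final appeal to a ``standard fact'' is a punt rather than a proof. The repair is much easier than what you attempted: once $s\in L$ is chosen with $s-p$ in the bounded parallelepiped of the cone generators $s_{j,1},\dots,s_{j,n}$, translation invariance gives $d_Q(p,s)=d_Q(O,s-p)\le c_j$ directly, and the Voronoi condition then reads $d_Q(p,O)\le d_Q(p,s)\le c_j$; since $O$ is interior to $Q$ (hence to $-Q$), the gauge $d_Q(\cdot,O)=d_{-Q}(O,\cdot)$ is comparable to the Euclidean norm, and you are done. No collinearity argument is needed at all.

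The paper's own proof takes a different route: it shows $V_Q(O)$ contains no infinite ray, using that $V_Q(O)$ is star-shaped and closed. Assuming $tv\in V_Q(O)$ for all $t\ge 0$ and picking $0<\lambda<d_Q(v,O)$, one finds that the full-dimensional cone $\bigcup_{t\ge 0} t(v+\lambda Q)$ contains no nonzero lattice point; but any full-dimensional cone contains a rational ray, and since a basis of $L$ is a $\mathbb Q$-basis of $H_0$, every rational ray meets $L$—a contradiction. Your (repaired) approach has the minor advantage of yielding an explicit diameter bound for $V_Q(O)$, while the paper's argument is purely qualitative but avoids the cone decomposition and the comparability-of-gauges step altogether.
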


\begin{proof}
 The proof is standard. It will be sufficient to prove that $V_Q(O)$ does not contain any infinite ray. Indeed, $V_Q(O)$ being star-shaped and closed, this  will imply that $V_Q(O)$ is bounded and so we have the compactness. 

Assume, for the sake of a contradiction, that there exists a vector $v\neq O$ in $H_0$ such that the ray $t.v$ for $t\geq 0$ is contained in $V_{Q}(O)$. This means that 
\begin{align}
\label{keypro} \textrm{For every}\: t\geq 0\: \textrm{and for every} \: p\in L,\: \textrm{we have} \: 	d_{Q}(t.v,O) \leq d_{Q}(t.v,p).
\end{align} 
Choose a real number $\lambda$ such that $0<\lambda<d_Q(v,O)$. By Lemma~\ref{lin_lem}, $d_Q(t.v,O)=td_Q(v,O)>\lambda t$ for $t>0$.   By the definition of $d_Q$, the choice of $\lambda$ and Property~(\ref{keypro}), the polytope $t.v+t\lambda.Q=t.(v+\lambda Q)$ does not contain any point $p\in L$ for $t>0$. Let $\mathcal C=\bigcup_{t\geq 0} t.(v+\lambda.Q)$. It is easy to check that $\mathcal C$  is the cone generated by $v+\lambda.Q$. It follows that $\mathcal C $ does not contain any lattice point apart from $O$ (for $t=0$).  In addition, $Q$ being a polytope of dimension $n$, $\mathcal C$ should be a cone of full dimension in $H_0$. But this will provide a contradiction, because as we will show below for any vector $\bar v$ with rational coordinates in $H_0$, the open ray $t.\bar v$ for $\ t > 0$ contains a lattice point in $L$. (And it is clear that any cone $\mathcal C$ of full dimension in $H_0$ contains a rational vector.) To see this, observe that a basis for $L$ is also a basis for the $n$-dimensional $\mathbb Q$-vector space $H_0(\mathbb Q)$. Here $H_0(\mathbb Q)$ denotes the rational points of the hyperplane $H_0$. This means that $\bar v$ can be written as a rational combination of some points in $L$. Multiplying by a sufficiently large integer number $N$, $N.\bar v$ can be written as an integral combination of the same points in $L$, i.e.,  $N.\bar v \in L$, and this finishes the proof of the lemma. 
\end{proof}

From now on, we will restrict ourselves to two special polytopes $\triangle$ and $\bar\triangle$ in $H_0$. They are both standard simplices of $H_0$ under an appropriate isometry  $H_0\simeq\mathbb R^n$.  
\noindent The $n$-dimensional regular simplex $\triangle(O)$ centred at the origin $O$ has vertices at the points $b_0,b_1,\dots,b_n$.  For all $\: 0\leq i,j\leq n$, the coordinates of $b_i$ are given by: 
\begin{center}
$(b_i)_j =
\begin{cases}
n & \text{if i=j,}   \\
-1 & \text{otherwise.}
\end{cases}
$
\end{center}

The simplex $\bar{\triangle}(O)$ is the {\it opposite} simplex to $\triangle(O)$, i.e.,  $\bar{\triangle}(O):=-\triangle(O)$. 
The simplicial distance functions $d_\triangle(.\:,.)$ and $d_{\bar\triangle}(.\:,.)$ are the distance functions in $\mathbb R^{n+1}$ defined by $\triangle$ and $\bar\triangle$ respectively.
It is easy to check the following anti-symmetric property for the above distance functions: {\it For any pair of points $p,q\in\mathbb R^{n+1}$, we have $d_\triangle(p,q)=d_{\bar\triangle}(q,p)$.}  (This is indeed true for any convex polytope $Q$: $d_Q(p,q)=d_{\bar Q}(q,p)$, where $\bar Q=-Q$.)

\paragraph*{Notation.} In the following we will use the following terminology: For a point $v\in H_0$, we let $\triangle(v) = v+\triangle(O)$ and $\bar\triangle(v)=v+\bar\triangle(O)$. More generally given a real $\lambda\geq 0$ and $v\in H_0$, we define $\triangle_\lambda(v) = v+\lambda\triangle(O)$, and similarly, $\bar\triangle_\lambda(v) = v+\lambda\bar\triangle(O)$. We can think of these as {\it balls of radius} $\lambda$ around $v$ for $d_\triangle$ and $d_{\bar\triangle}$ respectively.

\vspace{.5cm}

\begin{figure}[!htb]
\begin{center}
\includegraphics[width=0.25\linewidth]{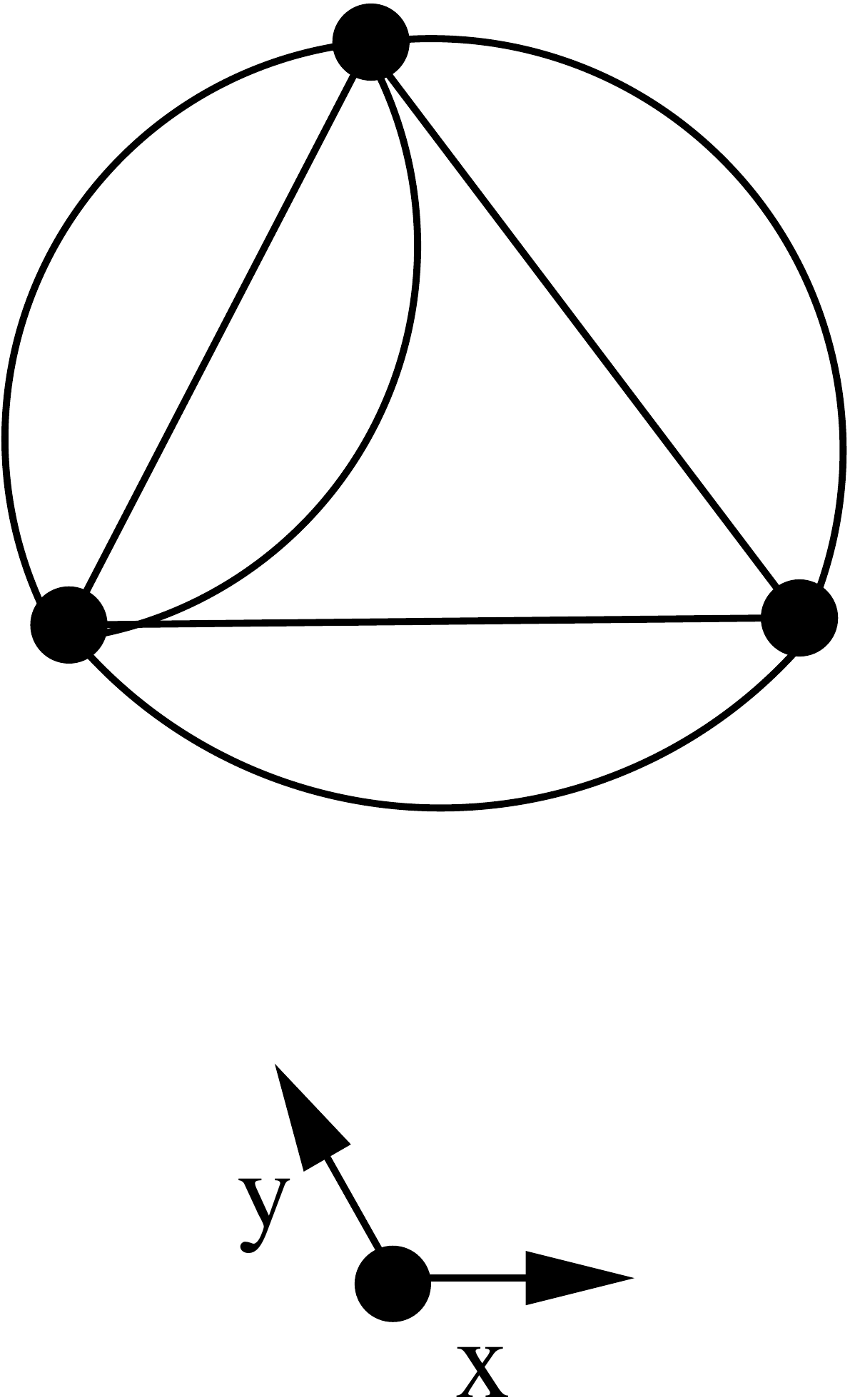}
\hspace{3cm}
\includegraphics[width=0.45\linewidth]{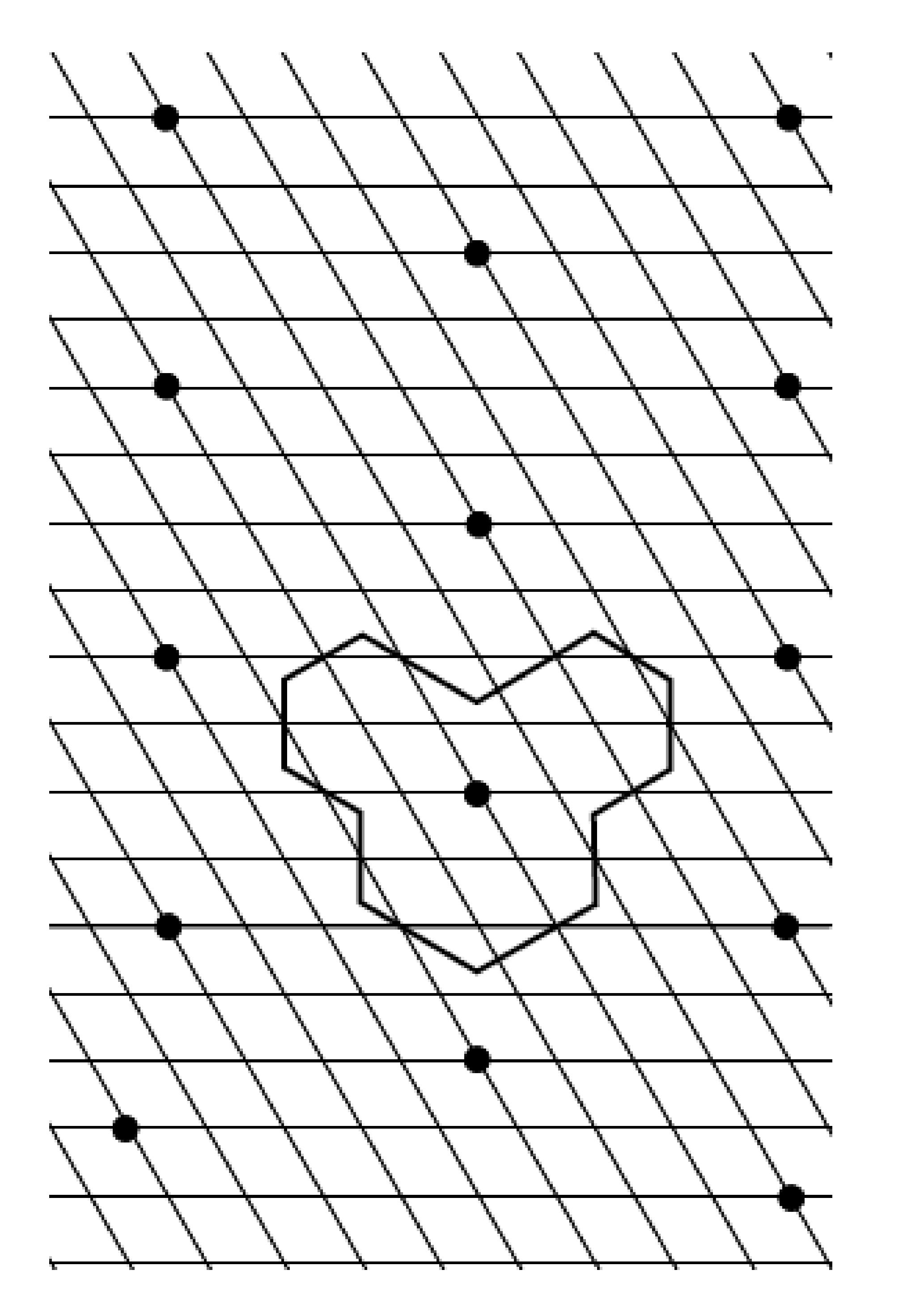}
\caption{The shape of a Voronoi-cell in the Laplacian lattice of a graph with three vertices. The multi-graph $G$ has three vertices and $7$ edges. The lattice $A_2$ is generated by the two vectors $x = (1,-1,0)$ and $y=(-1,0,1)$. The  corresponding Laplacian sub-lattice of $A_2$, whose elements are denoted by $\bullet$, is generated by the vectors $(-5,3,2)=-3x+2y$ and $(3,-5,2) = 5x+2y$ (and $(2,2,-4) = -2x-4y$), which correspond to the vertices of $G$.}
\label{defVor}
\end{center}
\end{figure}

\noindent The following lemma shows that the definition given in the beginning of this section coincides with the definition of $d_\triangle$ given above. We can explicitly write a formula for $d_\triangle(.\:,.)$ and $d_{\bar\triangle}(.\:,.)$ in the hyperplane $H_0$:

\begin{lemm}\label{d_form} 
For two points $p=(p_0,p_1,\dots,p_n)$ and $q=(q_0,q_1,\dots,q_n)$ in $H_0$, the $\triangle$-simplicial distance from $p$ to $q$ is given by 
$\: d_\triangle(p,q)\:=\: |\:\bigoplus_{i=0}^n(q_i-p_i)\:|.$
And the $\bar\triangle$-simplicial distance from $p$ to $q$ is given by 
$\: d_{\bar\triangle}(p,q)\:=\: |\:\bigoplus_{i=0}^n(p_i-q_i)\:|.$
\noindent Here the sum $\bigoplus_i (x_i-y_i) $ denotes the tropical sum of the numbers $x_i-y_i$.
\end{lemm}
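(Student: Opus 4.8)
The plan is to unwind the definitions of $\triangle(O)$ and of the polyhedral distance $d_\triangle$ and verify directly that the two competing formulas for $d_\triangle(p,q)$ agree. First I would recall that, by Lemma~\ref{lem:trans-inv}, $d_\triangle(p,q) = d_\triangle(O, q-p)$, so it suffices to compute $d_\triangle(O, w)$ for a point $w=(w_0,\dots,w_n)\in H_0$ and check it equals $|\bigoplus_i w_i| = |\min_i w_i| = -\min_i w_i$ (the last equality because $\sum w_i = 0$ forces $\min_i w_i \le 0$). By the definition of the polyhedral distance function, $d_\triangle(O,w) = \inf\{\lambda \ge 0 \mid w \in \lambda.\triangle(O)\}$, so the whole task reduces to describing $\lambda.\triangle(O)$ explicitly.

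The key step is to observe that $\triangle(O)$, the simplex with vertices $b_0,\dots,b_n$ where $(b_i)_j = n$ if $i=j$ and $-1$ otherwise, can be rewritten as $b_i = (n+1)e_i - \vec{\mathbf 1}$. Hence a point of $\triangle(O)$ has the form $\sum_i t_i b_i = (n+1)(t_0,\dots,t_n) - \vec{\mathbf 1}$ with $t_i \ge 0$, $\sum t_i = 1$; equivalently, $\triangle(O) = \{\,x \in H_0 \mid x_j \ge -1 \text{ for all } j\,\}$ (the condition $\sum x_j = 0$ is automatic, and scaling/translating the standard probability simplex by $n+1$ and $-\vec{\mathbf 1}$ gives exactly the halfspaces $x_j \ge -1$). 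Consequently $\lambda.\triangle(O) = \{\,x \in H_0 \mid x_j \ge -\lambda \text{ for all } j\,\}$ for $\lambda \ge 0$. Therefore $w \in \lambda.\triangle(O)$ iff $w_j \ge -\lambda$ for every $j$, i.e. iff $\lambda \ge -\min_j w_j = -\min_j w_j$; taking the infimum gives $d_\triangle(O,w) = -\min_j w_j = |\bigoplus_j w_j|$. Combining with $d_\triangle(p,q) = d_\triangle(O,q-p)$ yields $d_\triangle(p,q) = |\bigoplus_i (q_i - p_i)|$, which also matches the original definition $d_\triangle(p,q) = \inf\{\lambda \mid q - p + \lambda\vec{\mathbf 1} \ge 0\}$ since $q-p+\lambda\vec{\mathbf 1}\ge 0$ is precisely $q_i - p_i \ge -\lambda$ for all $i$.

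Finally, the formula for $d_{\bar\triangle}$ follows immediately from the anti-symmetry relation $d_\triangle(p,q) = d_{\bar\triangle}(q,p)$ already recorded above (applied with the roles of $p$ and $q$ exchanged): $d_{\bar\triangle}(p,q) = d_\triangle(q,p) = |\bigoplus_i (p_i - q_i)|$. I do not expect any serious obstacle here; the only point requiring a little care is the identification $\triangle(O) = \{x \in H_0 \mid x_j \ge -1 \ \forall j\}$, i.e. checking that the $n+1$ halfspace constraints cut out exactly the convex hull of the $b_i$ inside $H_0$ and not something larger — this is a routine verification that each facet $\{x_j = -1\}\cap H_0$ is the face spanned by $\{b_i : i \ne j\}$, and that $O$ lies in the interior so the description is consistent with $O$ being the reference point of the distance function.
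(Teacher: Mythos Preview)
Your proposal is correct and follows essentially the same route as the paper: both write $b_i=(n+1)e_i-\vec{\mathbf 1}$, express a point of $\lambda\cdot\triangle(O)$ as $(n+1)(\mu_0,\dots,\mu_n)-\lambda\vec{\mathbf 1}$ with nonnegative $\mu_i$ summing to $\lambda$, note that the sum condition is automatic for points of $H_0$, and read off $\lambda_0=-\min_i(q_i-p_i)$; the $\bar\triangle$ case is deduced via the anti-symmetry $d_{\bar\triangle}(p,q)=d_\triangle(q,p)$ in both. The only cosmetic difference is that you phrase the intermediate step as the halfspace description $\lambda\cdot\triangle(O)=\{x\in H_0\mid x_j\ge -\lambda\}$, whereas the paper stays in barycentric coordinates throughout.
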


\begin{proof} 
By the anti-symmetry property of the distance function $d_{\triangle}(.\:,.)$ (namely $d_{\triangle}(p,q) = d_{\bar\triangle}(q,p),\; \forall p,q$), we only need to prove the lemma for $d_{\triangle}(.\:,.)$. By definition, $d_\triangle(p,q)$ is the smallest positive real $\lambda$ such that $q\in p+\lambda.\triangle$. The simplex $\triangle$ being the convex hull of the vectors $b_i$ defined above, it follows that for an element $x\in \lambda.\triangle$, there should exist non-negative reals $\mu_i\geq0$ such that $\sum_{i=0}^n\mu_i=\lambda$ and $x=\mu_0 b_0+\mu_1b_1+\dots+\mu_nb_n$. From the definition of the vector $b_i$'s, we obtain
$x=(n+1)(\mu_0,\mu_1,\dots,\mu_n)-\lambda(1,\dots,1)$. It follows that $d_{\triangle}(p,q)$ is the smallest $\lambda$ such that $q-p+\lambda.(1,\dots,1)$ becomes equal to $(n+1)(\mu_0,\mu_1,\dots,\mu_n)$ for some $\mu_i\geq0$ such that $\sum_i\mu_i=\lambda$. Let $\lambda_0$ be the smallest positive real number such that the vector $\mu:=\frac1{n+1}(q-p+\lambda_0.(1,\dots,1))$ has non-negative coordinates. As $p,q\in H_0$, a simple calculation shows that the other condition $\sum_i\mu_i=\lambda_0$ holds automatically, and hence  such $\lambda_0$ is equal to $d_{\triangle}(p,q)$. It is now easy to see that $\lambda_0 = \max_i\: (p_i-q_i) = -\min_{i}\:(q_i-p_i)$. It follows that $d_{\triangle}(p,q)=|\bigoplus_{i=0}^n(q_i-p_i)|.$ 
 \end{proof}

\subsection{Vertices of $\mathrm {Vor}_{\triangle}(L)$ that are Critical Points of a Distance Function.}\label{sec:criticalvertices}
For a discrete subset $\mathcal S$ of $H_0$ (e.g., $\mathcal S=L$), the simplicial distance function $h_{\triangle,\mathcal S}:H_0 \rightarrow \mathbb{R}$ is defined as follows:
\begin{align*}
 h_{\triangle,\mathcal S}(x) \:&=\: \bigoplus_{p\in \mathcal S} d_{\triangle}(x,p)\:=\:\min_{p\in \mathcal S} d_{\triangle}(x,p).
\end{align*}
By  definition, it is straightforward to verify that
$h_{\triangle,\mathcal S}(x)\:=\:\sup \{\:\lambda\ |\ (x+\lambda.\triangle) \cap \mathcal S=\emptyset\:\}$.

\noindent Note that our definition above exactly imitates  the classical definition of a distance function~\cite{GJ08}. 
 In what follows, we restrict ourselves to $\mathcal S=L$. 
   
Let $L$ be a full-rank sub-lattice of $A_n$ and $h_{\triangle,L}$ be the distance function defined by $L$. We first give a description of $\partial\Sigma^c(L)$ (see Section~\ref{sec:extremal}) in terms of $h_{\triangle,L}$.  The {\it lower-graph} of $h_{\triangle,L}$ is the graph of the function $h_{\triangle,L}$ in the negative half-space of  $\mathbb R^{n+1}$, i.e., in the half-space of $\mathbb R^{n+1}$ consisting of points  of negative degree. More precisely, the {\it lower-graph} of $h_{\triangle,L}$, denoted by $\Gr(h_{\triangle,L})$, consists of all the points $y-h_{\triangle,L}(y)(1,\dots,1)$ for $y\in H_0$.  (In the example given in Figure~\ref{defVor}, these are all the vertices of the polygon drawn in the plane $H_2$ (the right figure) having one concave and one convex neighbours on the polygon. There are six of them.)

We have
\begin{lemm}\label{lem:lower-graph} The lower-graph of $h_{\triangle,L}$ and $\partial\Sigma^c(L)$ coincide, i.e., $\emph{Gr}(h_{\triangle,L})=\partial\Sigma^c(L)$.   
\end{lemm}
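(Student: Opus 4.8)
The plan is to prove the two inclusions $\mathrm{Gr}(h_{\triangle,L}) \subseteq \partial\Sigma^c(L)$ and $\partial\Sigma^c(L) \subseteq \mathrm{Gr}(h_{\triangle,L})$ separately, using the explicit formula for $d_\triangle$ from Lemma~\ref{d_form} and the characterization of $\Sigma^c(L)$ from Lemma~\ref{sigc}. The key translation is that, by Lemma~\ref{d_form}, for a point $x\in\mathbb R^{n+1}$ and a lattice point $q\in L$, the condition $x < q$ (strict domination) is essentially the statement $d_\triangle(\pi_0(x),q) > \deg(x)\cdot(\text{sign correction})$; more precisely, writing a point of the lower-graph as $z = y - h_{\triangle,L}(y)(1,\dots,1)$ with $y\in H_0$, one has $\deg(z) = -(n+1)h_{\triangle,L}(y)$ and $\pi_0(z) = y$, so that $d_\triangle(y,q) = |{\bigoplus_i (q_i - y_i)}| = -\min_i(q_i-y_i) = \max_i(y_i - q_i)$. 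The condition $z \le q$, i.e. $z_i \le q_i$ for all $i$, unwinds to $\max_i(y_i-q_i) \le h_{\triangle,L}(y)$, and $z<q$ to a strict inequality.

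First I would establish $\mathrm{Gr}(h_{\triangle,L}) \subseteq \partial\Sigma^c(L)$. Take $z = y - h_{\triangle,L}(y)(1,\dots,1)$. For any $q\in L$, since $h_{\triangle,L}(y) = \min_{p\in L} d_\triangle(y,p) \le d_\triangle(y,q) = \max_i(y_i-q_i)$, we get $z_i = y_i - h_{\triangle,L}(y) \ge y_i - \max_j(y_j - q_j) \ge q_i$ for the index $i$ achieving the max — hence $z \nless q$. By Lemma~\ref{sigc}, $z \in \Sigma^c(L)$. To see $z \in \partial\Sigma^c(L)$ and not the interior, I would use that $h_{\triangle,L}(y)$ is realized: there exists $p^*\in L$ with $h_{\triangle,L}(y) = d_\triangle(y,p^*) = \max_i(y_i - p^*_i)$, so $z_i \le p^*_i$ for all $i$ with equality at the maximizing index; thus $z \le p^*$, and for every $\varepsilon>0$ the point $z - \varepsilon(1,\dots,1)$ satisfies $z - \varepsilon(1,\dots,1) < p^*$, so it lies outside $\Sigma^c(L)$ by Lemma~\ref{sigc}. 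Hence $z$ is a boundary point. (One should note that $h_{\triangle,L}$ is finite everywhere and the infimum is attained, which follows from discreteness of $L$ together with Lemma~\ref{compact_lem}, or directly from the boundedness arguments already in the text; I would cite Lemma~\ref{sup_lem}/\ref{compact_lem} as needed.)

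Conversely, for $\partial\Sigma^c(L) \subseteq \mathrm{Gr}(h_{\triangle,L})$, take $u \in \partial\Sigma^c(L)$. Set $y = \pi_0(u) \in H_0$, so $u = y - t(1,\dots,1)$ where $t = \deg(u)/(n+1) \cdot(-1)$... more cleanly, $u = y + s(1,\dots,1)$ for the unique real $s$ with $\deg(u) = (n+1)s$. Since $u\in\Sigma^c(L)$, Lemma~\ref{sigc} gives $u \nless q$ for all $q\in L$, i.e. for each $q$ there is an index $i$ with $u_i \ge q_i$, which reads $y_i + s \ge q_i$, i.e. $-s \le y_i - q_i \le \max_j(y_j - q_j) = d_\triangle(y,q)$; taking the min over $q\in L$ yields $-s \le h_{\triangle,L}(y)$, i.e. $u \ge y - h_{\triangle,L}(y)(1,\dots,1)$. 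On the other hand, since $u$ is a boundary point, it is a limit of points not in $\Sigma^c(L)$; any point $u'$ with $u' < u$ (coordinatewise), being $\le$ some non-$\Sigma^c$ point or by a direct argument, will satisfy $u' < q$ for some $q$, and pushing this to the limit along $u - \varepsilon(1,\dots,1)$ forces the reverse inequality $-s \ge h_{\triangle,L}(y)$. Combining, $s = -h_{\triangle,L}(y)$, so $u = y - h_{\triangle,L}(y)(1,\dots,1) \in \mathrm{Gr}(h_{\triangle,L})$.

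The main obstacle I anticipate is the boundary-versus-interior bookkeeping in both directions: making rigorous the claim that a point of $\Sigma^c(L)$ lies in the topological boundary \emph{exactly} when moving it down by $\varepsilon(1,\dots,1)$ leaves $\Sigma^c(L)$. This requires knowing that $\Sigma^c(L)$ is ``downward closed along the diagonal direction'' near its boundary — equivalently that if $u\in\Sigma^c(L)$ and $u + \varepsilon(1,\dots,1)\notin\Sigma^c(L)$ cannot happen while $u-\varepsilon(1,\dots,1)\in\Sigma^c(L)$ for all small $\varepsilon$. This monotonicity is intuitively clear from Lemma~\ref{sigc} (if $u' \le u''$ and $u'' < q$ then $u' < q$, so $\Sigma^c(L)$ is an up-set, hence its boundary is exactly where one can decrease to exit), and I would state it as a short preliminary observation and then the two inclusions above become routine manipulations of the formula in Lemma~\ref{d_form}. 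The continuity/attainment of $h_{\triangle,L}$ is the only other technical point, and it is already implicitly available from Lemmas~\ref{sup_lem} and~\ref{compact_lem}.
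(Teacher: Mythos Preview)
Your proposal is correct and takes essentially the same approach as the paper. The paper's proof is more compressed: it introduces the auxiliary function $f_p(y)=y-d_\triangle(y,p)(1,\dots,1)$, characterizes $\partial\Sigma^c(L)$ directly as $\{z:z\leq p\text{ for some }p\in L\text{ and }z\nless q\text{ for all }q\in L\}$, and identifies this with $\{\sup_{p\in L}f_p(y):y\in H_0\}=\mathrm{Gr}(h_{\triangle,L})$; your two-inclusion argument unpacks exactly the same computation, and the ``up-set'' observation you single out as the key obstacle is precisely what makes the paper's one-line ray-intersection argument work.
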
 

 In order to present the proof of Lemma~\ref{lem:lower-graph}, we need to make some remarks. Let $p$ be a point of $L$. The function $f_p:H_0 \rightarrow \mathbb R^{n+1}$ is defined as follows: 
\[\forall \: y\in H_0,\: f_p(y) := \sup\:\{y_t\:|\:y_t=y-t.(1,\dots,1), t\geq 0, \:\textrm{and}\: y_t\leq p\:\}.\]
\noindent Note that $\sup$ is defined with respect to the ordering of $\mathbb R^{n+1}$, and is well-defined because $y_t\geq y_{t'}$ if and only if $t\leq t'$. Remark also that $f_p(y)$ is finite.

\begin{rema}\rm
The above notion has the following tropical meaning:  Let $\lambda_p=\min\:\{t\in\mathbb R\:\:|\:\:t\odot p \oplus y = y\}$. Then $y_p = (-\lambda_p)\odot y$. The numbers $\lambda_p$ are used in~\cite{DS04} to define the tropical closest point projection into some tropical polytopes. For a finite set of points $p_1,\dots,p_l$ with the tropical convex-hull polytope $Q$, the tropical projection map $\pi_Q$ at the point $y$ is defined as $\pi_Q(y) = \lambda_{p_1}\odot p_1 \oplus \dots \oplus \lambda_{p_l}\odot p_l.$ It would be interesting to explore the connection between the work presented here and the theory of tropical polytopes. 
\end{rema}

 A simple calculation shows that $f_p(y) = y-|\bigoplus_i (p_i-y_i)|.(1,\dots,1)$, and hence by Lemma~\ref{d_form}, we obtain $f_p(y)=y-d_{\triangle}(y,p).(1,\dots,1)$. In other words, $f_p(y)$ is the lower-graph of the function $d_{\triangle}(.\:,p)$. We claim that for all $y\in H_0,\:y-h_{\triangle,L}(y)(1,\dots,1)\:=\:\sup_{p\in L}f_p(y).$ Here, $\sup$ is understood as before with respect to the ordering of $\mathbb R^{n+1}$. In other words, the lower-graph $\Gr(h_{\triangle,L})$ is the lower envelope of the graphs $\Gr(f_p)$ for $p\in L$. To see this, remark that $\sup_{p\in L}f_p(y) = \sup_{p\in L} (y-d_{\triangle}(y,p).(1,\dots,1)) = y-(\min_{p\in L}d_{\triangle}(y,p)).(1,\dots,1) = y-h_{\triangle,L}(y).(1,\dots,1)$.

\begin{proof}[Proof of Lemma~\ref{lem:lower-graph}] It is easy to see that for every point $y\in H_0$, the intersection of the half-ray $\{\:y-t(1,\dots,1)\:|\: t\geq0\:\}$ with $\partial \Sigma^c(L)$ is the point $y-h_{\triangle,L}(L).(1,\dots,1)\in\Gr(h_{\triangle,L})$. This gives the lemma. More precisely, by the definition of $\Sigma^c(L)$ (see Section~\ref{sec:extremal}), we have
\begin{align*}
\partial\Sigma^c(L) \:&=\: \{\:z\:|\:z\leq p\:\textrm{for some}\: p\in L\:\textrm{and}\:z\nless p,\:\forall p\in L \}\\
&\hspace{-0.1cm}\:=\:\{\:\sup_{p\in L}f_p(y)\:|\:y\in H_0\:\}\:=\:\Gr(h_{\triangle,L})\:\:\:(\textrm{By the discussion above)}.
\end{align*}
\end{proof}

\noindent It is possible to strengthen Lemma~\ref{lem:lower-graph} and to obtain a description of the Voronoi diagram $\Vor_{\triangle}(L)$ in terms of the boundary of the Sigma-Region. The following lemma can be seen as the simplicial Voronoi diagram analogue of the classical result that {\it the Voronoi diagram under the Euclidean metric is the projection of a lower envelope of paraboloids \cite{Edels01}.} 

\begin{lemm}\label{lowenv_lem} The Voronoi diagram of $L$ under the simplicial distance function $d_{\triangle}(.\:,.)$ is the projection of $\partial \Sigma^{c}(L)$ along $(1,\dots,1)$ onto the hyperplane $H_0$. More precisely, for any $p \in L$, the Voronoi cell $V_{\triangle}(p)$ is obtained as the image of  $H_p^-\cap \partial\Sigma^c(L)$ under the projection map $\pi_0$.
\end{lemm}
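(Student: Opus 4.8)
The plan is to combine Lemma~\ref{lem:lower-graph}, which identifies $\Gr(h_{\triangle,L})$ with $\partial\Sigma^c(L)$, with the explicit formula for the simplicial distance from Lemma~\ref{d_form}, and then translate the membership condition "$x\in V_\triangle(p)$" into a condition on the lower-graph point lying in $H_p^-$. First I would fix a point $p\in L$ and describe the image of $H_p^-\cap\partial\Sigma^c(L)$ under $\pi_0$ pointwise: by Lemma~\ref{lem:lower-graph}, a point $z\in\partial\Sigma^c(L)$ has the form $z = y - h_{\triangle,L}(y)(1,\dots,1)$ for $y=\pi_0(z)\in H_0$, and moreover $z = \sup_{q\in L} f_q(y)$, with the supremum attained at exactly those $q$ realizing the minimum in $h_{\triangle,L}(y)=\min_{q\in L}d_\triangle(y,q)$ (using that $f_q(y) = y - d_\triangle(y,q)(1,\dots,1)$). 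So the key observation is: for $y\in H_0$, the corresponding lower-graph point $z$ lies in $H_p^-$ if and only if $z\le p$, and since $z\le f_p(y)$ always (as $f_p(y)\le\sup_q f_q(y)=z$ would be the wrong direction — rather $z=\sup_q f_q(y)\ge f_p(y)$, and $f_p(y)\le p$ by definition), one must be careful: $z\le p$ forces $f_p(y) = z$, i.e. $p$ is among the minimizers, i.e. $d_\triangle(y,p)=h_{\triangle,L}(y)=\min_{q\in L}d_\triangle(y,q)$, which is precisely the statement $y\in V_\triangle(p)$.

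More concretely, the two inclusions I would verify are as follows. For $\pi_0\bigl(H_p^-\cap\partial\Sigma^c(L)\bigr)\subseteq V_\triangle(p)$: take $z\in H_p^-\cap\partial\Sigma^c(L)$ and set $y=\pi_0(z)$; then $z=y-h_{\triangle,L}(y)(1,\dots,1)$ and $z\le p$. Writing $z = y - \lambda(1,\dots,1)$ with $\lambda = h_{\triangle,L}(y)$, the condition $z\le p$ reads $y-\lambda(1,\dots,1)\le p$, i.e. $\lambda\ge \max_i (y_i - p_i) = d_\triangle(y,p)$ by Lemma~\ref{d_form}. But $\lambda = h_{\triangle,L}(y)\le d_\triangle(y,p)$ by definition of $h$, so $d_\triangle(y,p)=h_{\triangle,L}(y)=\min_{q\in L}d_\triangle(y,q)$, which says exactly $y\in V_\triangle(p)$. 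Conversely, for $V_\triangle(p)\subseteq\pi_0\bigl(H_p^-\cap\partial\Sigma^c(L)\bigr)$: given $y\in V_\triangle(p)$, so $d_\triangle(y,p)=h_{\triangle,L}(y)=:\lambda$, let $z = y-\lambda(1,\dots,1)$. By Lemma~\ref{lem:lower-graph}, $z\in\Gr(h_{\triangle,L})=\partial\Sigma^c(L)$, and by the distance formula $z\le p$ exactly because $\lambda=d_\triangle(y,p)=\max_i(y_i-p_i)$, so $z\in H_p^-$, and $\pi_0(z)=y$.

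Finally, to conclude the full statement "$\Vor_\triangle(L)$ is the projection of $\partial\Sigma^c(L)$ along $(1,\dots,1)$ onto $H_0$," I would note that $\pi_0$ restricted to $\Gr(h_{\triangle,L})=\partial\Sigma^c(L)$ is a bijection onto $H_0$ (its inverse being $y\mapsto y-h_{\triangle,L}(y)(1,\dots,1)$), so the partition of $H_0$ into Voronoi cells $V_\triangle(p)$ is the $\pi_0$-image of the partition of $\partial\Sigma^c(L)$ into the pieces $H_p^-\cap\partial\Sigma^c(L)$; the per-cell identity just proved then gives the claim, with the covering property $\partial\Sigma^c(L)=\bigcup_{p\in L}(H_p^-\cap\partial\Sigma^c(L))$ following from the description of $\partial\Sigma^c(L)$ in Lemma~\ref{lem:lower-graph} (every boundary point is $\le$ some lattice point). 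The only mildly delicate point, and the one I would state carefully, is the direction $z\le p \Rightarrow f_p(y)=z$: it rests on the fact that $f_p(y)$ is the \emph{largest} point of the ray $\{y-t(1,\dots,1):t\ge0\}$ dominated by $p$, so any point $z$ of that ray with $z\le p$ satisfies $z\le f_p(y)\le z$ once we also know $z = \sup_{q\in L}f_q(y)\ge f_p(y)$; this is where Lemma~\ref{lem:lower-graph} and its proof are used in an essential way. I do not expect any serious obstacle beyond bookkeeping the direction of inequalities in the domination order versus the scalar $\lambda$.
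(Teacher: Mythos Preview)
Your proposal is correct and follows essentially the same route as the paper: both arguments identify $\partial\Sigma^c(L)$ with the lower graph of $h_{\triangle,L}$ via Lemma~\ref{lem:lower-graph}, and then check that for $y\in H_0$ the lower-graph point $z=y-h_{\triangle,L}(y)(1,\dots,1)$ satisfies $z\le p$ if and only if $h_{\triangle,L}(y)=d_\triangle(y,p)$, i.e.\ $y\in V_\triangle(p)$. The only cosmetic difference is that the paper phrases the key equivalence through the auxiliary functions $f_p$ (so that $z\le p \Leftrightarrow z=f_p(y)$), whereas you invoke the explicit formula $d_\triangle(y,p)=\max_i(y_i-p_i)$ from Lemma~\ref{d_form} directly; these are the same computation.
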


\begin{proof}[Proof of Lemma~\ref{lowenv_lem}] By definition, $H_p^-$ consists of the points which are dominated by $p$. It follows that the intersection $H_p^-\cap \partial\Sigma^c(L)$ consists of all the points of $\partial\Sigma^c(L)$ which are dominated by $p$. By Lemma~\ref{lem:lower-graph}, the boundary of $\Sigma^c(L)$, $\partial\Sigma^c(L)$ coincides with the graph of the simplicial distance function $h_{\triangle,L}$. It follows that  the intersection $H_p^-\cap \partial\Sigma^c(L)$ consists of all the points of the lower-graph of $h_{\triangle,L}$ that are dominated by $p$. By definition, any point of the lower-graph of $h_{\triangle,L}$ is of the form $y-h_{\triangle,L}(y).(1,\dots,1)$ for some $y\in H_0$. By definition of the function $f_p$, such a point is dominated by $p$ if and only if $h_{\triangle,L}(y) \geq f_p(y)$. By definition, we know that $h_{\triangle,L}(y)\leq f_p(y)$ for all $y\in H_0$. We infer that for $y\in H_0$, $y- h_{\triangle,L}(y).(1,\dots,1)
 \in H_p^-\cap \partial\Sigma^c(L)$ if and only if $h_{\triangle,L}(y)= f_p(y)$, or equivalently, if and only if $y\in V_\triangle(p)$. We conclude that $V_{\triangle}(p) = \pi_0(H_p^-\cap \partial\Sigma^c(L))$ and the lemma follows.
 \end{proof}

As we show in the next two lemmas, it is possible to describe Voronoi {\it vertices} that are local maxima of $h_{\triangle,L}$ as the projection of the extremal points of the Sigma-Region onto the hyperplane $H_0$ (see below, Lemma~\ref{ver_cor}, for a precise statement).
  
\noindent Let us denote by $\Crit(L)$ the set of all local maxima of $h_{\triangle,P}$. We have
 \begin{lemm}\label{crit_lem}
The critical points of $L$ are the projection  of the extremal points of $\Sigma^c(L)$ along the vector $(1,\dots,1)$. In other words, $\emph{Crit}(L)= \pi_0(\emph{Ext}^c(L))$. 
\end{lemm}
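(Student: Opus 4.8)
The plan is to establish the equality $\Crit(L) = \pi_0(\Ext^c(L))$ by showing that the correspondence $y \mapsto y - h_{\triangle,L}(y)(1,\dots,1)$, which by Lemma~\ref{lem:lower-graph} maps $H_0$ bijectively onto $\partial\Sigma^c(L)$ with inverse $\pi_0$, carries local maxima of $h_{\triangle,L}$ exactly to local minima of the degree function on $\Sigma^c(L)$ (equivalently, on $\partial\Sigma^c(L)$, since extremal points lie on the boundary). The essential observation is that for $z = y - h_{\triangle,L}(y)(1,\dots,1) \in \partial\Sigma^c(L)$ one has $\deg(z) = \deg(y) - (n+1)\,h_{\triangle,L}(y) = -(n+1)\,h_{\triangle,L}(y)$, since $y \in H_0$ means $\deg(y)=0$. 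Thus along the graph $\Gr(h_{\triangle,L}) = \partial\Sigma^c(L)$, the degree function is simply $-(n+1)$ times the height $h_{\triangle,L}$ read off in the $H_0$-coordinate, so minimizing degree over a neighborhood in $\Sigma^c(L)$ is the same as maximizing $h_{\triangle,L}$ over the corresponding neighborhood in $H_0$.

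First I would record that $\Ext^c(L) \subseteq \partial\Sigma^c(L)$ (already proved in the excerpt), so every extremal point is of the form $z_y := y - h_{\triangle,L}(y)(1,\dots,1)$ for a unique $y = \pi_0(z_y) \in H_0$; conversely every point of $\partial\Sigma^c(L)$ has this form. Next I would verify the two directions of the neighborhood correspondence. If $y$ is a local maximum of $h_{\triangle,L}$, pick a ball $B' \subseteq H_0$ around $y$ on which $h_{\triangle,L} \le h_{\triangle,L}(y)$; the set $U := \{\, w - t(1,\dots,1) \mid w \in B',\ t \in \mathbb{R}\,\}$ is an open subset of $\mathbb{R}^{n+1}$ containing $z_y$, and for any $z' \in U \cap \Sigma^c(L)$ we have $\pi_0(z') = w \in B'$ and, because $z' \in \Sigma^c(L)$ lies on or below the lower-graph in the $(1,\dots,1)$-direction, $z' \le w - h_{\triangle,L}(w)(1,\dots,1)$ coordinatewise in the sense that $\deg(z') \le \deg(w) - (n+1)h_{\triangle,L}(w) = -(n+1)h_{\triangle,L}(w) \ge -(n+1)h_{\triangle,L}(y) = \deg(z_y)$. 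Hence $z_y$ minimizes degree on $U \cap \Sigma^c(L)$, so $z_y \in \Ext^c(L)$ and $y = \pi_0(z_y) \in \pi_0(\Ext^c(L))$. For the reverse inclusion, start from $\nu \in \Ext^c(L)$ with $y := \pi_0(\nu)$, so $\nu = z_y$; given a ball $B \subseteq \mathbb{R}^{n+1}$ around $\nu$ witnessing extremality, choose a small ball $B' \subseteq H_0$ around $y$ so that $z_w \in B$ for all $w \in B'$ (continuity of $h_{\triangle,L}$, which follows from it being a finite-type min of $1$-Lipschitz-type functions, or simply from Lemma~\ref{lem:lower-graph} together with local finiteness of $L$). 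Then for $w \in B'$, $\deg(z_w) \ge \deg(\nu)$ gives $-(n+1)h_{\triangle,L}(w) \ge -(n+1)h_{\triangle,L}(y)$, i.e. $h_{\triangle,L}(w) \le h_{\triangle,L}(y)$, so $y$ is a local maximum, i.e. $y \in \Crit(L)$.

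The main obstacle I anticipate is handling the topological/measure-theoretic care needed in the inequality $\deg(z') \le -(n+1)h_{\triangle,L}(\pi_0(z'))$ for arbitrary $z' \in \Sigma^c(L)$ (not just boundary points): one must argue that any point of $\Sigma^c(L)$ lies weakly below the lower-graph along its vertical fiber, which is exactly the content that the fiber $\{\pi_0(z') - t(1,\dots,1)\}$ meets $\partial\Sigma^c(L)$ precisely at the lower-graph point and that $\Sigma^c(L)$ is ``downward closed'' along $(1,\dots,1)$ — both already extracted in the proof of Lemma~\ref{lem:lower-graph} and in the proof of Theorem~\ref{dom_thm}. A secondary subtlety is ensuring the neighborhoods match up cleanly in both directions, i.e. that open balls in $\mathbb{R}^{n+1}$ around $\nu$ and open balls in $H_0$ around $\pi_0(\nu)$ can be nested appropriately; this is routine given the continuity of $h_{\triangle,L}$ and of the parametrization $w \mapsto z_w$. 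Once these are in place, the equality $\Crit(L) = \pi_0(\Ext^c(L))$ follows, and combined with Proposition~\ref{prop:twosigmas} one also gets $\Crit(L) = \pi_0(\Ext(L))$ as a bonus.
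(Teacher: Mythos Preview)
Your approach is essentially the one the paper takes: both arguments rest on the bijection $y\mapsto z_y=y-h_{\triangle,L}(y)(1,\dots,1)$ between $H_0$ and $\partial\Sigma^c(L)$ (Lemma~\ref{lem:lower-graph}) together with the identity $\deg(z_y)=-(n+1)h_{\triangle,L}(y)$, so that local maxima of $h_{\triangle,L}$ correspond to local minima of $\deg$ on $\Sigma^c(L)$. The paper phrases each inclusion as a contradiction via sequences, while you work directly with neighborhoods; the content is the same.

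There is, however, a direction slip in your write-up that you should fix. On the vertical fiber through $w\in H_0$, the points of $\Sigma^c(L)$ are exactly those of the form $w-t(1,\dots,1)$ with $t\le h_{\triangle,L}(w)$ (for $t>h_{\triangle,L}(w)$ one has $w-t(1,\dots,1)<p$ for some $p\in L$), so such points lie \emph{above} the lower-graph in degree, i.e.\ $\deg(z')\ge -(n+1)h_{\triangle,L}(w)$, not $\le$. With the inequality flipped your chain becomes
\[
\deg(z')\ \ge\ -(n+1)h_{\triangle,L}(w)\ \ge\ -(n+1)h_{\triangle,L}(y)\ =\ \deg(z_y),
\]
which is exactly what you need to conclude that $z_y$ is a local minimum of $\deg$ on $U\cap\Sigma^c(L)$. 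The ``obstacle'' you flag is therefore not the inequality $\deg(z')\le -(n+1)h_{\triangle,L}(\pi_0(z'))$ but its reverse, and that reverse is immediate from the description of $\Sigma^c(L)$ in Lemma~\ref{sigc}. Once corrected, both directions go through as you outline.
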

\begin{proof} Let $c$ be a point in $\Crit(L)$, and let $x=c-h_{\triangle,L}(c).(1,\dots,1)$, be the corresponding point of the lower-graph of $h_{\triangle,L}$, $\Gr(h_{\triangle,L})\:(=\partial\Sigma^c(L)$ by Lemma~\ref{lem:lower-graph}). Note that $\pi_0(x)=c$. We claim that $x\in \Ext^c(L)$. Assume the contrary. Then there should exist an infinite sequence $\{\:x_i\:\}_{i=1}^{\infty}$ such that $(i)\:x_i\in \partial\Sigma^c(L)$, $(ii)\:\deg(x_i)<\deg(x)$, and $(iii)\:\lim_{i\rightarrow \infty} \: x_i\:=\:x$. By $(i)$ and Lemma~\ref{lem:lower-graph}, we can write $x_i=p_i-h_{\triangle,L}(p_i).(1,\dots,1)$ for some $p_i\in H_0$. By $(ii)$, we should have $-(n+1)h_{\triangle,L}(p_i)=\deg(x_i)<\deg(x)=-(n+1)h_{\triangle,L}(c)$ for every $i$, and so $h_{\triangle,L}(p_i)>h_{\triangle,L}(p_i)$. By $(iii)$, we have $\lim_{i\rightarrow \infty}\: p_i=c$. All together, we have obtained an infinite sequence of points $\{p_i\}$ in $H_0$ such that $h_{\triangle,L}(p_i)>h_{\triangle,L}(c)$ and $\lim_{i\rightarrow \infty}\: p_i=c$. This is a contradiction to our assumption that $c\in \Crit(L)$ is a local maximum of $h_{\triangle,L}$.
 A similar argument shows that for every point $x\in\Ext^c(L)$, $\pi_0(x)$ is in $\Crit(L)$, and  the lemma  follows.
\end{proof}

By Proposition~\ref{prop:twosigmas}, we have $ \pi_0(\Ext^c(L)) = \pi_0(\Ext(L))$, and so 
\begin{coro}\label{cor:crit}
We have $\emph{Crit}(L)= \pi_0(\Ext(L))$.
\end{coro}

The following lemma gives a precise meaning to our claim that the critical points are the Voronoi vertices of the Voronoi diagram, and will be used in Section~\ref{sec:examples} in the proof of Theorem~\ref{thm:main-laplacian} (also used to drive Theorem~\ref{theo:crit-n!}). 
\begin{lemm}\label{ver_cor}
Each $v\in \emph{Crit}(L)$ is a vertex of the Voronoi diagram $\emph{Vor}_{\triangle}(L)$: there exist $n+1$ different points $p_0,\dots,p_n$ in $L$ such that $v \in \bigcap_i V(p_i)$. More precisely, a point $v\in H_0$ is critical, i.e.,  $v\in \emph{Crit}(L)$, if and only if it satisfies the following property: for each of the $n+1$ facets $F_i$ of $\bar\triangle_{h_{\triangle,L}(v)}(v)$, there exists a point $p_i \in L$ such that $p_i \in F_i$ and $p_i$ is not in any of $F_j$ for $j \neq i$. 
\end{lemm}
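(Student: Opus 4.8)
The plan is to combine the description of $\partial\Sigma^c(L)$ as the lower-graph of $h_{\triangle,L}$ (Lemma~\ref{lem:lower-graph}) with the characterization of critical points as projections of extremal points (Lemma~\ref{crit_lem}), and to translate the ``local minimum of degree on $\partial\Sigma^c(L)$'' condition into a statement about how lattice points sit on the facets of the ball $\bar\triangle_{h_{\triangle,L}(v)}(v)$. First I would fix $v\in H_0$, set $\lambda = h_{\triangle,L}(v)$ and $x = v-\lambda(1,\dots,1)\in\partial\Sigma^c(L)$, so that $\pi_0(x)=v$. The key geometric observation is that a point $p\in L$ satisfies $d_\triangle(v,p)=\lambda$ — equivalently $p\in V_\triangle(v)$ in the sense that $v$ lies in the Voronoi cell of $p$ — precisely when $p$ lies on the boundary of the ball $\bar\triangle_\lambda(v) = v+\lambda\bar\triangle(O)$; by Lemma~\ref{d_form} this boundary is cut into $n+1$ facets $F_0,\dots,F_n$, where $F_i$ consists of the points $q$ with $v_i - q_i = \lambda = \max_j(v_j-q_j)$, i.e. the facet where the $i$-th coordinate ``realizes'' the distance. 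Intersecting several facets $F_i\cap F_j$ means $p$ realizes the distance to $v$ in two coordinates simultaneously, which corresponds to $x=v-\lambda(1,\dots,1)$ lying on a lower-dimensional face of $H_p^-$.

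Next I would make precise the link between the facet-covering condition and extremality of $x$. The direction ``$v$ critical $\Rightarrow$ facet condition'': by Corollary~\ref{cor:crit} and Proposition~\ref{prop:twosigmas}, $x\in\Ext^c(L)$, an integral local minimum of $\deg$ on $\Sigma^c(L)$. If some facet $F_i$ of $\bar\triangle_\lambda(v)$ contained no lattice point of $L$ that is \emph{exclusive} to $F_i$ — i.e. every $p\in L\cap F_i$ also lies in some $F_j$, $j\neq i$ — then I claim one can perturb $x$ downward in degree while staying in $\Sigma^c(L)$: concretely, moving $v$ a tiny bit against the outward normal direction associated to $F_i$ (i.e. increasing $v_i$ slightly, or equivalently translating $x$ in a direction $d$ with $\deg(d)<0$ that ``opens'' only the $i$-th coordinate) enlarges the ball on the $F_i$-side without swallowing any new lattice point, strictly decreasing $h_{\triangle,L}$ and hence $\deg(x)$. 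This uses Lemma~\ref{dom_lem}/Corollary~\ref{ext_cor}: non-extremality is detected by a feasible direction of negative degree, and the obstruction to feasibility is exactly some $\epsilon_{p,d}(q)=0$, i.e. some lattice point already touching every coordinate that $d$ acts on — which is what an ``exclusive'' point on $F_i$ provides. Conversely, ``facet condition $\Rightarrow$ $v$ critical'': if each facet $F_i$ carries a point $p_i$ exclusive to it, then for \emph{any} direction $d$ of negative degree there is some coordinate $i$ with $d_i<0$, and the point $p_i$ then forces $\epsilon_{x,d}(p_i)=0$ (the distance in coordinate $i$ is already tight and $d$ shrinks only the margin), so by Corollary~\ref{ext_cor} $x$ is extremal, hence $v\in\Crit(L)$. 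Finally, the existence of $n+1$ distinct $p_i$ with $v\in\bigcap_i V_\triangle(p_i)$ is then immediate from Lemma~\ref{lowenv_lem}: each $p_i$ on $\partial\bar\triangle_\lambda(v)$ has $x\in H_{p_i}^-\cap\partial\Sigma^c(L)$, so $v=\pi_0(x)\in V_\triangle(p_i)$, and exclusivity to distinct facets forces the $p_i$ to be distinct; this shows $v$ is a genuine Voronoi vertex.

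The main obstacle I anticipate is the careful bookkeeping in the perturbation argument — precisely identifying, given a ``deficient'' facet $F_i$, the feasible direction $d$ with $\deg(d)<0$ and verifying via the explicit formula~(\ref{eps_eq}) for $\epsilon_{x,d}$ that no lattice point blocks it, while also handling the borderline case where a lattice point sits on a face $F_i\cap F_j$ of codimension $\geq 2$ (so it is ``shared'' and must not be counted as exclusive to either). The clean way to organize this is to phrase the whole argument in terms of the index set $I(p) = \{\,i : v_i - p_i = \lambda\,\}$ attached to each $p\in L\cap\partial\bar\triangle_\lambda(v)$: the facet condition says $\bigcup_{p} I(p) = \{0,\dots,n\}$ with each coordinate hit by some $p$ having $|I(p)|=1$ available — no, more precisely with the multiset of $I(p)$'s admitting a system of distinct representatives — and then Corollary~\ref{ext_cor} applied to the \emph{finitely many} sign patterns of $d^-$ (using Lemma~\ref{dom_lem} to reduce to $d\in H_O^-$) reduces the claim to a small combinatorial covering statement about these index sets. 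Once that reduction is in place the remaining steps are routine.
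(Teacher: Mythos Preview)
Your approach is essentially correct and close to the paper's, with one organizational difference worth noting and one unnecessary complication you should drop.

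For the forward direction (``$v$ critical $\Rightarrow$ facet condition''), you and the paper do the same thing, though the paper is terse about it: from extremality of $x=v-\lambda(1,\dots,1)$, one applies Corollary~\ref{ext_cor} to the specific directions $d=-e_i$ to produce, for each $i$, a lattice point $p_i$ with $\epsilon_{x,-e_i}(p_i)=0$; unwinding this gives $x_i=(p_i)_i$ and $x_j<(p_i)_j$ for $j\neq i$, i.e.\ $p_i$ lies exclusively on $F_i$. Your proposal captures this, and your use of Lemma~\ref{lowenv_lem} to deduce $v\in\bigcap_i V_\triangle(p_i)$ matches the paper exactly.

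For the converse, the paper takes a more direct route than yours: rather than invoking Corollary~\ref{ext_cor} to show $x$ is extremal, it works upstairs with $h_{\triangle,L}$ and computes, for small $d\in H_0$, that $d_\triangle(v+d,p_j)=h_{\triangle,L}(v)+d_j$ (using the explicit facet description and the exclusivity of $p_j$ to $F_j$), so any nonzero $d\in H_0$ has some $d_i<0$ and hence $h_{\triangle,L}(v+d)\leq d_\triangle(v+d,p_i)<h_{\triangle,L}(v)$. This is cleaner than routing through the $\epsilon_{x,d}$ machinery, though your version also works.

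Your last paragraph overcomplicates matters. There is no need for a system-of-distinct-representatives argument or any combinatorial covering statement about the index sets $I(p)$: the direct choice $d=-e_i$ in Corollary~\ref{ext_cor} already forces the resulting $p_i$ to satisfy $x_j<(p_i)_j$ \emph{strictly} for all $j\neq i$, so $p_i$ is automatically exclusive to $F_i$ and the borderline case you worry about simply does not arise. Drop that discussion and the proof is complete.
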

Remark that this shows that every point in $\Crit(L)$ is a vertex of the Voronoi diagram $\Vor_\triangle(L)$.
\begin{proof}
  We first prove that for every $v \in \Crit(L)$, there exist $(n+1)$ different points $p_i\in L, \:i=0,\dots,n$, such that the corresponding Voronoi cells $V_{\triangle}(p_i)$ shares $v$, i.e., such that $v\in V_{\triangle}(p_i)$ for $i\in\{\:0,\dots,n\:\}$. By Lemma~\ref{crit_lem}, we know that there exists a point $x\in \Ext^{c}(L)$ such that $\pi_0(x)=v$. 
 We will prove the following: there exist $(n+1)$ different points $p_i\in L, \:i=0,\dots,n$ such that $x\in H^-_{p_i}$ for all $i\in\{\:0,\dots,n\:\}$. Once this has been proved, we will be done. Indeed by Lemma~\ref{lowenv_lem}, we know that that every Voronoi cell $V_{\triangle}(p)$, for $p\in L$, is of the form $\pi_0(H_p^-)\cap \partial\Sigma^c(L)$. So $v \in \pi_0 (H_{p_i}^-\cap \partial\Sigma^c(L))=V_{\triangle}(p_i)$ for each point $p_i$, and this is exactly what we wanted to prove.

 To prove the second part, it will be enough to show that the points $p_i$ have the desired property. Remark that we have $d_{\bar\triangle}(p_i,v)=d_\triangle(v,p_i)=h_{\triangle,L}(v)$, so $p_i \in \partial\bar\triangle_{h_{\triangle,L}(v)}(v)$ for all $i$. By the choice of $p_i$, we have $(p_i)_j > x_j$ for all $j \neq i$ and $(p_i)_i = x_i$. Since $v=\pi_0(x)$, it is now easy to see that $p_i$ is in the facet $F_i$ of $\bar\triangle_{h_{\triangle,L}(v)}(v)$ defined by
 $$F_i = \{\:u\in\bar\triangle_{h_{\triangle,L}(v)}(v)\:\:|\:\: u_i =v_i - h_{\triangle,L}(v) \:\:\textrm{and}\:\: u_j \geq v_j - h_{\triangle,L}(v)\:\}.$$
(Remark that $d_{\bar\triangle}(x,v) = |\oplus_j (x_j - v_j)|$ so this is a facet of $\bar\triangle_{h_{\triangle,L}(v)}(v)$.)
And $p_i$ is not in any of the other facets $F_j$ (since $(p_i)_j > v_j - h_{\triangle,L}(v)$ for $j \neq i$). So the proof of one direction  is now complete.
To prove the other direction, let $v$ be a point such that each of the $n+1$ facets $F_i$ of $\bar\triangle_{h_{\triangle,L}(v)}(v)$ has a point $p_i \in L$ and $p_i$ is not in any of the other facets $F_j$ for $j \neq i$. We show that $v$ is critical, i.e., $v$ is a local maxima of $h_{\triangle,L}$. It will be enough to show that for any non-zero vector $d\in H_0$ of sufficiently small norm, there exists one of the points $p_i$ such that $d_{\triangle}(v+d,p_i) < h_{\triangle,L}(v)=d_{\triangle}(v,p_i)$. For all $j$, by the characterisation of the facet $F_j$ (see above) and by $p_j \notin F_k$ for all $k\neq i$, we have $d_{\triangle}(v+d,p_j) = d_{\bar\triangle}(p_j,v+d) = |\bigoplus_k (p_j)_k-v_k-d_k| = d_j+v_j-(p_j)_j=h_{\triangle,L}(v)+d_j$ if all $d_k$'s are sufficiently small (namely if for all $k$, $|d_k| \leq \epsilon$ where $\epsilon >0$ is chosen so that $2\epsilon < \min_{j, k: k\neq j} \bigl[(p_j)_k-v_k + h_{\triangle,L}(v)\bigr]$).  As $d \in H_0$ and $d\neq
 0$, there exists $i$ such that $d_i < 0$. It follows that $h_{\triangle,L}(d+v) \leq d_{\triangle}(v+d,p_i) < h_{\triangle,L}(v)$. And this shows that $v$ is a local maximum of $h_{\triangle,L}$. The proof of the lemma is now complete.
\end{proof}

\subsection{Proof of Lemma~\ref{sup_lem}}
We end this section by providing the promised short proof of Lemma~\ref{sup_lem}, which claims that the degree function is bounded below in the region $\Sigma^c(L)$.
\begin{proof}[Proof of Lemma~\ref{sup_lem}]
In Section~\ref{sec:criticalvertices} we obtained  the following explicit formula for $f_p(y)$: 
$$\forall y\in H_0,\: f_p(y)\:=\:y-d_\triangle(y,p)(1,\dots,1).$$
 We infer that  
\begin{equation}\label{eq:vp}
\forall \: y\in V_{\triangle}(p):\:\:\:f_p(y)\:=\:y-h_{\triangle,L}(y).(1,\dots,1). 
\end{equation} 

 By Lemma \ref{lem:lower-graph}, we have $\partial\Sigma^c(L)=\Gr(h_{\triangle,L})$. It follows from Equation~\ref{eq:vp} that 
\begin{align*}
&\:\:\:\:\:\:\:\partial\Sigma^{c}(L)\:=\:\{\:f_p(y)\:|\:y \in V_\triangle(p)\:\textrm{and}\: p \in L\}. 
\end{align*} 

 We now observe that:
\begin{align*}
\forall\: y\in H_0:\:\deg(f_p(y))\:=\:\deg(y)-(n+1)d_{\triangle}(y,p)\:=&\:-(n+1)d_{\triangle}(y,p).
\end{align*}
\noindent This shows that $\deg(f_p(y))$ depends only on the simplicial distance $d_\triangle$ between $y$ and $p$.
 By translation invariance of the simplicial distance function (Lemma~\ref{lem:trans-inv}), translation invariance of the Voronoi cells (Lemma~\ref{trans_lem}), and the above observations, we obtain
\begin{align*}
\inf(\deg(\Sigma^c(L)))\:&=\:\inf_{y\in V_{\triangle}(p)}\{\:-(n+1)d_{\triangle}(y,p)\:\}\\ 
&=\:\inf_{y\in V_{\triangle}(O)}\{\:-(n+1)d_{\triangle}(y,O)\:\}\\ 
&=\:-(n+1)\sup_{y\in V_{\triangle}(O)}\{\:d_{\triangle}(y,O)\:\}.
\end{align*}

\noindent By Lemma \ref{compact_lem}, we know that $V_{\triangle}(O)$ is compact. Also the function $d_{\triangle}(O,y)$ is continuous on $y$.
Hence $\sup_{y \in V_{\triangle}(O)}\{d_{\triangle}(y,O)\}\}$ is finite and the lemma follows.
\end{proof}


\section{Riemann-Roch Theorem for Uniform Reflection Invariant Sub-Lattices}
\label{sec:R-R-theo}

Consider a full dimensional sub-lattice $L$ of $A_n$ and its Voronoi diagram $\Vor_{\triangle}(L)$ under the simplicial distance function. From the previous sections, we know that the points of $\Crit(L)$ are vertices of $\Vor_{\triangle}(L)$.  We know that $V_{\triangle}(O)$ is a compact star-shaped polyhedron with $O$ as a kernel, and that the other cells are all translations of $V_{\triangle}(0)$ by points in $L$. Consider now the subset $\Crit V_\triangle(O)$ of vertices of $V_\triangle(O)$ which are in $\Crit(L)$.    The sub-lattices of $A_n$ of interest for us should have the following symmetry property:  

\begin{defi}[Reflection Invariance]\rm
A sub-lattice $L\subseteq A_n$ is called {\it reflection invariant} if $-\Crit(L)$ is a translate of $\Crit(L)$, i.e., if there exists $t \in \mathbb{R}^{n+1}$ such that $-\Crit(L)=\Crit(L)+t$. $L$ is called {\it strongly reflection invariant} if the same property holds for $\Crit V_{\triangle}(O)$, i.e., if there exists $t \in \mathbb{R}^{n+1}$ such that $-\Crit V_\triangle(O)=\Crit V_\triangle(O)+t$.
\end{defi}
By translation invariance, it is easy to show that every strongly reflection invariant sub-lattice of $A_n$ is indeed reflection invariant. 

\noindent Also, note that the vector $t$ in the definition of reflection invariance lattices above is not uniquely defined: by translation invariance, if $t'$ is linearly equivalent to $t$, $t'$ also satisfies the property given in the definition.

\paragraph*{Reflection Invariance and Involution of $\Ext(L)$.}
Let  $L$ be reflection invariant and $t \in \mathbb{R}^{n+1}$ be a point such that $-\Crit(L)=\Crit(L)+t$. This means that for any $c\in \Crit(L)$ there exists a unique $\bar c \in \Crit(L)$ such that $c+\bar c=-t$.
 By Lemma~\ref{crit_lem} and Corollary~\ref{cor:crit}, for every point $c$ in $\Crit(L)$, there exists a point $\nu$ in $\Ext(L)$ such that $c=\pi_0(\nu)$. Thus, for every point $\nu$ in $\Ext(L)$, there exists a point $\bar{\nu}$ in $\Ext(L)$ such that $\pi_0(\nu+\bar{\nu})=-t$. This allows to define an involution $\phi (= \phi_t) : \Ext(L) \rightarrow \Ext(L)$:
 
 \begin{center} 
 For any point $\nu \in \Ext(L)$, $\phi(\nu) := \bar \nu$. 
 \end{center}
Note  that $\phi$ is well defined. Indeed, if there exist two different points $\bar \nu_1$ and $\bar \nu_2$ such that $\pi_0(\nu+\bar{\nu}_i)=-t$ for $i=1,2$, then  $\pi_0(\bar{\nu_1}) =\pi_0(\bar{\nu_2})$ and this would imply that $\bar{\nu_1} > \bar{\nu_2}$ or $\bar{\nu_2} > \bar{\nu_1}$ which contradicts the hypothesis that $\bar{\nu_1},\bar{\nu_2} \in \Ext(L)$. A similar argument shows that $\phi$ is a bijection on $\Ext(L)$ and is an involution.

\subsection{A Riemann-Roch Inequality for Reflection Invariant Sub-Lattices: Proof of Theorem~\ref{rrineq_theo}}

 In this subsection, we provide the proof of the Riemann-Roch inequality stated in Theorem~\ref{rrineq_theo} for reflection invariant sub-lattices of $A_n$. We refer to Section~\ref{sec:min-max-genus} for the definition of $g_{min}$ and $g_{max}$. 
 
 Let $L$ be a reflection invariant sub-lattice of $A_n$. We have to show the existence of  a canonical point $K\in\mathbb Z^{n+1}$ such that for every point $D \in \mathbb{Z}^{n+1}$, we have
\begin{equation}\label{Rie-Ro_ineq}3g_{min}-2g_{max}-1\: \leq\: r(K-D)-r(D)+\deg(D)\: \leq\:g_{max}-1\:	.
\end{equation}
$K$ is defined up to linear equivalence (which is manifested  in the choice of $t$ in the definition of reflection invariance).

\hspace{.5cm}

\noindent{\bf Construction of a Canonical Point $K$.}

\noindent We define the canonical point $K$ as follows: Let $\nu_0\in \Ext(L)$ be an extremal point such that $\nu_0+\phi(\nu_0)$ has the maximum degree, i.e., $\nu_0=\mathrm{argmax}\:\{\:\deg(\nu+\phi(\nu))\:|\: \nu \in \Ext(L)\:\}$. The map $\phi$ is the involution defined above. 
Define $K:=-\nu_0-\phi(\nu_0)$.

 \vspace{.3cm}

\noindent{\bf Proof of the Riemann-Roch Inequality.}
We first observe that $K$ is well-defined and for any point $\nu$ in $\Ext(L)$, $\nu\:+\:\bar{\nu}\: \leq\: -K.$ This is true because all the points $\nu+\bar \nu$ are on the line $-t + \alpha(1,\dots,1)), \ \alpha\in \mathbb R$, and $K$ is chosen in such a way to ensure that $-K$ has the maximum degree among the points of that line. We infer that for any point $\nu\in \Ext(L)$, there exists an effective point $E_\nu$ such that $\nu+\bar \nu =-K-E_\nu$. Using this, we first derive an upper bound on the  quantity $\deg^{+}(K-D+\bar{\nu})-\deg^{+}(\nu+D)$ as follows:
\begin{align}
\deg^{+}(K-D+\bar{\nu}) - \deg^{+}(\nu+D)&=\label{eq1}\deg^{+}(-\nu-\bar{\nu}-E_\nu-D+\bar{\nu})-\deg^{+}(\nu+D)\\
&= \label{ineq2} \deg^{+}(-\nu-E_\nu-D)-\deg^{+}(\nu+D)\\ 
&\leq \deg^{+}(-\nu-D)-\deg^{+}(\nu+D)\\ 
&= \deg(-\nu-D)\:=\:-\deg(\nu)-\deg(D)\\
&\label{lem_gmax}\leq g_{max}-\deg(D)-1.
\end{align}

\noindent To obtain Inequality (\ref{ineq2}), we use the fact that if $E \geq 0$ then $\deg^{+}(D-E) \leq \deg^{+}(D)$. Also  remark that Inequality (\ref{lem_gmax}) is a simple consequence of the definition of $g_{max}$.

\noindent Now, we obtain a lower bound on the quantity $\deg^{+}(K-D+\bar{\nu})-\deg^{+}(\nu+D)$.  In order to do so, we first obtain an upper bound on the degree of $E_\nu$, for the effective point $E_\nu$ such that $\nu+\bar \nu =-K-E_\nu$. To do so, we note that by the definition of $K$ and by the definition of $g_{min}$, we have $\deg(K)=\min(\deg(-\nu-\bar \nu)) \geq 2 g_{min}-2$. Also observe that by the definition of $g_{max}$, we have $\deg(-\nu-\bar \nu) \leq 2g_{max}-2$. It follows that
\[\deg(E_v) = -\deg(K)+\deg(-\nu-\bar \nu) \leq 2(g_{max}-g_{min}).\]
 \noindent We proceed as follows
\begin{align*}
\deg^{+}(K-D+\bar{\nu}) - \deg^{+}(\nu+D)\:&=\: \deg^{+}(-\nu-E_\nu-D)-\deg^{+}(\nu+D)\\
\:&\ge\: \deg^{+}(-\nu-D)-\deg(E_\nu)-\deg^{+}(\nu+D)\\
\:&\ge\: 2(g_{min}-g_{max})+\deg^{+}(-\nu-D)-\deg^{+}(\nu+D)\\
\:&\ge\:2(g_{min}-g_{max})-\deg(\nu+D)\\
\:&=\:2(g_{min}-g_{max})-\deg(\nu)-\deg(D)\\
\:&\ge\:3g_{min}-2g_{max}-\deg(D)-1. 
\end{align*} 
\noindent The last inequality follows from the definition of $g_{min}$.
Now since the map $\phi(\nu)=\bar{\nu}$ is a bijection from $\Ext(L)$ onto itself, we can easily see that
\begin{align*}
3g_{min}-2g_{max}-\deg(D)-1 &\leq \min_{\nu \in \Ext(L)}\deg^{+}(K+\bar{\nu}-D) -\min_{\nu \in \Ext(L)}\deg^{+}(\nu+D)\\
&\le g_{max}-\deg(D)-1.
\end{align*}

\noindent By Lemma \ref{rank_lem} and the fact $\phi$ is a bijection, we know that:
\begin{center} 
$r(D)=\min_{\nu \in \Ext(L)}\deg^{+}(\nu+D)-1,$\\
$ r(K-D)=\min_{\bar \nu \in \Ext(L)}\deg^{+}(K-D+\bar \nu)-1.$
\end{center}

\noindent Finally we infer that $3g_{min}-2g_{\max}-\deg(D)-1\:\leq\:r(K-D)-r(D)\:\leq\:g_{max}-\deg(D)-1\:,$
and the Riemann-Roch Inequality (\ref{rrineq_theo}) follows.

\begin{rema}\label{rema:R-R-digraph}\rm
As the above proof shows, we indeed obtain a slightly stronger inequality 
\[g_{min}-\deg(D)-1-\max_{\nu\in \textrm{Ext}(L)} \deg(E_\nu)\:\leq\:r(K-D)-r(D).\]
In particular if $E_\nu=0$ for all $\nu\in \Ext(L)$ (see Section~\ref{sec:digraphs} for examples, e.g., regular digraphs), we have:
\[g_{min}-\deg(D)-1\:\leq\:r(K-D)-r(D)\leq g_{max}-\deg(D)-1.\]
\end{rema}

 We remark that the proof technique used above is quite similar to the one used by Baker and Norine~\cite{BaNo07}.

 \begin{rema}\rm From Lemma \ref{rank_lem}, it is easy to obtain the inequality $\deg(D)-r(D) \leq g_{max}$, for all sub-lattices $L$ of $A_n$ and all $D\in\mathbb Z^{n+1	}$. This inequality is usually referred to as Riemann's inequality. Note that the Riemann-Roch inequality (\ref{Rie-Ro_ineq}) is more sensitive on (and contains more information about) the extent of ``un-evenness'' of the extremal points, while the above trivial inequality does not provide any such information.
 \end{rema}

\subsection{Riemann-Roch Theorem for Uniform Reflection Invariant Lattices}

 Recall that a lattice $L$ is called uniform if $g_{max}=g_{min}$, i.e., every point in $\Ext(L)$ has the same degree. By Corollary~\ref{cor:crit} and the definition of $h_{\triangle}$, this is equivalent to saying that the set of critical values of $h_{\triangle,L}$ is a singleton. We call $g=g_{max}=g_{min}$ the {\it genus} of the lattice.

The following is a direct consequence of Theorem~\ref{rrineq_theo}. However we give it as a separate theorem.
\begin{theo}\label{theo:R-R-uniform} Every uniform reflection invariant sub-lattice $L\subseteq A_n$ of dimension $n$ has the Riemann-Roch property.
\end{theo}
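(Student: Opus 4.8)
The plan is to deduce Theorem~\ref{theo:R-R-uniform} (which is the same statement as Theorem~\ref{rr_theo}) directly from the Weak Riemann-Roch inequality, Theorem~\ref{rrineq_theo}, by specializing to the uniform case $g := g_{min} = g_{max}$. First I would simply substitute $g_{min} = g_{max} = g$ into the two-sided bound
\[3g_{min} - 2g_{max} - 1 \;\leq\; r(K-D) - r(D) + \deg(D) \;\leq\; g_{max} - 1.\]
The left-hand side becomes $3g - 2g - 1 = g - 1$ and the right-hand side becomes $g - 1$, so the two bounds coincide and force
\[r(K-D) - r(D) + \deg(D) = g - 1\]
for every $D \in \mathbb{Z}^{n+1}$, which is exactly the asserted Riemann-Roch identity. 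The point $K$ is the canonical point furnished by Theorem~\ref{rrineq_theo}.

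The only genuine content beyond this one-line substitution is checking the hypotheses of Theorem~\ref{rrineq_theo} are met: we need $L$ to be a reflection invariant sub-lattice of $A_n$ of rank $n$, which is assumed in the statement (a uniform reflection invariant sub-lattice is in particular reflection invariant), and we need the min- and max-genus to be well-defined and finite. The latter follows because $\Ext(L)$ is nonempty and the degree function is bounded below on $\Sigma(L)$; concretely, Lemma~\ref{sup_lem} gives that $\inf(\deg(\Sigma^c(L)))$ is finite, and combined with Theorem~\ref{dom_thm} this shows $\Ext(L)$ is nonempty with degrees bounded below, so $g_{max}(L) < \infty$. In the uniform case all extremal points have the same degree, so there is a single value $g$, and the substitution above is legitimate.

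I do not expect any real obstacle here: the theorem is explicitly flagged in the text as "a direct consequence of Theorem~\ref{rrineq_theo}," and the derivation is purely a matter of collapsing the inequality bounds. The only thing worth spelling out carefully is that the same canonical point $K$ works for both inequalities in Theorem~\ref{rrineq_theo} — but this is automatic since Theorem~\ref{rrineq_theo} asserts the existence of a single $K$ satisfying the full two-sided bound simultaneously. Hence one writes: by Theorem~\ref{rrineq_theo} there is a canonical $K \in \mathbb{Z}^{n+1}$ with $3g_{min} - 2g_{max} - 1 \leq r(K-D) - r(D) + \deg(D) \leq g_{max} - 1$ for all $D$; setting $g = g_{min} = g_{max}$ both extremes equal $g-1$, whence equality throughout, which is the Riemann-Roch property. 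This completes the proof.
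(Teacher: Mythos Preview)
Your proposal is correct and follows essentially the same approach as the paper's own proof: apply Theorem~\ref{rrineq_theo} and collapse the two-sided bound using $g_{min}=g_{max}=g$. The only small addition in the paper is an explicit verification that $\deg(K)=2g-2$, obtained from the construction $K=-(\nu+\bar{\nu})$ together with uniformity; this is part of what ``Riemann-Roch property'' means in the paper (the canonical point is required to have degree $2g-2$), so you should include it, though it also follows immediately from the identity itself by substituting $D=0$ and $D=K$ and adding.
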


\begin{proof}
Let $D\in\mathbb Z^{n+1}$. If $L$ is a reflection invariant lattice, we can apply Theorem \ref{rrineq_theo} to obtain $3g_{min}-2g_{max}-1 \leq r(K-D)-r(D)+\deg(D) \leq g_{max}-1,$
where $K$ is the canonical point defined as in the proof of Theorem~\ref{rrineq_theo}. Since $L$ is uniform we have $g_{max}=g_{min}=g$ and we obtain $r(K-D)-r(D)+\deg(D)=g-1.$ It remains to show that $\deg(K)=2g-2$. But, we know from the construction of $K$ that $K=-(\nu+\bar{\nu})$ for a point $\nu \in\Ext(L)$. Since $L$ is uniform, we infer that $\deg(K)=-\deg(\nu)-\deg(\bar \nu)=2g-2$ (and also that $K=-\nu-\bar \nu$, $\forall\:\nu\in\Ext(L)$). 
\end{proof}

We say that a sub-lattice $L$ of $A_n$ has a Riemann-Roch formula if there exists an integer $m$ 
and an integral point $K_m$, or simply $K$, of degree $2m-2$ (a canonical point) such that for
every integral point $D$, we have:
$$r(D)-r(K-D)=\deg(D)-(m-1).$$

The following result shows the amount of geometric information one can obtain from the Riemann-Roch Property. 

\begin{theo}\label{theo:R-R-class} A sub-lattice $L$ has a Riemann-Roch formula if
and only if it is uniform and reflection invariant. Moreover, for a uniform and
reflection invariant lattice $m=g$ (the genus of the lattice). 
\end{theo}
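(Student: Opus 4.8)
The "if" direction is already done: Theorem~\ref{theo:R-R-uniform} (together with the computation $\deg(K)=2g-2$ in its proof) shows that a uniform reflection invariant lattice has a Riemann-Roch formula with $m=g$. So the content of Theorem~\ref{theo:R-R-class} is the converse: assuming $L$ possesses \emph{some} Riemann-Roch formula, with some integer $m$ and some canonical point $K$ of degree $2m-2$ satisfying $r(D)-r(K-D)=\deg(D)-(m-1)$ for all $D\in\mathbb Z^{n+1}$, deduce that $L$ is uniform and reflection invariant and that necessarily $m=g$.

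\textbf{Step 1: $m=g_{max}$ and uniformity.} Plug special divisors into the formula to pin down $m$. First take $D=0$: then $r(0)=0$ (the empty divisor is effective, and removing one chip makes $|{-E}|=\emptyset$ for a suitable $E\ge 0$ of degree $1$ unless the lattice is trivial), so $r(K)=m-1$; applying the formula at $D=K$ gives $\deg(K)-(m-1) = r(K)-r(0) = m-1$, consistent with $\deg(K)=2m-2$ but not yet conclusive. The real leverage comes from Lemma~\ref{rank_lem}: $r(D)=\min_{\nu\in\Ext(L)}\deg^+(\nu+D)-1$. Choosing $D$ deep in the positive cone (say $D = N\cdot(1,\dots,1,-n)$-type translates, or more robustly $D$ such that $\nu+D\ge 0$ for \emph{every} $\nu\in\Ext(L)$, which is possible since $\Ext(L)$ is finite) forces $r(D)=\deg(D)+\min_\nu\deg(\nu)-1 = \deg(D)-g_{max}$. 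Dually, choosing $D$ so that $K-D$ is deep in the positive cone forces $r(K-D)=\deg(K-D)-g_{max} = \deg(D)-(g_{max}-\deg(K))-? $; combining the two in the Riemann-Roch formula and using $\deg(K)=2m-2$ yields $m=g_{max}$. A symmetric argument — using the lower-bound side of $\deg^+(\nu+D)-\deg^+(\nu'+D)$ for two extremal points $\nu,\nu'$ of differing degree, and choosing $D$ to "separate" them — shows that if $g_{min}<g_{max}$ one can produce a $D$ violating the exact equality $r(K-D)-r(D)+\deg(D)=m-1$ (the same $D$ would have to make the left side equal both something like $m - \deg(\nu)$ and $m - \deg(\nu')$). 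Hence $g_{min}=g_{max}=g=m$.

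\textbf{Step 2: reflection invariance.} This is the main obstacle. We must recover the symmetry $-\Crit(L) = \Crit(L)+t$ from the Riemann-Roch identity. The idea: by Corollary~\ref{cor:crit}, $\Crit(L)=\pi_0(\Ext(L))$, and by Lemma~\ref{rank_lem} the function $D\mapsto r(D)+1 = \min_\nu \deg^+(\nu+D)$ is determined by $\Ext(L)$; conversely I want to show $\Ext(L)$ is determined by $r(\cdot)$ and then that the Riemann-Roch involution $D\mapsto K-D$ on divisor classes induces the required reflection on $\Ext(L)$. Concretely: for each $\nu\in\Ext(L)$ the cone $H^+_{-\nu}$ (points $D$ with $-\nu\ge D$, equivalently $-D\ge \nu$... ) detects exactly where $-\nu-D\ge 0$, and the locus where the minimum in Lemma~\ref{rank_lem} is \emph{uniquely} attained at $\nu$ with value $0$ is, up to translation, a full-dimensional region whose "apex" recovers $\nu$. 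Applying the Riemann-Roch identity $r(K-D)=r(D)-\deg(D)+(g-1)$ shows that the apex region for $\nu$ is carried to the apex region for some $\nu'$, and tracking the translation gives $\pi_0(\nu)+\pi_0(\nu') = \pi_0(K)=: -t$ — precisely reflection invariance with $t=-\pi_0(K)$. The technical work is in making "apex region" precise: I expect to use that $r(D)=-1$ exactly on $-\Sigma(L)$ (Lemma~\ref{ranksig_lem}(i)) and that $\Sigma(L)=\bigcup_{\nu}H^+_\nu(\mathbb Z)$ (Theorem~\ref{dom_thm}), so $\Ext(L)$ is recovered as the set of minimal points of $-\{D : r(D)=-1\}$; then the bijection $D\mapsto K-D$ preserving the rank-symmetry must send $\{D:r(D)=-1\}$ to $\{D: r(K-D)=-1\}$, i.e., must relate $\Sigma(L)$ to $K-\Sigma(L)$-type sets, forcing $-\Ext(L)$ to be a translate of $\Ext(L)$.

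\textbf{Summary of the obstacle.} Step 1 is bookkeeping with Lemma~\ref{rank_lem} and well-chosen test divisors. The crux is Step 2: extracting the \emph{geometric} reflection symmetry of $\Crit(L)$ from the purely \emph{numerical} Riemann-Roch identity. The right framing is that the rank function, via Lemmas~\ref{ranksig_lem} and~\ref{rank_lem} and Theorem~\ref{dom_thm}, is a faithful encoding of $\Ext(L)$ (its set of extremal points), and the self-duality $r(D)\leftrightarrow r(K-D)$ then transfers to a self-duality of $\Ext(L)$ under $\nu\mapsto$ (the extremal point governing $K-D$ near $\nu$'s apex), which unwinds to $-\Crit(L)=\Crit(L)-\pi_0(K)$. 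Once both properties are established, $m=g$ follows from Step 1, completing the equivalence.
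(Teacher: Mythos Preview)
Your overall architecture (show $m=g_{\max}$, then uniformity, then reflection invariance) matches the paper's, but the execution has a real error and misses the clean shortcuts.

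In Step~1 you write ``choose $D$ such that $\nu+D\ge 0$ for \emph{every} $\nu\in\Ext(L)$, which is possible since $\Ext(L)$ is finite.'' This is false: $\Ext(L)$ is $L$-periodic and hence infinite and unbounded in $H_0$, so no such $D$ exists. The conclusion $r(D)=\deg(D)-g_{\max}$ for suitable $D$ is still reachable (take $D=-\nu_{\max}$ for a single extremal point of minimal degree), and the paper gets $m\le g_{\max}$ even more simply: for $\deg(D)>2m-2$ one has $r(K-D)=-1$, so RR reads $\deg(D)-r(D)=m$, and Riemann's inequality gives $m\le g_{\max}$. Your uniformity argument (``a symmetric argument \dots\ choosing $D$ to separate them'') is not an argument. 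The paper's trick is short and you should use it: let $N=\{\nu\in\Sigma(L):\deg(\nu)=1-g_{\max}\}\subseteq\Ext(L)$; for any $-D\in\Sigma(L)$ one has $r(D)=-1$, and RR gives $r(K-D)=g_{\max}-1-\deg(D)\ge 0$, so there is $E\ge 0$ of degree $g_{\max}-1-\deg(D)$ with $r(D+E)=-1$, i.e.\ $-D-E\in\Sigma(L)$ has degree $1-g_{\max}$ and lies below $-D$. Thus $\Sigma(L)=\bigcup_{\nu\in N}H^+_\nu$, forcing $N=\Ext(L)$.

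Step~2 is massively overcomplicated. Once uniformity is in hand, the extremal points are \emph{exactly} the points of $\Sigma(L)$ of degree $1-g$. So take $\nu\in\Ext(L)$: then $r(-\nu)=-1$ and $\deg(-\nu)=g-1$, and RR gives $r(K+\nu)=r(-\nu)=-1$; hence $-K-\nu\in\Sigma(L)$, and since $\deg(-K-\nu)=-(2g-2)+(g-1)=1-g$ it is automatically extremal. The map $\nu\mapsto\bar\nu:=-K-\nu$ is the required involution on $\Ext(L)$, and projecting by $\pi_0$ gives $-\Crit(L)=\Crit(L)+\pi_0(-K)$. No apex regions, no recovery-of-$\Ext(L)$-from-$r$ machinery is needed; the whole of Step~2 is three lines once you notice that ``extremal'' $=$ ``in $\Sigma(L)$ of minimal degree'' in the uniform case.
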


The rest of this section is devoted to the proof of this theorem. One direction is already shown, we prove the other direction.

\noindent 

We first prove that  

\begin{claim}
If $L$ has a Riemann-Roch formula, then $m=g_{max}$.
\end{claim}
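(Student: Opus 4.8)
The plan is to exploit the Riemann-Roch formula at cleverly chosen test points $D$ and combine it with the structural results of Section~\ref{sec:prel}, especially Lemma~\ref{rank_lem}, which expresses $r(D)$ in terms of $\deg^+(\nu+D)$ over $\nu\in\Ext(L)$. First I would record the two elementary facts that follow immediately from the formula by plugging in $D$ with $\deg(D)$ very negative and $D = K$: namely $r(D) = -1$ whenever $\deg(D) < 0$ is small enough (so that $r(K-D) = \deg(K)-\deg(D) \geq 0$ forces, by the degree bound, ... ), and that applying the formula at $D = K$ gives $r(K) = \deg(K)-(m-1) = 2m-2-(m-1) = m-1$. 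The key is that a Riemann-Roch formula forces $\deg(K) = 2m-2$ by definition, so $m$ is determined by $\deg(K)$, and the whole game is to identify $\deg(K)$ geometrically as $2g_{max}-2$, equivalently to show $m = g_{max}$ directly from rank considerations.

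The main step is to show $r(K) = g_{max} - 1$, since the formula at $D=K$ already gives $r(K) = m-1$. For the inequality $r(K) \leq g_{max}-1$: by Lemma~\ref{rank_lem}, $r(K) = \min_{\nu\in\Ext(L)} \deg^+(\nu+K) - 1$, so it suffices to exhibit one $\nu\in\Ext(L)$ with $\deg^+(\nu+K) \leq g_{max}$; I expect that choosing $\nu$ to be the extremal point of minimal degree (so $-\deg(\nu) = g_{max}-1$) and using that $K$ can be taken of the form $-\nu_0-\phi(\nu_0)$ as in the construction makes $\nu+K$ dominated by an effective point of controlled degree, giving the bound. For the reverse inequality $r(K) \geq g_{max}-1$, I would instead use the formula the other way: for any $\nu\in\Ext(L)$, the point $-\nu$ lies on the boundary of the Sigma-region (indeed $r(-\nu)=-1$ by Lemma~\ref{ranksig_lem}(i), since $\nu\in\Sigma(L)$), and applying the Riemann-Roch formula at $D = -\nu$ yields $r(K+\nu) = r(-\nu) - \deg(-\nu) + (m-1) = -1 + \deg(\nu) + m - 1$... wait — rather, $r(K-D) = r(D) - \deg(D) + (m-1)$, so with $D = -\nu$ we get $r(K+\nu) = -1 - (-\deg(\nu)) + (m-1) = \deg(\nu) + m - 2$; since $r(K+\nu) \geq -1$ always, this gives $\deg(\nu) \geq 1-m$, i.e. $-\deg(\nu) \leq m-1$ for every extremal $\nu$, hence $g_{max}-1 = \max(-\deg(\nu)) \leq m-1$, so $g_{max} \leq m$. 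The opposite inequality $m \leq g_{max}$ should come from running the same computation at the extremal point achieving $g_{max}$ together with the upper bound $r(K+\nu) \leq \deg^+(\text{something})$ forcing equality, or alternatively from Riemann's inequality $\deg(D)-r(D)\leq g_{max}$ applied at $D=K$: $r(K)\geq \deg(K)-g_{max} = 2m-2-g_{max}$, and since $r(K)=m-1$ this gives $m-1 \geq 2m-2-g_{max}$, i.e. $m \leq g_{max}$.

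Putting the two inequalities together yields $m = g_{max}$. The main obstacle I anticipate is the direction $g_{max} \leq m$ done carefully — one must make sure the extremal point $\nu$ with $-\deg(\nu) = g_{max}-1$ genuinely satisfies $-\nu \in \Sigma(L)$ so that $r(-\nu) = -1$ (this is exactly Lemma~\ref{ranksig_lem}(i) combined with $\Ext(L)\subseteq\Sigma(L)$), and then the bookkeeping with signs of degrees in the Riemann-Roch formula has to be done without error; the cleanest route is probably the Riemann's-inequality argument for $m\leq g_{max}$ since it avoids any delicate choice of test point, paired with the test point $D=-\nu$ for the reverse. I would then remark that once $m = g_{max}$ is established, combining with the already-proven inequality $m = g$ would require also $g_{min} = g_{max}$, which is presumably the content of the next claim in the paper (forcing uniformity), so this Claim is the first half of the argument that a Riemann-Roch formula implies uniformity.
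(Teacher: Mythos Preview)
Your direction $g_{max}\le m$ is correct and is essentially the paper's argument in contrapositive form: the paper observes that the Riemann--Roch formula forces $r(D)\ge 0$ whenever $\deg(D)\ge m$, then takes $D=-\nu_{max}$ for an extremal $\nu_{max}$ of minimal degree (so $\deg(D)=g_{max}-1$ and $r(D)=-1$), concluding $g_{max}-1<m$. Your version, plugging $D=-\nu$ into the formula and using $r(K+\nu)\ge -1$, gives the same bound.

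The gap is in your direction $m\le g_{max}$. Your Riemann's-inequality computation at $D=K$ contains an arithmetic slip: from $m-1\ge 2m-2-g_{max}$ you only get $g_{max}\ge m-1$, i.e.\ $m\le g_{max}+1$, not $m\le g_{max}$. The lost unit comes from $r(K-K)=r(0)=0$ rather than $-1$. The paper fixes this by choosing $D$ with $\deg(D)>2m-2$ \emph{strictly}: then $\deg(K-D)<0$ forces $r(K-D)=-1$, so the formula gives $\deg(D)-r(D)=m$ exactly, and Riemann's inequality $\deg(D)-r(D)\le g_{max}$ yields $m\le g_{max}$ with no loss.

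Your alternative route---bounding $r(K)\le g_{max}-1$ by writing $K=-\nu_0-\phi(\nu_0)$---is circular here: that form of $K$ is the \emph{output} of the reflection-invariant construction in Section~\ref{sec:R-R-theo}, whereas in this Claim $K$ is an arbitrary canonical point coming from an assumed Riemann--Roch formula, and reflection invariance has not yet been established.
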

\begin{proof}
The Riemann-Roch formula for a point $D$ with $\deg(D)> 2m-2 $ implies that $\deg(D)-r(D)=m$. We know that if
$\deg(D)>2g_{max}-2$ then $\deg(D)-r(D)\leq g_{max}$. This for $D$ with $\deg(D) \geq 2\max\{m,g_{max}\}-2$ shows that $m\leq g_{max}$. 
By the Riemann-Roch formula, we have $r(D) \geq 0$ for any $D$ with $\deg(D) \geq m$. Let $D=-\nu_{max}$, where $\nu_{max}$ is an extremal point of minimal degree. Remark that we have $r(D)=-1$. This shows that $m\geq g_{max}$. And we infer that $m = g_{max}$.
\end{proof}

We now prove that 
\begin{claim}
If $L$ has a Riemann-Roch formula, then $L$ is uniform and $m=g$. 
\end{claim}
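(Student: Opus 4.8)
The plan is to leverage the first claim (which establishes $m = g_{max}$) together with the Riemann-Roch inequality of Theorem~\ref{rrineq_theo}, applied to a carefully chosen point $D$ that ``witnesses'' the value $g_{min}$. Specifically, recall that $g_{min} = -\deg(\nu_{min}) + 1$ where $\nu_{min} \in \Ext(L)$ is an extremal point of \emph{maximal} degree. The strategy is to evaluate the Riemann-Roch formula at $D = -\nu_{min}$ and compute both sides explicitly, showing that the formula forces $\deg(K) = 2g_{min} - 2$; combined with the first claim's conclusion $\deg(K) = 2m - 2 = 2g_{max} - 2$, this yields $g_{min} = g_{max}$, i.e., uniformity, and then $m = g$ follows immediately.

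To carry this out, first I would observe that for $D = -\nu_{min}$ we have $r(D) = -1$ by Lemma~\ref{ranksig_lem}(i), since $-D = \nu_{min} \in \Ext(L) \subseteq \Sigma(L)$. Next I would compute $r(K - D) = r(K + \nu_{min})$ using Lemma~\ref{rank_lem}: $r(K+\nu_{min}) = \min_{\nu \in \Ext(L)} \deg^{+}(\nu + K + \nu_{min}) - 1$. Here the point is to extract enough information from the Riemann-Roch formula $r(D) - r(K-D) = \deg(D) - (m-1)$, which now reads $-1 - r(K+\nu_{min}) = -\deg(\nu_{min}) - (m-1) = (g_{min} - 1) - (m - 1)$, so $r(K + \nu_{min}) = m - g_{min} - 1 = g_{max} - g_{min} - 1$. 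On the other hand, I would bound $r(K+\nu_{min})$ from above using Riemann's inequality $\deg(E) - r(E) \leq g_{max}$ applied to $E = K + \nu_{min}$, whose degree is $\deg(K) + \deg(\nu_{min}) = (2g_{max} - 2) - (g_{min} - 1) = 2g_{max} - g_{min} - 1$; this gives $r(K+\nu_{min}) \geq \deg(K+\nu_{min}) - g_{max} = g_{max} - g_{min} - 1$, which matches — so this alone is not sharp enough, and I expect to need the reflection-invariance structure more directly.

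The main obstacle, then, is squeezing out the reverse inequality forcing $g_{min} = g_{max}$. The cleanest route is to exploit the canonical point $K = -(\nu + \bar\nu)$ with $E_\nu$ defined by $\nu + \bar\nu = -K - E_\nu$, and the identity from the proof of Theorem~\ref{rrineq_theo} that $\deg(E_\nu) \leq 2(g_{max} - g_{min})$, together with Remark~\ref{rema:R-R-digraph}. If $L$ has a Riemann-Roch \emph{formula}, then the lower and upper bounds in~(\ref{Rie-Ro_ineq}) must both collapse to $g_{max} - 1$ (using $m = g_{max}$ from Claim 1), and inspecting the chain of inequalities in the proof of Theorem~\ref{rrineq_theo} that produced the lower bound $3g_{min} - 2g_{max} - \deg(D) - 1$, equality throughout forces each slack term — in particular $\deg(E_\nu)$ for the relevant $\nu$, and the gap $g_{max} - (-\deg(\nu))$ — to vanish. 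Concretely, taking $D$ so that $r(D) = \min_\nu \deg^+(\nu + D) - 1$ is achieved at an extremal point $\nu$ of maximal degree (i.e., $-\deg(\nu) = g_{min} - 1$), the upper bound computation gives $r(K-D) - r(D) \leq -\deg(\nu) - \deg(D) - 1 = g_{min} - \deg(D) - 1$, while the Riemann-Roch formula demands $r(K-D) - r(D) = g_{max} - \deg(D) - 1$; hence $g_{max} \leq g_{min}$, and since always $g_{min} \leq g_{max}$, we get $g_{min} = g_{max} = g$. Finally $m = g_{max} = g$ by Claim 1, completing the proof. I would write the argument by choosing $D = -\nu_{min}$ explicitly and running the upper-bound estimate from Theorem~\ref{rrineq_theo}'s proof with this specific $\nu = \nu_{min}$, since that makes the extraction of $-\deg(\nu) = g_{min} - 1$ transparent and avoids any appeal to ``equality in a chain of inequalities'' reasoning, which is more fragile.
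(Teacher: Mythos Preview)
Your argument has a genuine circularity. The ``upper-bound estimate from Theorem~\ref{rrineq_theo}'s proof'' that you invoke --- specifically, the inequality $\deg^{+}(K-D+\bar{\nu}) - \deg^{+}(\nu+D) \leq -\deg(\nu)-\deg(D)$ --- rests on the identity $\nu+\bar{\nu} = -K - E_\nu$ with $E_\nu \geq 0$, where $\bar\nu = \phi(\nu)$ is the image of $\nu$ under the involution and $K$ is the canonical point \emph{constructed from reflection invariance}. At this point in the argument you do not know that $L$ is reflection invariant; that is precisely Claim~3, which is proved \emph{after} Claim~2 and in fact uses uniformity as a hypothesis. So you are assuming what remains to be proved. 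Moreover, the $K$ appearing in your estimate is the one built from $\phi$, not the $K$ handed to you by the Riemann-Roch hypothesis; you have not linked the two.

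To see that the gap is real and not merely cosmetic: the inequality you need would follow if, for $\nu = \nu_{min}$, there existed some $\mu \in \Ext(L)$ with $-K-\nu_{min} \geq \mu$, equivalently $-K-\nu_{min} \in \Sigma(L)$, equivalently $r(K+\nu_{min}) = -1$. But the Riemann-Roch formula itself gives $r(K+\nu_{min}) = g_{max} - g_{min} - 1$, which is $\geq 0$ precisely when $g_{max} > g_{min}$ --- so the required extremal $\mu$ fails to exist exactly in the case you are trying to rule out. The paper's proof avoids this entirely: it never touches reflection invariance, and instead shows directly that every point of $\Sigma(L)$ dominates an extremal point of minimal degree $-g_{max}+1$. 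The mechanism is to take any $-D \in \Sigma(L)$, set $D' = K-D$, use the definition of rank to find $E \geq 0$ of degree $r(D')+1$ with $r(D'-E) = -1$, and then apply the Riemann-Roch formula once more to conclude $r(D+E) = -1$ with $\deg(D+E) = g_{max}-1$; then $-D-E \leq -D$ is the desired extremal point.
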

\begin{proof}
 Let $N$ be the set of points of $\Sigma(L)$ of degree $-g_{max}+1$. We note that every point in $N$ is extremal, i.e., $N \subset \Ext(L)$. To prove the uniformity, we should prove that $N=\Ext(L)$. We claim that $\Sigma(L)=\cup_{\nu \in N} H^{+}_\nu$, and this in turn implies that $N =\Ext(L)$. Indeed, if the claim holds, then every extremal point $\nu \in \Ext(L)$ should dominate a point $u$ in $N$, and so $u=\nu$, meaning that $N = \Ext(L)$. 

To prove the claim, we proceed as follows.  Let $-D$ be a point in $\Sigma(L)$. We know that $r(D) = -1$. We should prove the existence of a point $\nu$ in $N$ such that $\nu \leq -D$. By the Riemann-Roch formula there exists $E \geq 0$ with $\deg(E)=g_{max}-1-\deg(D)$ and $r(D+E)=-1$. The point $-D-E$ has degree $-g_{max}+1$ and so is in $N$. In addition $-D - E \leq -D$. And this is what we wanted to prove. The proof of the uniformity is now complete.
\end{proof}

To finish the proof of the theorem, it remains to show that 

\begin{claim}\label{prop:R-R-inverse} If a uniform sub-lattice $L$ of $A_n$ of full dimension has a Riemann-Roch formula, then it is reflection invariant.
\end{claim}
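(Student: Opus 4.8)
The plan is to use the Riemann-Roch formula to build an explicit involution on $\Ext(L)$ carried by the canonical point $K$, and then to project this involution to $\Crit(L)$ along $(1,\dots,1)$. By the two previous claims we may assume $m = g_{max} = g_{min} =: g$, so the canonical point $K$ satisfies $\deg(K) = 2g-2$; moreover, since $L$ is uniform, every point of $\Ext(L)$ has degree $-g+1$.

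The first step is to record the characterization
\[\Ext(L) \;=\; \bigl\{\, -D \;:\; D \in \mathbb Z^{n+1},\ \deg(D) = g-1,\ r(D) = -1 \,\bigr\}.\]
Indeed, if $\nu \in \Ext(L)$, then $\nu \in \Sigma(L)$, so $r(-\nu) = -1$ by Lemma~\ref{ranksig_lem}(i), and $\deg(-\nu) = g-1$ by uniformity. Conversely, if $\deg(D) = g-1$ and $r(D) = -1$, then $-D \in \Sigma(L)$ by Lemma~\ref{ranksig_lem}(i); hence by Theorem~\ref{dom_thm} the point $-D$ dominates some $\nu \in \Ext(L)$, and since $\deg(\nu) = -g+1 = \deg(-D)$ while $\nu \le -D$, we must have $\nu = -D$, so $-D \in \Ext(L)$.

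The second step is to define $\psi : \Ext(L) \to \mathbb Z^{n+1}$ by $\psi(\nu) := -K - \nu$ and to check that $\psi(\Ext(L)) \subseteq \Ext(L)$. Writing $\nu = -D$ with $D$ as in the characterization, one has $\psi(\nu) = -(K-D)$, and $\deg(K-D) = (2g-2)-(g-1) = g-1$; the Riemann-Roch formula gives $r(K-D) = r(D) - \deg(D) + (g-1) = r(D) = -1$. Hence $-(K-D) \in \Ext(L)$ by the characterization. Since $\psi \circ \psi = \mathrm{id}$, the map $\psi$ is an involution of $\Ext(L)$, in particular a bijection, so $-\Ext(L) = K + \Ext(L)$.

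The last step is routine: applying $\pi_0$ and using $\Crit(L) = \pi_0(\Ext(L))$ (Corollary~\ref{cor:crit}) yields $-\Crit(L) = \pi_0(K) + \Crit(L)$, which is exactly reflection invariance, with $t = \pi_0(K)$ (and $\psi$ is then the involution $\phi_t$ of $\Ext(L)$). I do not expect a genuine obstacle here; the only step requiring a little care is the characterization of $\Ext(L)$ above, where uniformity is used to upgrade ``dominates an extremal point'' (Theorem~\ref{dom_thm}) to ``equals an extremal point''. Once that is in hand, the Riemann-Roch formula evaluated at points of degree $g-1$ does all the work.
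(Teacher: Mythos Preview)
Your proof is correct and follows essentially the same route as the paper's: take $\nu\in\Ext(L)$, use Lemma~\ref{ranksig_lem} and the Riemann-Roch formula at $D=-\nu$ to get $r(K+\nu)=-1$, conclude that $-K-\nu\in\Ext(L)$ by uniformity, and project via $\pi_0$ using Corollary~\ref{cor:crit}. The only cosmetic difference is that you first isolate the characterization $\Ext(L)=\{-D:\deg(D)=g-1,\ r(D)=-1\}$ via Theorem~\ref{dom_thm}, whereas the paper argues directly that a point of $\Sigma(L)$ of degree $-g+1$ is automatically extremal (being a global, hence local, minimum of the degree function); both justifications are fine.
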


\begin{proof}
Consider a uniform lattice satisfying the Riemann-Roch property. By Lemma~\ref{ranksig_lem}, we know that for a point $\nu$ in $\Ext(L)$, $r(-\nu)=-1$. Now, if we evaluate the Riemann-Roch formula for $D=-\nu$, we get $r(-\nu)=r(K+\nu)$. Hence, we have $r(-\nu)=r(K+\nu)=-1$. Again by Lemma~\ref{ranksig_lem}, this implies that $-K-\nu$ is a point in $\Sigma(L)$. By the Riemann-Roch property and Claim 2 above, $\deg(K)=2g-2$. Since $L$ is uniform and $\nu\in\Ext(L)$, we have $\deg(\nu)=g-1$. We infer that $\deg(K+\nu)=g-1$, and it follows that $-K-\nu$ is an extremal point of $\Sigma(L)$. We now define $\bar{\nu}=-K-\nu$. Clearly, the map $\nu \rightarrow \bar{\nu}$ is a bijection from $\Ext(L)$ onto itself. 
Let $t=\pi_0(-K)$. We obtain $t=\pi_0(-\nu-\bar{\nu})=-\pi_0(\nu)-\pi_0(\bar{\nu})$ for all $\nu\in\Ext(L)$. By Corollary~\ref{cor:crit}, we have $\Crit(L)=\pi_0(\Ext(L))$ and hence $t=-c-\bar{c}$ for every $c$ in $\Crit(L)$. This implies that $-\bar{c}=t+c$. To finish the proof, observe that $\bar{c}\rightarrow c$ is a bijection from $\Crit(L)$ onto itself, and so we have $-\Crit(L)=\Crit(L)+t$.
\end{proof}

The proof of Theorem~\ref{theo:R-R-class} is now complete.


\section{Examples}
\label{sec:examples}
In this section we study the machinery we presented in the previous sections through a few classes of examples. 
\subsection{Lattices Generated by Laplacian of Connected Graphs}
\label{sec:laplacian}
 Probably the most interesting examples of the sub-lattices of $A_n$ are generated by Laplacian of connected multi-graphs (and more generally directed multi-graphs) on $n+1$ vertices. 
In this subsection, we provide a geometric study of these sub-lattices.  We prove the following result:

\begin{theo}\label{theo:laplacian-unif+reflect}\label{thm:graphic} For any connected graph $G$, the sub-lattice $L_G$ of $A_n$ generated by the Laplacian of $G$ is  strongly reflection invariant and uniform.  
\end{theo}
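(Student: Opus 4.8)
The plan is to verify directly the two defining properties—uniformity and strong reflection invariance—by exhibiting the extremal points of $\Sigma(L_G)$ explicitly and matching them with the combinatorial structure of acyclic orientations (or, equivalently, of maximal parking functions) of $G$. First I would recall the dictionary between the Laplacian lattice and chip-firing: an integral point $D$ lies in $\Sigma(L_G)$ iff $-D$ has rank $-1$, and by Lemma~\ref{rank_lem} the extremal points of $\Sigma(L_G)$ correspond to the (negatives of the) maximal unwinnable configurations, i.e.\ to the $G$-parking functions that are maximal with respect to domination. The key structural input is that these maximal $G$-parking functions are in bijection with the acyclic orientations of $G$ having a fixed unique source $v_0$ (equivalently, with $\deg(\nu)$-many of them), and every such orientation yields an extremal point of the same degree, namely $\deg(\nu) = g-1$ where $g = m - n$ is the genus. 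This gives uniformity, $g_{min} = g_{max} = g$, since all extremal points have degree $g-1$.

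The next step is strong reflection invariance. Here I would use the reversal involution on acyclic orientations: reversing all edges of an acyclic orientation $\mathcal O$ (source $v_0$) produces another acyclic orientation $\mathcal O'$, but now with $v_0$ as a \emph{sink}; composing with the standard bijection that re-roots at $v_0$ gives an involution on the set of acyclic orientations with source $v_0$. Translated through the correspondence with $\Ext(L_G)$, this reversal sends an extremal point $\nu$ to an extremal point $\bar\nu$ with $\nu + \bar\nu$ independent of $\nu$ up to the line $\mathbb R(1,\dots,1)$: concretely, if $\nu$ is built from $\mathcal O$ by assigning to each $v \neq v_0$ the quantity $(\text{indeg}_{\mathcal O}(v)) - d_v$ (and a complementary value at $v_0$), then reversal gives $\bar\nu$ with the analogous formula for $\mathcal O'$, and $\text{indeg}_{\mathcal O}(v) + \text{indeg}_{\mathcal O'}(v) = d_v$ for $v \neq v_0$, so that $\nu + \bar\nu$ is the constant vector with entries $-2 + d_v$ at each coordinate, i.e.\ $\nu + \bar\nu = K$ for the Baker–Norine canonical point $K$, \emph{independent} of $\nu$. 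Thus $-\Ext(L_G) = \Ext(L_G) + K$, which gives $-\Crit(L_G) = \Crit(L_G) + \pi_0(K)$, hence reflection invariance; and since this already holds as stated on the level of the full $\Ext(L_G)$ (equivalently all of $\Crit(L_G)$), I would check that it restricts correctly to the vertices of the single cell $V_\triangle(O)$—using translation invariance of the Voronoi diagram (Lemma~\ref{trans_lem}) and that $\Crit V_\triangle(O)$ is, up to a lattice translation, a fundamental domain's worth of critical vertices—to conclude \emph{strong} reflection invariance.

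I expect the main obstacle to be making the bijection between $\Ext(L_G)$ and acyclic orientations with source $v_0$ fully rigorous in the present geometric language, and in particular verifying that \emph{every} extremal point of $\Sigma(L_G)$ arises this way (not just that each orientation gives one). The cleanest route is probably to show two inequalities on the number and degrees of extremal points: on one hand, Theorem~\ref{dom_thm} together with the computation of $\deg(f_p(y))$ in the proof of Lemma~\ref{sup_lem} pins down all extremal degrees once we know $\sup_{y\in V_\triangle(O)} d_\triangle(y,O)$, and for the Laplacian lattice this supremum equals $(g-1)/(n+1)$; on the other hand, the acyclic-orientation construction produces extremal points realizing exactly that value, and a counting argument via the Matrix–Tree theorem (the number of maximal parking functions equals the number of spanning trees, which equals the number of acyclic orientations with a fixed source) shows there are no others. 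Once this identification is in place, uniformity is immediate and the reversal involution delivers both reflection invariance and its strong form; the remaining verifications (degree of $K$ equals $2g-2$, consistency of the translation vector) are routine.
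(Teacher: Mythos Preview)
Your overall strategy---parametrize extremal points by acyclic orientations (equivalently, maximal $G$-parking functions), then use edge-reversal as the involution---is a legitimate alternative to the paper's argument, and the two are closely related: the paper indexes extremal points by permutations $\pi\in S_{n+1}$ via $\nu^\pi=\bigoplus_i b_i^\pi$, and a permutation is just a linear extension of an acyclic orientation. The paper's involution $\pi\mapsto\bar\pi$ is exactly your edge-reversal, and the identity $\nu^\pi+\nu^{\bar\pi}=(-\delta_0,\dots,-\delta_n)$ is your computation $\mathrm{indeg}_{\mathcal O}(v)+\mathrm{indeg}_{\mathcal O'}(v)=d_v$. So conceptually you are on the same track, in different language.

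There is, however, a genuine gap in your completeness step. The counting argument you propose is incorrect: the number of maximal $G$-parking functions equals the number of acyclic orientations with a fixed unique source, which is $T_G(1,0)$, \emph{not} the number of spanning trees $T_G(1,1)$. (For $K_3$: two acyclic orientations with source $v_0$, but three spanning trees.) Even if the equalities held, the argument would still be incomplete, because you never establish an independent upper bound on the number of extremal points modulo $L$ to compare against. The paper closes this gap not by counting but structurally: it builds a simplicial decomposition $\{\triangle^\pi+p\}$ of $H_0$, shows (Corollary~\ref{cor:vor-ex}) that two Voronoi cells meet iff their centres are adjacent in this triangulation, and then uses Lemma~\ref{ver_cor} to conclude that every critical point is $\pi_0(\nu^\pi)$ for some $\pi$. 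If you want to stay in the parking-function language, the analogue of this step is Dhar's burning algorithm: for any $D$ with $r(D)=-1$, run the algorithm to produce a $v_0$-reduced representative, then show directly that a reduced divisor of degree $g-1$ with $r=-1$ is dominated by one coming from an acyclic orientation. That replaces the broken counting.

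Two smaller points. First, a sign slip: extremal points of $\Sigma(L_G)$ have degree $1-g$, not $g-1$ (so $\nu+\bar\nu=-K$, and the relevant supremum in Lemma~\ref{sup_lem} is $m/(n+1)$, not $(g-1)/(n+1)$). Second, your passage from reflection invariance to \emph{strong} reflection invariance is too quick: $\Crit V_\triangle(O)$ is not a fundamental domain's worth of critical points (it has up to $(n+1)!$ elements, while there are at most $n!$ classes), so you cannot simply ``restrict''. The paper handles this by proving directly that $\Crit V_\triangle(O)=\pi_0(\{\nu^\pi:\pi\in S_{n+1}\})$ and observing that $\pi\mapsto\bar\pi$ is a bijection of $S_{n+1}$; in your language you would need to check that edge-reversal, composed with the appropriate lattice translation, preserves the specific set of orientations whose critical points land in $V_\triangle(O)$.
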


\noindent Theorem~\ref{thm:graphic} will be a direct consequence of Theorem~\ref{thm:main-laplacian} below. Combining this theorem with Theorem~\ref{rr_theo} gives the main result of~\cite{BaNo07}.
\begin{coro}[Theorem 1.12 in~\cite{BaNo07}]\label{coro:R-R-graph} For any connected graph $G$ on $n+1$ vertices and with $m$ edges, the Laplacian lattice $L_G$ has the Riemann-Roch property. In addition, we have $g_{\max}=g_{\min}=m-n$ and the canonical point $K$ is given by $(\delta_0-2,\delta_1-2,\dots,\delta_n-2)$ of $\mathbb Z^{n+1}$ where $\delta_i$'s are the degrees of the vertices of $G$.
\end{coro}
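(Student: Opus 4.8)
The plan is to deduce the corollary from Theorem~\ref{thm:graphic} together with the Riemann-Roch theorem for uniform reflection invariant lattices. By Theorem~\ref{thm:graphic} (which itself follows from Theorem~\ref{thm:main-laplacian}), the Laplacian lattice $L_G$ is uniform and (strongly) reflection invariant, so Theorem~\ref{rr_theo}, equivalently Theorem~\ref{theo:R-R-uniform}, applies and produces a canonical point $K\in\mathbb Z^{n+1}$ with $r(K-D)-r(D)+\deg(D)=g-1$ for every $D\in\mathbb Z^{n+1}$, where $g=g_{min}(L_G)=g_{max}(L_G)$; moreover, from the construction in the proof of Theorem~\ref{rrineq_theo} one may take $K=-\nu-\phi(\nu)$ for any $\nu\in\Ext(L_G)$, with $\phi$ the involution of $\Ext(L_G)$ induced by reflection invariance. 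It therefore only remains to check that $g=m-n$ and that $K$ may be chosen equal to $(\delta_0-2,\dots,\delta_n-2)$; both are read off from the explicit description of $\Ext(L_G)$ furnished by Theorem~\ref{thm:main-laplacian}.

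For the genus: by Theorem~\ref{thm:main-laplacian} (and Corollary~\ref{cor:crit}), every extremal point of $\Sigma(L_G)$ is, up to translation by a lattice vector, of the form $\nu_{\mathcal O}$ associated with an acyclic orientation $\mathcal O$ of $G$, where $\nu_{\mathcal O}$ has coordinate $1-\mathrm{indeg}_{\mathcal O}(v)$ at each vertex $v$; equivalently (cf.\ Lemma~\ref{ranksig_lem}), $-\nu_{\mathcal O}=\sum_v(\mathrm{indeg}_{\mathcal O}(v)-1)v$ is a divisor of degree $g-1$ and rank $-1$, a fact valid for every acyclic orientation. Since each edge of $G$ contributes exactly $1$ to $\sum_v\mathrm{indeg}_{\mathcal O}(v)$, we get $\deg(\nu_{\mathcal O})=(n+1)-m$, independently of $\mathcal O$. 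By the definitions of $g_{min}$ and $g_{max}$ this re-derives the uniformity of $L_G$ and gives $g=-\bigl((n+1)-m\bigr)+1=m-n$.

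For the canonical point: let $\bar{\mathcal O}$ be the orientation obtained by reversing every edge of $\mathcal O$; it is again acyclic, so $\nu_{\bar{\mathcal O}}\in\Ext(L_G)$, and since $\mathrm{indeg}_{\mathcal O}(v)+\mathrm{indeg}_{\bar{\mathcal O}}(v)=\delta_v$ for all $v$ we have $\nu_{\mathcal O}+\nu_{\bar{\mathcal O}}=(2-\delta_0,\dots,2-\delta_n)$. This sum is independent of $\mathcal O$, and its image under $\pi_0$ is precisely the vector $-t$ appearing in the definition of $\phi$. Because the $\phi$-partner of an extremal point is by definition the unique extremal point with the prescribed $\pi_0$-sum, it follows that $\phi(\nu_{\mathcal O})=\nu_{\bar{\mathcal O}}$, whence $K=-\nu_{\mathcal O}-\phi(\nu_{\mathcal O})=(\delta_0-2,\dots,\delta_n-2)$. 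Finally $\deg(K)=\sum_{i=0}^{n}(\delta_i-2)=2m-2(n+1)=2g-2$ by the handshake identity $\sum_i\delta_i=2m$, consistent with the Riemann-Roch formula; this is Theorem~1.12 of~\cite{BaNo07}.

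The substantive input is Theorem~\ref{thm:main-laplacian}, which supplies the combinatorial description of $\Ext(L_G)$ in terms of acyclic orientations of $G$; granting it, everything above is a short computation. The only point that deserves care is the compatibility of the abstract involution $\phi$ with edge-reversal, which is what forces $K$ to coincide with the classical canonical divisor $(\delta_v-2)_v$ rather than merely some point of degree $2g-2$; this compatibility rests solely on $\bar{\mathcal O}$ being acyclic (so that $\nu_{\bar{\mathcal O}}$ is again an extremal point) together with the uniqueness built into the definition of $\phi$, and requires no finer information about the sources or sinks of $\mathcal O$.
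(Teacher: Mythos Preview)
Your argument is correct and follows essentially the same route as the paper. The paper deduces the corollary from Theorem~\ref{thm:graphic} and Theorem~\ref{rr_theo}, then computes $g$ and $K$ from the explicit description of $\Ext(L_G)$ in Theorem~\ref{thm:main-laplacian}: the extremal points $\nu^\pi+(1,\dots,1)$ all have degree $-m+n+1$, giving $g=m-n$, and Property~(P1) (equivalently Lemma~\ref{lem:ex1}) gives $\nu^\pi+\nu^{\bar\pi}=(-\delta_0,\dots,-\delta_n)$, whence $K=(\delta_0-2,\dots,\delta_n-2)$.

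The only difference is one of language: the paper parametrises extremal points by permutations $\pi\in S_{n+1}$, while you parametrise them by acyclic orientations~$\mathcal O$. These are equivalent, since a total order $<_\pi$ induces the acyclic orientation in which each edge points toward the $<_\pi$-larger endpoint, and then $\sum_{j<_\pi i}b_{ji}=\mathrm{indeg}_{\mathcal O}(v_i)$, so your $\nu_{\mathcal O}$ is exactly the paper's $\nu^\pi+(1,\dots,1)$; likewise your edge-reversal $\bar{\mathcal O}$ corresponds to the paper's opposite permutation $\bar\pi$. Strictly speaking Theorem~\ref{thm:main-laplacian} is stated in the permutation language, so when you invoke it you are silently using this translation; it would be cleaner to say so explicitly. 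The citation of Corollary~\ref{cor:crit} and Lemma~\ref{ranksig_lem} is not needed for the computation.
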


\begin{rema}\rm Using reduced devisors, and the results of~\cite{BaNo07}, it is probably quite straightforward to obtain a proof of Theorem~\ref{thm:graphic}. (This is not surprising since, as we pointed out in the previous section, a lattice with a Riemann-Roch formula has to be uniform and reflection invariant.)  The proof we will present for Theorem~\ref{thm:graphic} gives indeed more than what is the content of this theorem. We  give a complete description of the Voronoi-diagram and  its dual Delaunay triangulation. And we do not use reduced divisors, which is the main tool used in the previous proofs of the Riemann-Roch theorem. As we will see, the form of the canonical divisor for a given graph (and the genus) as defined in~\cite{BaNo07} comes naturally out of this explicit description. \end{rema}

Let $G$ be a connected graph on $n+1$ vertices $v_0,v_1,\dots,v_n$ and $m$ edges. Let $L_G$, or simply $L$ if there is no risk of confusion, be the Laplacian sub-lattice of $A_n$. We summarise the main properties of the lattice $L_G$ and the matrix $Q$, defined in Section~\ref{sec:intro}. $L_G$ is an $n$-dimensional sub-lattice of $A_n$ with $\{b_0,\dots,b_{n-1}\}$ as a basis  such that the $(n+1) \times (n+1)$ matrix $Q$ has $\{b_0,\dots,b_{n-1}\}$ as the first $n$ rows and $b_n=-\sum_{i=0}^{n-1}b_{i}$ as the last row. 
In addition, the matrix 
\begin{equation}\label{gra_form}
Q=
\begin{bmatrix}
  \delta_0      & -b_{01} & -b_{02}  \hdots & -b_{0n} \\
 -b_{10}   &  \delta_1    & -b_{12}  \hdots & -b_{1n} \\
 \vdots    &  \vdots & \ddots\\
 -b_{n0}   &  b_{n1} & -b_{n2}  \hdots &  \delta_n
\end{bmatrix}
\end{equation}
has the following properties:
\begin{itemize}
\item[$(C_1)$] $b_{ij}$'s are integers, $b_{ij}\geq0$ for all $0\leq i \neq j\leq n$ and $b_{ij}=b_{ji},\:\:\forall i\neq j$.
\item[$(C_2)$] $\delta_i=\sum_{j=1,j\neq i}^{n}b_{ij}=\sum_{j=1,j\neq i}^{n}b_{ji}$ (and is the degree of the $i-$ vertex).
\end{itemize}
We denote by $B$ the basis $\{b_0,\dots,b_{n-1}\}$ of $L_G$.

\subsection*{Voronoi Diagram $\Vor_{\triangle}(L_G)$ and the Riemann-Roch Theorem for Graphs}
We first provide a decomposition of $H_0$ into simplices with vertices in $L$ such that the vertices of each simplex forms an affine basis of $L_G$. Recall that a subset of lattice points $X\subset L$ of size $n+1$ is called an affine basis of $L$, if for $v \in X$, the set of vectors $u-v$, $u\in X$ and $u\neq v$, forms a basis of $L$. In other words, if the simplex defined by $X$ is minimal (which means it is full-dimensional and has minimum volume among all the (full-dimensional) simplices whose vertices lie in $L$). The whole decomposition is derived from the symmetries of the affine basis $B$, and describes in a very nice way the Voronoi decomposition $\Vor_\triangle(L_G)$. What follows could be considered as an explicit construction of the ``{\it Delaunay dual}'', $\Del_{\triangle}(L_G)$, of $\Vor_{\triangle}(L_G)$.

We consider the family of total orders on the set $\{\:0,1,\dots,n\:\}$. A total order $<_{\pi}$ on  $\{\:0,1,\dots,n\:\}$ gives rise to an element $\pi$ of the symmetric group $S_{n+1}$, defined in such a way that $\pi(0) <_\pi \pi(1) <_\pi \dots <_\pi \pi(n-1)<_{\pi}\pi(n)$. It is clear that the set of all total orders on $\{0,\dots,n\}$ is in bijection with the elements of $S_{n+1}$. In addition the total orders which have $n$ as the maximum element are in bijection with the subgroup $S_n\subset S_{n+1}$ consisting of all the permutation which fix $n$, i.e., $\pi(n)=n$. 
In the following when we talk about a permutation in $S_n$, we mean a permutation of $S_{n+1}$ which fixes $n$. For $\pi\in S_n$, we denote by $\bar\pi$ the {\it opposite permutation} to $\pi$ defined as follows:  we set $\bar\pi(n)=n$ and $\bar\pi(i)=\pi(n-1-i)$ for all $i=0,\dots,n-1$. In other words, for all $i=0,\dots,n-1$, $i<_\pi j$ if and only if $j<_{\bar\pi} i$, and $j\leq_{\bar\pi}n$ for all $j$. Let $C_{n+1}$ denotes the group of cyclic permutations of $\{0,\dots,n\}$, i.e., $C_{n+1}=<\sigma>$ where $\sigma$ is the  element of $S_{n+1}$ defined by $\sigma(i) = i+1$ for $0\leq i\leq n-1$ and $\sigma(n)=0$. It is easy to check that $S_{n+1}=S_nC_{n+1}.$

Let $<_{\pi}$ be a total order such that $\pi\in S_n$, i.e., $\pi\in S_{n+1}$ and $\pi(n)=n$.
We first define a set of vectors $B^{\pi}=\{\:b^\pi_0,\dots,b^\pi_{n}\:\}$ as follows: 
\begin{align*}
\forall\ i \in \:\{\:0,\dots, n\:\},\: \ \  b^{\pi}_i\::=\: \sum_{j \leq_\pi i}b_j\:.
\end{align*}
In particular, note that $b^{\pi}_n=b^\pi_{\pi(n)}:= \sum_{ j\leq_{ \pi} \pi(n) }b_j\:=\:\sum_{j=0}^n b_j = 0.$

\begin{lemm}\label{lem:affine-base} For any total order $<_\pi$ with $n$ as maximum, or equivalently for any $\pi$ in $S_{n}$, the set $B^{\pi}=\{\:b^\pi_0,\dots,b^\pi_n\:\}$ forms an affine basis of $L_G$.
\end{lemm}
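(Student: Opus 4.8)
The plan is to verify the definition of an affine basis directly, using the point $b^\pi_n$ as the reference vertex. First I would record that $b^\pi_n = b^\pi_{\pi(n)} = \sum_{j=0}^n b_j = 0$: indeed $n$ is the maximum element for $<_\pi$, so the defining sum runs over all $j$, and $b_n = -\sum_{j=0}^{n-1}b_j$. In particular $b^\pi_n \in L_G$, and since every $b^\pi_i$ is an integer combination of elements of the basis $B \subset L_G$, the whole set $B^\pi$ lies in $L_G$. It then remains to prove that the $n$ difference vectors $b^\pi_i - b^\pi_n = b^\pi_i$, for $i \in \{0,\dots,n-1\}$, form a $\mathbb{Z}$-basis of $L_G$ (the choice of reference vertex is immaterial, since changing it replaces the difference vectors by a unimodular transform of them).

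Next I would re-index everything along the order $<_\pi$. Writing the elements of $\{0,\dots,n\}$ in $<_\pi$-increasing order as $\pi(0) <_\pi \pi(1) <_\pi \cdots <_\pi \pi(n-1) <_\pi \pi(n) = n$, the definition of $b^\pi$ gives $b^\pi_{\pi(k)} = \sum_{l=0}^{k} b_{\pi(l)}$ for each $0 \le k \le n$. Hence the ordered list $(b^\pi_{\pi(0)}, \dots, b^\pi_{\pi(n-1)})$ --- which as a set is exactly $\{b^\pi_0,\dots,b^\pi_{n-1}\}$ --- is the image of the ordered list $(b_{\pi(0)}, \dots, b_{\pi(n-1)})$ under the linear map whose matrix in these orderings is lower-triangular with every diagonal entry equal to $1$, namely the partial-sum matrix. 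This matrix has determinant $1$, so it lies in $GL_n(\mathbb{Z})$.

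Finally, since $\pi$ fixes $n$, its restriction to $\{0,\dots,n-1\}$ is a permutation of $\{0,\dots,n-1\}$, so $(b_{\pi(0)},\dots,b_{\pi(n-1)})$ is merely a reordering of the basis $B = \{b_0,\dots,b_{n-1}\}$ of $L_G$, and is therefore itself a $\mathbb{Z}$-basis of $L_G$. Applying the unimodular partial-sum transformation to a $\mathbb{Z}$-basis again yields a $\mathbb{Z}$-basis, so $\{b^\pi_0,\dots,b^\pi_{n-1}\}$ is a basis of $L_G$; this is precisely the statement that $B^\pi$ is an affine basis. In passing this also shows that the $n+1$ points of $B^\pi$ are pairwise distinct, since the first $n$ are linearly independent, hence nonzero and distinct, while the last is $0$. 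I do not expect any genuine difficulty here; the only points requiring care are the bookkeeping of the order $<_\pi$ and the verification that the partial-sum matrix is truly unimodular, both of which are routine.
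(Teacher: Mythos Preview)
Your proof is correct and follows essentially the same approach as the paper: both arguments observe that, after ordering according to $\pi$, the vectors $b^\pi_{\pi(0)},\dots,b^\pi_{\pi(n-1)}$ are obtained from the basis vectors $b_{\pi(0)},\dots,b_{\pi(n-1)}$ by a triangular matrix with $1$'s on the diagonal, hence a unimodular transformation, and then use $b^\pi_n=0$ to conclude. Your write-up is more explicit about the bookkeeping (the partial-sum matrix, the fact that $\pi$ permutes $\{0,\dots,n-1\}$), but the underlying argument is the same.
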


\begin{proof} It is easy to see that the matrix of $\{b_{\pi(0)}^\pi,\dots,b_{\pi(n-1)}^\pi\}$ in the base $B$ is upper triangular with diagonals equal to $1$. It follows that the set $\{b_{\pi(0)}^\pi,\dots,b_{\pi(n-1)}^\pi\}$ is a basis of $L$. As $b^\pi_{\pi(n)}=0$, it follows that $B^\pi$ is an affine basis.
\end{proof}

We denote by $\triangle^{\pi}$ the simplex defined by $B^\pi$. In other words, $\triangle^\pi:=\conv(B^\pi)$, the convex-hull of $B^\pi$.
Consider the fundamental parallelotope $F(B)$ defined by the basis $B$ of $L_G$. Note that $F(B)$ is the convex-hull of all the vectors $b^\pi_i$ for $\pi\in S_n$ and $i\in\{0,\dots,n\}$.  We next show that the set of simplices $\{\triangle^\pi\}_{\pi \in S_{n}}$ provides a simplicial decomposition (i.e., a triangulation) 
of $F(B)$. But before we need the following simple lemma: 
\begin{lemm} \label{freud_lem} Let $\square_n=\{\:(x_0\dots,x_{n-1})\:|\:0\leq x_i\leq 1\}$ be the unit hypercube in $\mathbb R^n$.
For a permutation $\pi\in S_n$, let $\bar\triangle_n^{\pi}=\{\:x=(x_0,\dots,x_{n-1}) \in \mathbb{R}^n\:|\:0 \leq x_{\pi(n-1)} \leq x_{\pi(n-2)}\leq \dots \leq x_{\pi(0)} \leq 1\}$. The set of simplices $\{\:\bar\triangle_n^{\pi}\:\}_{\pi \in S_n}$ is a simplicial decomposition of $\square_n$.  
\end{lemm}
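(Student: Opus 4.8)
The plan is to verify directly the three defining properties of a triangulation: each $\bar\triangle_n^{\pi}$ is an $n$-dimensional simplex, the simplices $\bar\triangle_n^{\pi}$ cover $\square_n$, and any two of them intersect along a common (possibly empty) face.

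For the first point I would produce an explicit affine isomorphism identifying $\bar\triangle_n^{\pi}$ with a standard simplex. For $\pi\in S_n$ set $t_0 = 1-x_{\pi(0)}$, $t_i = x_{\pi(i-1)}-x_{\pi(i)}$ for $1\le i\le n-1$, and $t_n = x_{\pi(n-1)}$. The map $x\mapsto(t_0,\dots,t_n)$ is an invertible affine map from $\mathbb R^n$ onto the hyperplane $\{\sum_i t_i = 1\}$ (the telescoping sum gives $\sum_i t_i = 1$ automatically, and one recovers $x_{\pi(k)} = 1-\sum_{j\le k}t_j$), and it carries $\bar\triangle_n^{\pi}$ bijectively onto $\{t\ge 0,\ \sum_i t_i = 1\}$ because $t\ge 0$ is literally the chain of inequalities defining $\bar\triangle_n^{\pi}$. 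Hence $\bar\triangle_n^{\pi}$ is an $n$-simplex; its $n+1$ vertices are the preimages of the unit vectors, namely the origin, the all-ones vector $(1,\dots,1)$, and, for $1\le k\le n-1$, the $0/1$ vector having ones exactly in the coordinates $\pi(0),\dots,\pi(k-1)$.

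For the covering, given $x\in\square_n$ I would pick any permutation $\pi$ that sorts the coordinates of $x$, i.e. with $x_{\pi(n-1)}\le\dots\le x_{\pi(0)}$; then $x\in\bar\triangle_n^{\pi}$. Disjointness of interiors is equally quick: a point in the interior of $\bar\triangle_n^{\pi}$ satisfies $0<x_{\pi(n-1)}<\dots<x_{\pi(0)}<1$, so its coordinates are pairwise distinct, the permutation sorting them is unique, and the point therefore lies in no other $\bar\triangle_n^{\pi'}$.

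The remaining, genuinely fiddlier, point is the face condition. For $\pi\ne\pi'$, a point $x\in\bar\triangle_n^{\pi}\cap\bar\triangle_n^{\pi'}$ is sorted by both permutations, which forces $x_i=x_j$ whenever $i$ and $j$ occur in different relative order under $\pi$ and $\pi'$. Letting $\sim$ be the equivalence relation $i\sim j\iff x_i=x_j$, one checks that both $\pi$ and $\pi'$ list the coordinates in consecutive $\sim$-blocks ordered the same way, so the set of defining inequalities of $\bar\triangle_n^{\pi}$ that become equalities at $x$ is exactly $\{\,x_{\pi(i-1)} = x_{\pi(i)} : \pi(i-1)\sim\pi(i)\,\}$, together with $x_{\pi(0)}=1$ or $x_{\pi(n-1)}=0$ when the top or bottom block meets the corresponding cube facet; this specifies a face of $\bar\triangle_n^{\pi}$ which, by the symmetric computation, is the same face of $\bar\triangle_n^{\pi'}$, and it equals $\bar\triangle_n^{\pi}\cap\bar\triangle_n^{\pi'}$. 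Together with the covering and the disjointness of interiors this shows that $\{\bar\triangle_n^{\pi}\}_{\pi\in S_n}$ is a triangulation of $\square_n$. Alternatively one can shorten this last step by noting that the open simplices $\mathrm{int}(\bar\triangle_n^{\pi})$ are precisely the full-dimensional chambers, inside $\square_n$, of the braid arrangement $\{x_i=x_j\}$ together with the cube walls $\{x_i=0\},\{x_i=1\}$, and the relatively open cells of a hyperplane arrangement contained in a polytope whose facets are among the hyperplanes form a polyhedral complex subdividing the polytope — simplicial here since the closed chambers are simplices. I expect the only real obstacle to be the bookkeeping in the direct face-condition argument, namely tracking the extreme-block walls $\{x_i=0\}$ and $\{x_i=1\}$ alongside the equality walls $\{x_i=x_j\}$; everything else is routine.
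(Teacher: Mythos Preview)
The paper does not actually prove this lemma: it is introduced as ``the following simple lemma'' (the label \texttt{freud\_lem} alludes to the classical Freudenthal--Kuhn triangulation of the cube) and is used immediately without argument. So there is no paper proof to compare against; you are supplying what the authors took for granted.

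Your direct verification is correct. The affine change of variables to barycentric coordinates cleanly shows each $\bar\triangle_n^{\pi}$ is an $n$-simplex and identifies its vertex set as a maximal chain of $0/1$-vectors; the covering and interior-disjointness arguments are standard and fine. Your face-condition argument has the right idea --- disagreements between $\pi$ and $\pi'$ force equalities on the intersection, pinning it down as a common face --- but as written it is phrased per-point (the relation $\sim$ depends on the chosen $x$), whereas you need a statement about the whole set $\bar\triangle_n^{\pi}\cap\bar\triangle_n^{\pi'}$. The clean fix is to let $E$ be the set of equalities $x_i=x_j$ forced by \emph{all} order-disagreements of $\pi,\pi'$, observe that these equalities cut out a face $F$ of $\bar\triangle_n^{\pi}$ (any forced equality between non-adjacent positions propagates through the chain), and then check $F\subseteq\bar\triangle_n^{\pi'}$ directly, giving $F=\bar\triangle_n^{\pi}\cap\bar\triangle_n^{\pi'}$. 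Your hyperplane-arrangement alternative sidesteps this bookkeeping entirely and is the tidier route; either way the result is classical and your outline is sound.
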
 
We have
\begin{lemm}\label{simdecomp_lem}Let $G$ be a connected graph and $L\subset A_n$ be the corresponding Laplacian lattice. The set of simplices $\{\triangle^{\pi}\}_{\pi \in S_{n}}$ is a simplicial decomposition of $F(B)$.
\end{lemm}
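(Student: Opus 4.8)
The plan is to reduce the statement about the simplices $\triangle^\pi$ in $H_0$ to the combinatorial statement about the cube $\square_n$ provided by Lemma~\ref{freud_lem} (the standard Freudenthal/Kuhn triangulation of the hypercube). The key point is that the affine basis $B = \{b_0,\dots,b_{n-1}\}$ gives a linear isomorphism $\Phi\colon \mathbb R^n \to H_0$ sending the standard basis vector $e_j$ to $b_j$; under this map the fundamental parallelotope $F(B)$ is exactly $\Phi(\square_n)$. So it suffices to check that $\Phi$ sends the simplices $\bar\triangle_n^\pi$ of Lemma~\ref{freud_lem} bijectively onto the simplices $\triangle^\pi$.

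\noindent First I would write down the vertices of $\bar\triangle_n^\pi$ explicitly. The region $\{0 \le x_{\pi(n-1)} \le \dots \le x_{\pi(0)} \le 1\}$ is a simplex whose $n+1$ vertices are the points $w^\pi_i$ ($i = 0,\dots,n$) defined by $(w^\pi_i)_j = 1$ if $j \le_\pi i$ and $0$ otherwise; that is, $w^\pi_i = \sum_{j \le_\pi i} e_j$, with $w^\pi_n = (1,\dots,1)$ and, implicitly, a ``$w^\pi_{-1} = 0$'' coming from $n$ being the maximum of $<_\pi$. (One should be slightly careful with the index conventions: since $n$ is the maximum element of $<_\pi$ and the coordinates are indexed $0,\dots,n-1$, the vertex corresponding to ``$i = n$'' is the all-ones vector and the one corresponding to ``$i$ the $<_\pi$-minimum'' is a single unit vector.) Applying $\Phi$ gives $\Phi(w^\pi_i) = \sum_{j \le_\pi i} b_j = b^\pi_i$ by the definition of $B^\pi$, so $\Phi(\bar\triangle_n^\pi) = \conv(B^\pi) = \triangle^\pi$. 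This identifies the whole proposed decomposition of $F(B)$ with the image under $\Phi$ of the decomposition of $\square_n$ given by Lemma~\ref{freud_lem}. Since $\Phi$ is an affine isomorphism, it carries a simplicial decomposition to a simplicial decomposition: the images $\triangle^\pi$ cover $F(B)$, have disjoint interiors, and any two of them meet along a common face. This proves the lemma.

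\noindent It then remains to prove Lemma~\ref{freud_lem} itself, if it is not taken as given. This is the classical fact that the order simplices $\{x_{\pi(n-1)} \le \dots \le x_{\pi(0)}\}$, as $\pi$ ranges over $S_n$, triangulate the unit cube: every point $x \in \square_n$ lies in $\bar\triangle_n^\pi$ for the permutation $\pi$ that sorts its coordinates in decreasing order (so the cube is covered), distinct permutations give simplices whose interiors are separated by a hyperplane $\{x_a = x_b\}$ (so the interiors are disjoint), and a shared face is cut out by equalities among the coordinates together with $x = 0$ or $x = 1$ constraints, which is again a face of each. One checks the simplices are genuinely $n$-dimensional (their vertices $w^\pi_0,\dots,w^\pi_{n-1},w^\pi_n$ are affinely independent because their difference matrix is, after reordering, lower triangular with unit diagonal) and that $n! \cdot \mathrm{vol}(\bar\triangle_n^\pi) = n! \cdot \tfrac{1}{n!} = 1 = \mathrm{vol}(\square_n)$, confirming there is no overcounting.

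\noindent \textbf{Main obstacle.} The conceptual content is entirely in Lemma~\ref{freud_lem}, which is standard; the work specific to this lemma is really just bookkeeping with the index conventions — matching the ``$i \le_\pi j$'' ordering used to define $b^\pi_i$ with the ordering ``$x_{\pi(n-1)} \le \dots \le x_{\pi(0)}$'' used to define $\bar\triangle_n^\pi$, and being consistent about the role of the fixed maximal element $n$ and the fact that the simplices live in $H_0 \subset \mathbb R^{n+1}$ while $\square_n$ lives in $\mathbb R^n$. The one substantive thing to verify carefully is that $\Phi$ really does map $\square_n$ onto $F(B)$ (immediate from $\Phi(e_j) = b_j$ and the definition of the fundamental parallelotope as $\{\sum t_j b_j : 0 \le t_j \le 1\}$), after which the transfer of the triangulation is automatic. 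So I do not expect a genuine difficulty, only the need for care with conventions.
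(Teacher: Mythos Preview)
Your proposal is correct and follows essentially the same approach as the paper: both arguments use the linear change of basis sending the standard cube $\square_n$ to $F(B)$ and then invoke Lemma~\ref{freud_lem} to transport the Freudenthal triangulation, checking that the order-simplices map to the $\triangle^\pi$. The paper phrases this as ``$F(B)$ is the unit cube with respect to the basis $B$'' rather than naming the map $\Phi$, and simply asserts that a calculation identifies $\bar\triangle^\pi$ with $\triangle^\pi$; your version spells out the vertex correspondence (and additionally sketches a proof of Lemma~\ref{freud_lem}, which the paper omits), with the small indexing wrinkle you already flagged --- namely that $\Phi(1,\dots,1) = b^\pi_{\pi(n-1)}$ while $0 \in \mathbb R^n$ corresponds to $b^\pi_n = 0$ --- being the only point requiring care.
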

\begin{proof}
Since $B$ is a basis of the $n$ dimensional lattice $L_G$, which is contained in $H_0$, it is also a basis of $H_0$. By definition, $F(B)$ is the unit cube with respect to the basis $B$. By Lemma \ref{freud_lem}, the family of simplices 
$\{\bar\triangle^\pi\}_{\pi \in S_{n}}$ is a simplicial decomposition of $F(B)$, where $\bar\triangle^\pi=\{\:x=x_0b_0+\dots+x_{n-1}b_{n-1}) \in H_0\:|\:0 \leq x_{\pi(n-1)} \leq x_{\pi(n-2)}\leq \dots \leq x_{\pi(0)} \leq 1\}$ and the vectors are written in the $B$-basis. 
Now recall that the vertices of $\triangle^\pi$ are given by the points $b^\pi_j$. Recall also that
$
\forall\ i \in \:\{\:0,\dots, n\:\},\: \ \  b^{\pi}_i\::=\: \sum_{j \leq_\pi i}b_j\:,$
and that $b^\pi_n=0$. A simple calculation shows that $\triangle^\pi$ coincides with the simplex $\bar\triangle^\pi$ above, and the proof follows.
\end{proof}

\noindent A combination of this lemma with the simple fact that $F(B)+L_G$ is a tiling of $H_0$ gives us:
\begin{coro} \label{simpdecomp_cor} The set of simplices $\{\:\triangle^{\pi}+p\:|\:\pi \in S_{n},\:p\in L_G\:\}$ forms a triangulation of $H_0$.
\end{coro}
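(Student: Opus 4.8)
The plan is to bootstrap the single-cell statement of Lemma~\ref{simdecomp_lem} to all of $H_0$ via the lattice $L_G$; the only point needing care is the compatibility of the decompositions of two adjacent fundamental domains along their common face. Since $B=\{b_0,\dots,b_{n-1}\}$ is a basis of $H_0$, the translates $F(B)+p$, $p\in L_G$, tile $H_0$: they cover $H_0$, and any two distinct translates $F(B)+p$, $F(B)+p'$ have disjoint interiors and intersect in a common (possibly empty) face of the parallelotope tiling. By Lemma~\ref{simdecomp_lem} and translation invariance, for each $p\in L_G$ the simplices $\{\triangle^{\pi}+p\}_{\pi\in S_n}$ form a simplicial decomposition of $F(B)+p$. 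Hence the family $\{\triangle^{\pi}+p\mid \pi\in S_n,\ p\in L_G\}$ covers $H_0$, and two of its members $\triangle^{\pi}+p$ and $\triangle^{\pi'}+p'$ have disjoint interiors (for $p=p'$ this is Lemma~\ref{simdecomp_lem}, and for $p\neq p'$ it follows from disjointness of the interiors of $F(B)+p$ and $F(B)+p'$). So the only thing left to check is that any two members of the family meet along a common face.

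For two simplices inside the same translate $F(B)+p$ this is exactly Lemma~\ref{simdecomp_lem}. For $\triangle^{\pi}+p$ and $\triangle^{\pi'}+p'$ with $p\neq p'$, their intersection lies in $(F(B)+p)\cap(F(B)+p')$, a proper face $\Phi$ of each parallelotope, so what must be verified is that the trace on $\Phi$ of the $\triangle$-decomposition of $F(B)+p$ coincides with the trace on $\Phi$ of the $\triangle$-decomposition of $F(B)+p'$. This is the translation-compatibility (Freudenthal--Kuhn property) of the standard triangulation $\{\bar\triangle_n^{\pi}\}_{\pi\in S_n}$ of the cube $\square_n$ in Lemma~\ref{freud_lem}: for each coordinate $j$, the trace of that triangulation on the facet $\{x_j=1\}$ of $\square_n$ is carried by the unit translation $x\mapsto x-e_j$ exactly onto its trace on the facet $\{x_j=0\}$, and likewise on lower-dimensional faces, so that the decompositions of $\square_n$ and of $\square_n+v$ for $v\in\mathbb Z^n$ glue along their common faces. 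Transporting all of this through the linear isomorphism $A\colon\mathbb R^n\to H_0$ with $A(e_i)=b_i$ for $0\le i\le n-1$ --- which sends $\mathbb Z^n$ to $L_G$, $\square_n$ to $F(B)$, and (as already noted in the proof of Lemma~\ref{simdecomp_lem}) $\bar\triangle_n^{\pi}$ to $\triangle^{\pi}$ --- gives precisely the required face-matching between adjacent tiles $F(B)+p$. Therefore the simplices $\triangle^{\pi}+p$ meet pairwise along common faces, which together with the covering and interior-disjointness above proves the corollary.

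The main (in fact the only non-formal) obstacle is this cross-tile face-matching: within a single fundamental domain everything is handed to us by Lemma~\ref{simdecomp_lem}, and covering and disjointness of interiors are immediate from the tiling property, so all the content lies in checking that the cube triangulation of Lemma~\ref{freud_lem} is invariant under $\mathbb Z^n$-translations. A cleaner route, which I would probably take in the write-up, is to establish once and for all that the Freudenthal triangulation $\{\bar\triangle_n^{\pi}+v\mid \pi\in S_n,\ v\in\mathbb Z^n\}$ is a triangulation of $\mathbb R^n$ --- a classical fact: a point $x$ with distinct fractional parts lies in the interior of the unique such simplex with $v=(\lfloor x_0\rfloor,\dots,\lfloor x_{n-1}\rfloor)$ and $\pi$ the permutation sorting the fractional parts of $x$, and the face relations follow by inspection of the defining inequalities --- and then simply push this triangulation forward along $A$ to obtain $\{\triangle^{\pi}+p\}$ as a triangulation of $H_0$.
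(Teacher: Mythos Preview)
Your argument is correct and follows the same route as the paper, which simply says the corollary is obtained by combining Lemma~\ref{simdecomp_lem} with the fact that $F(B)+L_G$ tiles $H_0$. If anything you are more careful than the paper: you explicitly address the cross-tile face-matching via the $\mathbb Z^n$-translation invariance of the Freudenthal triangulation, a point the paper passes over in silence.
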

In the simplicial decomposition ${\{\triangle^{\pi}+p\:|\:\pi\in S_n \:\&\: p\in L\}}$ of $H_0$, consider the set $\mathcal Sim_O$ consisting of all the simplices  that contain the origin $O$ as a vertex.  We have
\begin{lemm} \label{S0_lem} A simplex is in $\mathcal Sim_O$ if and only if it is spanned by $B^{\pi}$ for some $\pi$ in $S_{n+1}$. (Remark that we do not assume that $\pi(n)=n$.)
 \end{lemm}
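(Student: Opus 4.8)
The plan is to unwind the definition of $\mathcal Sim_O$ using Corollary~\ref{simpdecomp_cor}, and then to prove the resulting purely combinatorial identity between two families of $(n+1)$-point sets by means of the exact factorisation $S_{n+1}=S_nC_{n+1}$, tracking how right-multiplication of a total order by the cyclic generator $\sigma$ translates the vertex set $B^\rho$ by a lattice vector.

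First I would record the shape of $\mathcal Sim_O$. By Corollary~\ref{simpdecomp_cor}, the simplices of the triangulation of $H_0$ are exactly the sets $\triangle^\pi+p$ with $\pi\in S_n$ and $p\in L_G$, and such a simplex has $O$ as a vertex precisely when $-p$ is a vertex of $\triangle^\pi$, i.e. $p=-b^\pi_i$ for some $i\in\{0,\dots,n\}$ (and then $p\in L_G$ automatically, since $b^\pi_i$ is a sum of the $b_j$, all of which lie in $L_G$). Hence $\mathcal Sim_O=\{\,\triangle^\pi-b^\pi_i\ :\ \pi\in S_n,\ 0\le i\le n\,\}$. On the other side, for an arbitrary $\rho\in S_{n+1}$ one uses the same formula $b^\rho_i:=\sum_{j\le_\rho i}b_j$; since $\rho(n)$ is the $<_\rho$-maximum, $b^\rho_{\rho(n)}=\sum_{j=0}^n b_j=0$, so $O\in B^\rho$ in all cases. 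Thus the statement reduces to the identity of vertex sets
\[\{\,B^\pi-b^\pi_i\ :\ \pi\in S_n,\ 0\le i\le n\,\}\;=\;\{\,B^\rho\ :\ \rho\in S_{n+1}\,\}.\]

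The key computation is the following. Write $P^\rho_k:=b_{\rho(0)}+\cdots+b_{\rho(k)}$ for the $k$-th partial sum along $<_\rho$, so that $B^\rho=\{P^\rho_0,\dots,P^\rho_n\}$ and $P^\rho_n=0$. Directly from the definition of $\sigma$ (namely $\sigma(k)=k+1$ for $k<n$ and $\sigma(n)=0$) one checks that $P^{\rho\sigma}_k=P^\rho_{k+1}-P^\rho_0$ for $k<n$ and $P^{\rho\sigma}_n=0$, hence $B^{\rho\sigma}=B^\rho-P^\rho_0$ with $P^\rho_0=b_{\rho(0)}\in L_G$; iterating yields $B^{\rho\sigma^k}=B^\rho-P^\rho_{k-1}$, again with $P^\rho_{k-1}\in L_G$. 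For the inclusion $\supseteq$: given $\rho\in S_{n+1}$, choose $k\equiv n-\rho^{-1}(n)\pmod{n+1}$ so that $\pi:=\rho\sigma^{-k}$ fixes $n$, i.e. $\pi\in S_n$ and $\rho=\pi\sigma^k$ (this is exactly the factorisation $S_{n+1}=S_nC_{n+1}$ applied to $\rho$); then $B^\rho=B^\pi-P^\pi_{k-1}$, so $\conv(B^\rho)=\triangle^\pi-P^\pi_{k-1}\in\mathcal Sim_O$. For $\subseteq$: a simplex $\triangle^\pi-b^\pi_i\in\mathcal Sim_O$ has $b^\pi_i=P^\pi_j$, where $j$ is the $<_\pi$-position of $i$; if $j=n$ then $b^\pi_i=0$ and the simplex is $\triangle^\pi=\conv(B^\pi)$, while if $j\le n-1$ it equals $\conv(B^\pi-P^\pi_j)=\conv(B^{\pi\sigma^{j+1}})$, so it is spanned by $B^\rho$ with $\rho=\pi\sigma^{j+1}\in S_{n+1}$.

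The main obstacle, such as it is, will be the index bookkeeping: verifying the shift formula $B^{\rho\sigma^k}=B^\rho-P^\rho_{k-1}$ carefully (including the degenerate values $k=0$ and $k=n+1$, where $P^\rho_n=0$ makes it consistent with $\sigma^{n+1}=\mathrm{id}$), and confirming that the correct form of the $S_{n+1}=S_nC_{n+1}$ decomposition is $\rho=\pi\sigma^k$ with $\pi\in S_n$ rather than $\sigma^k\pi$. Granting Corollary~\ref{simpdecomp_cor}, everything else is immediate from the definitions of $B^\pi$ and of $\mathcal Sim_O$.
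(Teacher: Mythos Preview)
Your proof is correct and follows essentially the same approach as the paper's own proof: both start from Corollary~\ref{simpdecomp_cor} to describe $\mathcal{Sim}_O$ as the translates $\triangle^{\pi_0}-b^{\pi_0}_i$ with $\pi_0\in S_n$, and then use the factorisation $S_{n+1}=S_nC_{n+1}$ together with the key shift identity (your $B^{\rho\sigma^k}=B^\rho-P^\rho_{k-1}$, the paper's ``straightforward calculation'') to match these with the sets $B^\rho$ for $\rho\in S_{n+1}$. Your version is more explicit, spelling out both inclusions and the partial-sum bookkeeping that the paper leaves to the reader; your index $\rho=\pi\sigma^{j+1}$ is in fact the correct one (the paper's stated exponent appears to be off by one, though this is immaterial to the argument).
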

\begin{proof}
By Corollary \ref{simpdecomp_cor}, we know that  every simplex in $\mathcal Sim_O$ is of the form:
$\triangle^{\pi_0}+q$ for some $\pi_0$ in $S_{n}$ and $q$ in $L$. Recall that the element $\pi$ of $S_{n}$ is regarded as an element of $S_{n+1}$, with the property that $\pi(n)=n$. Since, the vertex set of $\triangle^{\pi_0}$ is $V(\triangle^{\pi_0})=\{b^{\pi_0}_0,\dots,b^{\pi_0}_{n-1},O\}$, we should have $q=-b^{\pi_0}_i$ for some $0 \leq i \leq n-1$. Let $0\leq j\leq n$ be such that $\pi_0(j)=i$. A straightforward calculation shows that $V(\triangle^{\pi_0})-b^{\pi_0}_i=V(\triangle^{\pi})$, where $\pi=\pi_0\sigma^{j}$ and $\sigma$ is the cyclic permutation $(0,1,2,..,n) \rightarrow (1,\dots,n,0)$.  The lemma follows because every element  $\pi\in S_{n+1}$ can be written uniquely in the form $\sigma^i\pi_0$ for some $\pi_0\in S_n$ ($S_{n+1}=S_nC_{n+1}$).
\end{proof}
Remark also that  $|\mathcal Sim_O|=|S_{n+1}|=(n+1)!$.

Our aim now will be to provide a complete description of the set $\Ext^c(L_G)$ of extremal points of $\Sigma^c(L)$ (and equivalently the set $\Ext(L_G)=\Ext^c(L_G)-(1,\dots,1)$) in terms of this triangulation. Actually we obtain an explicit description of the set $\Crit V_\triangle(O)$.
Before we proceed, let us introduce an extra notation. Let $\pi$ be an element of the permutation group $S_{n+1}$. We do not suppose anymore that $\pi(n)=n$. We define the point $\nu^\pi\in\mathbb Z^{n+1}$ as the tropical sum of the points of $B^\pi$, i.e., $\nu^\pi \::=\:\bigoplus_{i=0}^n\: b^\pi_i.$ (And recall that $b^\pi_i = \sum_{j\leq_\pi i}b_j$.) We have the following theorem.

\begin{theo}\label{thm:main-laplacian}
Let $G$ be a connected graph and $L_G$ be the Laplacian lattice of $G$.
\begin{itemize}
\item[(i)] The set of extremal points of $\Sigma^c(L_G)$ consists of  all the points $\nu^\pi+p$ for $\pi\in S_{n+1}$ and $p\in L_G$, i.e.,
$\emph{Ext}^c(L_G) = \{\:\nu^\pi+p\:|\:\pi\in S_{n+1} \:\textrm{and}\:p\in L_G \:\}.$
As a consequence, we have $\emph{Ext}(L_G) = \{\:\nu^\pi+p+(1,\dots,1)\:|\:\pi\in S_{n+1} \:\textrm{and}\:p\in L_G \:\}.$ 
\item[(ii)] We have $\emph{Crit} V_\triangle(O) = \pi_0(\{\:\nu^\pi\:|\:\pi\in S_{n+1}\:\})$.
\end{itemize}
\end{theo}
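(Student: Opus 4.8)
The plan is to prove part (ii) first, since part (i) then follows by combining it with Corollary~\ref{cor:crit} and the translation structure of $\Sigma(L_G)$. To analyze $\Crit V_\triangle(O)$, I would use the characterization of critical points given in Lemma~\ref{ver_cor}: a point $v\in H_0$ is critical if and only if each of the $n+1$ facets of $\bar\triangle_{h_{\triangle,L}(v)}(v)$ contains a lattice point of $L_G$ that lies on no other facet. By Lemma~\ref{lowenv_lem} the Voronoi cell $V_\triangle(O)$ is the projection of $H_O^-\cap\partial\Sigma^c(L_G)$, so the vertices of $V_\triangle(O)$ that are critical are exactly those $v=\pi_0(x)$ with $x\in\Ext^c(L_G)$ and $x\leq O$. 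Now I would bring in the simplicial decomposition: by Corollary~\ref{simpdecomp_cor} the simplices $\{\triangle^\pi+p\}$ triangulate $H_0$, and by Lemma~\ref{S0_lem} the simplices containing $O$ as a vertex are exactly the $\triangle^\pi=\conv(B^\pi)$ for $\pi\in S_{n+1}$ (without the restriction $\pi(n)=n$), of which there are $(n+1)!$. The key geometric fact to establish is that the Delaunay cell of $\Vor_\triangle(L_G)$ dual to a critical vertex $v$ is precisely one of these minimal simplices $\triangle^\pi$ (up to a lattice translate), with the $n+1$ points $p_0,\dots,p_n$ demanded by Lemma~\ref{ver_cor} being the $n+1$ vertices $b_0^\pi,\dots,b_n^\pi$ of $\triangle^\pi$.

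The heart of the argument is therefore the following computation: for a permutation $\pi\in S_{n+1}$, show that $\nu^\pi := \bigoplus_{i=0}^n b_i^\pi$ is an extremal point of $\Sigma^c(L_G)$, that $\nu^\pi$ is dominated by all $n+1$ points $b_0^\pi,\dots,b_n^\pi$ (so it lies in the common intersection $\bigcap_i H^-_{b_i^\pi}$ and hence, by Lemma~\ref{lowenv_lem}, $\pi_0(\nu^\pi)\in\bigcap_i V_\triangle(b_i^\pi)$ — a genuine Voronoi vertex), and that no other lattice point of $L_G$ lies on the relevant facets. I would verify the domination $\nu^\pi\leq b_i^\pi$ directly from the definition $\nu^\pi=\bigoplus_j b_j^\pi$ (coordinate-wise min), which gives $\nu^\pi\leq b_i^\pi$ for free; the substantive point is that $\nu^\pi\in\Sigma^c(L_G)$, i.e.\ that no lattice point strictly dominates $\nu^\pi$. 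For this I would use the triangulation: if some $q\in L_G$ satisfied $q>\nu^\pi$, then $q$ together with the $b_i^\pi$ would force $O$ to lie strictly inside too large a simplex, contradicting minimality of $\triangle^\pi$ (which is an affine basis by Lemma~\ref{lem:affine-base}). The extremality (local minimality of degree) then follows from the facet characterization in Lemma~\ref{ver_cor}: one checks that for each facet $F_i$ of $\bar\triangle_{h_{\triangle,L_G}(\pi_0(\nu^\pi))}(\pi_0(\nu^\pi))$, the vertex $b_i^\pi$ sits on $F_i$ and on no $F_j$ with $j\neq i$, which is a direct coordinate check using the explicit formula $b_i^\pi=\sum_{j\leq_\pi i}b_j$ together with properties $(C_1)$, $(C_2)$ of the Laplacian matrix $Q$.

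Conversely, I would show every critical vertex arises this way. Given $v\in\Crit V_\triangle(O)$ with associated $x\in\Ext^c(L_G)$, $x\leq O$, Lemma~\ref{ver_cor} produces $n+1$ lattice points $p_0,\dots,p_n$ with $v\in\bigcap V_\triangle(p_i)$; by Lemma~\ref{lowenv_lem} this means $x\leq p_i$ for all $i$ and $x = \bigoplus_i p_i$ (the point of $\partial\Sigma^c$ below $v$ is the tropical infimum of the $p_i$, by Lemma~\ref{lem:lower-graph}). Since $O$ is a vertex of the Delaunay cell $\conv(p_0,\dots,p_n)$ and this cell is a Delaunay simplex of $\Vor_\triangle(L_G)$, it must be a minimal lattice simplex containing $O$, hence by Lemma~\ref{S0_lem} equal to some $\triangle^\pi$, i.e.\ $\{p_0,\dots,p_n\}=B^\pi$ and $x=\nu^\pi$. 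Summing up, $\Crit V_\triangle(O)=\pi_0(\{\nu^\pi : \pi\in S_{n+1}\})$, proving (ii). For (i), translation invariance of the Voronoi diagram (Lemma~\ref{trans_lem}) and the fact that $\Ext^c(L_G)$ is $L_G$-periodic give $\Ext^c(L_G)=\{\nu^\pi+p : \pi\in S_{n+1},\, p\in L_G\}$; the description of $\Ext(L_G)$ then follows from $\Ext(L_G)=\Ext^c(L_G)+(1,\dots,1)$ (Proposition~\ref{prop:twosigmas}).

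The main obstacle I anticipate is the minimality/domination step — proving $\nu^\pi\in\Sigma^c(L_G)$ and that the Delaunay cell dual to a critical vertex is exactly a minimal simplex $\triangle^\pi$. This requires genuinely exploiting the combinatorial structure of the Laplacian (properties $(C_1)$–$(C_2)$ and the telescoping form $b_i^\pi=\sum_{j\leq_\pi i}b_j$), not just formal properties of the Voronoi diagram; everything else is bookkeeping with the tropical-min formula for $\nu^\pi$ and the facet characterization of Lemma~\ref{ver_cor}.
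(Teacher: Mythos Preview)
Your high-level strategy matches the paper's: use the triangulation $\{\triangle^\pi+p\}$, show each $\nu^\pi$ is extremal via Lemma~\ref{ver_cor}, and for the converse show that the lattice points $p_0,\dots,p_n$ produced by Lemma~\ref{ver_cor} must be the vertices of some $\triangle^\pi$. However, there is a genuine gap in your plan that the paper has to work hard to close.

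Your claim that ``the vertex $b_i^\pi$ sits on $F_i$ and on no $F_j$ with $j\neq i$'' by ``a direct coordinate check using $(C_1),(C_2)$'' is false in general. The strict inequality $(b_i^\pi)_j>\nu^\pi_j$ for $j\neq i$ requires $b_{k\ell}>0$ for all $k\neq\ell$, i.e.\ that $G$ is a complete multigraph. For instance, take the path on three vertices ($b_{02}=0$): with $\pi=\mathrm{id}$ one computes $\nu^\pi=(0,-1,-1)$ and $b_1^\pi=(0,1,-1)$, which lies on \emph{both} $F_0$ and $F_2$. So Lemma~\ref{ver_cor} cannot be applied with the witnesses $p_i=b_i^\pi$. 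The paper deals with this by first proving everything under the extra hypothesis $b_{ij}>0$ (Lemmas~\ref{vprop_lem}--\ref{neighdom_lem}, Corollary~\ref{cor:vor-ex}), and then running a careful perturbation/limiting argument (the $Q^\epsilon$ trick and Lemma~\ref{limiting_lem}) to pass to arbitrary connected graphs. This limiting step is not routine: one must show $\Crit V(O)=\lim_{\epsilon\to 0}\Crit V_\epsilon(O)$, and as the paper remarks, the analogous statement fails for general (non-graphical) lattices.

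A secondary gap is in your converse: you assert that the Delaunay cell $\conv(p_0,\dots,p_n)$ ``must be a minimal lattice simplex,'' but this is exactly the content that needs proof. The paper establishes it (again under $b_{ij}>0$) via Lemma~\ref{neighdom_lem}, which shows that any $q\in L$ not adjacent to $O$ in the triangulation has $V(O)\cap V(q)=\emptyset$; this is a nontrivial combinatorial argument using the sign pattern of the $b^\pi_{ij}$. Also, your proposed proof that $\nu^\pi\in\Sigma^c(L_G)$ (``$O$ would lie strictly inside too large a simplex'') is too vague; the paper gives a clean direct argument by writing $p=\sum\alpha_i b_i$, picking the $<_\pi$-minimal index among those with minimal $\alpha_i$, and comparing a single coordinate.
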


 It is quite easy to see that the set $\{\nu^\pi\}$ has the following properties (c.f. Theorem~\ref{theo:laplacian-unif+reflect} below.) 
\begin{itemize}
\item[(P1)-]{\bf Reflection Invariance.} For all $\pi\in S_{n+1}$, $\nu^\pi+\nu^{\bar \pi}=(-\delta_0,-\delta_1,\dots,-\delta_n)$ where $\bar\pi$ is the opposite permutation to $\pi$, and $\delta_i$ denotes the degree of the vertex $v_i$. Since $\Crit V_\triangle(O) = \pi_0(\{\:\nu^\pi\:|\:\pi\in S_{n+1}\:\})$, it follows that $L_G$ is strongly reflection invariant. More precisely we have $\Crit V_\triangle(O)=-\Crit V_\triangle(O)+\pi_0((-\delta_0,\dots,-\delta_n)).$ (Recall that $\pi_0$ is the projection function.)
\item[(P2)-]{\bf Uniformity.} For all $\pi\in S_{n+1}$, $\deg(\nu^\pi)=-m$. In other words, the Laplacian lattice $L_G$ is uniform. 
\end{itemize}

The proof of the results of this section will be given in the next subsection. However, let us quickly show how to calculate $g$ and $K$ in the above corollary. The vertices $\nu^\pi$ all belong to $\Ext^c$ and have degree $-m$. It follows that the vertices of $\Ext(L) = \Ext^c+(1,\dots,1)$ have all degree $-m+n+1$, and so by the definition of genus, we obtain $g_{min}=g_{max}=m-n$. In particular $g$ coincides with the graphical genus of $G$ (which is the number of vertices minus the number of edges plus one). Since the points of $\Ext(L_G)$ are of the form $\nu^\pi+(1\dots,1)$, and as we saw in the proofs of Theorem~\ref{rrineq_theo} and Theorem~\ref{theo:R-R-uniform}, we have $K = -(\nu^\pi+(1,\dots,1))-(\nu^{\bar\pi}+(1,\dots,1))=(\delta_0-2,\delta_1-2,\dots,\delta_n-2)$. 

\subsection*{Proofs of Theorem~\ref{thm:main-laplacian} and Theorem~\ref{theo:laplacian-unif+reflect}}
\label{app:lemmas5-bis}

It is easy to see that the point  $\nu^{\pi}= \bigoplus_{i=0}^n b^\pi_i$ has the following explicit form: 
\begin{align}\label{b_form}
\nu^{\pi}\:&=\:(-\sum_{j<_\pi 0}b_{j0},-\sum_{j <_\pi 1}b_{j1},\dots,-\sum_{j <_\pi n}b_{nj}).
\end{align}
It follows that
 \begin{align*}
 \nu^{\pi}\:&=\:(-\delta_0 + \sum_{j >_{\pi}0}b_{j0},-\delta_1+\sum_{j>_\pi 1}b_{j1},\dots,-\delta_{n}+\sum_{j >_\pi n}b_{nj})\\
      &=\:(-\delta_0,\dots,-\delta_n)-(-\sum_{j<_{\bar\pi} 0}b_{j0},-\sum_{j <_{\bar\pi} 1}b_{j1},\dots,-\sum_{j <_{\bar\pi} n}b_{nj})\\
&=\:\:(-\delta_0,\dots,-\delta_n)-\nu^{\bar\pi}.
 \end{align*}

And we infer that  
\begin{lemm}\label{lem:ex1}
For every $\pi\in S_{n+1}$, we have $\nu^\pi+\nu^{\bar\pi}=(-\delta_0,\dots,-\delta_n)$.
\end{lemm}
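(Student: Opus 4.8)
The plan is a direct coordinate-by-coordinate computation based on the explicit description~\eqref{b_form} of $\nu^\pi$. First I would (re)verify~\eqref{b_form} itself: since $\nu^\pi=\bigoplus_{i=0}^n b^\pi_i$ is the coordinate-wise minimum of the partial sums $b^\pi_i=\sum_{j\leq_\pi i}b_j$, I would fix a coordinate $k$ and run through the elements $0,\dots,n$ in increasing $\pi$-order. Using $(b_j)_k=-b_{jk}$ for $j\neq k$ and $(b_k)_k=\delta_k$, the $k$-th coordinate of $b^\pi_i$ weakly decreases (by $(C_1)$, each $b_{jk}\geq 0$) while $i$ ranges over the elements $\pi$-smaller than $k$, then jumps up by $\delta_k$ once $k$ enters the sum, and afterwards stays non-negative, since for such $i$ it equals $\sum_{j>_\pi i}b_{jk}$. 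Hence the tropical minimum is attained exactly before $k$ enters, and equals $-\sum_{j<_\pi k}b_{jk}$, which is~\eqref{b_form}.

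Next I would rewrite each coordinate using condition $(C_2)$, namely $\delta_k=\sum_{j\neq k}b_{jk}=\sum_{j<_\pi k}b_{jk}+\sum_{j>_\pi k}b_{jk}$, to obtain $(\nu^\pi)_k=-\sum_{j<_\pi k}b_{jk}=-\delta_k+\sum_{j>_\pi k}b_{jk}$. Here $\bar\pi$ is to be read as the permutation obtained by reversing the total order $<_\pi$, so that $j>_\pi k$ if and only if $j<_{\bar\pi}k$. Combining this with the symmetry $b_{jk}=b_{kj}$ from $(C_1)$ and applying~\eqref{b_form} to $\bar\pi$, one gets $\sum_{j>_\pi k}b_{jk}=\sum_{j<_{\bar\pi}k}b_{kj}=-(\nu^{\bar\pi})_k$. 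Therefore $(\nu^\pi)_k=-\delta_k-(\nu^{\bar\pi})_k$ for every $k$, which is precisely the asserted identity $\nu^\pi+\nu^{\bar\pi}=(-\delta_0,\dots,-\delta_n)$.

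There is no genuine obstacle here; the proof is entirely bookkeeping with $(C_1)$ and $(C_2)$. The two points deserving a little care are (i) pinning down where the minimum defining $\nu^\pi$ is achieved in the verification of~\eqref{b_form}, and (ii) using the order-reversing $\bar\pi$ rather than the $S_n$-adapted "opposite permutation'' introduced earlier in the section — these differ only in the placement of $n$, and it is the full reversal that makes the identity valid for general $\pi\in S_{n+1}$.
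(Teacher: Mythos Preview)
Your proposal is correct and follows essentially the same route as the paper: both arguments read off the $k$-th coordinate of $\nu^\pi$ from \eqref{b_form}, rewrite $-\sum_{j<_\pi k}b_{jk}$ as $-\delta_k+\sum_{j>_\pi k}b_{jk}$ via $(C_2)$, convert $j>_\pi k$ into $j<_{\bar\pi}k$, and recognise the result as $-\delta_k-(\nu^{\bar\pi})_k$ by another application of \eqref{b_form}. Your extra verification of \eqref{b_form} and your remark about reading $\bar\pi$ as the full order-reversal are useful clarifications the paper glosses over; the appeal to the symmetry $b_{jk}=b_{kj}$ is actually unnecessary here (the identity $\sum_{j>_\pi k}b_{jk}=\sum_{j<_{\bar\pi}k}b_{jk}=-(\nu^{\bar\pi})_k$ already follows without switching indices), but it does no harm.
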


Second, we calculate the degree of the point $\nu^\pi$. It is easy to see that
\[\deg(\nu^\pi)= -\sum_{i,j\::\:j<_\pi i}b_{ij} = -m,\]
where $m$ denotes the number of edges of $G$, or equivalently in terms of the matrix $Q$, $m=\frac 12\sum_i\delta_i=trace(Q)/2$. It follows that
\begin{lemm}\label{lem:ex2}
All the points $\nu^\pi$ have the same degree.
\end{lemm}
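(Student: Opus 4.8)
The plan is to evaluate $\deg(\nu^\pi)$ directly from the explicit description \eqref{b_form} and to check that the value is $-m$ regardless of the permutation $\pi$ we started from. By \eqref{b_form}, the $i$-th coordinate of $\nu^\pi$ equals $-\sum_{j<_\pi i}b_{ji}$, and since $b_{ji}=b_{ij}$ by $(C_1)$, summing over the $n+1$ coordinates gives
$$\deg(\nu^\pi)\:=\:-\sum_{i=0}^n\ \sum_{j<_\pi i}b_{ij}\:=\:-\!\!\sum_{\substack{0\le i,j\le n\\ j<_\pi i}}\!\! b_{ij}.$$

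Next I would observe that the $\pi$-dependence drops out. Because $<_\pi$ is a \emph{total} order on $\{0,\dots,n\}$, for every unordered pair $\{i,j\}$ with $i\neq j$ exactly one of $j<_\pi i$ or $i<_\pi j$ holds; hence each such pair contributes exactly one ordered pair to the double sum above, and by the symmetry $b_{ij}=b_{ji}$ its contribution equals $b_{ij}$ in either case. Consequently
$$\deg(\nu^\pi)\:=\:-\!\!\sum_{0\le i<j\le n}\!\! b_{ij},$$
which is visibly independent of $\pi$. Finally, $(C_2)$ gives $\sum_{0\le i<j\le n}b_{ij}=\tfrac12\sum_{i=0}^n\sum_{j\neq i}b_{ij}=\tfrac12\sum_{i=0}^n\delta_i=m$, the number of edges of $G$ (equivalently $\tfrac12\,\mathrm{trace}(Q)$). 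Therefore $\deg(\nu^\pi)=-m$ for every $\pi\in S_{n+1}$, and in particular all the points $\nu^\pi$ have the same degree.

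There is no genuine obstacle here; the only step that needs a moment of care is the combinatorial bookkeeping in the middle, namely recognising that it is precisely the totality of $<_\pi$, together with the symmetry $b_{ij}=b_{ji}$ from $(C_1)$, that collapses the order-dependent sum $\sum_{j<_\pi i}b_{ij}$ into the order-independent sum $\sum_{i<j}b_{ij}$ of all edge multiplicities. One can also sanity-check the outcome against Lemma~\ref{lem:ex1}, which yields $\deg(\nu^\pi)+\deg(\nu^{\bar\pi})=-\sum_i\delta_i=-2m$, consistent with $\deg(\nu^\pi)=\deg(\nu^{\bar\pi})=-m$.
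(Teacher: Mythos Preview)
Your proof is correct and follows exactly the same route as the paper: compute $\deg(\nu^\pi)$ from the explicit coordinate formula \eqref{b_form}, obtain $-\sum_{j<_\pi i} b_{ij}$, and identify this with $-m$ via the symmetry $b_{ij}=b_{ji}$ and the handshake identity $\sum_i\delta_i=2m$. The paper compresses all of this into a single displayed equation, while you have spelled out the combinatorial step (totality of $<_\pi$ plus symmetry) that justifies the passage to $-m$; the added sanity check against Lemma~\ref{lem:ex1} is a nice touch but not needed.
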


 We now show that $\nu^\pi\in\Sigma^c(L)$ for every $\pi\in S_{n+1}$.
Assume for the sake of contradiction that there exists a point $p\in L$ such that $p>\nu^\pi$. 
By the definition of the Laplacian lattice $L$, we know that there are integers $\alpha_0,\dots, \alpha_n$ such that $p=\alpha_0b_0+\dots,\alpha_nb_n$, and so we can write 
$$p=(\sum_{j=0}^n(\alpha_0-\alpha_j)b_{j0},\dots,\sum_{j=0}^n(\alpha_n-\alpha_j)b_{jn}).$$
for some $\alpha_i\in \mathbb Z$.
 Among the integer numbers $\alpha_i$, consider the set of indices $S_{p}$ consisting of the indices $i$ for which $\alpha_i$ is minimum. Remark that as $p$ is certainly non zero (since there is a coordinate of $\nu^\pi$ which is zero, we cannot have $0>\nu^\pi$), we cannot have $S_p = \{0,\dots,n\}$. Now in the set $S_{p}$ consider the index $k$ which is the minimum in the total order $<_\pi$. By construction of $k$, we have $\alpha_k-\alpha_j\leq-1$ for all $j <_\pi k$ and $\alpha_k-\alpha_j \leq 0$ for all $j \geq_\pi k$. It follows that $p_k$, the $k-$th coordinate of $p$, is bounded above by
\begin{align*}
p_k\:&=\:\sum_{j=0}^n(\alpha_k-\alpha_j)b_{jk}\:\leq\:\sum_{j<_\pi k}\:-b_{jk}\:=\:\nu^\pi_k.
\end{align*}

And this contradicts our assumption $p>\nu^\pi$.

Next, we need to show that $\nu^{\pi}$ is a local minimum of the degree function. We already now that $\deg(\nu^\pi)=-m$. We will prove that for every point $x\in \Sigma^c(L)$, we have $\deg(x)\geq -m$. By Lemma~\ref{lem:lower-graph}, it will be enough to prove that $h_{\triangle,L}(x)\leq \frac m{n+1}$ for every point $x\in L$. By the definition of the simplicial distance function $h_{\triangle,L}$, this is equivalent to proving that the simplex $x+\frac{m}{n+1}\triangle$ contains a lattice point $p\in L$, i.e.,
\begin{equation}\label{eq:latticepoints}
\forall x\in H_0,\:\: (x+\frac m{n+1}\triangle) \cap L \neq \emptyset.
\end{equation}

Here we use the following trick to reduce the problem to the case when all the entries of $Q$ are non-zero. We add a rational number $\epsilon=\frac st$, $s,t\in \mathbb N$, to each $ b_{ij}$, $i\neq j$, to obtain $b_{ij}^\epsilon$. We also define $\delta_i^\epsilon$ in such a way that $\sum_i b_{ij}^\epsilon =\delta_i^\epsilon$. Remark that $\sum_j \delta_j^\epsilon = \frac{tr(Q^\epsilon)}2$. The new matrix $Q^\epsilon$ is not integral  anymore (but if we want to work with integral  lattices, we can multiply every coordinate by a large integer $t$ to obtain an integral  matrix $tQ^\epsilon$). If we know that our claim is true for all Laplacians with non-zero coordinates, then the function $h$ associated to $tQ^\epsilon$ satisfies the property 
 \begin{equation}\label{eq:hdelta}
 h_{\triangle,L^{t,\epsilon}} \leq \frac{tr(tQ^\epsilon)}{2(n+1)}.
 \end{equation}

 Where $L^{t,\epsilon}$ denotes the lattice generated by the matrix $tQ^\epsilon$. Let $L^\epsilon$ be the (non 
necessarily integral ) lattice generated by the matrix $Q^\epsilon$. It is easy to see that $t.h_{\triangle,L^\epsilon}=h_{\triangle,L^{t,\epsilon}}$. Equation~\ref{eq:hdelta} implies then
  \begin{equation}\label{eq:h2}
 h_{\triangle,L^{\epsilon}} \leq \frac{tr(Q^\epsilon)}{2(n+1)}=\frac m{n+1}+\frac{n\epsilon}{2(n+1)}.
 \end{equation}
Using characterisation of Equation~\ref{eq:latticepoints}, one can see that, varying $\epsilon$, the above property for all sufficiently small rational $\epsilon>0$ will imply that $h_{\triangle,L}\leq \frac m{n+1}$, and that is what we wanted to prove. 
Indeed one can easily show that the distance function $h_{\triangle,L_{\epsilon}}(p)$ is a continuous function in $\epsilon$ and $p$.

So at present, we have shown that we can assume that all the $b_{ij}$'s are strictly positive. This is the assumption we will make for a while. In this case, using the explicit calculation of $\nu^\pi$, we can quite easily show that 

\begin{lemm}\label{vprop_lem} The point $\nu^{\pi}$ has the following properties:\\
1. $\nu^{\pi}_i=b^\pi_{ii}$ for $0 \leq i \leq n$.\\
2. $\nu^{\pi}_j<b^\pi_{ij}$ for $i \neq j$ and $0 \leq i,j \leq n$. 
\end{lemm}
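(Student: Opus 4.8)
The plan is to unwind the explicit formula~\eqref{b_form} for $\nu^\pi$ together with the relation $b^\pi_i = \sum_{j\leq_\pi i} b_j$ and the structure of the Laplacian matrix $Q$, keeping in mind that we are in the case where all the $b_{ij}$ are strictly positive (which has been arranged just before the statement). First I would compute the entries of the vector $b^\pi_i$ for a fixed $\pi\in S_{n+1}$. Since $b_k$ is the $k$-th row of $Q$, its $\ell$-th coordinate is $\delta_k$ if $\ell=k$ and $-b_{k\ell}$ otherwise; hence $(b^\pi_i)_\ell = \sum_{k\leq_\pi i}(b_k)_\ell$. Splitting the sum according to whether the summation index equals $\ell$ or not, one gets a clean closed form: if $\ell \leq_\pi i$ then $(b^\pi_i)_\ell = \delta_\ell - \sum_{k\leq_\pi i,\, k\neq \ell} b_{k\ell} = \sum_{k >_\pi i} b_{k\ell}$ (using $\delta_\ell = \sum_{k\neq \ell} b_{k\ell}$ from $(C_2)$), and if $\ell >_\pi i$ then $(b^\pi_i)_\ell = -\sum_{k\leq_\pi i} b_{k\ell}$, which is a nonpositive number, strictly negative when $i$ is not the $<_\pi$-minimum and always $\leq 0$. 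Set $b^\pi_{i\ell} := (b^\pi_i)_\ell$ for the notation used in the statement.

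Next I would read off $\nu^\pi_j = \min_i (b^\pi_i)_j = \min_i b^\pi_{ij}$. Fix a coordinate $j$. As $i$ ranges over $\{0,\dots,n\}$, the quantity $(b^\pi_i)_j$ behaves as follows: while $i <_\pi j$ (i.e.\ $j >_\pi i$) we are in the second case, $(b^\pi_i)_j = -\sum_{k\leq_\pi i} b_{kj}$, which is strictly decreasing in $i$ along the order $<_\pi$ because each newly added $b_{kj}$ is strictly positive; as soon as $i$ reaches the position of $j$ in $<_\pi$ we switch to the first case and $(b^\pi_i)_j = \sum_{k>_\pi i} b_{kj}$, which from then on is nonnegative and increasing as $i$ moves on. Hence the minimum over $i$ is attained exactly at $i = j$ — at the crossover — and there it equals $(b^\pi_j)_j = b^\pi_{jj}$. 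This immediately gives part~1, $\nu^\pi_j = b^\pi_{jj}$ (note this is consistent with the tropical formula $\nu^\pi_j = -\sum_{k <_\pi j} b_{kj}$ already recorded in~\eqref{b_form}). For part~2, fix $i\neq j$; I must show $\nu^\pi_j < b^\pi_{ij}$, i.e.\ $b^\pi_{jj} < b^\pi_{ij}$. Since the minimum of $i\mapsto b^\pi_{ij}$ is attained only at $i=j$ — strict, because the function is strictly monotone on each side of the crossover thanks to the strict positivity of all $b_{k\ell}$ — any $i\neq j$ gives a strictly larger value, which is exactly the claim.

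The only point requiring care is the strictness in part~2 at the index $i$ adjacent to $j$ in the order $<_\pi$: if $i$ is the immediate $<_\pi$-predecessor of $j$ then $b^\pi_{ij} = -\sum_{k\leq_\pi i} b_{kj}$ while $b^\pi_{jj} = -\sum_{k<_\pi j} b_{kj} = -\sum_{k\leq_\pi i} b_{kj} - b_{ij}$ (since $k<_\pi j$ means $k\leq_\pi i$, plus possibly $k=i$ itself — let me instead write it as: the sum for $b^\pi_{jj}$ has exactly one more term, namely $b_{ij}$, than the sum for $b^\pi_{ij}$), so $b^\pi_{jj} = b^\pi_{ij} - b_{ij} < b^\pi_{ij}$ because $b_{ij} > 0$; similarly if $i$ is the immediate successor. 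The hypothesis $b_{ij}>0$ for all $i\neq j$ is what makes this strict — without it one would only get $\leq$, which is the reason the reduction to strictly positive $b_{ij}$ was made beforehand. I expect this bookkeeping of which terms appear in which partial sum, and the monotonicity argument pinning the minimum at $i=j$, to be the only real content; everything else is direct substitution from~\eqref{b_form} and $(C_1)$–$(C_2)$.
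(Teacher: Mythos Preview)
Your case analysis for $(b^\pi_i)_\ell$ is correct, but the monotonicity claim in the second half is wrong, and this breaks the argument. For $i \geq_\pi j$ you write $(b^\pi_i)_j = \sum_{k >_\pi i} b_{kj}$ and assert this is ``nonnegative and increasing as $i$ moves on.'' In fact, as $i$ moves \emph{up} in the order $<_\pi$, the index set $\{k : k >_\pi i\}$ \emph{shrinks}, so the sum (of strictly positive terms) is strictly \emph{decreasing}, reaching $0$ at $i = \pi(n)$. Thus the map $i \mapsto (b^\pi_i)_j$ is strictly decreasing on the segment $i <_\pi j$, then jumps \emph{up} to a nonnegative value at $i = j$, and is again strictly decreasing on $i \geq_\pi j$. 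Its minimum is therefore attained at the immediate $<_\pi$-predecessor of $j$ (interpreted cyclically as $\pi(n)$ when $j = \pi(0)$), \emph{not} at $i = j$. A small check confirms this: for $n=2$, $\pi$ the identity, $j=0$, one has $b^\pi_{00} = \delta_0 > 0$, $b^\pi_{10} = \delta_0 - b_{10} > 0$, $b^\pi_{20} = 0$, so $\nu^\pi_0 = 0 \neq b^\pi_{00}$.

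What this means is that the lemma as printed contains a misindexing: the correct assertion (which your very computation would deliver, once the monotonicity is fixed) is that $\nu^\pi_j = b^\pi_{\mathrm{pred}(j),\, j}$ and $\nu^\pi_j < b^\pi_{ij}$ for all $i \neq \mathrm{pred}(j)$, where $\mathrm{pred}(j)$ is the $<_\pi$-predecessor of $j$ (with $\mathrm{pred}(\pi(0)) := \pi(n)$). This corrected version is exactly what is needed for the subsequent corollary: one gets $\nu^\pi - \delta e_j < b^\pi_{\mathrm{pred}(j)}$ for every $\delta > 0$, which is what feeds into Lemma~\ref{wdom_lem} and hence the extremality of $\nu^\pi$. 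So your overall strategy is the right one --- compute $(b^\pi_i)_j$ by cases and track the minimum --- but you must correct the direction of monotonicity on the second piece, which then forces you to relabel the index at which the minimum occurs.
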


As a corollary we obtain:
\begin{coro} \label{orth_cor} Let $\{e_0,\dots,e_n\}$ be the standard orthonormal basis of $\mathbb{R}^n$, i.e., $e_0=(1,0,$ $\dots,0)$ $,\dots,e_n=(0,\dots,0,1)$. Let $e_j$ be a fixed vector. For every $\delta>0$, $\nu^{\pi}-\delta e_j<b^\pi_j$. \end{coro}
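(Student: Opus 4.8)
The plan is to deduce the corollary directly from Lemma~\ref{vprop_lem} by checking the strict domination $\nu^{\pi}-\delta e_j < b^\pi_j$ coordinate by coordinate. Recall that $b^\pi_{jk}$ denotes the $k$-th coordinate of the vector $b^\pi_j$, so that the claim $\nu^{\pi}-\delta e_j < b^\pi_j$ is equivalent to the collection of strict inequalities $(\nu^{\pi}-\delta e_j)_k < b^\pi_{jk}$ for all $k\in\{0,\dots,n\}$. I would split the verification into the two cases $k\neq j$ and $k=j$.

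For $k\neq j$: subtracting $\delta e_j$ leaves the $k$-th coordinate untouched, so $(\nu^{\pi}-\delta e_j)_k = \nu^\pi_k$, and part~2 of Lemma~\ref{vprop_lem}, applied with point index $j$ and coordinate index $k$ (which are distinct), gives $\nu^\pi_k < b^\pi_{jk}$, the desired inequality in coordinate $k$. For $k=j$: $(\nu^{\pi}-\delta e_j)_j = \nu^\pi_j - \delta$, and part~1 of Lemma~\ref{vprop_lem} asserts $\nu^\pi_j = b^\pi_{jj}$, whence $(\nu^{\pi}-\delta e_j)_j = b^\pi_{jj}-\delta < b^\pi_{jj}$ because $\delta>0$. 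Combining the two cases establishes $\nu^{\pi}-\delta e_j < b^\pi_j$, as claimed.

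There is essentially no obstacle here: the corollary is just an unwinding of Lemma~\ref{vprop_lem}, valid under the standing assumption that all the $b_{ij}$ are strictly positive — it is precisely this hypothesis that makes the strict inequality in part~2 of the lemma available, and hence lets the perturbation $-\delta e_j$ be absorbed in the $j$-th coordinate while the other coordinates still dominate strictly. The only point requiring a little care is not to conflate the ``point index'' with the ``coordinate index'' when invoking part~2 of the lemma. (One should also note that the ambient space in the statement of the corollary is $\mathbb{R}^{n+1}$ rather than $\mathbb{R}^{n}$, since $e_0,\dots,e_n$ are $n+1$ vectors.)
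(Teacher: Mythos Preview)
Your proof is correct and matches the paper's intent: the paper states this result as an immediate corollary of Lemma~\ref{vprop_lem} without giving any further argument, and your coordinate-by-coordinate verification from parts~1 and~2 of that lemma is exactly the implicit reasoning. Your caveat about keeping the point index and the coordinate index straight when invoking part~2 is well taken.
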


\begin{lemm}\label{wdom_lem}
For every non-zero vector $w$ in $H^{-}_{O}$ and for every $\delta>0$, there exists a point $p$ in $L$ such that $\nu^{\pi}-\delta w<p$.
\end{lemm}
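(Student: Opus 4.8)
The plan is to prove this by a direct generalisation of Corollary~\ref{orth_cor}, which is exactly the case $w=-e_j$ of the present statement: there one perturbs $\nu^\pi$ by a vector in $H^-_O$, writing the moved point as $\nu^\pi-\delta e_j=\nu^\pi+\delta w$, and finds that the single vertex $b^\pi_j$ already strictly dominates it. I would run the same single-vertex mechanism for an arbitrary $w\in H^-_O$, using one suitably chosen vertex of $\triangle^\pi$ rather than any combination of the $b^\pi_i$. Throughout I work in the non-degenerate regime where all $b_{ij}>0$, to which the earlier rational perturbation of $Q$ reduces, so that the strict inequalities of Lemma~\ref{vprop_lem} are available.

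First I would exploit that $w\in H^-_O$ is non-zero: every coordinate satisfies $w_i\le 0$, and since $w\ne O$ there is an index $j$ with $w_j<0$. Fix such a $j$ and set $p:=b^\pi_j\in L$, the vertex of the affine basis $B^\pi$ which, by Lemma~\ref{vprop_lem}, part~1, meets $\nu^\pi$ precisely in coordinate $j$, i.e. $(b^\pi_j)_j=\nu^\pi_j$, while by part~2 it lies strictly above $\nu^\pi$ in every other coordinate, $(b^\pi_j)_k>\nu^\pi_k$ for all $k\ne j$. I would then check that $b^\pi_j$ strictly dominates the perturbed point $\nu^\pi+\delta w$ coordinate by coordinate. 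In coordinate $j$, the strict negativity $w_j<0$ together with $\delta>0$ gives $(\nu^\pi+\delta w)_j=\nu^\pi_j+\delta w_j<\nu^\pi_j=(b^\pi_j)_j$. In any coordinate $k\ne j$, the inequality $w_k\le 0$ yields $(\nu^\pi+\delta w)_k=\nu^\pi_k+\delta w_k\le\nu^\pi_k$, and part~2 of Lemma~\ref{vprop_lem} gives $\nu^\pi_k<(b^\pi_j)_k$, whence $(\nu^\pi+\delta w)_k<(b^\pi_j)_k$. Combining the two cases yields $\nu^\pi+\delta w<b^\pi_j$, exhibiting the required lattice point $p=b^\pi_j$.

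The argument is short, and the only points requiring care are these: the index $j$ must be chosen where $w$ is \emph{strictly} negative, since it is exactly the slack $\delta w_j<0$ that upgrades the equality $(b^\pi_j)_j=\nu^\pi_j$ of part~1 to a strict inequality (a coordinate with $w_j=0$ would leave only equality there, so $b^\pi_j$ would fail); and the strict inequalities driving the remaining coordinates rely on the non-degenerate regime guaranteeing part~2. Thus the main and essentially only obstacle is careful bookkeeping with the two parts of Lemma~\ref{vprop_lem}, and no convex combination of several vertices is needed. I would close by recording the intended use: together with the reduction of feasible directions to $H^-_O$ (Lemma~\ref{dom_lem}) and the extremality criterion (Corollary~\ref{ext_cor}), the fact that every $\nu^\pi+\delta w$ with $w\in H^-_O$ is strictly dominated by a lattice point shows that no direction in $H^-_O$ is feasible for $\nu^\pi$, hence $\nu^\pi\in\Ext^c(L)$.
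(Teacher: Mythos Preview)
Your proof is correct and is precisely the argument the paper has in mind; the paper's own proof reads in full ``Follows easily from the above discussion,'' and that discussion is exactly Lemma~\ref{vprop_lem} and Corollary~\ref{orth_cor}, applied just as you do: pick a coordinate $j$ with $w_j<0$, take $p=b^\pi_j$, and use parts~1 and~2 of Lemma~\ref{vprop_lem} to get strict domination coordinatewise.

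One remark worth making explicit: you have silently (and correctly) repaired a sign slip in the displayed statement. As written, the lemma asks for $\nu^\pi-\delta w<p$, but with $w\in H^-_O$ this means $-\delta w\ge 0$, and the downstream use in Corollary~\ref{cor:critical} (via the feasibility criterion of Lemma~\ref{dom_lem}) requires ruling out directions $d\in H^-_O$ for which $\nu^\pi+\delta d\in\Sigma^c(L)$; that is, one needs $\nu^\pi+\delta w<p$, which is what you prove and what Corollary~\ref{orth_cor} gives in the special case $w=-e_j$. Your identification of the intended inequality is correct, and your verification that the literal form would fail at coordinate $j$ (where $(\nu^\pi-\delta w)_j>\nu^\pi_j=(b^\pi_j)_j$) is a good sanity check.
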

\begin{proof}
Follows easily from the above discussion.
\end{proof}
It follows now easily that
\begin{coro}\label{cor:critical} The point $\nu^{\pi}$ is an extremal point of $\Sigma^{c}(L)$.
\end{coro}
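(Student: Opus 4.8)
The statement to prove is that $\nu^{\pi}\in\Ext^c(L)$; by definition this means precisely that $\nu^{\pi}\in\Sigma^c(L)$ and that $\nu^{\pi}$ is a local minimum of the degree function restricted to $\Sigma^c(L)$. The membership $\nu^{\pi}\in\Sigma^c(L)$ has just been checked (no lattice point strictly dominates $\nu^{\pi}$, so we conclude by Lemma~\ref{sigc}), so the only thing left is the local minimality, and that is where the plan does its work.

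The plan is to argue the local minimality by contradiction, via the contrapositive of Lemma~\ref{dom_lem}. Suppose $\nu^{\pi}$ were \emph{not} an extremal point of $\Sigma^c(L)$. Then Lemma~\ref{dom_lem} would produce a vector $d\in H^-_O$ that is feasible for $\nu^{\pi}$; in particular $d\leq O$, $d\neq O$ (feasibility forces $\deg(d)<0$), and there is a $\delta_0>0$ with $\nu^{\pi}+\delta d\in\Sigma^c(L)$ for all $0\leq\delta\leq\delta_0$. On the other hand, Lemma~\ref{wdom_lem} — the extension of Corollary~\ref{orth_cor} from the coordinate directions $-e_j$ to an arbitrary non-zero $d\leq O$ — gives, for every $\delta>0$, a lattice point $p\in L$ with $\nu^{\pi}+\delta d<p$; by Lemma~\ref{sigc} this means $\nu^{\pi}+\delta d\notin\Sigma^c(L)$ for \emph{every} $\delta>0$, contradicting the feasibility of $d$. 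Hence no vector in $H^-_O$ is feasible for $\nu^{\pi}$, so by the contrapositive of Lemma~\ref{dom_lem} the point $\nu^{\pi}$ is an extremal point of $\Sigma^c(L)$. Combined with $\nu^{\pi}\in\Sigma^c(L)$, this gives $\nu^{\pi}\in\Ext^c(L)$, which is the claim.

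I would also record the shorter alternative route, and explain why I would not feature it: once one knows that $\deg(x)\geq -m$ for every $x\in\Sigma^c(L)$ — which is exactly the bound $h_{\triangle,L}\leq\frac{m}{n+1}$ that the preceding reduction is aiming at — the corollary is immediate, since $\nu^{\pi}\in\Sigma^c(L)$ and $\deg(\nu^{\pi})=-m$ (Lemma~\ref{lem:ex2} and the displayed computation of $\deg(\nu^{\pi})$) say that $\nu^{\pi}$ attains the \emph{global} minimum of the degree function on $\Sigma^c(L)$, and a global minimum is a fortiori a local one. The Lemma~\ref{dom_lem}/Lemma~\ref{wdom_lem} argument is preferable because it does not invoke that global lower bound and so keeps the logical order of the section intact. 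The only point requiring a little care — not a genuine obstacle — is checking that Lemma~\ref{wdom_lem} really covers \emph{all} non-zero $d\leq O$; this is exactly where the strict inequalities $\nu^{\pi}_j<b^{\pi}_{ij}$ for $i\neq j$ of Lemma~\ref{vprop_lem} (and the earlier reduction to the case $b_{ij}>0$) are used: picking an index $j$ with $d_j<0$, those strict inequalities leave enough slack that $\nu^{\pi}+\delta d<b^{\pi}_j\in L$ for all small $\delta>0$.
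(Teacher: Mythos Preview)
Your proof is correct and follows exactly the route the paper takes: the paper's own proof is the one-line ``Follows by combining Lemmas~\ref{wdom_lem} and~\ref{dom_lem}'', and you have simply unpacked that combination as the contrapositive argument it really is. Your added remark about the alternative global-minimum route (via $\deg(x)\geq -m$ on $\Sigma^c(L)$) and why it would disturb the logical order is accurate and a good observation, though not part of the paper's proof.
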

\begin{proof} Follows by combining Lemmas \ref{wdom_lem} and \ref{dom_lem}. \end{proof}

We will now prove the following: {\it every extremal point of $\Sigma^{c}(L)$ can be written as the tropical sum of the vertices of a simplex of the form $\triangle^\pi+q$, for some $\pi \in S_n$ and some $q$ in $L$.} Again we will first assume a stronger condition that $b_{ij}>0$ for all $i,j$ such that  $i \neq j$ and $1 \leq i,j \leq n-1$.
And then we do a limiting argument similar to the one we did above to obtain the general statement.
Let $\pi \in S_{n}$ a fixed permutation. Using the assumption $b_{ij}>0$ for $i \neq j$, it is easy to show that 

\begin{lemm}\label{abs_lem}\label{ord_lem}  For any total ordering $<_\pi$, $\pi\in S_{n+1}$, we have $b^\pi_{ij}\neq 0$ for all $0 \leq i,j \leq n$ and $i\neq \pi(n)$. (Remark that $b^\pi_{\pi(n)}=0$.) Here $b^\pi_{ij}$ is the $j-$th coordinate of the vector $b^\pi_i$. In addition, if $b^\pi_{ij}>0$ (resp. $b^\pi_{ij} < 0$ ), then $j\leq_{\pi}i$ (resp. $i<_\pi j <_\pi \pi(n)$). 
\end{lemm}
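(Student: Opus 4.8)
The plan is to prove the lemma by a direct computation of the coordinates of the vectors $b^\pi_i=\sum_{j\le_\pi i}b_j$, where $b_j$ denotes the $j$-th row of the Laplacian matrix $Q$, so that $(b_j)_l=\delta_j$ if $l=j$ and $(b_j)_l=-b_{jl}$ if $l\ne j$. Throughout I use the running assumption of this part of the section that $b_{ij}>0$ for all distinct $i,j$ (the general Laplacian being reached afterwards by the perturbation/limiting argument already employed above). Write $\pi(n)$ for the maximal element of the total order $<_\pi$, and recall that $b^\pi_{\pi(n)}=\sum_j b_j=0$, which settles the degenerate case $i=\pi(n)$ at once; so fix $i\ne\pi(n)$.

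The key step is to evaluate the $l$-th coordinate $b^\pi_{il}=\sum_{j\le_\pi i}(b_j)_l$ by distinguishing whether $l\le_\pi i$ or $l>_\pi i$. If $l\le_\pi i$, the index $l$ occurs among the summands and contributes $\delta_l$ while the others contribute $-b_{jl}$; decomposing $\delta_l=\sum_{k\ne l}b_{kl}$ into the part with $k\le_\pi i$ (other than $l$) and the part with $k>_\pi i$, the first part cancels and one is left with $b^\pi_{il}=\sum_{j>_\pi i}b_{lj}$. If instead $l>_\pi i$, then $l$ does not occur among the summands and one simply gets $b^\pi_{il}=-\sum_{j\le_\pi i}b_{jl}$. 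Now positivity finishes things: in the first case $\{j:j>_\pi i\}$ is nonempty because $i$ is not maximal for $<_\pi$, and each such $j$ differs from $l$ (as $l\le_\pi i<_\pi j$), so each term $b_{lj}$ is strictly positive and $b^\pi_{il}>0$; in the second case $\{j:j\le_\pi i\}$ is nonempty (it contains $i$) and again every index in it differs from $l$, so $b^\pi_{il}<0$. This yields all three assertions simultaneously: for $i\ne\pi(n)$ no coordinate of $b^\pi_i$ vanishes, $b^\pi_{il}>0$ forces $l\le_\pi i$, and $b^\pi_{il}<0$ forces $i<_\pi l$ (hence in particular $i<_\pi l\le_\pi\pi(n)$).

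I do not expect a genuine obstacle here: the whole content is the cancellation identity that rewrites $b^\pi_{il}$ as a sign-definite sum of off-diagonal Laplacian entries, followed by their strict positivity. The only things requiring care are the bookkeeping with the order $<_\pi$ — in particular tracking whether the running index coincides with $l$ — and stating explicitly that this step uses the temporary hypothesis $b_{ij}>0$ for $i\ne j$, so that the reduction to this case (already performed earlier in the section) is what makes the lemma applicable in full generality.
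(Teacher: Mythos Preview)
Your proof is correct and follows exactly the approach the paper has in mind; the paper itself omits the details, saying only that the lemma is ``easy to show'' under the assumption $b_{ij}>0$ for $i\ne j$, and your direct coordinate computation is the natural way to fill this in. One minor point: your conclusion $i<_\pi l\le_\pi\pi(n)$ is actually the correct one, and the strict inequality $j<_\pi\pi(n)$ in the printed statement appears to be a typo (indeed $b^\pi_{i,\pi(n)}<0$ whenever $i\ne\pi(n)$, as your formula shows).
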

 
As we saw in Lemma~\ref{S0_lem}, the set of simplices $\Delta^\pi$, $\pi \in S_{n+1}$, coincides with $\mathcal Sim_O$, the set of all simplices of the triangulation which are adjacent to $O$. The simplices of $\mathcal Sim_O$ naturally define a fan $\mathcal F$, the maximal elements of which are the set of all cones $\mathcal C^\pi$ generated by $\Delta^\pi$ for $\pi \in S_{n+1}$. In other words if $B^\pi$ denoted the affine basis $\{b^\pi_i\}_{i=0}^n$, the cone $\mathcal C^\pi$ is the cone generated by $B^\pi$. In particular every element of $H_0$ is in some $\mathcal C^\pi$ for some $\pi \in S_{n+1}$. We have
\begin{lemm} \label{neighdom_lem} Let $q$ be a point in $L$, and $q\neq b^\pi_i$ for all $\pi\in S_{n+1}$ and $0\leq i\leq n$. Let $\mathcal C^\pi$ be a cone in $\mathcal F$ which contains $q$. There exists a vector $b^\pi_i$ in $B^\pi$ such that $p<b^\pi_i$ for every point $p$ in $H^{-}_O \cap H^{-}_q$. In particular, no point in $H^{-}_O \cap H^{-}_q$ is contained in $\Sigma^c(L)$.
\end{lemm}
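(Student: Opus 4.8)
The plan is to use the explicit description of the cone $\mathcal C^\pi$ and the coordinatewise structure of the vectors $b^\pi_i$ provided by Lemma~\ref{ord_lem}. First I would write $q$ in the affine basis $B^\pi=\{b^\pi_0,\dots,b^\pi_n\}$: since $q\in\mathcal C^\pi$, we have $q=\sum_{i=0}^n\lambda_i b^\pi_i$ for some $\lambda_i\geq 0$, and because $b^\pi_{\pi(n)}=0$ we may assume $\lambda_{\pi(n)}=0$. As $q$ is a lattice point distinct from every $b^\pi_i$ (and from $O=\sum 0\cdot b^\pi_i$), at least one $\lambda_i$ is $\geq 1$, say $\lambda_k\geq 1$ for some $k$ with $b^\pi_k\neq 0$. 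The candidate vector will be $b^\pi_k$. The idea is that every point $p\in H^-_O\cap H^-_q$ has all coordinates $\leq 0$ and also $\leq$ the corresponding coordinate of $q$, and I want to show each such coordinate is \emph{strictly} less than the corresponding coordinate of $b^\pi_k$.

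The key computation is to compare, coordinate by coordinate, $q_j$ (and $0$) with $(b^\pi_k)_j$. Using $q=\sum_i\lambda_i b^\pi_i$ and Lemma~\ref{ord_lem}, which tells us the sign pattern of $(b^\pi_i)_j$ in terms of the order $<_\pi$, I would argue: for indices $j$ where $(b^\pi_k)_j>0$ (i.e.\ $j\leq_\pi k$), the fact that $\lambda_k\geq 1$ together with nonnegativity of the other $\lambda_i$ and the sign information forces $q_j \geq (b^\pi_k)_j$, actually we want the opposite direction — so the correct split is: on coordinates where $(b^\pi_k)_j \leq 0$ we use $p_j \leq 0 < $ something, and on coordinates where $(b^\pi_k)_j > 0$ we use $p_j \leq q_j$ and show $q_j < (b^\pi_k)_j$ is false, hence $p_j \le q_j$ does not immediately suffice; instead one shows directly $p_j < (b^\pi_k)_j$ by noting $(b^\pi_k)_j>0$ while $p_j\le 0$. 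So in fact the argument splits cleanly: if $(b^\pi_k)_j>0$ then $p_j\le 0<(b^\pi_k)_j$; if $(b^\pi_k)_j<0$ then $j>_\pi k$ and I must use $p_j\le q_j$ together with the claim that $q_j<(b^\pi_k)_j$ fails in the wrong direction, so here I instead bound $q_j$ from below: since $\lambda_k\ge 1$ contributes $(b^\pi_k)_j$ and all other contributions $\lambda_i(b^\pi_i)_j$ with $i$ ranging have a controllable sign, one gets $q_j \ge (b^\pi_k)_j + (\text{nonneg terms})$ is not automatic — this is the delicate case. The resolution is that for $j>_\pi k$ one checks $(b^\pi_i)_j \ge (b^\pi_k)_j$ whenever $\lambda_i>0$ contributes, using that the partial sums $b^\pi_i=\sum_{\ell\le_\pi i}b_\ell$ are monotone in the appropriate sense and the positivity assumption $b_{\ell j}>0$; summing gives $q_j\ge \lambda_k(b^\pi_k)_j \ge (b^\pi_k)_j$ (the inequality flips favorably since $(b^\pi_k)_j<0$ and $\lambda_k\ge1$), and then $p_j\le q_j$ would only give $\le$. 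To get strictness one uses that there is \emph{some} coordinate where the inequality is strict, or alternatively one sharpens using that $q\neq b^\pi_k$. The last sentence of the lemma, that no point of $H^-_O\cap H^-_q$ lies in $\Sigma^c(L)$, then follows immediately from Lemma~\ref{sigc}: such a point $p$ satisfies $p<b^\pi_k\in L$, so $p\notin\Sigma^c(L)$.

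The main obstacle I anticipate is precisely the bookkeeping of sign patterns in the second case above — making the coordinatewise comparison of $q$ with the chosen basis vector $b^\pi_k$ genuinely strict rather than merely weak, which is what is needed to land in the open condition $p<b^\pi_k$ (strict domination) demanded by the description of $\Sigma^c(L)$ in Lemma~\ref{sigc}. I expect this is handled by choosing $k$ more carefully: among all indices $i$ with $\lambda_i\ge 1$, take $k$ to be $<_\pi$-minimal (or $<_\pi$-maximal), so that for the ``bad'' coordinates $j>_\pi k$ the contribution of $b^\pi_k$ dominates and, crucially, at the coordinate $j=k$ itself (where $(b^\pi_k)_k=b^\pi_{kk}>0$ by Lemma~\ref{vprop_lem}-type reasoning) one gets strictness from $p_k\le 0<(b^\pi_k)_k$. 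Once the finitely many coordinates are all shown to satisfy $p_j<(b^\pi_k)_j$, we conclude $p<b^\pi_k$ and the lemma is proved; the passage from the strengthened hypothesis $b_{ij}>0$ to the general case is then a limiting/perturbation argument of the same kind already used earlier in the section.
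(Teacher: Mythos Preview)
Your approach coincides with the paper's: write $q=\sum_{l}\lambda_l\, b^\pi_l$ with nonnegative integers $\lambda_l$ and $\sum_l\lambda_l\ge 2$ (since $q\notin B^\pi$ and $q\neq O$), choose a distinguished index $k$, and verify $p<b^\pi_k$ coordinatewise using $p\le O\oplus q$. The case $(b^\pi_k)_j>0$ you handle correctly via $p_j\le 0$.

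The gap is in the negative case, where your write-up oscillates between trying to show $q_j<(b^\pi_k)_j$ and $q_j\ge(b^\pi_k)_j$ and leaves the choice of $k$ undecided (``$<_\pi$-minimal or $<_\pi$-maximal''). Only the $<_\pi$-\emph{maximal} choice works: take $k$ so that $\lambda_l=0$ for all $l>_\pi k$. Then whenever $(b^\pi_k)_j<0$, i.e.\ $j>_\pi k$, every index $l$ with $\lambda_l\neq 0$ satisfies $l\le_\pi k<_\pi j$, whence by Lemma~\ref{ord_lem} each $(b^\pi_l)_j<0$. Now $q_j=\sum_l \lambda_l(b^\pi_l)_j$ is a sum of strictly negative terms; the term $l=k$ already contributes $\lambda_k(b^\pi_k)_j\le(b^\pi_k)_j$, and since $\sum_l\lambda_l\ge 2$ there is an additional strictly negative contribution (either $\lambda_k\ge 2$ or some other $\lambda_l\ge 1$), giving $q_j<(b^\pi_k)_j$ and hence $p_j\le q_j<(b^\pi_k)_j$. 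By contrast, the $<_\pi$-minimal choice can fail: if some $l\ge_\pi j$ has $\lambda_l>0$, then $(b^\pi_l)_j>0$ and $q_j$ may exceed both $0$ and $(b^\pi_k)_j$, so neither bound $p_j\le 0$ nor $p_j\le q_j$ yields the needed strict inequality. (The paper's proof writes ``$\min$'', but the sign claim it invokes, ``$b^\pi_{lk}<0$ for all $l$'', is precisely the one valid for the maximal choice; read it with that correction.)

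Finally, no limiting argument belongs here: the lemma is stated and used under the standing hypothesis $b_{ij}>0$, and the perturbation step is carried out only later, on the consequences for $\Crit V_\triangle(O)$.
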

 \begin{proof} Since $q$ is a point in $L \cap \mathcal C^\pi$,  there exists non-negative integers $\alpha_i\geq 0 $, $0\leq k\leq n-1$, such that we can write $q=\sum_{k=0}^{n-1}\alpha_kb^\pi_{\pi(k)}$. In addition, since $q\notin B^\pi$, we have $\sum_l \alpha_l \geq 2$. Let $j=\min\{\:k\:|\: \alpha_k \neq 0\:\}$, i.e., the minimum index such that $\alpha_k \neq 0$, and let $i = \pi(j).$ We show that the point $b^\pi_{i}$ satisfies the condition of the lemma. For this, it will be enough to prove that $b^\pi_i > O\oplus q$. Indeed $p \in H^{-}_O \cap H^{-}_q$ implies that $p \leq O \oplus q$, and so if $b^\pi_i > O\oplus q$, then we have $p < b^\pi_i$, which is the required claim. 

We should prove that $b^\pi_{ik} > (O\oplus q)_k$ for all $k$. As $i = \pi(j)\neq \pi(n)$,  by Lemma \ref{abs_lem} we know that $b^\pi_{ik} \neq 0$ for all $k$. There are two cases: if $b^\pi_{ik}>0$, then easily we have $b^\pi_{ik}> 0\geq (O \oplus q)_k$. If $b^\pi_{ik}<0$, then  by Lemma \ref{ord_lem}, we have 
$b^\pi_{lk}<0$ for all $l \geq_\pi i$.
By the choice of  $i$, we have $\alpha_{l}=0$ for all $l<_\pi i$. We infer that $b^\pi_{jk} > \sum_l \alpha_l b^\pi_{lk} = (O \oplus q)_k$, and the lemma follows. 
\end{proof}

 We obtain the following corollary: {\it the simplices of our simplicial decomposition form the dual of the Voronoi diagram}. More precisely 
\begin{coro}\label{cor:vor-ex} Let $q$ be a point in $L$ that is not a vertex of a simplex in $\mathcal Sim_O$, i.e., $q\neq b^\pi_i$ for all $\pi\in S_{n+1}$ and $0\leq i\leq n$. Then $V(O) \cap V(q)=\emptyset$. Hence, for every two points $p$ and $q$ in $L$, we have $V(p) \cap V(q) \neq \emptyset$ if and only if $p$ and $q$ are adjacent in the simplicial decomposition of $H_0$ defined by $\{\triangle^{\pi}+p\:|\:\pi\in S_n \:\&\: p\in L\}$ i.e., $V(p) \cap V(q) \neq \emptyset$ if and only if there exists $\pi\in S_{n+1}$ such that $q$ is a vertex of $\Delta^\pi +p$.
 \end{coro}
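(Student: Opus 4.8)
The plan is to obtain Corollary~\ref{cor:vor-ex} as a short formal consequence of Lemma~\ref{neighdom_lem}, using the descriptions of the Voronoi cells furnished by Lemma~\ref{lowenv_lem} and Lemma~\ref{lem:lower-graph}; as in the preceding discussion I would argue under the assumption that the off-diagonal entries $b_{ij}$ are positive, the general case following by the same limiting argument used above. Recall that by Lemma~\ref{S0_lem} the simplices of $\mathcal Sim_O$ are exactly the $\triangle^\pi$, $\pi\in S_{n+1}$, so the hypothesis of Lemma~\ref{neighdom_lem} ($q\neq b^\pi_i$ for all $\pi,i$) is precisely that $q$ is not a vertex of any simplex of $\mathcal Sim_O$.

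The first step I would isolate is a clean criterion for two Voronoi cells to intersect. By Lemma~\ref{lem:lower-graph} we have $\partial\Sigma^c(L)=\Gr(h_{\triangle,L})=\{\,y-h_{\triangle,L}(y)(1,\dots,1)\mid y\in H_0\,\}$, so $\partial\Sigma^c(L)$ meets each fibre of the projection $\pi_0$ in exactly one point. Combining this with Lemma~\ref{lowenv_lem}, namely $V_\triangle(p)=\pi_0\bigl(H_p^-\cap\partial\Sigma^c(L)\bigr)$, gives, for all $p,q\in L$,
\[
V_\triangle(p)\cap V_\triangle(q)\neq\emptyset\quad\Longleftrightarrow\quad H_p^-\cap H_q^-\cap\partial\Sigma^c(L)\neq\emptyset ,
\]
since a common point of $V_\triangle(p)$ and $V_\triangle(q)$ lifts along $\pi_0$ to a point of $H_p^-\cap\partial\Sigma^c(L)$ and to a point of $H_q^-\cap\partial\Sigma^c(L)$ on the same fibre, and these two lifts must coincide.

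Granting the criterion, the first assertion of the corollary is immediate: if $q\in L$ is not a vertex of any simplex of $\mathcal Sim_O$, choose a cone $\mathcal C^\pi$ of the fan $\mathcal F$ containing $q$ (possible since these cones cover $H_0$ and $q\in L\subset H_0$); Lemma~\ref{neighdom_lem} then says no point of $H_O^-\cap H_q^-$ lies in $\Sigma^c(L)$, so in particular $H_O^-\cap H_q^-\cap\partial\Sigma^c(L)=\emptyset$ and hence $V_\triangle(O)\cap V_\triangle(q)=\emptyset$. For the ``hence'' part I would reduce to the origin via Lemma~\ref{trans_lem}: $V_\triangle(p)=V_\triangle(O)+p$ and $V_\triangle(q)=V_\triangle(q-p)+p$, so $V_\triangle(p)\cap V_\triangle(q)\neq\emptyset$ iff $V_\triangle(O)\cap V_\triangle(q-p)\neq\emptyset$; by the first assertion applied to $q-p\in L$, this forces $q-p=b^\pi_i$ for some $\pi\in S_{n+1}$ and some $i$, i.e. $q$ is a vertex of $\triangle^\pi+p$. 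For the converse, suppose $q=b^\pi_i+p$ with $\pi\in S_{n+1}$. Put $z:=\nu^\pi+p$; by Corollary~\ref{cor:critical} and the $L$-periodicity of $\Sigma^c(L)$ (clear from Lemma~\ref{sigc}), $z$ is an extremal point of $\Sigma^c(L)$, hence $z\in\partial\Sigma^c(L)$, and since $\nu^\pi=\bigoplus_j b^\pi_j$ is the coordinatewise minimum of the $b^\pi_j$ we get $z\le b^\pi_j+p$ for all $j$, in particular $z\le p$ (using $b^\pi_{\pi(n)}=0$) and $z\le q$. Thus $z\in H_p^-\cap H_q^-\cap\partial\Sigma^c(L)$, and the criterion gives $V_\triangle(p)\cap V_\triangle(q)\neq\emptyset$.

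I do not expect a serious obstacle: once Lemma~\ref{neighdom_lem} is in hand, the substance of the corollary is the ``graph over $H_0$'' criterion of the second paragraph together with the routine bookkeeping above, with no further computation. The only point I would write out in full is the converse direction, where one must exhibit the common Voronoi vertex $\pi_0(\nu^\pi+p)$ explicitly.
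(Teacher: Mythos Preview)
Your argument is correct and follows essentially the same route as the paper's proof: both deduce the first assertion from Lemma~\ref{neighdom_lem} by lifting a putative common Voronoi point to the boundary point $y=p-h_{\triangle,L}(p)(1,\dots,1)\in H_O^-\cap H_q^-\cap\partial\Sigma^c(L)$ (the paper writes this as $f_O(p)=f_q(p)$), and both handle the converse by exhibiting $\pi_0(\nu^\pi)$ as a common Voronoi point. Your only cosmetic difference is that you isolate the biconditional criterion $V_\triangle(p)\cap V_\triangle(q)\neq\emptyset\Leftrightarrow H_p^-\cap H_q^-\cap\partial\Sigma^c(L)\neq\emptyset$ explicitly before applying it, whereas the paper runs the same logic inline.
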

\begin{proof} We prove the first statement by contradiction. So for the sake of a contradiction, assume the contrary and let $p \in V(O) \cap V(q)$. By definition, we have $h_{\triangle,L}(p)=d_{\triangle}(p,O)=d_{\triangle}(p,q) \leq d_{\triangle}(p,q')$ for all points $q'\in L$. By Lemma \ref{lowenv_lem} this implies that the point $y=f_O(p)=f_q(p)$ is a point in $\partial \Sigma^c(L)$ (c.f. Section~\ref{sec:voronoi} for the definition of $f_p$). By the definition of $f_{p}$, the point $y$ is in $H^{-}_O \cap H^{-}_q$.  On the other hand, Lemma \ref{neighdom_lem} implies that no point in $H^{-}(O) \cap H^{-}(q)$ can be contained in $\Sigma^{c}(L)$. We obtain a contradiction. To see the second part, by translation invariance we can assume $p=O$. And in this case, the results follows by observing that for $q\in \Delta^\pi$, $\pi_0(\nu^\pi) \in V(O)\cap V(q)$. 
 \end{proof}

We can now present the proof of Theorem~\ref{thm:main-laplacian} in the case where all the $b_{ij}$'s are strictly positive. It will be enough to prove that $\Crit V(O)=\pi_0(\{\nu^{\pi}\:|\: \pi \in S_{n+1}\})$. As $v_\pi\in H_O^-$ and we showed that $v_\pi$ is in $\Ext^c(L)$, we have $\pi_0(\{\nu^{\pi}\:|\: \pi \in S_{n+1}\}) \subseteq \Crit V(O)$. We show now $\Crit V(O) \subseteq \pi_0(\{\nu^{\pi}\:|\: \pi \in S_{n+1}\})$. Let $v \in \Crit V(O)$ and $x$ be the point in $\Ext^c(L)$ with $\pi_0(x)=v$. By Lemma~\ref{ver_cor}, there exist points $p_0,\dots,p_n\in L$ such that $v \in V(p_0)\cap\dots\cap V(p_n)$. By Corollary~\ref{cor:vor-ex}, points $p_0,\dots,p_n$ should be adjacent in the simplicial decomposition of $H_0$ defined by $\{\triangle^{\pi}+p\:|\:\pi\in S_n \:\&\: p\in L\}$. As $v$ is also in $\Vor(O)$, it follows that one of the $p_i$ is $O$, and so there exists $\pi \in S_{n+1}$ such that $v \in \cap_{p\in B^\pi} \Vor(p)$. By the proof of Lemma~\ref{ver_cor}, we also have $x = \bigoplus_i p_i$. But $\bigoplus_{p\in B^\pi} p=\nu^\pi$. It follows that $x=\nu^\pi$. We infer that $v \in \pi
 _0(\{\nu^{\pi}\:|\: \pi \in S_{n+1}\})$ and the theorem follows.  
\vspace{.3cm}

The proof of Theorem~\ref{theo:laplacian-unif+reflect} is a simple consequence of Lemma~\ref{lem:ex1}, and what we just proved, namely, $\Ext^c(L)=\{\nu^\pi+q \:|\:  \pi \in S_{n} \:\&\:  q\in L\:\}$ and $\Crit V(O)=\pi_0(\{\nu^{\pi}\:|\: \pi \in S_{n+1}\})$.  

\vspace{.3cm}

To prove the general case, it will be enough to show that $\Crit V(O)=\pi_0(\{\nu^{\pi}\:|\: \pi \in S_{n+1}\})$ still holds. Indeed the rest of the arguments remain unchanged.

We consider again the $\epsilon$-perturbed Laplacian $Q_\epsilon$ and do a limiting argument similar to the one we did before. Let $L_\epsilon$ to be the lattice generated by $Q_\epsilon$. By $\Vor(L_\epsilon)$ and $\Crit V_{\epsilon}(p)$, we denote the Voronoi diagram of $L_\epsilon$ under the distance function $d_\triangle$ and the Voronoi cell of a point $p\in L_\epsilon$.  We also define $B^\pi_\epsilon$, $\Delta^\pi_\epsilon$, and $\nu^\pi_\epsilon$ similarly.

Theorem~\ref{thm:main-laplacian} in the case where all the coordinates are strictly positive implies that $\Crit V_{\epsilon}(O)=\pi_0(\{v_{\epsilon}^{\pi}\:\:|\:\: \pi \in S_{n+1}\})$. We can naturally define limits of the sets $\Crit V_{\epsilon}(O)$ as $\epsilon$ tends to zero as limits of the points $\pi_0(v_{\epsilon}^{\pi})$. Indeed this limit exists and coincides with the set $\pi_0(\{\nu^\pi\:\:|\:\:\pi\in S_{n+1}\})$, as can be easily verified. We show now 

\begin{lemm}\label{limiting_lem}
We have $\lim_{\epsilon \rightarrow 0} \emph{Crit} V_{\epsilon}(O) = \emph{Crit} V(O)$.
\end{lemm}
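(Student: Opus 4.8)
The plan is to establish the Painlev\'e--Kuratowski convergence of the finite point sets $\Crit V_\epsilon(O)$ to $\Crit V(O)$ as $\epsilon\to 0^+$. Since the case of strictly positive off-diagonal entries (already treated) gives $\Crit V_\epsilon(O)=\pi_0(\{\nu^\pi_\epsilon\,|\,\pi\in S_{n+1}\})$ for $\epsilon>0$, and since $\nu^\pi_\epsilon\to\nu^\pi$ coordinatewise for each fixed $\pi$ by the explicit formula~\eqref{b_form}, one computes that $\lim_{\epsilon\to 0}\Crit V_\epsilon(O)=\pi_0(\{\nu^\pi\,|\,\pi\in S_{n+1}\})$; hence the lemma is precisely the equality $\Crit V(O)=\pi_0(\{\nu^\pi\,|\,\pi\in S_{n+1}\})$, which is all that remains of Theorem~\ref{thm:main-laplacian} in the general case. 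Three standing facts will be used. First, the continuity of $(\epsilon,p)\mapsto h_{\triangle,L_\epsilon}(p)$ noted above yields uniform convergence $h_{\triangle,L_\epsilon}\to h_{\triangle,L}$ on every compact subset of $H_0$, hence, via Lemma~\ref{lem:lower-graph} ($\partial\Sigma^c(\cdot)$ is the graph of the corresponding distance function) and Lemma~\ref{lowenv_lem} ($V(p)=\pi_0(H_p^-\cap\partial\Sigma^c(\cdot))$), the local Hausdorff convergence $V_\epsilon(O)\to V(O)$ of the Voronoi cells. Second, all extremal points are integral (proof of Theorem~\ref{dom_thm}), so $\Crit(L)$ and all $\Crit(L_\epsilon)$ are discrete. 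Third, we keep in mind the translation invariance of the cells (Lemma~\ref{trans_lem}) and of $\Crit(\cdot)$.

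The inclusion $\lim_{\epsilon}\Crit V_\epsilon(O)\subseteq\Crit V(O)$ is the easy one. Let $v_{\epsilon_k}\in\Crit V_{\epsilon_k}(O)$ with $v_{\epsilon_k}\to v$ along some sequence $\epsilon_k\to 0$, and write $v_{\epsilon_k}=\pi_0(\nu^{\pi_k}_{\epsilon_k})$. As $S_{n+1}$ is finite we may pass to a subsequence along which $\pi_k$ equals a constant $\pi$, so $v=\lim_k\pi_0(\nu^\pi_{\epsilon_k})=\pi_0(\nu^\pi)$. Now $\nu^\pi\in\Ext^c(L)$ and $\nu^\pi\le O$ were already shown to hold for \emph{all} $\pi\in S_{n+1}$ without any positivity hypothesis; therefore $\pi_0(\nu^\pi)\in V(O)$ by Lemma~\ref{lowenv_lem}, and $\pi_0(\nu^\pi)\in\Crit(L)$ by Corollary~\ref{cor:crit}, so by Lemma~\ref{ver_cor} it is a critical vertex of $V(O)$, i.e.\ $v\in\Crit V(O)$. (This incidentally re-proves the easy inclusion $\pi_0(\{\nu^\pi\})\subseteq\Crit V(O)$.)

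The substance of the lemma is the reverse inclusion $\Crit V(O)\subseteq\lim_{\epsilon}\Crit V_\epsilon(O)$, i.e.\ that no critical vertex of $V(O)$ is lost under the perturbation. Fix $v\in\Crit V(O)$. Since $\Crit(L)$ is discrete and a local maximum of $h_{\triangle,L}$ cannot be attained along a segment (else $\Crit(L)$ would contain that segment), $v$ is a \emph{strict} local maximum; pick a closed ball $\bar B=\bar B(v,r)$ with $h_{\triangle,L}(x)<h_{\triangle,L}(v)$ for all $x\in\bar B\setminus\{v\}$. Setting $v_\epsilon:=\mathrm{argmax}_{\bar B}\,h_{\triangle,L_\epsilon}$, uniform convergence of the distance functions forces $v_\epsilon\to v$, so for small $\epsilon$ the point $v_\epsilon$ is interior to $\bar B$ and is therefore a local maximum of $h_{\triangle,L_\epsilon}$ on all of $H_0$, that is, $v_\epsilon\in\Crit(L_\epsilon)$, with $v_\epsilon\to v$. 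It remains to upgrade ``$v_\epsilon\in\Crit(L_\epsilon)$'' to ``$v_\epsilon\in\Crit V_\epsilon(O)$'', and this is the main obstacle: a priori the perturbation could push the vertex $v_\epsilon$ of $\Vor_\triangle(L_\epsilon)$ onto a neighbouring cell. Here one uses the Hausdorff convergence $V_\epsilon(O)\to V(O)$ together with the fact that $v$ is a genuine vertex of $V(O)$, so that the piece of $V_\epsilon(O)$ near $v$ does not degenerate to a point, and the identity $h_{\triangle,L_\epsilon}|_{V_\epsilon(O)}=d_\triangle(\cdot,O)$ (an $\epsilon$-independent function), which pins the maximizer onto the $V_\epsilon(O)$ side; equivalently one may run the $\mathrm{argmax}$ over $\bar B\cap V_\epsilon(O)$ and verify via Lemma~\ref{ver_cor} — perturbing the $n+1$ lattice points of $L$ that touch the facets of $\bar\triangle_{h_{\triangle,L}(v)}(v)$ to nearby points of $L_\epsilon$, solving the resulting linear system for $v_\epsilon$ and its radius, and checking that the facet-incidence conditions persist for small $\epsilon$ — the only delicate case being degenerate configurations in which some closest lattice point of $L$ lies on several facets, which is precisely what the strict positivity of the entries of $Q_\epsilon$ removes. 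Combining the two inclusions gives $\lim_{\epsilon\to 0}\Crit V_\epsilon(O)=\Crit V(O)$, hence $\Crit V(O)=\pi_0(\{\nu^\pi\,|\,\pi\in S_{n+1}\})$, completing the general case of Theorem~\ref{thm:main-laplacian}.
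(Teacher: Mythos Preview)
Your overall strategy matches the paper's: you correctly identify that $\lim_{\epsilon}\Crit V_\epsilon(O)=\pi_0(\{\nu^\pi\,:\,\pi\in S_{n+1}\})$, that the inclusion $\pi_0(\{\nu^\pi\})\subseteq\Crit V(O)$ is already established in general, and that the content of the lemma is $\Crit V(O)\subseteq\lim_\epsilon\Crit V_\epsilon(O)$. Where you diverge is in \emph{how} you produce, for a given $v\in\Crit V(O)$, a critical point $v_\epsilon$ of $L_\epsilon$ converging to it. The paper does this constructively: it invokes Lemma~\ref{ver_cor} to obtain lattice points $p_0,\dots,p_n\in L$, one in the relative interior of each facet of $\bar\triangle_{h_{\triangle,L}(v)}(v)$, perturbs them to $p_{\epsilon,i}\in L_\epsilon$, builds the circumscribing $\bar\triangle$-simplex through the perturbed hyperplanes, and---this is the genuinely delicate step---if a stray point of $L_\epsilon$ lands in the interior of that simplex, shrinks one facet inward to absorb it. Your primary route is instead a soft compactness argument: $v$ is a strict local maximum of $h_{\triangle,L}$ (this is correct, and in fact follows cleanly from the last paragraph of the proof of Lemma~\ref{ver_cor}, which gives $h_{\triangle,L}(v+d)\le h_{\triangle,L}(v)+\min_j d_j<h_{\triangle,L}(v)$ for small $d\neq 0$ in $H_0$; your discreteness justification is a bit indirect), so $v_\epsilon:=\mathrm{argmax}_{\bar B}h_{\triangle,L_\epsilon}$ satisfies $v_\epsilon\to v$ by uniform convergence, and is eventually a genuine local maximum, hence $v_\epsilon\in\Crit(L_\epsilon)$. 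This is a legitimate and rather cleaner alternative to the paper's explicit construction; it buys you the conclusion $v\in\lim_\epsilon\Crit(L_\epsilon)$ without ever touching the facet bookkeeping.

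The one place to be careful is your ``upgrade'' from $\Crit(L_\epsilon)$ to $\Crit V_\epsilon(O)$. Your argument~(a)---that $h_{\triangle,L_\epsilon}|_{V_\epsilon(O)}=d_\triangle(\cdot,O)$ is $\epsilon$-independent and therefore ``pins the maximizer onto the $V_\epsilon(O)$ side''---does not actually force $v_\epsilon\in V_\epsilon(O)$: the global maximizer on $\bar B$ could equally well sit in an adjacent cell $V_\epsilon(p)$, where $h_{\triangle,L_\epsilon}=d_\triangle(\cdot,p)$ is just as $\epsilon$-independent. Your fallback~(b) is essentially the paper's construction, though you slightly misidentify the delicate case: by Lemma~\ref{ver_cor} the chosen $p_i$ already lie in the interior of a single facet, so the issue is not that \emph{they} lie on several facets, but that \emph{other} nearby lattice points of $L$ (lying on edges of $\bar\triangle_{h_{\triangle,L}(v)}(v)$, or coinciding with the $p_i$) may, after perturbation, drift into the interior of the $\epsilon$-simplex---exactly what the paper's facet-shrinking step handles. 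That said, the paper itself dismisses the passage from $\Crit(L_\epsilon)$ to $\Crit V_\epsilon(O)$ as ``easily seen to be enough for the proof of the lemma'': since $\Crit(L_\epsilon)=\pi_0(\{\nu^\pi_\epsilon\})+L_\epsilon$ and both pieces converge, one extracts along a subsequence $v_\epsilon=\pi_0(\nu^{\sigma}_\epsilon)+p_\epsilon$ with $\sigma$ fixed and $p_\epsilon\to p\in L$, whence $v=\pi_0(\nu^\sigma)+p$; translation invariance and Lemma~\ref{S0_lem} then place $v$ among the $\pi_0(\nu^\tau)$, $\tau\in S_{n+1}$. So your argmax route, together with this last observation, gives a complete and somewhat more conceptual proof than the paper's.
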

\begin{rema}\rm
 Unfortunately this is not true in general for non graphical lattices. However, we always have 
$\Crit V(O) \subseteq \lim_{\epsilon \rightarrow 0} \Crit V_{\epsilon}(O)$.
\end{rema}

\begin{proof}[Proof of Lemma~\ref{limiting_lem}] By Corollary~\ref{cor:critical}, we already know that every point of $\pi_0(\{\nu^\pi\:\:|\:\:\pi \in S_{n+1}\})$ is critical.
So we should only prove that these are the only critical points, namely $\Crit V(O) \subseteq \lim_{\epsilon \rightarrow 0} \Crit V_{\epsilon}(O)=\pi_0(\{\nu^\pi\:\:|\:\:\pi \in S_{n+1}\})$. Let $c$ be a critical point of $L$. By Lemma~\ref{ver_cor}, we know that there exists a set of points $p_0,\dots,p_n$ such for each $i$, the facet  $F_i$ of $\bar\triangle_{h_{\triangle,L}(c)}(c)$ contains $p_i$ and none of the other points $p_j \neq p_i$. We will show the following: for all sufficiently small $\epsilon$, there exists a point $c_\epsilon \in L_\epsilon$ and $h_\epsilon = h_{\triangle,L_\epsilon}(c_\epsilon) \in \mathbb R_{+}$ such that $\bar\triangle_{h_{\epsilon}}(c_\epsilon)$ has the same property for the lattice $L_\epsilon$, namely, for each $i$, the facet $F_{\epsilon,i}$ of $\bar\triangle_{h_{\epsilon}}(c_\epsilon)$ contains a point $p_{\epsilon,i}\in L_\epsilon$ which is not in any other facet $F_{\epsilon,j}$ of $\bar\triangle_{h_{\epsilon}}(c_\epsilon)$, for $j\neq i$. In addition $\bar\triangle_{h_{\epsilon}}(c_\epsilon)\rightarrow \bar\triangle_{h_{\triangle,L}(c)}(c)$,   and so $h_{\epsilon} \rightarrow h_{\triangle,L}(c)$ and $c_{\epsilon} \rightarrow c$ ($h_\epsilon$ and $c_\epsilon$ being the radius and the centre of these balls $\bar\triangle_{h_{\epsilon}}(c_\epsilon)$). As each of the point $c_{\epsilon}$ will be critical for $L_\epsilon$, we conclude that $c\in \lim_{\epsilon \rightarrow 0} \Crit(L_\epsilon)$ which is easily seen to be enough for the proof of the lemma. To show this last statement, we argue as follows: for small enough $\epsilon$, there exist points $q_{\epsilon,0}$ and $p_{\epsilon,1},\dots,p_{\epsilon,n} \in L_\epsilon$ such that $q_{\epsilon,0} \rightarrow p_{0}$ and for all $n \geq i\geq 1$, $p_{\epsilon,i} \rightarrow p_i$ when $\epsilon$ goes to zero. These points naturally define  a ball for the metric $d_{\bar\triangle}$, i.e., a simplex of the form $\bar\triangle_{r_\epsilon}(\bar c_{\epsilon})$.
  This is the bounded simplex defined by the set of hyperplanes $E_{i,\epsilon}$, where $E_{i,\epsilon}$ is the hyperplane parallel to the facet 
$F_i$ of $\bar\triangle_{h_{\triangle,L}(c)}(c)$ which contains $p_{\epsilon,i}$ ($q_{\epsilon,0}$ for $i=0$). We define the ball $\bar\triangle_{h_\epsilon}(c_\epsilon)$ as follows. For each $\epsilon$, if the interior of $\bar\triangle_{r_\epsilon}(\bar c_{\epsilon})$ does not contain any other lattice point (a point of $L_\epsilon$), we let $\bar\triangle_{h_\epsilon}(c_\epsilon) := \bar\triangle_{r_\epsilon}(\bar c_{\epsilon})$. If the interior of $\bar\triangle_{r_\epsilon}(\bar c_{\epsilon})$ contains another point of $L_\epsilon$, let $p_{\epsilon,0}$ be the furthest point from the hyperplane $E_{0,\epsilon}$ and $E'_{0,\epsilon}$ the hyperplane parallel to $E_{0,\epsilon}$ which contains this point. The simplex (ball) $\bar\triangle_{h_\epsilon}(c_\epsilon)$ is the simplex defined by the hyperplanes $E_{0,\epsilon}$ and $E_{1,\epsilon},\dots,E_{n,\epsilon}$. These simplices have the following properties:
\begin{itemize}
\item For all small $\epsilon$, $\bar\triangle_{h_{\epsilon}}(c_{\epsilon})$ does not contain any point of $L_{\epsilon}$ in its interior. In consequence  $h_{\epsilon} = h_{\triangle,L_{\epsilon}}(c_{\epsilon})$.
\item When $\epsilon \rightarrow 0$, the simplices $\bar\triangle_{h_{\epsilon}}(c_{\epsilon})$ converge to $\bar\triangle_{h_{\triangle,L}(c)}(c)$ (in Gromov-Haussdorf distance for example).
\item The point $p_{\epsilon,0}$ is in the interior of the facet $F_{\epsilon,0}$ of the simplex $\triangle_{h_{\epsilon}}(c_{\epsilon})$. In addition for sufficiently small $\epsilon$, each point $p_{\epsilon,i}$ is in the interior of the facet $F_{\epsilon,i}$ of the simplex $\triangle_{h_{\epsilon}}(c_{\epsilon})$. This is true because $\bar\triangle_{h_{\epsilon}}(c_{\epsilon})\rightarrow \bar\triangle_{h_{\triangle,L}(c)}(c)$, $p_{\epsilon,i} \rightarrow p_i$, and each point $p_i$ is in the interior of the facet $F_i$ of $\bar\triangle_{h_{\triangle,L}(c)}(c)$.  
\end{itemize}

 These properties show that the point $c_{\epsilon}$ is critical for $L_{\epsilon}$ and $\lim_{\epsilon \rightarrow 0}c_{\epsilon} = c$, which completes the proof.
\end{proof}

The proofs of Theorem~\ref{thm:main-laplacian} and Theorem~\ref{theo:laplacian-unif+reflect} are now complete.

 We note that this representation of $c$  as a limit of $c_{\epsilon}$ is not in general unique. Indeed it is quite straightforward to check the following two theorems which show together that: {\it Different non-equivalent classes of critical points, up to linear equivalence, can converge in the limit to the same class.} 
\begin{theo}
In the case where all $b_{ij} > 0$, none of the points $\nu^\pi$ for $\pi \in S_n$ is linearly equivalent to another one, i.e., they define different classes in $\mathbb R^{n+1}/L$. In particular, the number of different critical points up to  linear equivalence is exactly $n!$.  
\end{theo}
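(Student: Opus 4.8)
The plan is to reduce the statement to a rigidity property of the points $\nu^\pi$ that holds precisely because every off‑diagonal entry $b_{ij}$ of the Laplacian is positive. Since $L_G$ is uniform (property (P2) above), all extremal points of $\Sigma^c(L_G)$ have the same degree $-m$, so two of them are linearly equivalent if and only if they differ by a vector of $L_G$. Moreover, by Theorem~\ref{thm:main-laplacian} the extremal points are the $\nu^\pi+p$ with $\pi\in S_{n+1}$ and $p\in L_G$, and since each simplex $\triangle^\pi$ ($\pi\in S_{n+1}$) is a cell of the triangulation $\{\triangle^{\tau}+p:\tau\in S_n,\ p\in L_G\}$ (Lemma~\ref{S0_lem}, Corollary~\ref{simpdecomp_cor}) we get $B^\pi=B^\tau+p$ and hence $\nu^\pi=\bigoplus_i b^\pi_i\equiv\nu^{\tau}\pmod{L_G}$ for a suitable $\tau\in S_n$. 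Thus modulo $L_G$ only the classes $\nu^\tau$, $\tau\in S_n$, occur, so $\Crit(L_G)=\pi_0(\Ext^c(L_G))$ has at most $n!$ linear‑equivalence classes, and the whole assertion will follow once we show that the $n!$ points $\nu^\pi$, $\pi\in S_n$, are pairwise inequivalent modulo $L_G$.

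The heart of the argument is the following description of the lattice points that dominate $\nu^\pi$: for every $\pi\in S_{n+1}$ one has $\{\,q\in L_G:q\geq\nu^\pi\,\}=B^\pi=\{b^\pi_0,\dots,b^\pi_n\}$. The inclusion ``$\supseteq$'' is exactly Lemma~\ref{vprop_lem}. For ``$\subseteq$'', I would write $q=\sum_i\alpha_ib_i$ with $\alpha\in\mathbb Z^{n+1}$ normalised so that $\min_i\alpha_i=0$. If $\alpha$ is constant then $q=0=b^\pi_{\pi(n)}$. Otherwise let $k^\ast$ be the $<_\pi$‑minimal index with $\alpha_{k^\ast}=0$; from $q_{k^\ast}=-\sum_{j\neq k^\ast}\alpha_jb_{jk^\ast}$, together with $\alpha_j\geq 1$ for $j<_\pi k^\ast$ and $\alpha_j\geq 0$ otherwise, one gets $q_{k^\ast}\leq-\sum_{j<_\pi k^\ast}b_{jk^\ast}=\nu^\pi_{k^\ast}$. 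As $q\geq\nu^\pi$ this must be an equality, and since every $b_{jk^\ast}>0$ this forces $\alpha_j=1$ for all $j<_\pi k^\ast$ and $\alpha_j=0$ for all $j\geq_\pi k^\ast$, i.e.\ $q=\sum_{j<_\pi k^\ast}b_j=b^\pi_i$, where $i$ is the immediate $<_\pi$‑predecessor of $k^\ast$ (which exists, as $q\neq 0$). This is the only place positivity of all $b_{ij}$ is used in an essential way; intuitively it says the relevant Voronoi vertex lies on exactly $n+1$ cells.

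The second ingredient is that $\pi\mapsto\nu^\pi$ is injective on $S_{n+1}$. If $\pi\neq\pi'$ first disagree at position $t$, then $t\leq n-1$ (otherwise $\pi=\pi'$), and the label $a'=\pi'(t)$ has strictly more $<_\pi$‑predecessors than $<_{\pi'}$‑predecessors: the $<_\pi$‑predecessors of $a'$ include the common prefix of $\pi$ and $\pi'$ together with $\pi(t)\neq a'$. Hence by the explicit formula~(\ref{b_form}) and $b_{\pi(t)\,a'}>0$ we get $\nu^\pi_{a'}<\nu^{\pi'}_{a'}$, so $\nu^\pi\neq\nu^{\pi'}$.

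Finally I would combine these. Suppose $\pi,\pi'\in S_n$ and $\nu^{\pi'}=\nu^\pi+v$ with $v\in L_G$. Applying the domination lemma to $\nu^\pi$ and to $\nu^{\pi'}$, and using $\{\,q\in L_G:q\geq\nu^\pi+v\,\}=\{\,q\in L_G:q\geq\nu^\pi\,\}+v$, gives $B^{\pi'}=B^\pi+v$. Since $0=b^\pi_{\pi(n)}\in B^\pi$ and also $0\in B^{\pi'}$, we get $-v\in B^\pi$; write $-v=b^\pi_{\pi(l)}$ with $0\leq l\leq n$. The identity in the proof of Lemma~\ref{S0_lem} gives $B^\pi-b^\pi_{\pi(l)}=B^{\pi\sigma^l}$, so $\nu^{\pi\sigma^l}=\bigoplus_i\bigl(b^\pi_i-b^\pi_{\pi(l)}\bigr)=\nu^\pi+v=\nu^{\pi'}$, and by the injectivity just proved $\pi\sigma^l=\pi'$. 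But $\pi'$ fixes $n$, whereas $\pi\sigma^l$ fixes $n$ only when $l\equiv 0$; hence $\pi=\pi'$ (and $v=0$). This shows the $n!$ points $\nu^\pi$, $\pi\in S_n$, are pairwise inequivalent in $\mathbb R^{n+1}/L_G$, and together with the first paragraph that $\Crit(L_G)$ has exactly $n!$ classes up to linear equivalence. The main obstacle is the domination lemma of the second paragraph; everything else is bookkeeping on top of Lemma~\ref{vprop_lem}, Lemma~\ref{S0_lem}, Theorem~\ref{thm:main-laplacian} and uniformity.
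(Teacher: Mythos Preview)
Your argument is correct, and since the paper gives no proof of this statement (it only asserts that the claim is ``quite straightforward to check''), there is nothing to compare against. The domination lemma $\{q\in L_G:q\ge\nu^\pi\}=B^\pi$ is exactly the right engine; your derivation of the non-trivial inclusion sharpens the computation already used in the paper to show $\nu^\pi\in\Sigma^c(L)$, and makes explicit where strict positivity of every $b_{ij}$ is needed.

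One technical caveat: the identity you quote from the proof of Lemma~\ref{S0_lem} carries an off-by-one from the paper. A direct check (e.g.\ $n=1$) shows that the correct statement is $B^{\pi}-b^\pi_{\pi(l)}=B^{\pi\sigma^{l+1}}$, not $B^{\pi\sigma^l}$. With the exponent as written, your chain would force $l=0$, giving simultaneously $\pi'=\pi$ (hence $v=0$) and $-v=b^\pi_{\pi(0)}=b_{\pi(0)}\neq 0$, an inconsistency. With the corrected exponent one obtains $l=n$, whence $-v=b^\pi_{\pi(n)}=0$, and the argument is coherent. Alternatively, you can bypass this step entirely: once $B^{\pi'}=B^\pi+v$, the simplices $\triangle^{\pi'}$ and $\triangle^\pi+v$ are the same top-dimensional cell of the triangulation in Corollary~\ref{simpdecomp_cor}; since $F(B)$ is a fundamental domain for $L_G$ and the $n!$ simplices $\triangle^\tau$, $\tau\in S_n$, have disjoint interiors inside it, distinct pairs $(\tau,p)\in S_n\times L_G$ index distinct cells, so $(\pi',0)=(\pi,v)$ directly.
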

However for general graphs this number is usually strictly smaller than $n!$. We state here without proof the following result about the number of non-equivalent classes of critical points. Two permutations $\pi$ and $\sigma \in S_n \subset S_{n+1}$ are {\it elementary
equivalent} if $\pi$ is obtained from $\sigma$ by switching two consecutive vertices in the order defined by $\sigma$ which are not adjacent in $G$ ($\pi(n)=\sigma(n)=n$). Two elements $\pi$ and $\sigma \in S_n$ are {\it equivalent} if there is a sequence of elementary equivalences which relate $\pi$ to $\sigma$.
Each equivalent class for this  equivalence relation is called a {\it cyclic order} of $G$.
\begin{theo}
 Let $G$ be a given connected graph on $n+1$ vertices. The number of different critical points up to linear equivalence for the Laplacian lattice $L$ is exactly the number of different cyclic orders of $G$.
\end{theo}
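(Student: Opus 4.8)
The plan is to route the correspondence through acyclic orientations of $G$. By Corollary~\ref{cor:crit} and Theorem~\ref{thm:main-laplacian}(i), the critical points of $L_G$ are the points $\pi_0(\nu^\pi+p)$ with $\pi\in S_{n+1}$ and $p\in L_G$, and since $\nu^\pi-\nu^\sigma$ has degree $0$ for all $\pi,\sigma$, two such critical points are linearly equivalent exactly when $\nu^\pi\equiv\nu^\sigma\pmod{L_G}$; so the critical points up to linear equivalence are the set $\{\nu^\pi:\pi\in S_{n+1}\}/L_G$. Reading off formula~\eqref{b_form} one sees that $\nu^\pi=-\,\mathrm{indeg}(\mathcal O_\pi)$, where $\mathcal O_\pi$ is the acyclic orientation of $G$ orienting every edge toward its $<_\pi$-larger end and $\mathrm{indeg}(\mathcal O)_i=\sum_{j\to i}b_{ij}$; conversely every acyclic orientation of $G$ arises this way. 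Two observations: (i) transposing two consecutive vertices of $<_\pi$ that are \emph{not} $G$-adjacent leaves $\mathcal O_\pi$ (hence $\nu^\pi$) unchanged --- immediate from~\eqref{b_form}, since the only coordinates that could change carry a factor $b_{ab}=0$; (ii) a cyclic shift $\pi\mapsto\pi\sigma$ is precisely the operation that flips the $<_\pi$-minimal vertex $v=\pi(0)$ from a source to a sink of $\mathcal O_\pi$, changing $\mathrm{indeg}$, hence $\nu^\pi$, by the lattice vector $Qe_v\in L_G$. Therefore $[\pi]\mapsto[\nu^\pi]$ descends to a well-defined \emph{surjection} from the cyclic orders of $G$ (classes under swapping cyclically consecutive $G$-non-adjacent vertices) onto the critical points modulo $L_G$; the theorem is the injectivity of this map.

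For injectivity I would use two ingredients. First, a classical fact: \emph{an acyclic orientation of a connected graph is determined by its indegree vector} (if $\mathcal O,\mathcal O'$ had equal indegrees, the edges on which they disagree would be balanced at every vertex under the orientation inherited from $\mathcal O$, hence would contain a directed cycle, contradicting acyclicity). Consequently $\nu^\pi=\nu^\sigma$ literally iff $\mathcal O_\pi=\mathcal O_\sigma$ iff $\pi$ and $\sigma$ are linear extensions of one and the same partial order (the transitive closure of $\mathcal O_\pi$), iff they are joined by a chain of $G$-non-adjacent transpositions --- and then they already lie in a single cyclic order. Second, combining (i)--(ii): any source $v$ of $\mathcal O_\pi$ has all its $G$-neighbours above it, so repeatedly transposing $v$ with the (necessarily non-$G$-adjacent) vertex just below it brings $v$ to the bottom, after which one further cyclic shift makes $v$ a sink; dually for sinks. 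Hence the cyclic-order equivalence contains every source/sink reversal of the underlying orientation.

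Now fix any vertex $q$. The crux is the lemma that \emph{every source/sink-reversal class of acyclic orientations of $G$ contains one with $q$ as its unique source} --- a standard statement in the theory of graph orientations and chip-firing (essentially the output of Dhar's burning algorithm), and the step I expect to require the most care to make self-contained. Granting it: if $\nu^\pi\equiv\nu^\sigma\pmod{L_G}$, use the reversals above to replace $\pi,\sigma$ by orders $\pi',\sigma'$ in the same cyclic orders for which $\mathcal O_{\pi'}$ and $\mathcal O_{\sigma'}$ both have $q$ as unique source. For such an $\mathcal O$, the divisor $\mathrm{indeg}(\mathcal O)-(1,\dots,1)$ is $q$-reduced in the Baker--Norine sense: it is nonnegative off $q$ since every non-source vertex has indegree $\ge1$, and for each nonempty $S\subseteq V\setminus\{q\}$ a source of $\mathcal O$ restricted to $S$ furnishes the required reducedness inequality. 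Since $\nu^{\pi'}\equiv\nu^{\sigma'}\pmod{L_G}$, the divisors $\mathrm{indeg}(\mathcal O_{\pi'})-(1,\dots,1)$ and $\mathrm{indeg}(\mathcal O_{\sigma'})-(1,\dots,1)$ are linearly equivalent and both $q$-reduced, hence equal; so $\mathrm{indeg}(\mathcal O_{\pi'})=\mathrm{indeg}(\mathcal O_{\sigma'})$, whence $\mathcal O_{\pi'}=\mathcal O_{\sigma'}$ by the first ingredient, and $\pi',\sigma'$ --- hence $\pi,\sigma$ --- lie in a common cyclic order. This gives the injectivity, and hence the theorem. As a consistency check: when all $b_{ij}>0$ no two vertices are non-adjacent, each cyclic order is a single $C_{n+1}$-orbit, and the count is $|S_{n+1}/C_{n+1}|=n!$, recovering the preceding theorem. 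The two places needing genuine work are the crux lemma above and the bookkeeping that the combinatorial ``elementary equivalences'' of the statement generate exactly the set of moves $\{G\text{-non-adjacent transpositions}\}\cup\{\text{cyclic shifts}\}=\{G\text{-non-adjacent transpositions}\}\cup\{\text{source/sink reversals}\}$ used in the argument.
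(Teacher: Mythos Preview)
The paper explicitly states this theorem \emph{without proof} (see the sentence immediately preceding it: ``We state here without proof the following result\ldots''), so there is no argument in the paper to compare yours against. That said, the Remark immediately after the theorem identifies the count with the number of acyclic orientations having a unique fixed source, which is exactly the invariant you route through; so your approach is certainly the intended one.

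Your outline is sound. A few remarks on the places you flag:

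\begin{itemize}
\item Your reading of the paper's ``elementary equivalence'' as allowing swaps of \emph{cyclically} consecutive $G$-non-adjacent vertices (equivalently: non-adjacent transpositions in $S_n$ together with cyclic shifts restoring $n$ to the last slot) is the correct one, and is necessary --- with a purely linear reading the count already fails for the path $P_3$. Once this convention is fixed, the ``bookkeeping'' you mention is straightforward: representing cyclic orders by elements of $S_n$, a swap across the seam at position $n$ is precisely a source-to-sink flip followed by the renormalising cyclic shift.
\item The ``crux lemma'' (every source-to-sink reversal class of acyclic orientations contains one with a prescribed unique source) is the Mosesian--Pretzel theorem on pushing-down maximal vertices; as you note, it also drops out of Dhar's burning algorithm. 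This is the genuine external input.
\item Your $q$-reduced verification is correct: if $q$ is the unique source then every $v\neq q$ has $\mathrm{indeg}(\mathcal O)_v\ge 1$, and for $\emptyset\neq S\subseteq V\setminus\{q\}$ a source $v$ of $\mathcal O|_S$ has all its $\mathcal O$-indegree coming from $V\setminus S$, hence $\mathrm{indeg}(\mathcal O)_v-1<\deg_G(v,V\setminus S)$.
\item The ``indegree determines an acyclic orientation'' step goes through for multigraphs as well: parallel edges oriented oppositely would form a $2$-cycle, so acyclicity forces all parallel edges to agree, and your balancing/cycle argument applies verbatim.
\end{itemize}

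In short: there is no paper proof to match, and your argument is essentially complete once the Mosesian--Pretzel lemma is supplied and the cyclic reading of ``consecutive'' is made explicit.
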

    
\begin{rema}\rm
The number in the above theorem is exactly the number of acyclic orientations of $G$ with a unique fixed source $v_0$, which is also the evaluation of the Tutte polynomial at the point $(1,0)$~\cite{Stan}.
\end{rema}

\subsection{Lattices Generated by Laplacian of Connected Regular Digraphs}\label{sec:digraphs}
 In this section, we briefly describe how to extend partially the results of the previous section to connected regular digraphs. A digraph $D$ is {\it regular} if the in-degree and out-degree of each vertex are the same.  This allows to define a Laplacian matrix for $D$, almost similar as in the graphic case: if the vertices of $D$ are enumerated by $\{v_0,\dots,v_n\}$, the matrix representation of the Laplacian $D$ is of the form Equation~\ref{gra_form} but we do not have symmetry any more. Namely 
\begin{equation}\label{gra_form_di}
Q=
\begin{bmatrix}
  \delta_0      & -b_{01} & -b_{02}  \hdots & -b_{0n} \\
 -b_{10}   &  \delta_1    & -b_{12}  \hdots & -b_{1n} \\
 \vdots    &  \vdots & \ddots\\
 -b_{n0}   &  b_{n1} & -b_{n2}  \hdots &  \delta_n
\end{bmatrix}
\end{equation}
has the following properties:
\begin{itemize}
\item[$(C_1)$] $b_{ij}$'s are integers and $b_{ij}\geq0$ for all $0\leq i \neq j\leq n$.
\item[$(C_2)$] $\delta_i=\sum_{j=1,j\neq i}^{n}b_{ij}=\sum_{j=1,j\neq i}^{n}b_{ji}$ (and is the in-degree ($=$ out-degree) of the vertex $v_i$).
\end{itemize}
 We obtain the simplicial decomposition of $H_0$ defined by $\{\triangle^\pi + p\:\:|\:\: \pi \in S_n \:\: \textrm{and}\:\: p \in L\}$, similar to the case of unoriented graphs.
In the case where all the coordinates $b_{ij}$ are strictly positive, we can similarly prove the following results (the proofs remain unchanged):
\begin{itemize}
\item For all $\pi \in S_n$ and $p\in L$, the point $\nu^\pi+p$ is extremal (c.f. Corollary~\ref{cor:critical}).
\item For every two points $p$ and $q$ in $L$, we have $V(p) \cap V(q) \neq \emptyset$ if and only if  $p$ and $q$ are adjacent in the simplicial decomposition of $H_0$ defined by $\{\triangle^{\pi}+p\:|\:\pi\in S_n \:\&\: p\in L\}$. In other words, $V(p) \cap V(q) \neq \emptyset$ if and only if there exists $\pi\in S_{n+1}$ such that $q$ is a vertex of $\Delta^\pi +p$ (c.f. Corollary~\ref{cor:vor-ex}).
\item The set of extremal points of $\Sigma^c(L_G)$ consists of  all the points $\nu^\pi+p$ for $\pi\in S_{n+1}$ and $p\in L_G$, i.e.,
$\Ext^c(L_G) = \{\:\nu^\pi+p\:|\:\pi\in S_{n+1} \:\textrm{and}\:p\in L_G \:\}.$
As a consequence, we have $\Ext(L_G) = \{\:\nu^\pi+p+(1,\dots,1)\:|\:\pi\in S_{n+1} \:\textrm{and}\:p\in L_G \:\}.$ More precisely, we have $\Crit V_\triangle(O) = \pi_0(\{\:\nu^\pi\:|\:\pi\in S_{n+1}\:\})$. (c.f. Theorem~\ref{thm:main-laplacian}).
\item We have $g_{min}	 = -\max_{\pi\in S_n} \deg(\nu^\pi) - n$ and $g_{max} = -\min_{\pi\in S_n} \deg(\nu^\pi) - n$.
\item Riemann-Roch Inequality. Remark~\ref{rema:R-R-digraph} can be applied: for $K = (\delta_0 - 2,\dots, \delta_n-2)$, we have for all $D$, $$g_{min}-\deg(D)-1\:\leq\:r(K-D)-r(D)\leq g_{max}-\deg(D)-1.$$
\end{itemize}

 In the general case, where some of the $b_{ij}$'s could be zero, unfortunately the limiting argument does not behave quite well. Indeed, there are examples of regular digraphs for which a point $\nu^\pi$ is not a critical point for $L$ for some $\pi \in S_n$.  However as the proof of Lemma~\ref{limiting_lem} shows, we always have 
$\Crit(L) \subseteq \lim_{\epsilon \rightarrow 0} \Crit (L_\epsilon).$ So it could happen that we lose  (strong) reflection invariance. Although we do not know in general if such lattices have any sort of reflection invariance, it is still possible to prove a Riemann-Roch inequality for these lattices by taking the limit of the Riemann-Roch inequalities for the lattices $L_\epsilon$. One point in doing this limiting argument is to extend the definition of the rank function to all the points of $\mathbb R^{n+1}$ (and not only for integral  points), which we briefly defined in the beginning of this paper. This new rank-function will have image in $\{-1\} \cup \mathbb R_{+}$ and is continuous on the points where it is strictly positive.

 In the general case we have the following results:
\begin{itemize}
\item Every point of degree $-\min_{\pi\in S_n} \deg(\nu^\pi)$ among the points $\nu^\pi$ is extremal (by a similar limiting argument as in the graphic case). So we have $g_{max} = -\min_{\pi\in S_n} \deg(\nu^\pi) - n$. In addition, $g_{min} \geq -\max_{\pi\in S_n} \deg(\nu^\pi) - n$. Let $\bar{g}_{min} = -\max_{\pi\in S_n} \deg(\nu^\pi) - n$.
\item (Riemann-Roch Inequality.) Taking the limit of the family of inequalities $g^{\epsilon}_{min}-\deg(D)-1 \leq r_{\epsilon}(K_{\epsilon}-D)-r_{\epsilon}(D)\leq g^{\epsilon}_{max}-\deg(D)-1,$ where $\epsilon$ goes to zero, we get
$$\bar g_{min}-\deg(D)-3\:\leq\:r(K-D)-r(D)\leq g_{max}-\deg(D)+1.$$
\end{itemize} 
Here $r_\epsilon$ is the rank function for the lattice $L_\epsilon$. This is because $\lim_{\epsilon \rightarrow 0} g^{\epsilon}_{min}=\bar g_{min}$; $\lim_{\epsilon \rightarrow 0} g^{\epsilon}_{max}=g_{max}$; and $r(E) + 1 \geq \lim_{\epsilon \rightarrow 0} r_{\epsilon}(E) \geq r(E) - 1$ for all $E \in \mathbb R^{n+1}$.

\subsection{Two Dimensional Sub-lattices of $A_2$}
In this section, we consider full-rank sub-lattices of $A_2$. 
First, we show that all these sub-lattices are reflection invariant. This is indeed an easy consequence of Theorem~\ref{theo:crit-n!} by which a sub-lattice of $A_2$ of rank two has at most two different classes of critical points. It follows that:
\begin{theo}
Every sub-lattice $L$ of $A_2$ of dimension two is reflection invariant.
\end{theo}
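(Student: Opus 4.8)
The plan is to reduce the statement to a short combinatorial check on $\Crit(L)$ viewed modulo $L$. Recall that $L$ is reflection invariant precisely when there is a $t\in\mathbb R^{n+1}$ with $-\Crit(L)=\Crit(L)+t$. First I would record that $h_{\triangle,L}$ is $L$-periodic: by the translation invariance of the simplicial distance (Lemma~\ref{lem:trans-inv}), $h_{\triangle,L}(x+p)=h_{\triangle,L}(x)$ for all $p\in L$, so the set $\Crit(L)$ of local maxima of $h_{\triangle,L}$ is a union of cosets of $L$; it is moreover non-empty, since $h_{\triangle,L}$ is continuous and attains its maximum on the compact cell $V_\triangle(O)$ (Lemma~\ref{compact_lem}), and by $L$-periodicity that maximum is global, hence a local maximum. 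Thus $\Crit(L)/L\subseteq H_0/L$ is a finite non-empty set, and the content of Theorem~\ref{theo:crit-n!} for $n=2$ is exactly that $|\Crit(L)/L|\le 2!=2$. So there are one or two cosets to handle.

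If $\Crit(L)=c+L$ for a single coset, then, since $L=-L$, we have $-\Crit(L)=-c+L=(c+L)+(-2c)=\Crit(L)+t$ with $t:=-2c$, so $L$ is reflection invariant. If instead $\Crit(L)=(c_1+L)\cup(c_2+L)$ with $c_1-c_2\notin L$, set $t:=-c_1-c_2$. Then
\begin{align*}
\Crit(L)+t &= \bigl((c_1+L)-c_1-c_2\bigr)\,\cup\,\bigl((c_2+L)-c_1-c_2\bigr)\\
&= (-c_2+L)\,\cup\,(-c_1+L)\ =\ -\Crit(L),
\end{align*}
again using $-L=L$. In either case $-\Crit(L)$ is a translate of $\Crit(L)$, so $L$ is reflection invariant; this exhausts the possibilities and proves the theorem.

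There is essentially no obstacle once Theorem~\ref{theo:crit-n!} is in hand: the real work is entirely contained in that theorem, i.e.\ in bounding the number of linear-equivalence classes of critical points by $n!$, and the whole point is that for $n=2$ this bound is $2$ — small enough to force the reflection symmetry of $\Crit(L)$. The only step requiring a moment's thought is that, in the two-class case, one must match $-c_1$ with $c_2+t$ and $-c_2$ with $c_1+t$ (the \emph{swapped} matching): the naive matching $-c_i\equiv c_i+t$ would demand $2(c_1-c_2)\in L$, which need not hold, whereas the swapped matching is automatically consistent precisely because there are only two cosets being permuted.
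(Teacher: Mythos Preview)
Your proof is correct and follows exactly the approach the paper intends: the paper's proof is just the one-line remark that the theorem is an easy consequence of Theorem~\ref{theo:crit-n!}, and you have cleanly spelled out the two cases (one or two cosets of critical points) that this remark leaves implicit. Your observation that the swapped matching is what makes the two-coset case work is a nice clarification.
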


 Indeed something quite strong holds in dimension two: every two dimensional sub-lattice $L$ of $A_2$ is a Laplacian lattice of some regular digraph on three vertices.

\begin{lemm}\label{digraph_lem}
Every full dimensional sub-lattice of $A_2$ is the Laplacian lattice of a regular digraph on three vertices. 
\end{lemm}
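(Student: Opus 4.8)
The plan is to restate the assertion as a lattice–geometry fact about ``superbases adapted to three wedges'' and then to build such a superbase by a reduction argument. Since $n=2$ here, $A_2\subset\mathbb R^3$ and $H_0$ is a plane; for $i\in\{0,1,2\}$ put $R_i:=\{x\in H_0\mid x_j\le 0\ \text{for all}\ j\neq i\}$. Each $R_i$ is a two–dimensional closed cone (a $60^\circ$ wedge) with apex $O$, and $H_0$ is covered by the six wedges $R_0,-R_2,R_1,-R_0,R_2,-R_1$, occurring in this cyclic order. Now observe: for a regular digraph on vertices $v_0,v_1,v_2$ with edge multiplicities $b_{ij}\ge 0$, the rows of its Laplacian (in the form of Equation~\ref{gra_form_di}) are the vectors $b_i$ with $(b_i)_i=\sum_{j\neq i}b_{ij}$ and $(b_i)_j=-b_{ij}$ for $j\neq i$, so $b_i\in R_i$; the condition $b_0+b_1+b_2=0$ is exactly regularity, since the $i$–th coordinate of this identity reads $\mathrm{out}(v_i)=\mathrm{in}(v_i)$; and $L$ is generated by $b_0,b_1$. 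Conversely, any $b_0,b_1,b_2\in L$ with $b_i\in R_i$ and $\sum_i b_i=0$ yield non-negative integers $b_{ij}:=-(b_i)_j$ defining a regular digraph with Laplacian lattice $\mathbb Z b_0+\mathbb Z b_1$; if moreover $\{b_0,b_1\}$ is a basis of $L$ then this lattice is $L$, the Laplacian has rank $2$, and the digraph is connected. So it is enough to show that \emph{every full-rank sublattice $L\subseteq A_2$ has a superbase $(b_0,b_1,b_2)$ — i.e.\ $b_0+b_1+b_2=0$ with $\{b_0,b_1\}$ a basis of $L$ — such that $b_i\in R_i$ for each $i$.}

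To construct it, I would first normalise using symmetry. The group $\Gamma=\{\pm1\}\times S_3$ of signed coordinate permutations acts on $A_2$; the factor $S_3$ permutes $\{R_0,R_1,R_2\}$ (a relabelling of the vertices), $-1$ sends each $R_i$ to $-R_i$ (reversal of all edges), and $\Gamma$ acts transitively on the six wedges. Hence it suffices to produce the superbase for $g\cdot L$ for some $g\in\Gamma$. Take a shortest non-zero vector $v$ of $L$ and move it into $R_0$ by a suitable $g$; set $b_0:=v$, which is automatically primitive in $L$ (dividing it by an integer $\ge 2$ would give a strictly shorter lattice vector). Let $\lambda:H_0\to\mathbb R$ be the linear form with $\ker\lambda=\mathbb R b_0$ and $\lambda(L)=\mathbb Z$. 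Because the line $\mathbb R b_0\subseteq R_0\cup(-R_0)$ separates $R_1$ from $R_2$ (clear from the cyclic order above), we may orient $\lambda$ so that $\lambda>0$ on $R_1\setminus\{O\}$ and $\lambda<0$ on $R_2\setminus\{O\}$. For $b_1\in L$, the pair $\{b_0,b_1\}$ is a basis of $L$ exactly when $\lambda(b_1)=\pm1$. Setting $b_2:=-b_0-b_1$, we want $b_1\in R_1$ (which forces $\lambda(b_1)=1$) and $b_2\in R_2$, i.e.\ $b_1\in\mathcal R:=R_1\cap\bigl((-b_0)-R_2\bigr)$. One checks that $\mathcal R$ is a non-empty, unbounded convex polygon. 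Since $\lambda^{-1}(1)$ is a line parallel to $b_0$ on which the lattice points $L\cap\lambda^{-1}(1)$ form an arithmetic progression with common difference $b_0$, it suffices to show that $\mathcal R\cap\lambda^{-1}(1)$ is a segment of length at least $|b_0|$; then it contains one of those lattice points, which we take as $b_1$.

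The last claim — that $\mathcal R$ is ``wide enough'' along the $b_0$–direction — is the real content, and I expect this to be the main obstacle. It is here that one uses that $b_0$ is a shortest lattice vector (and, for cleaner bookkeeping, completes it to a Gauss-reduced basis $\{b_0,w\}$, so the angle between $b_0$ and $w$ lies in $[90^\circ,120^\circ]$, after replacing $w$ by $-w$ if needed): a short analysis of where $\pm(w+kb_0)$ sits relative to $R_1$ and $-b_0-R_2$ then identifies the correct $k$. Concretely, I would instead run the computation on the Hermite normal form $L=\mathbb Z(af_1)+\mathbb Z(bf_1+df_2)$ with $f_1=(1,-1,0)$, $f_2=(0,1,-1)$, $a,d\ge1$ and $0\le b<a$. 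The natural choice $b_0=af_1$ succeeds unless $0<d<b<a$; in that exceptional range the choice $b_0=bf_1+df_2$ succeeds unless moreover $\lceil a/b\rceil>\lfloor a/(b-d)\rfloor$; and the remaining ``doubly exceptional'' shapes are disposed of by passing to a lattice point of $L$ farther along the boundary ray of $R_0$ — equivalently, by a continued-fraction / Euclidean descent on the triple $(a,b,d)$ (examples such as $L=\mathbb Z(10f_1)+\mathbb Z(7f_1+f_2)$, where the correct $b_0$ turns out to be $4f_1+2f_2$, show that one genuinely must go beyond the first two candidates).

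The bookkeeping above is elementary but not entirely short; I would present the Hermite-normal-form version, since there all inequalities are explicit. One may also shortcut parts of it using the preceding sections: every rank-two sublattice of $A_2$ is reflection invariant and, by Theorem~\ref{theo:crit-n!}, has at most two classes of critical points modulo $L$, which constrains the shape of $\Ext(L)$ and hence of the Voronoi cell $V_\triangle(O)$ — the data from which, exactly as in the graphic case treated above, the superbase $(b_0,b_1,b_2)$ can be read off.
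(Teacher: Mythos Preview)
Your reformulation is correct and coincides with the paper's: one must produce a superbase $(b_0,b_1,b_2)$ of $L$ with $b_i$ in the wedge $R_i$ (the paper calls these $C_i$, checks $g_j(v)=3v_j$ on $H_0$, and arrives at the same criterion). From that point on, however, your argument is incomplete precisely where you yourself flag it. The width claim --- that $\mathcal R\cap\lambda^{-1}(1)$ is a segment of length at least $|b_0|$ --- is the whole content of the lemma, and you do not prove it; nor is it clear that it holds without further case analysis (when $b_0$ lies near a boundary ray of $R_0$, the geometry of $\mathcal R$ degenerates and the segment can be short relative to $|b_0|$ unless one also uses that the second successive minimum is large). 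Your fallback via Hermite normal form with a ``continued-fraction/Euclidean descent'' for the doubly exceptional shapes is only sketched, and the suggestion to read the superbase off from $\Crit(L)$ is circular here, since the explicit description of $\Crit(L)$ in Section~\ref{sec:laplacian} was obtained \emph{starting} from a digraph basis.

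The paper closes this gap in a way that is morally your descent but packaged cleanly. It begins with a Gauss-reduced pair $\{b_0,b_1\}$ (shortest and second-shortest independent vectors; angle in $[\pi/3,2\pi/3]$), normalises so that $b_0\in C_0$ and $b_1\in C_1$, and then iterates: if $b_0+b_1\in -C_2$ stop; if $b_0+b_1\in C_0$ replace $b_0\leftarrow b_0+b_1$; if $b_0+b_1\in C_1$ replace $b_1\leftarrow b_0+b_1$. Termination is proved via the integer linear functionals $g_0,g_1$: each replacement strictly decreases one of $|g_1(b_0)|$, $|g_0(b_1)|$ while not increasing the other, and these are bounded below by $0$. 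This is exactly the Euclidean-style descent you allude to, but it avoids the HNF case split entirely and needs no width estimate. If you want to salvage your first approach, the clean way is to replace the width claim by this replacement procedure: it is the same idea (moving along $\lambda^{-1}(\pm1)$ by multiples of $b_0$), but with a built-in termination certificate.
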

 
 Let $\{e_0,e_1,e_2\}$ be the standard basis of $H_0$ where $e_0=(2,-1,-1)$,  $e_1=(-1,2,-1)$ and $e_2=(-1,-1,2)$. Let the linear functional $g_0,g_1$ and $g_2$ be defined by taking the scaler product with $e_0,e_1,e_2$ respectively. So for example for $u = (u_0,u_1,u_2)$, $g_0(u)=2u_0-u_1-u_2$. Let $b_0,b_1$ be a basis of $L$ and $b_2 = -b_0-b_1$. Let $Q$ be the matrix having $b_0,b_1$ and $b_2$ as its first, second and third row, respectively. For $i=0,1,2$, define the cone $C_i$ to be the set of vectors $v$ such that $g_i(v) \geq 0$ and $g_j(v)\leq 0$ for $j\neq i$. We have 
\begin{lemm}
The basis $b_0,b_1,b_2$ is the basis defined by a regular digraph if and only if the following holds: for each $i$, $b_i$ is in the cone $C_i$.
\end{lemm}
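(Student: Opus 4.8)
The plan is to reduce the lemma to a pure sign condition on the coordinates of $b_0,b_1,b_2$ by exploiting the elementary observation that, restricted to $H_0$, the functional $g_i$ is just three times the $i$-th coordinate function. Indeed, if $u=(u_0,u_1,u_2)\in H_0$ then $u_0+u_1+u_2=0$, so $g_0(u)=2u_0-u_1-u_2=3u_0$, and symmetrically $g_1(u)=3u_1$ and $g_2(u)=3u_2$. Hence, for $u\in H_0$, one has $u\in C_i$ if and only if $u_i\geq 0$ and $u_j\leq 0$ for all $j\neq i$. Since $L\subseteq A_2$, every $b_i$ lies in $H_0$ and has integer coordinates; therefore ``$b_i\in C_i$ for all $i$'' is exactly the assertion that the matrix $Q$ with rows $b_0,b_1,b_2$ has integer entries, non-negative diagonal, and non-positive off-diagonal entries — equivalently, writing $\delta_i:=Q_{ii}$ and $b_{ij}:=-Q_{ij}$ for $i\neq j$, that $Q$ satisfies condition $(C_1)$.

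Next I would show that condition $(C_2)$ holds automatically for this $Q$, with no sign hypothesis. The first equality $\delta_i=\sum_{j\neq i}b_{ij}$ is simply the statement that the $i$-th row of $Q$ sums to zero, which holds because $b_i\in H_0$. The second equality $\delta_i=\sum_{j\neq i}b_{ji}$ says that the $i$-th column of $Q$ sums to zero, which holds because $b_0+b_1+b_2=0$, a consequence of $b_2=-b_0-b_1$. Thus, for the matrix $Q$ with rows $b_0,b_1,b_2$, being the Laplacian of a regular multi-digraph on the vertices $v_0,v_1,v_2$ amounts to satisfying $(C_1)$ alone, which by the previous paragraph is equivalent to $b_i\in C_i$ for every $i$.

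Putting the two implications together finishes the proof. If $b_i\in C_i$ for all $i$, then $Q$ satisfies $(C_1)$ and $(C_2)$, hence is the Laplacian matrix of a regular digraph $D$ whose rows are precisely $b_0,b_1,b_2$, so $b_0,b_1,b_2$ is the basis defined by $D$; moreover $D$ is connected, since $L$ has full rank forces $b_0,b_1$ linearly independent, so $\operatorname{rank}(Q)=2$, which cannot happen for a disconnected digraph on three vertices. Conversely, if $b_0,b_1,b_2$ is the basis defined by a regular digraph, then $Q$ satisfies $(C_1)$, so $Q_{ii}=\delta_i\geq 0$ and $Q_{ij}=-b_{ij}\leq 0$ for $i\neq j$, which by the identities above gives $b_i\in C_i$ for all $i$.

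I do not expect a real obstacle: the only genuine content is the identity expressing $g_i$ on $H_0$ as a coordinate function, after which everything is bookkeeping with row and column sums of $Q$. The one point to be careful about is matching conventions — namely that ``regular'' for a digraph translates exactly into the columns of $Q$ summing to zero (in-degree equals out-degree at each vertex), and that this column condition comes for free from $b_2=-b_0-b_1$ rather than being an additional hypothesis.
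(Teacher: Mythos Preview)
Your proof is correct and follows essentially the same approach as the paper: both hinge on the identity $g_j(u)=3u_j$ for $u\in H_0$, which immediately converts the cone condition into the sign condition on the off-diagonal entries of $Q$. Your write-up is in fact more complete than the paper's, which only spells out one direction and leaves the automatic verification of $(C_2)$ and the converse implicit.
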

\begin{proof}
Let $b_{ij}$ denote the $j$-th coordinate of $b_i$. It will be enough to show that $b_{ij} \leq 0$ for $i\neq j$. Let $j'\in \{0,1,2\}$ be different from $i$ and $j$. We have $g_j(b_i) \leq 0$. But $g_{j}(b_i) = 2b_{ij}-b_{ii}-b_{ij'} = 3b_{ij}$. It follows that $b_{ij}\leq 0$. 
\end{proof}

\begin{proof}{\bf of Lemma~\ref{digraph_lem}}
We should show the existence of lattice points $\{b_0,b_1,b_2\}$ such that:
\begin{itemize}
\item[(i)] $\{b_0,b_1\}$ is a basis of $L$;
\item[(ii)] $b_0+b_1+b_2=O$;
\item[(iii)] $b_i$ is contained in the cone $C_i$.
\end{itemize}
 First consider a shortest vector $b_0$ of the lattice and a shortest vector of the lattice $b_1$ that is linearly independent of $b_0$. Using for example Pick's formula, one can show that $\{b_0,b_1\}$ forms a basis of the lattice $L$. We may now assume that $b_0$ is contained in one of the cones $C_0,C_1$ or $C_2$, and without loss of generality $C_0$. Indeed if $b_0$ does not belong to any of these cones then $-b_0$ will belong to one of these cones, and we may replace $b_0$ by $-b_0$. So we assume that $b_0$ belongs to $C_0$. It is well known that $b_1$ can be chosen such that the angle between $b_0$ and $b_1$ is in the interval $[\frac \pi3,\frac{2\pi}3]$. Since the maximum angle between any two points in $C_i$ is $\frac \pi3$, $b_1$ is contained in a cone different from $C_0$ and $-C_0$. Now, if $b_1$ is not contained in $C_1$ or $C_2$ then $-b_2$ will be  in $C_1$ or $C_2$, and we can replace $b_1$ by $-b_1$. Remark that $\{b_0,-b_1\}$ will remain a basis. Hence, we may assume without loss of generality that $B=\{b_0,b_1\}$ is a basis of the lattice such that $b_0$ is contained in cone $C_0$ and $b_1$ is contained in cone $C_1$. \\
This means that $b_0=(b_{00},b_{01},b_{02})$ and $b_1=(b_{10},b_{11},b_{12})$, where $b_{01},b_{02},b_{10},b_{12}\leq 0$ and $b_{00} = -b_{01}-b_{02} >0$ and $b_{11} = -b_{10}-b_{12}>0$. First, we observe that $-b_3 = b_0+b_1$ is contained in $C_0 \cup C_1 \cup -C_2$, and if it is in $-C_2$, then we have our set of lattice points $\{b_0,b_1,b_2\}$. We now define a procedure which, by updating the set of vectors $b_0,b_1$, provides at the end the set of lattice points $\{b_0,b_1,b_2\}$ with properties $(i), (ii)$ and $(iii)$ above. 
The procedure is defined as follows:
\begin{itemize}
\item[(a)] If $b_0+b_1 \in -C_2$ then stop.
\item[(b)] Otherwise, if $b_0+b_1 \in C_0$ replace $b_0$ by $b_0+b_1$ and iterate.
\item[(c)] Otherwise, if $b_0+b_1 \in C_1$ replace $b_1$ by $b_0+b_1$ and iterate.
\item[(d)] Output $\{b_0,b_1, b_2\}$, where $b_2=-b_0-b_1$.
\end{itemize}
 We will show that the number of iterations is finite. And this shows that the final output has the desired properties. Indeed, at each iteration $\{b_0,b_1\}$ form a basis of $L$ (if $\{b_0,b_1\}$ is a basis of $L$ then $\{b_0+b_1,b_1\}$ and $\{b_0,b_0+b_1\}$ will also be a basis of $L$), and so by the definition of the procedure, the finiteness of the number of steps shows that at the end we should have $b_0+b_1 \in -C_3$.
To show that the procedure terminates after a finite number of iterations, consider a step of the algorithm: if the step $(b)$ in the procedure happens, then $b_0+b_1$ should be in $ C_0$ and not in $-C_2$. This means that $0>b_{01}+b_{11}$, which implies that $|g_1(b_0+b_1)|<|g_1(b_0)|$. Indeed $g_{1}(b_0+b_1) = 3b_{01}+3b_{11} <0$ and so $|g_{1}(b_0+b_1)| = -3b_{01}-3b_{11} < -3b_{01} = |g_{1}(b_0)|$. Furthermore, we have, $0\leq g_0(b_0+b_1) \leq g_0(b_0)$, since $g_0(b_1) \leq 0$.

Similarly, if the step $(c)$ in the above procedure happens, then $b_0+b_1$ should be in $C_1$ and not in $-C_2$. Hence, we should have $|g_0(b_0+b_1)|<|g_0(b_1)|$ and $0\leq g_1(b_0+b_1) \leq g_1(b_1)$. 
We infer that, starting form $b_0$ and $b_1$, at each iteration one of the two inequalities $|g_1(p)|<|g_1(b_0)|$ or $|g_0(p)|<|g_0(b_1)|$ for $p=b_0+b_1$ should be satisfied. Furthermore, at every iteration we have $|g_0(p)|\leq |g_0(b_0)|$ and $|g_0(p)|\leq |g_0(b_1)|$. 
Hence, an upper bound on the number of iterations is the number of lattice points  $p$ in $C_0$ with $|g_0(p)| \leq |g_0(b_0)|$ plus the number of lattice points $q$ in $C_1$ with $|g_1(q)|\leq |g_1(b_1)|$ and this is indeed finite.
\end{proof}

\begin{rema}\rm
In higher dimensions, the analogue of Lemma~\ref{digraph_lem} is unlikely to be true since a simple calculation shows that the minimum angle between cones $C_i$ and $C_j$ is at least $\pi/3$ (here, as in dimension two $e_0,\dots,e_n$ is the corresponding basis of $H_0$ where $e_0=(n,-1,\dots,-1),\dots, e_n=(-1,\dots,-1,n)$, and $g_i$ is the linear form defined by taking the scaler product with $e_i$). Indeed, let $p=(\sum_{i\neq 0} p_i,-p_1,\dots,-p_n) \in C_0-\{O\}$ and $q=(-q_0,\sum_{i\neq 1}q_i,-q_2,\dots,-q_n) \in C_1-\{O\}$. We have
\begin{align*}
\frac {p \cdot q}{|p|_{\ell_2}|q|_{\ell_2}}&=\frac{-\sum_{i\neq 0}p_iq_0-\sum_{i\neq 1}q_ip_1+p_2q_2+\dots+p_nq_n}{|p|_{\ell_2}|q|_{\ell_2}}\\
& \leq \frac{p_2q_2+\dots+p_nq_n}{|p|_{\ell_2}|q|_{\ell_2}}\\ 
&\leq \frac {p_2q_2+\dots+p_nq_n}{2\sqrt{p^2_2+\dots+p^2_n}\sqrt{q^2_2+\dots+q^{2}_n}} \leq \frac 12.
\end{align*}

The two inequalities of the last line follow from the set of inequalities 
$$|p|_{\ell_2}=\sqrt{ (p_1+\dots+p_n)^{2}+p_1^2+\dots+p_n^{2}} \geq \sqrt{2(p_1^{2}+\dots+p_n^{2})} \geq \sqrt{2(p_2^{2}+\dots+p_n^{2})}$$
$$|q|_{\ell_2} \geq \sqrt{q_2^2+\dots+q_n^{2}} \:\:\:\: \textrm{(similarly as above)},$$ and the Cauchy-Schwartz inequality. Hence, if the lattice $L$ is generated by a regular digraph, then there exists a basis such that the pairwise angles between the elements of the basis is at least $\frac \pi3$. But, it is known that there exist lattices that are not weakly orthogonal, see \cite{NellDaBar07}. However, note that the notion of a weakly-orthogonal lattice seems to be slightly different from the notion of a digraphical lattice.
\end{rema}

We now characterise all the sub-lattices of $A_2$ which are strongly reflection invariant. 
\begin{theo}\label{thm:A2-strongreflect} A sub-lattice $L$ of $A_2$ is strongly reflection invariant if and only if there are two different classes of critical points up to linear equivalence or $L$ is defined by a multi-tree on three vertices (i.e., a graph obtained from a tree by replacing each edge by multiple parallel edges).
\end{theo}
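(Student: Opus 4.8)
The plan is to express everything in terms of the six lattice–tropical sums $\nu^\pi$, $\pi\in S_3$, attached to the $(n+1)!=6$ minimal simplices containing the origin (as in Section~\ref{sec:laplacian}), and to exploit the opposite–order involution $\pi\mapsto\bar\pi$ of $S_3$, together with the identity $\nu^\pi+\nu^{\bar\pi}=(-\delta_0,\dots,-\delta_n)$ of Lemma~\ref{lem:ex1} (which holds verbatim for regular digraphs, cf.\ Section~\ref{sec:digraphs}).

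\emph{Step 1: reduction and the coset structure of the critical set.} By Lemma~\ref{digraph_lem}, $L=L_G$ for some connected regular digraph $G$ on three vertices, so all the results of Section~\ref{sec:digraphs} apply. In particular $\Crit V_\triangle(O)$ is contained in $\{\pi_0(\nu^\pi)\mid\pi\in S_3\}$, the (generically six) vertices of the polygon $V_\triangle(O)$, the three $\nu^\pi$ of minimal degree are always extremal, and, since $\nu^{\pi\sigma^j}-\nu^\pi\in L$ (the simplex $\triangle^{\pi\sigma^j}$ is an $L$-translate of $\triangle^\pi$, as in the proof of Lemma~\ref{S0_lem}) while $\Crit(L)$ is $L$-periodic, the set $P:=\{\pi\in S_3\mid\nu^\pi\in\Ext^c(L)\}$ is a union of right $C_3$-cosets of $S_3$ and contains the minimal-degree one. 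Hence $P$ is either a single $C_3$-coset or all of $S_3$; and since the two $C_3$-cosets carry the (at most two, by Theorem~\ref{theo:crit-n!}) linear-equivalence classes of critical points, $L$ has two such classes only if $P=S_3$.

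\emph{Step 2: strong reflection invariance is equivalent to $P=S_3$.} If $P=S_3$, applying the linear map $\pi_0$ to the identity of Lemma~\ref{lem:ex1} and using that $\pi\mapsto\bar\pi$ is a bijection of $S_3$ yields $-\Crit V_\triangle(O)=\Crit V_\triangle(O)+\pi_0((\delta_0,\dots,\delta_n))$, so $L$ is strongly reflection invariant. Conversely, suppose $P$ is a single $C_3$-coset; then $\Crit V_\triangle(O)$ consists of $\pi_0(\nu^\pi)$ for one $\pi\in P$ together with its two translates by the nonzero vectors of the affine basis $B^\pi$, which are linearly independent elements of $L$, so $\Crit V_\triangle(O)$ is a non-degenerate triangle $T$. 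The same computation gives $-\Crit V_\triangle(O)=T'+\pi_0((\delta_0,\dots,\delta_n))$ where $T'=\{\pi_0(\nu^{\bar\pi})\mid\pi\in P\}$ comes from the other $C_3$-coset, so $T'$ is a point-reflection of $T$. If $L$ were strongly reflection invariant we would have $-\Crit V_\triangle(O)=T+t$ for some $t$, making $T'$ a translate of $T$; but a non-degenerate triangle is never at once a translate and a point-reflection of itself (the orientations disagree), a contradiction. Thus $P$ a single coset forces $L$ not strongly reflection invariant.

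\emph{Step 3: $P=S_3$ is equivalent to ``two classes or multi-tree''.} A multi-tree on three vertices is (the symmetrisation of) an undirected graph, so if $G$ is a multi-tree then $P=S_3$ by Theorem~\ref{thm:main-laplacian} (and $L_G$ is then strongly reflection invariant directly by Theorem~\ref{theo:laplacian-unif+reflect}). If $L$ has two classes of critical points then $P=S_3$ by Step 1. For the converse, assume $P=S_3$: if the two $C_3$-cosets give distinct classes, $L$ has two classes; otherwise $\nu^e\sim\nu^{\bar e}$, and by the description of linear equivalence of the points $\nu^\pi$ in terms of the cyclic orders of $G$ (end of Section~\ref{sec:laplacian}), which forces two consecutive vertices in some ordering to be non-adjacent in $G$, two of the three vertices of $G$ are non-adjacent; and a connected regular digraph on three vertices with a non-adjacent pair is necessarily symmetric, i.e.\ a multi-tree. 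Combining Steps 2 and 3 gives the theorem. The crux is the ``single coset'' half of Step 2, where one must pin down $\Crit V_\triangle(O)$ as an honest non-degenerate triangle and then invoke the translate–versus–point-reflection dichotomy; a subsidiary point needing care is the implication $\nu^e\sim\nu^{\bar e}\Rightarrow$ multi-tree in Step 3, which rests on the combinatorial description of when the tropical sums $\nu^\pi$ fall into the same linear-equivalence class.
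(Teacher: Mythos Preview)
Your Steps 1 and 2 are correct and are essentially the paper's argument, phrased more structurally: the paper too reduces (via the digraph basis of Lemma~\ref{digraph_lem}) to the dichotomy ``both $c_T,c_{\bar T}$ critical'' versus ``only one of them critical'' (your $P=S_3$ versus $P$ a single $C_3$-coset), and your orientation argument in Step~2 is exactly the paper's observation that for the three-point set $\{c_T,c_T-b_0,c_T+b_2\}$ no bijection $\phi$ can make $x+\phi(x)$ constant.

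The genuine gap is in Step~3, at the implication ``$P=S_3$ with a single class $\Rightarrow$ multi-tree''. You invoke the cyclic-order description of linear equivalence among the $\nu^\pi$ from the end of Section~\ref{sec:laplacian}, but that theorem is stated (without proof) only for \emph{undirected} graphs, whereas the $G$ produced by Lemma~\ref{digraph_lem} is a priori only a regular \emph{digraph}. In particular, ``non-adjacent'' in that theorem means $b_{ij}=b_{ji}=0$, and it is precisely this two-sided vanishing that your final sentence (``a connected regular digraph on three vertices with a non-adjacent pair is necessarily symmetric'') needs; nothing in the paper provides a digraph analogue yielding both $b_{ij}=0$ and $b_{ji}=0$. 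The paper does not use this route: it disposes of the case by the direct two-dimensional computation ``it is easy to check directly that the only case when $c_T$ and $c_{\bar T}$ are equivalent is when $b_0=(a,0,-a)$ and $b_1=(0,b,-b)$''. Concretely, $\nu^e-\nu^\tau=(b_{10},-b_{01},0)$, so equivalence first forces equal degrees, i.e.\ $b_{01}=b_{10}$, and one then checks by hand when $b_{01}\cdot(1,-1,0)$ lies in $L$. You should replace the appeal to the cyclic-order theorem by this (or an equivalent) explicit check.
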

\begin{proof}
Let $\{b_0,b_1\}$ be the regular digraph basis of $L$ and $b_2=-b_0-b_1$. We consider the triangulation $\{\triangle^\pi+p\}$ of $H_0$ defined by this basis. Let $T$ be the triangle defined by the convex hull of $\{0,b_0,b_0+b_1\}$ ($=\triangle^\pi$) and let $\bar T$ be the opposite of $T$, the triangle defined by the convex hull of $\{0,b_2, b_1+b_2\}$ ($=\triangle^{\bar \pi}$), and let $c_T$ and $c_{\bar T}$ be $\pi_0(\nu^\pi)$ and $\pi_{0}(\nu^{\bar \pi})$. At least one of the points $c_T$ or $c_{\bar T}$ is critical. And in addition the set of critical points of $\Crit V(O)$ is a subset of $\{c_{T},c_{T}-b_0, c_{T}+b_2, c_{\bar T},c_{\bar T}+b_0, c_{\bar T}-b_2\}$.

($\Rightarrow$)
If $c_T$ and $c_{\bar T}$ are both critical points and they are different, we have $\Crit V(O)=\{c_{T},c_{T}-b_0, c_{T}+b_2, c_{\bar T},c_{\bar T}+b_0, c_{\bar T}-b_2\}$ and we can directly see that $-\Crit V(O)=\Crit V(O)+t$ where $t=c_{T}+c_{\bar T}$. It is easy to check directly than the only case when $c_T$ and $c_{\bar T}$ are equivalent is when $b_0 = (a,0,-a)$ and $b_{1} = (0,b,-b)$ for $a,b>0$ (in which case $c_{T} = \pi_0((0,0,-a-b)$ and $c_{\bar T} = \pi_0((-a,-b,0))$, and so $c_{\bar T}-c_T = b_2$ and the lattice is also uniform). In this case, we also have $\Crit V(O)=\{c_{T},c_{T}-b_0, c_{T}+b_2, c_{\bar T},c_{\bar T}+b_0, c_{\bar T}-b_2\}$ and so again $-\Crit V(O)=\Crit V(O)+t$ where $t=c_{T}+c_{\bar T}$.

($\Leftarrow$) If there is just one critical point up to linear equivalence, let us assume without loss of generality that the critical point is $c_T$. In this case, $\Crit V(O)=\{c_T,c_T-b_0,c+b_2 \}$.  It is now easy to check that for any bijection $\phi$ of $\Crit V(O)$ onto itself, $x+ \phi(x)$ cannot be the same over all $x$ in $\Crit V(O)$. 
\end{proof}

We end this section by providing an example of a sub-lattice $L$ of $A_2$ which is not strongly reflection invariant. By the previous theorem, $L$ should contain only one critical point up to linear equivalence and should not be a multi-tree. (In particular, since we only have one class of critical points, $L$ is uniform and satisfies the Riemann-Roch theorem.)

Consider the two dimensional sub-lattice of $A_2$ defined by the vectors $b_0 = (7,-7,0)$ and $b_1=(-3,11,-8)$, and let $b_2=-b_0-b_1=(-4,-4,8)$. These vectors form the rows of the $3\times 3$ matrix $Q$ (which is the Laplacian matrix of a regular digraph). 
\begin{equation}
Q=
\begin{bmatrix}
  \hspace{0.3cm}7   && -7  && \hspace{0.3cm}0 \\
 -3   && \hspace{0.3cm}11  && -8 \\
 -4   &&    -4  && \hspace{0.3cm}8 \\
 \end{bmatrix}
\end{equation} 
Let $\pi$ and $\bar \pi$ be the permutation corresponding to the order $0<_\pi 1<_\pi 2$ and its opposite $1<_{\bar \pi} 0<_{\bar\pi}2$ as in the proof of Theorem~\ref{thm:A2-strongreflect}. We have $\nu^\pi=\bigoplus\{b_0,b_0+b+1,O\}=(0,-7,-8)$ and $\nu^{\bar\pi}=\bigoplus\{b_1,b_1+b_0,O\}=(-3,0,-8)$. We claim that $\nu^{\bar\pi}$ is not an extremal point of $\Sigma^c(L)$ and so $\pi_0(\nu^{\bar \pi})$ is not critical. This is true because  $\nu^{\bar \pi}+(7,-7,0) = (4,0,-8) \geq \nu^{\pi}$, and so $\nu^{\bar \pi}$ cannot be extremal.

Indeed the above example can be turned into a generic class of examples, that we now explain. Consider a lattice defined by generators of the form $b_0=(\alpha,-\alpha,0)$ and $b_1 = (-\gamma, \gamma+\eta, -\eta)$:
\begin{equation}
Q=
\begin{bmatrix}
  \hspace{0.3cm}\alpha   && -\alpha  && \hspace{0.3cm}0\\
  -\gamma  && \hspace{0.3cm}\gamma+\eta  && -\eta \\
  \gamma-\alpha   && -\gamma+\alpha-\eta && \hspace{0.3cm}\eta\\
 \end{bmatrix}
\end{equation} 
Here we suppose in addition that $\alpha,\gamma,\eta > 0$ and $\gamma < \alpha\leq \eta+\gamma$ such that the above matrix is the Laplacian of a regular digraph.
The two permutations $\pi$ and $\bar\pi$ are defined as above, so for these permutations we have 
$\nu^\pi = (0,-\alpha,-\eta)$ and $\nu^{\bar \pi} = (-\gamma,0,-\eta)$. It is clear that $\deg(\nu^\pi) < \deg (\nu^{\bar\pi})$. We infer that $\nu^\pi$ is extremal. But $\nu^{\bar \pi}$ is not extremal since  $\nu^{\bar \pi}\geq \nu^\pi-b_0$. It is also easy to see that $L$ cannot have a multi-tree basis.

\subsection{Examples of sub-lattices with Riemann-Roch property which are not graphical}
In this subsection, we show that there exist an infinite family of sub-lattices $\{L_n\}_{n=2}^{\infty}$, where $L_n$ is a full rank sub-lattice of $A_n$, each $L_n$ satisfies the Riemann-Roch theorem (we say that it has the Riemann-Roch property), and such that none of $L_n$ is graphical. By not being graphical, we mean that there does not exist any basis of $L$ which comes from a connected unoriented multi-graph, i.e. $L_n \neq L_G$ for any connected multi-graph $G$ on $n+1$ vertices. Indeed, we have already provided in the previous section such an example (and even an infinite number of them) in dimension two: the family of sub-lattices of $A_2$ defined by $b_0=(\alpha,-\alpha,0)$ and $b_1 = (-\gamma, \gamma+\eta, -\eta)$ (we will prove this shortly below). The construction of $L_n$ for larger values  of $n$ is then recursive. Suppose we have already constructed an infinite family of full rank sub-lattices of $A_{n}$ which are not graphical and have the Riemann-Roch property, and let 
 $L_{n}$ be an element of this family. Then we construct a full rank sub-lattice of $A_{n+1}$ as follows. By taking the natural embedding $A_{n}\subset A_{n+1}$, $(x_0,\dots,x_{n}) \rightarrow (x_0,\dots,x_{n},0)$, we embed $L_{n}$ in $A_{n+1}$. The lattice $L_{n+1}$ is obtained by adding $b_{n} = (0,0,\dots,0,-1,1)$ to the image of $L_n$. Remark that if $L_{n}$ comes from a regular digraph $G$ with vertices $v_0,\dots,v_{n}$, then $L_{n+1}$ is the lattice of the digraph $G'$ consisting of $G$ and a new vertex $v_{n+1}$ which is connected to $v_{n}$ by two arcs, one in each direction.  We will see that $L_{n+1}$ will not be graphical, and in addition it will have the Riemann-Roch property. Here we provide the details of the construction.

\subsection*{The Lattices $L_2$}
Let $L_2$ be a sub-lattice of $A_2$ defined by $b_0=(\alpha,-\alpha,0)$ and $b_1 = (-\gamma, \gamma+\eta, -\eta)$, where $\alpha,\gamma,\eta > 0$ and $\gamma < \alpha\leq \eta+\gamma$.
\begin{prop} The sub-lattice $L_2$ has Riemann-Roch property and $L_2$ is not graphical. 
\end{prop}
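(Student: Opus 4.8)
The claim has two halves. For the Riemann--Roch property the key input is already in hand: we have just seen, for $L_2$, that $\nu^{\pi}=(0,-\alpha,-\eta)$ is an extremal point of $\Sigma^{c}(L_2)$ while $\nu^{\bar\pi}=(-\gamma,0,-\eta)$ is not, since $\nu^{\bar\pi}$ dominates the extremal point $\nu^{\pi}-b_0$, whose degree is strictly smaller because $\alpha>\gamma$. Together with the limiting argument used for regular digraphs (as in Lemma~\ref{limiting_lem}) this gives $\Crit(L_2)\subseteq\{\pi_0(\nu^{\pi}),\pi_0(\nu^{\bar\pi})\}+\pi_0(L_2)$; and since no lattice translate $\nu^{\bar\pi}+q$ ($q\in L_2$) can be extremal either --- it always dominates the extremal point $\nu^{\pi}-b_0+q$ of strictly smaller degree --- we get that $\Crit(L_2)=\pi_0(\nu^{\pi})+\pi_0(L_2)$ is a single linear-equivalence class. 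The plan is then: (i) conclude $L_2$ is uniform, since every extremal point is a lattice translate of $\nu^{\pi}+(1,1,1)$ and hence has degree $3-\alpha-\eta$, so $g=g_{min}=g_{max}=\alpha+\eta-2$; (ii) conclude $L_2$ is reflection invariant, since with $c=\pi_0(\nu^{\pi})$ and using $L_2=-L_2$ we have $-\Crit(L_2)=-c+\pi_0(L_2)=\Crit(L_2)+t$ for $t=-2c$; and (iii) invoke Theorem~\ref{theo:R-R-uniform} (equivalently Theorem~\ref{theo:R-R-class}) to obtain the Riemann--Roch formula.

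For non-graphicality the plan is a short argument by contradiction. Suppose $L_2=L_G$ for some connected multi-graph $G$ on the three vertices $v_0,v_1,v_2$. By Theorem~\ref{theo:laplacian-unif+reflect}, $L_G$ --- hence $L_2$ --- is strongly reflection invariant. On the other hand, since $\nu^{\bar\pi}$ is not extremal, the analysis behind Theorem~\ref{thm:A2-strongreflect} identifies $\Crit V_{\triangle}(O)$ for $L_2$ with the three-point set $\{c_T,\ c_T-b_0,\ c_T+b_2\}$, where $c_T=\pi_0(\nu^{\pi})$. One then checks --- a finite computation with the explicit generators $b_0=(\alpha,-\alpha,0)$, $b_1=(-\gamma,\gamma+\eta,-\eta)$, $b_2=-b_0-b_1$ --- that no vector $t$ satisfies $-\{c_T,c_T-b_0,c_T+b_2\}=\{c_T,c_T-b_0,c_T+b_2\}+t$, i.e. no self-bijection $\psi$ of this set makes $x+\psi(x)$ constant; this contradicts strong reflection invariance. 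A parallel route avoids $\Crit V_\triangle(O)$: the underlying simple graph of $G$ is a triangle or a path; a triangle makes all off-diagonal Laplacian entries positive, whence $L_G$ has $2!=2$ linear-equivalence classes of critical points by Theorem~\ref{theo:crit-n!}, contradicting the single class of $L_2$; a path $G$ with centre $v_j$ gives $L_G=\langle p(e_i-e_j),\,q(e_j-e_k)\rangle$ with $pq=[A_2:L_2]=\alpha\eta$, and matching this against $L_2$ forces a divisibility $g_1g_2\mid\alpha\eta$ (where $g_1,g_2$ generate $L_2\cap\mathbb Z(e_i-e_j)$ and $L_2\cap\mathbb Z(e_j-e_k)$) that one checks fails for each choice of $j$, the first case reducing to the excluded relation $\alpha\mid\gamma$.

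I expect the only genuine obstacle to be this non-graphicality step, and within it the path (multi-tree) case: the triangle case is cheap because the critical-point count kills it, but excluding every path $G$ --- or, equivalently, verifying the asymmetry of the three-point set $\Crit V_{\triangle}(O)$ --- is a finite but fiddly computation in which the congruences relating $\alpha$, $\gamma$ and $\eta$ do all the work. In carrying it out one should make sure the standing hypotheses $\gamma<\alpha\le\gamma+\eta$ are supplemented by enough genericity (for instance $\alpha\nmid\gamma+\eta$) for the obstruction to survive, since for degenerate parameter values $L_2$ can in fact coincide with the Laplacian lattice of a multi-tree on three vertices.
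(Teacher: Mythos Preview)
Your proof follows the paper's own route closely. The paper's argument is a two-line summary: $L_2$ has a single linear-equivalence class of critical points (hence is uniform and reflection invariant, giving Riemann--Roch via Theorem~\ref{theo:R-R-uniform}), and by Theorem~\ref{thm:A2-strongreflect} it is not strongly reflection invariant (single class plus ``not a multi-tree''), whence it is not graphical by Theorem~\ref{theo:laplacian-unif+reflect}. Your treatment of the Riemann--Roch half and your first route to non-graphicality via $\Crit V_\triangle(O)=\{c_T,c_T-b_0,c_T+b_2\}$ reproduce exactly this reasoning, just more explicitly.

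Your closing caveat, however, puts its finger on a genuine gap that the paper glosses over with ``It is also easy to see that $L$ cannot have a multi-tree basis.'' Under the stated hypotheses $\alpha,\gamma,\eta>0$ and $\gamma<\alpha\le\gamma+\eta$ alone, $L_2$ \emph{can} be a multi-tree lattice, and then the proposition fails. Take $\alpha=4$, $\gamma=1$, $\eta=7$: then $L_2=\langle(4,-4,0),(-1,8,-7)\rangle$, and since
\[
(-1,8,-7)=2(-4,4,0)-(-7,0,7),\qquad (-7,0,7)=-2(4,-4,0)-(-1,8,-7),
\]
one has $L_2=\langle(-4,4,0),(-7,0,7)\rangle$, the Laplacian lattice of the multi-tree with centre $v_0$ having four parallel edges to $v_1$ and seven to $v_2$. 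For these parameters $L_2$ is graphical and, by Theorem~\ref{thm:A2-strongreflect}, strongly reflection invariant; yet one still has $\nu^{\bar\pi}=(-1,0,-7)\ge(-4,0,-7)=\nu^\pi-b_0$, so only one class of critical points, consistent with your analysis. Thus your instinct that an extra genericity hypothesis (ruling out the multi-tree with centre $v_0$, which is what forces a relation like $\alpha\mid(\gamma+\eta)$) is needed is correct. The paper's own numerical example $(\alpha,\gamma,\eta)=(7,3,8)$ survives, but the family as stated does not, and your ``parallel route'' checking the three path-centres case by case is exactly what is required to repair the statement.
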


\begin{proof}
We saw in the previous section that $L_2$ has only one class of critical points, up to linear equivalence, is not strongly reflection invariant, and in addition $|\Crit V(O)|=3$. This shows that $L_2$ cannot be graphical. However, $L_2$ is uniform and reflection invariant, and so it has the Riemann-Roch property.  
\end{proof}

\subsection*{The Lattices $L_{n}$}

Let $L_{n}$ be a full rank sub-lattice of $A_{n}$ that we regard as an $n$-dimensional sub-lattice of $A_{n+1}$ by taking the embedding $A_{n} \subset A_{n+1}$ described above. Define $L_{n+1}$ to be the lattice generated by $L_n$ and $b_{n+1} = (0,\dots,0,-1,1)$. We first provide two correspondences: one between the rank function $r_n$ of $L_n$ and the rank function $r_{n+1}$ of $L_{n+1}$, and the other one, between the extremal points of $L_n$ and the extremal points of $L_{n+1}$. 

\noindent Let $D$ be an element of $\mathbb{Z}^{n+2}$. By $D|_n$ we denote the projection of $D$ to $\mathbb Z^{n+1}$ obtained by eliminating the last coordinate. So if $D=(D_0,\dots,D_{n+1})$, then $D|_n = (D_0,\dots,D_n)$.

\begin{lemm} Let $D=(D_0,\dots,D_{n+1})$ be a point in $\mathbb{Z}^{n+2}$ and let $D'=(D-D_{n+1}b_{n+1})|_{n+1}$. We have $r_{n+1}(D)=r_{n}(D')$. 
\end{lemm}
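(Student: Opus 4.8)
The plan is to reduce to the case $D_{n+1}=0$ and then exploit the fact that $L_{n+1}=L_n\oplus\mathbb Z b_{n+1}$, where $b_{n+1}=(0,\dots,0,-1,1)$ has its $-1$ in coordinate $n$ and its $1$ in the last coordinate. Since $b_{n+1}\in L_{n+1}$, the point $D$ is linearly equivalent modulo $L_{n+1}$ to $D-D_{n+1}b_{n+1}$, which has zero last coordinate and hence equals $(\delta,0)$ with $\delta:=D'\in\mathbb Z^{n+1}$; as $r_{n+1}$ depends only on the linear-equivalence class it therefore suffices to prove $r_{n+1}((\delta,0))=r_n(\delta)$. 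I would prove the two inequalities separately, using throughout the definition ``$r(\cdot)\ge s$ iff $|\cdot-E|\neq\emptyset$ for every effective $E$ of degree $s$'', the value $-1$ being the instance $s=0$, $E=0$.

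For $r_{n+1}((\delta,0))\ge r_n(\delta)$: assuming $r_n(\delta)\ge s$, take an arbitrary effective $E=(E_0,\dots,E_n,E_{n+1})\in\mathbb Z^{n+2}$ of degree $s$. Adding $E_{n+1}b_{n+1}\in L_{n+1}$ to $(\delta,0)-E$ zeroes out the last coordinate and moves $E_{n+1}$ into coordinate $n$, so $(\delta,0)-E$ is $L_{n+1}$-equivalent to $(\delta-\tilde E,0)$ with $\tilde E:=(E_0,\dots,E_{n-1},E_n+E_{n+1})\ge 0$ of degree $s$. By hypothesis there is $q$ in the copy of $L_n$ inside $\mathbb Z^{n+1}$ with $\delta-\tilde E-q\ge 0$, and then $(\delta-\tilde E-q,0)$ is an effective point of $\mathbb Z^{n+2}$ lying in the $L_{n+1}$-class of $(\delta,0)-E$. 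Since $E$ was arbitrary, $r_{n+1}((\delta,0))\ge s$.

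For the reverse inequality: assuming $r_{n+1}((\delta,0))\ge s$, take an arbitrary effective $E'\in\mathbb Z^{n+1}$ of degree $s$ and set $E:=(E',0)\in\mathbb Z^{n+2}$. Then $|(\delta,0)-E|\neq\emptyset$ for $L_{n+1}$, so there are $q\in L_n$ and $k\in\mathbb Z$ with $(\delta,0)-E-(q+k b_{n+1})\ge 0$. Inspecting the last coordinate forces $k\le 0$; inspecting coordinate $n$ then gives $\delta_n-E'_n-q_n\ge -k\ge 0$, while the remaining coordinates give $\delta_i-E'_i-q_i\ge 0$. Hence $\delta-E'-q\ge 0$ with $q$ in the copy of $L_n$ inside $\mathbb Z^{n+1}$, i.e.\ $|\delta-E'|\neq\emptyset$ for $L_n$; as $E'$ was arbitrary, $r_n(\delta)\ge s$. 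Combining the two inequalities (and the case $s=0$, $E=0$ for the value $-1$) gives $r_{n+1}(D)=r_n(D')$.

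The computations are entirely elementary. The only points requiring care are the direct-sum decomposition $L_{n+1}=L_n\oplus\mathbb Z b_{n+1}$ (immediate, because every element of $L_n$ has last coordinate $0$ whereas $b_{n+1}$ does not) and the bookkeeping that $b_{n+1}$ affects only coordinates $n$ and $n+1$, which is precisely what makes the inequalities in the two directions line up; I do not expect any genuine obstacle. This lemma is really the base step of the recursive construction of the $L_n$, serving to let the rank be computed one dimension down.
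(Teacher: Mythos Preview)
Your proof is correct and follows essentially the same approach as the paper's: both arguments rest on the decomposition $L_{n+1}=L_n\oplus\mathbb Z b_{n+1}$ and prove the two inequalities by the same coordinate bookkeeping (lifting $E'$ to $(E',0)$ in one direction, and collapsing $E$ to $\tilde E=(E_0,\dots,E_{n-1},E_n+E_{n+1})$ in the other). The only difference is cosmetic: you first reduce to the case $D_{n+1}=0$ via linear equivalence, whereas the paper carries $D_{n+1}$ through the computation; your reduction slightly streamlines the notation but the underlying manipulations are identical.
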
 
\begin{proof} We first prove that $r_{n}(D') \geq r_{n+1}(D)$. Let $E'\in \mathbb Z^{n+1}$ be effective.
We should prove that if $\deg(E')\leq r_{n+1}(D)$, then $D'-E' \geq q'$ for at least one $q' \in L_n$. Let $E = (E',0)$. As $\deg(E) \leq r_{n+1}(D)$, there exists a $q \in L_{n+1}$ such that $D-E \geq q$. By the definition of $L_{n+1}$, there exists $q'\in L$ and $\alpha \in \mathbb Z$ such that $q= (q',0) + \alpha b_{n+1}$. It follows that 
$D_{n+1} \geq \alpha$, and so $D'-E' = (D-D_{n+1}b_{n+1}-E)|_n \geq (D-\alpha b_{n+1}-E)|_{n} \geq q'$. So $D'-E' \geq q'$ and we are done. 

We now show that $r_{n+1}(D)\geq r_{n}(D')$. Let $E = (E_0,\dots,E_{n+1}) \in \mathbb Z^{n+2}$ be effective of degree at most $r_n(D')$. We have to prove the existence of a point $q \in L_{n+1}$ such that $D-E \geq q$. Let $O \leq E' \in \mathbb Z^{n+1}$ be defined by $E-E_{n+1}b_{n+1} = (E',0)$. In other words $E' =(E_0,\dots,E_{n-1}, E_n+E_{n+1})$. It is clear that $E' \geq O$ and $\deg(E') \leq r_n(D')$. So there exists a point $q'\in L_n$ such that $D' - E' \geq q'$. We infer that $D - E \geq (q',0)+(D_{n+1}+E_{n+1})b_{n+1}$. So for $q = (q',0)+(D_{n+1}+E_{n+1})b_{n+1} \in L_{n+1}$, we have $D-E \geq q$, and we are done.
\end{proof}

\begin{lemm}\label{extconst_lem} The extremal points of $\Sigma(L_{n+1})$ are of the form $(v,0)+q$ where $v$ is an extremal point $\Sigma(L_n)$ and $q$ is a point in $L_{n+1}$. Similarly, the elements of ${\emph Ext}^{c}(L_{n+1})$ are of the form $(u,-1)+q$ where $u$ is a point of ${\emph Ext}^{c}(L_{n})$ and $q \in L_{n+1}$.
\end{lemm}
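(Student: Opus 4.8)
The plan is to ``fold'' both $\Sigma(L_{n+1})$ and $\Ext(L_{n+1})$ onto the corresponding objects for $L_n$ via the same map used in the previous lemma: for $D=(D_0,\dots,D_{n+1})\in\mathbb Z^{n+2}$ put $D':=(D-D_{n+1}b_{n+1})|_n=(D_0,\dots,D_{n-1},\,D_n+D_{n+1})\in\mathbb Z^{n+1}$, so that $D=(D',0)+D_{n+1}b_{n+1}$ and, since $\deg(b_{n+1})=0$, $\deg(D)=\deg(D')$. First I would establish the set-level identity
\[
D\in\Sigma(L_{n+1})\iff D'\in\Sigma(L_n).
\]
Writing an arbitrary element of $L_{n+1}$ as $q=(w,0)+\alpha b_{n+1}$ with $w\in L_n$, $\alpha\in\mathbb Z$, the relation $D\le q$ unwinds to $D_i\le w_i$ for $i<n$, together with $\alpha\ge D_{n+1}$ and $\alpha\le w_n-D_n$; the two constraints on the integer $\alpha$ are jointly satisfiable exactly when $D_n+D_{n+1}\le w_n$, i.e.\ when $D'\le w$. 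Hence $D$ is dominated by some point of $L_{n+1}$ iff $D'$ is dominated by some point of $L_n$, which is the claim; in particular $\Sigma(L_{n+1})=\{\,(v,0)+\alpha b_{n+1}\:|\:v\in\Sigma(L_n),\ \alpha\in\mathbb Z\,\}$.

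Next I would pass from ``extremal'' to ``minimal for the domination order''. For any full-rank $L\subseteq A_m$ the set $\Sigma(L)=\mathbb Z^{m+1}\setminus\bigcup_p H^-_p$ is upward closed; therefore if $\nu\in\Ext(L)$ failed to be $\le$-minimal in $\Sigma(L)$, there would be $\mu\in\Sigma(L)$ with $\mu<\nu$, and picking $j$ with $\mu_j<\nu_j$ the neighbour $\nu-e_j$ still satisfies $\nu-e_j\ge\mu$, hence lies in the upward-closed set $\Sigma(L)$ and has degree $\deg(\nu)-1<\deg(\nu)$, contradicting extremality. Conversely a $\le$-minimal point of $\Sigma(L)$ dominates an extremal point by Theorem~\ref{dom_thm}, hence coincides with it. So $\Ext(L)$ is precisely the set of $\le$-minimal points of $\Sigma(L)$, and $D\in\Sigma(L_{n+1})$ lies in $\Ext(L_{n+1})$ iff $D-e_i\notin\Sigma(L_{n+1})$ for all $0\le i\le n+1$. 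Since $(D-e_i)'=D'-e_i$ for $i\le n-1$ and $(D-e_n)'=(D-e_{n+1})'=D'-e_n$ (standard basis vectors, in the appropriate space on each side), the identity of the first step turns this into ``$D'-e_i\notin\Sigma(L_n)$ for all $0\le i\le n$'', i.e.\ $D'\in\Ext(L_n)$. Thus $D\in\Ext(L_{n+1})\iff D'\in\Ext(L_n)$.

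To finish the first assertion I would unfold: from $D=(D',0)+D_{n+1}b_{n+1}$ and $D_{n+1}b_{n+1}\in L_{n+1}$ we get $\Ext(L_{n+1})\subseteq\{\,(v,0)+q\:|\:v\in\Ext(L_n),\ q\in L_{n+1}\,\}$, and conversely, for $v\in\Ext(L_n)$ and $q=(w,0)+\alpha b_{n+1}\in L_{n+1}$ one has $(v,0)+q=(v+w,0)+\alpha b_{n+1}$ with $v+w\in\Ext(L_n)$ because $\Sigma(L_n)$, hence $\Ext(L_n)$, is invariant under translation by $L_n$; so $(v,0)+q\in\Ext(L_{n+1})$. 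For the second assertion I would feed this through Proposition~\ref{prop:twosigmas}: since $\Ext(L)=\Ext^c(L)+(1,\dots,1)$ and $(u,-1)+(1,\dots,1)=(u+(1,\dots,1),0)$ (the first string of ones in $\mathbb R^{n+2}$, the second in $\mathbb R^{n+1}$), writing each $v\in\Ext(L_n)$ as $v=u+(1,\dots,1)$ with $u\in\Ext^c(L_n)$ rewrites $\Ext(L_{n+1})=\{(v,0)+q\}$ as $\Ext^c(L_{n+1})=\{\,(u,-1)+q\:|\:u\in\Ext^c(L_n),\ q\in L_{n+1}\,\}$.

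The only genuinely delicate point is the $2$-to-$1$ nature of the folding map on the last two coordinates: a unit step along $e_n$ and a unit step along $e_{n+1}$ upstairs both project to the single step along $e_n$ downstairs, so one must check --- as in the second step above --- that this produces neither a spurious nor a missing minimality condition. Everything else reduces to bookkeeping for $L_{n+1}=\langle L_n,b_{n+1}\rangle$ together with the upward-closedness of $\Sigma$ and Theorem~\ref{dom_thm} (applicable because $L_{n+1}$ has full rank $n+1$ in $A_{n+1}$, as $b_{n+1}$ lies outside the span of the embedded $L_n$).
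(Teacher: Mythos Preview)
Your argument is correct and follows the same strategy as the paper: both use the folding map $D\mapsto D'=(D-D_{n+1}b_{n+1})|_n$ to reduce questions about $L_{n+1}$ to $L_n$. The paper's proof only spells out one inclusion ($\Ext(L_{n+1})\subseteq\Ext(L_n)\times\{0\}+L_{n+1}$) by directly checking that the folded point lies in $\Sigma(L_n)$ and is a local minimum of degree, leaving the remaining three inclusions to the reader; you instead isolate the intermediate equivalence $D\in\Sigma(L_{n+1})\iff D'\in\Sigma(L_n)$ and the characterisation $\Ext(L)=\{\le\text{-minimal points of }\Sigma(L)\}$, which lets you handle both directions symmetrically and makes the $2$-to-$1$ collapse on the last two coordinates transparent. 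This is a cleaner packaging of the same idea rather than a different route.
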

\begin{proof}
The proof is similar to the proof of the previous lemma and we only prove one direction, namely $\Ext(L_{n+1}) \subseteq \Ext(L_n)\times\{0\} + L_{n+1}$. The other inclusions
$\Ext(L_n)\times\{0\} + L_{n+1} \subseteq \Ext(L_{n+1})$, $\Ext^c(L_{n+1}) \subseteq \Ext^c(L_n)\times\{-1\} + L_{n+1}$, and $\Ext^c(L_n)\times\{-1\} + L_{n+1} \subseteq \Ext^c(L_{n+1})$ follows similarly.

Let $\bar v = (\bar v_0,\dots,\bar v_{n+1})$ be an extremal point of $L_{n+1}$, i.e., $\bar v \in \Ext(L_{n+1})$. Let $v \in \mathbb Z^{n+1}$ be defined as follows: $(v,0) = \bar v-\bar v_{n+1}b_{n+1}$. The claim follows once we have shown that $v$ is an extremal point of $L_n$. To prove that $v$ is an extremal point, we need to show that for all $q \in L_{n}$, $v \nleq q$ and that  $v$ is a local minimum for the degree function. Suppose that this is not the case and let $q \in L_{n}$ be such that $v \leq q$. We have $\bar v \leq (q,0)+\bar v_{n+1}b_{n+1}$ and $(q,0)+\bar v_{n+1}b_{n+1} \in L_{n+1}$, which is a contradiction to the assumption that $\bar v \in \Ext(L_{n+1})$. The proof that $v$ is a local minimum follows similarly.
 \end{proof}
As a corollary to the above lemmas, we obtain

\begin{coro} If $L_{n}$ has the  Riemann-Roch property (resp. is uniform and reflection-invariant), then $L_{n+1}$ also has the Riemann-Roch property (resp. is uniform and reflection-invariant). Furthermore, we have $K_{n+1}=(K_n,0)$, where $K_i$ is canonical for $L_i$, $i=n,n+1$. 
\end{coro}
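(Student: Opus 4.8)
The plan is to deduce the corollary from the two lemmas just proved together with the classification Theorem~\ref{theo:R-R-class}. The key observation is that both lemmas let us transport a Riemann-Roch formula from $L_n$ to $L_{n+1}$ essentially verbatim, with canonical point $K_{n+1}=(K_n,0)$; once the formula holds for $L_{n+1}$, Theorem~\ref{theo:R-R-class} yields uniformity and reflection invariance for free, and conversely, if $L_n$ is only assumed uniform and reflection invariant, that same theorem first produces a Riemann-Roch formula for $L_n$, reducing this case to the first one.

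First I would fix $K_{n+1}:=(K_n,0)\in\mathbb Z^{n+2}$, where $K_n$ is canonical for $L_n$; then $\deg(K_{n+1})=\deg(K_n)=2g-2$ with $g$ the genus of $L_n$. For $D=(D_0,\dots,D_{n+1})\in\mathbb Z^{n+2}$, write $D'$ for the point of $\mathbb Z^{n+1}$ produced by the rank-correspondence lemma above, so that $r_{n+1}(D)=r_n(D')$; unwinding the definition (using $b_{n+1}=(0,\dots,0,-1,1)$) gives $D'=(D_0,\dots,D_{n-1},D_n+D_{n+1})$, and hence $\deg(D')=\deg(D)$ since the operation $D\mapsto D'$ only merges two coordinates. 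A short computation, substituting $(K_{n+1}-D)_{n+1}=-D_{n+1}$ into the definition of $(\,\cdot\,)'$ and adding $D_{n+1}b_{n+1}$, shows that $(K_{n+1}-D)'=K_n-D'$. Applying the rank correspondence twice and then the Riemann-Roch formula for $L_n$,
\[
r_{n+1}(D)-r_{n+1}(K_{n+1}-D)=r_n(D')-r_n(K_n-D')=\deg(D')-(g-1)=\deg(D)-(g-1).
\]
Thus $L_{n+1}$ has a Riemann-Roch formula with canonical point $K_{n+1}=(K_n,0)$ and $m=g$; in particular the genus of $L_{n+1}$ is again $g$.

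For the parenthetical statement I would then invoke Theorem~\ref{theo:R-R-class}: having a Riemann-Roch formula is equivalent to being uniform and reflection invariant. So if $L_n$ is assumed uniform and reflection invariant, that theorem yields a canonical point and a Riemann-Roch formula for $L_n$, the computation above gives one for $L_{n+1}$, and Theorem~\ref{theo:R-R-class} applied to $L_{n+1}$ returns uniformity and reflection invariance. Alternatively, uniformity is immediate from Lemma~\ref{extconst_lem}: since every lattice point has degree zero and $\Ext(L_{n+1})=\{(v,0)+q:\,v\in\Ext(L_n),\,q\in L_{n+1}\}$, the set of degrees of extremal points of $L_{n+1}$ coincides with that of $L_n$, so $g_{min}$ and $g_{max}$ are unchanged; reflection invariance can be read off similarly from the description $\Crit(L_{n+1})=\pi_0(\Ext(L_{n+1}))$ of Corollary~\ref{cor:crit}.

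The only place requiring care is the bookkeeping behind the identities $\deg(D')=\deg(D)$ and $(K_{n+1}-D)'=K_n-D'$, i.e.\ keeping track of which coordinate of $b_{n+1}$ equals $-1$ and which equals $1$, and of how reducing modulo $b_{n+1}$ acts on degrees; but this is entirely routine linear algebra, and the substantive work is already contained in the two lemmas and in Theorem~\ref{theo:R-R-class}, so I do not anticipate a genuine obstacle here.
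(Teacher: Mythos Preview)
Your proposal is correct and matches the paper's intent: the paper simply states this as ``a corollary to the above lemmas'' without further proof, and the argument you give---transporting the Riemann-Roch formula via the rank correspondence $r_{n+1}(D)=r_n(D')$ together with the identities $\deg(D')=\deg(D)$ and $(K_{n+1}-D)'=K_n-D'$, then invoking Theorem~\ref{theo:R-R-class} for the equivalence with uniformity and reflection invariance---is exactly the natural fleshing out. Your alternative route to uniformity directly from Lemma~\ref{extconst_lem} is also valid and slightly more self-contained.
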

We now show that if $L_2$ is the family of lattices that we described above, then $L_n$ is not graphical. Indeed, by applying Lemma~\ref{extconst_lem} and by induction on $n$, it is easy to show that $L_{n}$ is not strongly reflection invariant, provided that $L_2$ is not strongly reflection invariant, and we know that this is the case.

Remark that the family of all $L_n$ constructed above is infinite (for each fixed $n$). Indeed, by using the fact that $\Pic(L_n) = \Pic(L_{n+1})$, and by observing that the set $|\Pic(L_2)|$ contains an infinite number of values, we conclude that $|\Pic(L_n)|$ takes an infinite number of values and so the family of all $L_n$ is infinite.	

\section{Algorithmic Issues}\label{sec:algo}
Let $L$ be a full-rank sub-lattice of $A_n$. While it is not well known whether calculating the rank of a given point for a Laplacian lattice can be done in polynomial time or not, calculating the rank function for general $L$ becomes more complicated. In this section, we show that this problem is {\sc NP}-hard. Actually we prove that deciding if $r(D) \geq 0$ is already {\sc NP}-hard for general $D$ and $L$. Remark that for the case of graphs, deciding if $r(D) \geq 0$ can be done in polynomial time~\cite{H99, Shokrieh09}. 

By the results of Section~\ref{sec:prel}, deciding if $r(D)=-1$ is equivalent to deciding whether $-D \in \Sigma(D)$ or not. So we will instead consider this membership problem. We will show below that this problem is equivalent to the problem of deciding whether a rational simplex contains an integral  point. We then use this to show that it is generally {\sc NP}-hard to decide if a given integral  point $D$ is contained in $\Sigma(L)$. As every point of positive degree is in $\Sigma(L)$, we may only consider the points of negative degree.

We first state the following simple lemma.
\begin{lemm}\label{lat_lem} Let $D$ be a point in $\mathbb{Z}^{n+1}$ of negative degree. We have $D \in \Sigma(L)$ if and only if the simplex $\bar{\triangle}_{\frac{-\deg(D)}{n+1}}(\pi_0(D))$ contains no lattice point (a point in  $L$) in its interior.
\end{lemm}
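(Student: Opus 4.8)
The plan is to unwind the definition of $\Sigma(L)$ and then recognise the resulting region as the prescribed simplex, so the whole statement becomes a transparent identity. First I would observe that $D\in\Sigma(L)$ means, by definition, $D\nleq p$ for every $p\in L$; since $L$ is a sub-lattice of $A_n$ we have $L\subseteq H_0$, and for integral points $D\leq p$ is equivalent to $p\in H^{+}_D$. Hence $D\in\Sigma(L)$ if and only if the bounded polytope
\[
S_D\ :=\ H^{+}_D\cap H_0\ =\ \bigl\{\,x\in H_0\ :\ x_i\geq D_i\ \text{for all }i\,\bigr\}
\]
contains no point of $L$. (This $S_D$ really is an $n$-dimensional simplex: $x\geq D$ together with $\sum_i x_i=0$ forces $D_i\leq x_i\leq D_i-\deg(D)$, so $S_D$ is bounded, and $\sum_i(x_i-D_i)=-\deg(D)>0$ makes it full-dimensional in $H_0$.)

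The second step is to identify $S_D$ with the ball $\bar\triangle_{\lambda}(\pi_0(D))$, where $\lambda=\frac{-\deg(D)}{n+1}>0$. Since $\pi_0(D)=D+\lambda(1,\dots,1)$, Lemma~\ref{d_form} gives, for $x\in H_0$,
\[
d_{\bar\triangle}\bigl(x,\pi_0(D)\bigr)\ =\ \Bigl|\bigoplus_{i}\bigl(x_i-\pi_0(D)_i\bigr)\Bigr|\ =\ \max_i\bigl(\pi_0(D)_i-x_i\bigr)\ =\ \max_i\bigl(D_i+\lambda-x_i\bigr),
\]
so $d_{\bar\triangle}(x,\pi_0(D))\leq\lambda$ holds precisely when $x_i\geq D_i$ for all $i$; that is, $\bar\triangle_{\lambda}(\pi_0(D))=S_D$. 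Combining with the first step, $D\in\Sigma(L)$ if and only if the simplex $\bar\triangle_{\lambda}(\pi_0(D))$ contains no point of $L$.

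The step requiring the most care — and essentially the only subtle point — is the distinction between the closed simplex and its relative interior. A point $p\in L$ lying on the boundary of $\bar\triangle_{\lambda}(\pi_0(D))=S_D$ still satisfies $p_i\geq D_i$ for all $i$ (with equality for at least one $i$), hence $D\leq p$, so it equally obstructs $D\in\Sigma(L)$; thus the equivalence is exact already for the closed simplex $S_D$. If one prefers to phrase it via the relative interior, the correct radius is $1+\lambda$: using $\Sigma(L)=\Sigma^{c}_{\mathbb Z}(L)+(1,\dots,1)$ (Proposition~\ref{prop:twosigmas}) and Lemma~\ref{sigc}, $D\in\Sigma(L)$ iff $D-(1,\dots,1)\nless p$ for all $p\in L$, and $D-(1,\dots,1)<p$ for integral $p$ means exactly $p_i>D_i-1$ for all $i$, i.e.\ $p$ lies in the relative interior of $\{x\in H_0:x_i\geq D_i-1\}=\bar\triangle_{1+\lambda}(\pi_0(D))$. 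Either way, the lemma converts the $\Sigma(L)$-membership question into the question of whether a rational simplex contains a point of $L$, which is the form used in the subsequent hardness reduction.
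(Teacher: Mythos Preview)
Your proof is correct and in fact cleaner than the paper's own argument. The paper proves the lemma by invoking the machinery of Section~4: it uses that $\partial\Sigma^c(L)$ is the lower graph of $h_{\triangle,L}$ (Lemma~\ref{lem:lower-graph}), translates $D\in\Sigma(L)$ into the inequality $-\frac{\deg(D)}{n+1}<h_{\triangle,L}(\pi_0(D))$, and then reads this off as the simplex condition. You instead go straight from the definition of $\Sigma(L)$ to the polytope $S_D=H^+_D\cap H_0$ and identify it explicitly with the stated simplex via Lemma~\ref{d_form}. This avoids the detour through $h_{\triangle,L}$ and makes the lemma essentially a one-line computation; the paper's route is natural given that the surrounding section is built around $h_{\triangle,L}$, but for this isolated statement your direct approach is shorter and more transparent.

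You also do something the paper does not: you confront the closed-versus-interior discrepancy head on. The lemma as stated says ``interior'', yet the condition $D\nleq p$ for integral $D,p$ corresponds exactly to $p\notin S_D$ (the \emph{closed} simplex), as you observe. Your remedy --- either state the equivalence for the closed simplex of radius $\lambda$, or, via $\Sigma(L)=\Sigma^c_{\mathbb Z}(L)+(1,\dots,1)$, for the open simplex of radius $1+\lambda$ --- is the right way to make the statement precise, and either version suffices for the NP-hardness reduction that follows. The paper's proof silently uses integrality to pass between the strict and non-strict inequalities, so your explicit treatment is an improvement.

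One small caveat: when you write $\bar\triangle_\lambda(\pi_0(D))=\{x:d_{\bar\triangle}(x,\pi_0(D))\le\lambda\}$, note that by the paper's formal definition $\bar\triangle_\lambda(v)=v+\lambda\bar\triangle$ one has $\bar\triangle_\lambda(v)=\{x:d_{\bar\triangle}(v,x)\le\lambda\}$, with the arguments in the other order. The paper itself uses your convention in its proof of this lemma and in Lemma~\ref{ver_cor}, so you are matching the paper's intended meaning; but strictly speaking the simplex you have identified is $\triangle_\lambda(\pi_0(D))$ under the original definition. This is a notational wrinkle in the paper, not an error in your reasoning.
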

\begin{proof} We saw in Section~\ref{sec:voronoi} that $\partial \Sigma^c(L)$ is the lower graph of the function $h_{\triangle,L}$. It follows that $D \in \Sigma(L)$ if and only if $-\frac{\deg(D)}{n+1} < h_{\triangle,L}(\pi_0(D))$. By the definition of $h_{\triangle,L}$, this means that $D \in \Sigma(L)$ is equivalent to $d_{\bar\triangle}(p,\pi_0(D))=d_{\triangle}(\pi_0(D),p)>-\frac{\deg(D)}{n+1}$ for all $p \in L$, which is to say, $\bar{\triangle}_{\frac{-\deg(D)}{n+1}}(\pi_0(D))$ contains no lattice point.
\end{proof}

Hence, the question of deciding whether if $D \in \Sigma(L)$ is equivalent to the following question:
\begin{center}
{\it Given a simplex of the form $\bar{\triangle}_r(x)$ with centre at $x$ and radius $r \geq 0$, can we decide if there is a lattice point in the simplex?}
\end{center}
A simple calculation shows that the vertices of $\bar{\triangle}_{-\frac {\deg(D)}{n+1}}(\pi_0(D))$ are all integral. This shows that with respect to the lattice $L$, the simplex $\bar{\triangle}_{-\frac {\deg(D)}{n+1}}(\pi_0(D))$ is rational, i.e., there exists a large integer $N$ such that $N\bar{\triangle}_{-\frac {\deg(D)}{n+1}}(\pi_0(D))$
is a polytope with vertices all in $L$. (This is because $L$ is full dimensional and itself integral.)

 We now recall that the complexity of deciding if an arbitrary rational $n$-dimensional simplex in $\mathbb R^n$ contains a point of $\mathbb Z^n$ is {\sc NP}-hard when the dimension $n$ is not fixed, and  it is polynomial time solvable when the dimension is fixed~\cite{Barvinok05}. In our case, we are fixing the rational simplex, and $L$ is an arbitrary sub-lattice of $A_n$. However, it is quite easy to reduce the original problem to our case and to obtain the same complexity results in our setting. A polynomial-time reduction is described below:

\noindent Given the vertices $V(S)=\{v_1,\dots,v_n\}$ of a rational simplex $S$ in $\mathbb{R}^{n}$, we do the following.

\begin{itemize}
\item[1.] Compute the centroid $c(S) = \frac {\sum_i v_i}{n+1}$ of $S$ and let $S' = S-c(S)$.
\item[2.] Define the linear map $f$ from $\mathbb R^n$ to $H_0$ by sending $V(S')$ bijectively to $V(\bar \triangle) = \{e_0,\dots,e_n\}$. Let $\bar \triangle(x)$ be the image of $S$, where $x = f(c(S))$.
\item[3.] Let $L_0 = f(\mathbb Z^n)$ and $N$ be a large integer such that $NL \subset A_n$ (such $N$ exists since $f$ and $S$ are rational, and so $L$ is rational). Remark that we have $NL \cap N\bar \triangle(x) \neq\emptyset$ if and only if $S \cap \mathbb Z^n \neq \emptyset$. Remark also that $ N\bar\triangle(x)=\bar\triangle_{N}(Nx)$.
\item[4.] Let $D$ be the integral point in $\mathbb Z^{n+1}$ defined by $D=Nx-N(n+1)(1,\dots,1)$. Then $\pi_0(D)=Nx$, $\deg(D) = -N(n+1)$, and $N\bar\triangle(x) = \bar\triangle_{-\frac{\deg(D)}{n+1}}(\pi_0(D))$.)
\end{itemize}
For $L$ defined as above, we infer that $\bar\triangle_{-\frac{\deg(D)}{n+1}}(\pi_0(D))\cap L \neq \emptyset$ if and only $S\cap \mathbb Z^{d}\neq \emptyset$.  
So we have

\begin{theo} For an arbitrary full rank sub-lattice $L$ of $A_n$, the problem of deciding if $r(D)=-1$ given a point $D \in \mathbb{Z}^{n+1}$ and a basis of $L$ is {\sc NP}-hard.
\end{theo}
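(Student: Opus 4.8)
The plan is to establish the $\mathsf{NP}$-hardness of deciding $r(D)=-1$ by a polynomial-time reduction from the classical problem of deciding whether an arbitrary rational $n$-dimensional simplex $S\subset\mathbb R^n$ contains a point of $\mathbb Z^n$, which is known to be $\mathsf{NP}$-hard when $n$ is not fixed. By Lemma~\ref{ranksig_lem}(i), deciding $r(D)=-1$ is the same as deciding membership $-D\in\Sigma(L)$, and by Lemma~\ref{lat_lem} (restricting to $D$ of negative degree, which is harmless since every point of non-negative degree lies in $\Sigma(L)$) this membership is in turn equivalent to asking whether the simplex $\bar\triangle_{-\deg(D)/(n+1)}(\pi_0(D))$ contains a lattice point of $L$ in its interior. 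So the core of the argument is to encode an arbitrary rational simplex-emptiness instance as such a ``canonical'' simplex together with a suitable full-rank sub-lattice $L$ of $A_n$.

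First I would carry out the reduction exactly as sketched in the four numbered steps preceding the statement: given the vertices $v_1,\dots,v_{n+1}$ of the rational simplex $S$ (note the indexing should run to $n+1$ for an $n$-simplex), translate $S$ by its centroid to get $S'$ with the origin in its interior; then define the unique linear isomorphism $f:\mathbb R^n\to H_0$ sending the vertices of $S'$ bijectively onto the vertices $e_0,\dots,e_n$ of the standard opposite simplex $\bar\triangle$. This carries $S$ to a translate $\bar\triangle(x)$ with $x=f(c(S))$ and carries $\mathbb Z^n$ to a (rational, full-rank) lattice $L_0=f(\mathbb Z^n)\subset H_0$. Clearing denominators by a suitably large integer $N$ (polynomial in the input size, since $f$ and $S$ are rational) produces $L=NL_0\subset A_n$, an integral full-rank sub-lattice, and scaling the simplex by the same $N$ gives $\bar\triangle_N(Nx)=N\bar\triangle(x)$. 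Finally, choosing the integral point $D:=Nx-N(n+1)(1,\dots,1)\in\mathbb Z^{n+1}$ one checks the bookkeeping identities $\pi_0(D)=Nx$, $\deg(D)=-N(n+1)$, and hence $\bar\triangle_{-\deg(D)/(n+1)}(\pi_0(D))=N\bar\triangle(x)$.

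Putting these together: $S\cap\mathbb Z^n\neq\emptyset$ if and only if $N\bar\triangle(x)\cap NL_0\neq\emptyset$ (just scale and translate), which by the previous paragraph is $\bar\triangle_{-\deg(D)/(n+1)}(\pi_0(D))\cap L\neq\emptyset$, which by Lemma~\ref{lat_lem} is equivalent to $-D\notin\Sigma(L)$ (up to the boundary/interior issue discussed below), i.e.\ to $r(D)\neq-1$. Since $D$ and a basis of $L$ are computable from $S$ in polynomial time, and the original simplex-emptiness problem is $\mathsf{NP}$-hard, we conclude the stated theorem. One should also note that the decision problem is well-posed in the sense that membership in $\Sigma(L)$ has a short certificate in the relevant direction, but for $\mathsf{NP}$-hardness only the reduction is needed.

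The main technical nuisance — not conceptually deep but the place where care is required — is the discrepancy between ``contains a lattice point'' and ``contains a lattice point in the interior'' in Lemma~\ref{lat_lem}. The classical $\mathsf{NP}$-hardness result is usually phrased for closed simplices, so I would handle this by a standard perturbation: either enlarge the simplex $S$ slightly (replace $S$ by a rational $(1+\epsilon)$-dilate about its centroid with $\epsilon$ small enough, polynomially bounded in the input, that no new integer point is swept in on the boundary) so that an integer point of the closed $S$ becomes an interior point of the dilate, or equivalently shift $x$ by a tiny rational vector; alternatively one can invoke the version of the lattice-point problem for open/half-open simplices, which is equally $\mathsf{NP}$-hard. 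A second minor point to verify is that the map $f$ and the scaling by $N$ have bit-size polynomial in the input (true because $S$ is given by rationals and $\bar\triangle$ is fixed), and that $D$ indeed has negative degree so that Lemma~\ref{lat_lem} applies — which holds since $\deg(D)=-N(n+1)<0$. With these routine checks in place the reduction is complete.
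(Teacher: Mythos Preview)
Your proposal is correct and follows essentially the same approach as the paper: you invoke Lemma~\ref{ranksig_lem} and Lemma~\ref{lat_lem} to reformulate $r(D)=-1$ as emptiness of a canonical $\bar\triangle$-simplex, then carry out verbatim the four-step polynomial reduction (centroid translation, linear map $f:\mathbb R^n\to H_0$, integer scaling by $N$, choice of $D$) that the paper sketches immediately before the theorem. Your treatment is in fact slightly more careful than the paper's, since you explicitly flag and resolve the closed-versus-interior discrepancy between Lemma~\ref{lat_lem} and the standard formulation of the simplex lattice-point problem, a point the paper passes over in silence.
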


As a consequence, we also note that the decision version of the problem of computing the rank is {\sc NP}-hard.

\begin{theo} Given an integer $k \geq -1$, a point $D \in \mathbb{Z}^{n+1}$ and a basis of $L$ of a sub-lattice of $A_n$. The problem of deciding if $r(D) \geq k$ is NP-hard.
\end{theo}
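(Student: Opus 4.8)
The plan is to derive the statement directly from the previous theorem (NP-hardness of deciding whether $r(D)=-1$), by observing that its proof in fact establishes hardness of the ``$r(D)\geq 0$'' side, which is precisely the $k=0$ instance of the present problem. First recall that $r(D)\geq -1$ holds for every $D\in\mathbb Z^{n+1}$ by the empty-set convention, so for $k=-1$ the answer is always ``yes'' and there is nothing to prove; the content is therefore the hardness for $k\geq 0$. For $k=0$, deciding whether $r(D)\geq 0$ is the negation of deciding whether $r(D)=-1$, and by Lemma~\ref{ranksig_lem} the latter is the same as deciding whether $-D\in\Sigma(L)$. The crucial observation is that the polynomial-time reduction built in the proof of the previous theorem — which sends a rational simplex $S\subset\mathbb R^{n}$ to a pair $(D,L)$ — actually satisfies $S\cap\mathbb Z^{n}\neq\emptyset\ \Longleftrightarrow\ -D\notin\Sigma(L)\ \Longleftrightarrow\ r(D)\geq 0$ (using Lemma~\ref{lat_lem} to translate membership in $\Sigma(L)$ into emptiness of an associated simplex of lattice points). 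Since deciding whether a rational simplex of unbounded dimension contains an integral point is NP-hard, this already shows that deciding ``$r(D)\geq 0$'' is {\sc NP}-hard as a many-one reduction.

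It then only remains to note that the decision problem in the statement — given an integer $k\geq -1$, a point $D\in\mathbb Z^{n+1}$, and a basis of a full-rank sub-lattice $L\subseteq A_n$, decide whether $r(D)\geq k$ — contains the problem just discussed as the sub-family of instances with $k=0$. Hence it is {\sc NP}-hard, and the case $k=-1$ (trivially always ``yes'') is consistent with this.

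I expect the only delicate points to be bookkeeping ones: making sure that the simplex produced by the reduction has no integral point lying exactly on its boundary (so that the ``interior'' hypothesis in Lemma~\ref{lat_lem} causes no loss of generality), or else replacing the closed simplex by a slightly shrunk rational one; and being precise about whether ``{\sc NP}-hard'' is meant in the many-one or Turing sense — the reduction above lands directly on the ``$r\geq 0$'' side, so no appeal to closure of hardness under complementation is needed. If instead one wants {\sc NP}-hardness for each \emph{fixed} $k\geq 0$ (rather than with $k$ part of the input), the natural route is a dimension-lifting argument in the spirit of the construction of the lattices $L_{n}$: starting from $(D,L)$ with $L\subseteq A_n$, append $k$ new coordinates together with suitable generators so that the rank of the enlarged point equals $r(D)+k$. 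The main obstacle in that variant is verifying that this construction shifts the rank by exactly $k$, which would require tracking the effective points and extremal points through the extension, along the lines of Lemma~\ref{extconst_lem}.
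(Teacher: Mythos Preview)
Your proposal is correct and takes essentially the same approach as the paper, which simply records the theorem as an immediate consequence of the preceding {\sc NP}-hardness result without further argument. Your write-up is in fact more careful than the paper's treatment, spelling out why the reduction lands on the ``$r(D)\geq 0$'' side and why that suffices for the general decision problem with $k$ part of the input.
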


It is interesting to note that for the case of Laplacian lattices of graphs on $n+1$ vertices, with a given basis formed by the $n$ rows of the Laplacian matrix, the problem of deciding if an integral point belongs to the Sigma-Region can be done in polynomial time~\cite{H99}.  So we are naturally led to the following questions:
\begin{question} Given a full rank sub-lattice $L$ of $A_n$, does there exist a special basis $B$  of $L$ such that if $L$ is given with $B$, then the membership problem for the Sigma-Region of $L$ can be solved in polynomial time $?$
\end{question} 
\begin{question}
Given a Laplacian sub-lattice of $A_n$, is it possible to find the special basis of $L$ in time polynomial in $n\:?$ Given a sub-lattice of $A_n$, is it possible to decide if $L$ is Laplacian in time polynomial in $n\:?	$  
\end{question}

\section{Concluding Remarks}
In this section, we provide some concluding remarks on the results of the previous sections. 
\subsection{Extension to Non-Integral Sub-Lattices}
It is possible to extend the results of the previous sections to an arbitrary sub-lattice $L$ of $H_0$ of dimension $n$. However, since $\Sigma(L)$ does not make sense in the general case, one needs to work directly with $\Sigma^{\mathbb R}(L)$ and its closure $\Sigma^c(L)$. Thus, in this setting and for integral sub-lattices, min- and max-genus, rank-function, canonical point, etc change but the new theory is easily related to what we considered in the previous sections (through the relation between $\Sigma^{c}(L)$ and $\Sigma(L)$). 

In the general setting, the definition of the rank-function is inspired by the characterization given in Lemma~\ref{ranksig_lem}. Namely, for a point $D$ in $\mathbb{R}^{n+1}$, $\bar r(D)=-1$ if and only if $-D$ is a point in $\Sigma^{c}(L)$. More generally, $\bar r(D)+1$ is the distance of $-D$ to $\Sigma^{c}(L)$ in the $\ell_1$ norm, i.e.,
\begin{align*}
 \bar r(D) \:&\: = dist_{\ell_1}(-D,\Sigma^{c}(L))-1:=\inf\{||p+D||_{\ell_1}\ | \ p\in\Sigma^{c}(L)\}-1. 
\end{align*}

In the case of integral sub-lattices of $A_n$, the previous rank-function $r(.)$ is related to the new rank function by $r(D) = \bar r(D+(1,\dots,1))$. The structural theorem of Sigma-Region remains valid, and the definitions of the max- and min-genus and the distance function extend without change to this case. A Riemann-Roch theorem can be proved for uniform and reflection invariant real sub-lattices of $H_0$.  Note that for integral sub-lattices of $A_n$, the $*-$genus with respect to $\Sigma^{c}$ is $\bar g_{*} = g_{*}+n+1$ where $*\in \{min, max\}$, and the new canonical point $\bar K$ associated to $\bar r$, if exists, is nothing but $K+(2,\dots,2)$, where $K$ is the canonical point described in the previous sections.

\subsection{On the Number of Different Classes of Critical Points.}
Given a full rank sub-lattice $L$ of $A_n$, we bound here the number of different critical points modulo $L$.

\begin{theo}\label{theo:crit-n!}
 For a given sub-lattice $L$ of $A_n $, there are at most $n!$ different critical points modulo $L$.
\end{theo}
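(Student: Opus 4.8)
The plan is to count the local maxima of the simplicial distance function $h_{\triangle,L}$ on the torus $H_0/L$. By Corollary~\ref{cor:crit}, Lemma~\ref{crit_lem} and the fact that $\pi_0$ is injective on $\Ext^c(L)$ (an extremal point is minimal in the $(1,\dots,1)$-direction), bounding $\#(\Crit(L)/L)$ is the same as bounding the number of extremal points of $\Sigma^c(L)$ modulo $L$. So fix $x\in\Ext^c(L)$, put $v=\pi_0(x)$ and $h=h_{\triangle,L}(v)$. By Lemma~\ref{ver_cor} there are lattice points $p_0,\dots,p_n\in L$ with $p_i$ lying on the facet $F_i=\{u\in\bar\triangle_h(v):u_i=x_i\}$ and on no other facet, and as in the proof of that lemma one has $(p_i)_i=x_i$ and $(p_i)_j>x_j$ for $j\ne i$; in particular $x=\bigoplus_i p_i$, so $x$ — hence the class of $x$ modulo $L$ — is recovered from the unordered data $\{p_0,\dots,p_n\}$.

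The next step is to normalise modulo $L$. Translating $x$ by a lattice vector $q$ changes every $j$-th coordinate of every point by the same amount $q_j$, hence preserves which lattice point is distinguished on each facet; translating by $-p_n$ therefore preserves the whole configuration and puts the lattice point on $F_n$ at the origin. Conversely, a critical point $x$ has $O$ as the lattice point on $F_n$ (and on no other facet) exactly when $x_n=0$ and $x_j<0$ for all $j<n$, and $x-p_n$ is the \emph{only} translate of $x$ with this property. Thus the map sending a class modulo $L$ to its normal form is injective, and it remains to prove
\[
\#\bigl\{\,x\in\Ext^c(L)\ :\ x_n=0,\ x_j<0\ \text{for all }j<n\,\bigr\}\ \le\ n!\,.
\]

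For this I would attach to such an $x$ a total order on $\{0,1,\dots,n-1\}$ and show the assignment is injective. The idea is to reconstruct $\bar\triangle_h(v)$ as a simplex of a triangulation of $H_0$ built from a lattice basis: for \emph{any} full-rank sublattice $L$ of $A_n$ the simplices $\triangle^\pi+p$ attached to a basis $\{b_0,\dots,b_{n-1}\}$ triangulate $H_0$ (the proof of Lemma~\ref{simdecomp_lem} uses only that $\{b_0,\dots,b_{n-1}\}$ is a lattice basis), and one shows, using $(p_i)_i=x_i$, $(p_i)_j>x_j$ and the fact that no lattice point lies in the interior of $\bar\triangle_h(v)$, that a suitable enumeration of $p_0,\dots,p_{n-1}$ has partial sums forming an affine basis with $\bar\triangle_h(v)$ among the corresponding simplices; the enumeration then determines and is determined by the order, and $x=\bigoplus_i p_i$ makes the map injective. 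An equivalent route is to first prove the bound for a generic perturbation $L_\epsilon$ of $L$, where the $n+1$ facet-points are pairwise distinct and one obtains exactly the $n!$ critical classes coming from the points $\nu^\pi$, and then pass to the limit via $\Crit(L)\subseteq\lim_{\epsilon\to0}\Crit(L_\epsilon)$ as in the proof of Lemma~\ref{limiting_lem}. The main obstacle is precisely this last injectivity statement in the non-generic situation: when several lattice points sit on a common facet of $\bar\triangle_h(v)$ the recursive construction can branch, and one must show such degeneracies only decrease the count — equivalently, in the limiting approach, one must control the possible collisions of critical classes under perturbation, which is the technical heart of the argument.
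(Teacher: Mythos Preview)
Your approach is genuinely different from the paper's and, as you yourself concede in the last sentence, is incomplete. The paper does \emph{not} attempt to attach a permutation to each critical class. It uses instead a short volume argument via the gradient flow of $h_{\triangle,L}$: for each critical point $v$ one has a flow cell $U_v$ (the set of points flowing to $v$), these cells tile $H_0$ up to measure zero and are $L$-translation-invariant, and the closed ball $\bar\triangle_{h_{\triangle,L}(v)}(v)$ lies in $\overline{U_v}$. Since this ball contains the $n+1$ lattice points $p_0,\dots,p_n$ from Lemma~\ref{ver_cor}, $U_v$ has volume at least that of the lattice simplex $\conv(p_0,\dots,p_n)$, hence at least $\tfrac{1}{n!}\,\mathrm{vol}(H_0/L)$. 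Passing to the torus gives $|\Crit(L)/L|\le n!$ immediately.

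The gap in your combinatorial route is real, not merely technical. Your normalisation (putting $p_n=O$) is fine, but the assertion that ``a suitable enumeration of $p_0,\dots,p_{n-1}$ has partial sums forming an affine basis with $\bar\triangle_h(v)$ among the corresponding simplices'' is precisely what fails for a general lattice: the points $p_0,\dots,p_n$ produced by Lemma~\ref{ver_cor} need not span a \emph{minimal} simplex, so there is no reason their data single out an element of $S_n$. In volume terms, $\conv(p_0,\dots,p_n)$ can have volume a proper multiple of $\tfrac{1}{n!}\mathrm{vol}(H_0/L)$ --- harmless for the paper's counting argument, fatal for an injection into $S_n$. Your perturbation alternative is circular as stated: you would need the $n!$ bound already for the generic $L_\epsilon$, but the paper's identification of critical classes with $\{\nu^\pi:\pi\in S_n\}$ (Theorem~\ref{thm:main-laplacian}) is proved only for Laplacian lattices, using the sign pattern of the Laplacian (Lemmas~\ref{abs_lem}--\ref{neighdom_lem}); a generic sublattice of $A_n$ has no such structure, and nothing in the paper establishes the generic case separately.
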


\begin{proof}(Sketch of the proof.)
 Let us consider the gradient flow of the function $h$ and its corresponding flow complex. i.e., for each critical point $v \in \Crit(L)$, we have a maximal open subset $U_v \subset H_0$ such that for each point $u\in U_v$, the gradient flow of $h$ starting at $u$ ends at $v$. Moreover, $H_0\setminus \cup_v U_v$ has measure zero. By translation invariance, the tiling obtained by $U_v$ is also translation invariant, i.e., $U_{v+p}=U_v+p$ for all points $p \in L$ and $v\in \Crit(L)$.  Lemma~\ref{ver_cor} implies that for each critical point $v$, there exist points $p_0,\dots,p_{n} \in L$, such that $p_i$'s are in different facets of $\bar\triangle_{h_{\triangle,L}(v)}(v)$. By the definition of the distance function $h_{\triangle,L}$, it is easy to see the simplex $\bar\triangle_{h_{\triangle,L}(v)}(v)$ containing these points is in the topological closure of the open set $U_v$. It follows that $U_v$ has volume at least the volume of the simplex obtained by taking the convex hull of the points $p_0,\dots,p_n$. This volume is at least the volume of the minimal simplex defined by $L$, i.e.,
  $\frac {\textrm{vol}(L)}{n!}$. We infer that each open set $U_v$ has volume at least $\frac {\textrm{vol}(L)}{n!}$. By taking the quotient modulo $L$, we conclude that $|\Crit(L)/L| \leq n!$, i.e., the number of different classes of critical points modulo $L$ is at most $n!$. 
\end{proof}

\subsection{A Duality Theorem for Arrangements of Simplices} \label{sec:arrangement}
Let $L$ be a sub-lattice of $A_n$ of dimension $n$. For a real number $t \geq 0$, define the  arrangement $\A_t$ as the union of all the simplices  $\triangle_t(c)$ for $c \in \Crit(L)$, i.e., 
$$\A_t:=\bigcup_{c \in \Crit(L_G)} \triangle_t(c).$$
A second arrangement $\B_t$ is defined as the union of all the simplices $\bar \triangle_t (p)$ for $p\in L$, i.e., 

$$B_t:=\bigcup_{p \in L_G} \bar{\triangle}_t(p).$$
 (Recall that $\bar \triangle = -\triangle$.)

\begin{defi}
\rm The covering number of a lattice $L$ denoted by $\Cov(L)$ is the smallest real $k$ such that $B_k = H_0$. 
\end{defi}
It is not difficult to show that for a sub-lattice $L$ of $A_n$, the Covering Number is given by  $\Cov(L)=\frac{g_{max}+n}{n+1}$. (Thus, for a uniform lattice, $\Cov(L) = \frac {g+n}{n+1}$.)

Let $G$ be an undirected graph on $n+1$ and with $m$ edges (thus, $g=m-n$).  Let $L_G$ be the Laplacian lattice of $G$. (In this case, by the results of Section~\ref{sec:laplacian} (c.f., Equation~\ref{eq:latticepoints}), the covering number of $\Cov(L_G)$ is the density of the graph.)

\vspace{.3cm}

 The two arrangements $\A$ and $\B$ are dual in the following sense.

\begin{theo}[Duality between $\A$ and $\B$]\label{prop:dual}  For any $0 \leq t \leq \Cov(L_G)$, the arrangement $\B_t$ is the closure of the complement of the arrangement $\A_{\Cov(L_G)-t}$ in $H_0$, i.e., 
 $$\B_t=\Bigl(H_0\setminus \A_{\Cov(L_G)-t}\Bigr)^c.$$
 In particular, for any $0 \leq t \leq \Cov(L_G)$, $\partial{\B_t}=\partial{\A_{\Cov(L_G)-t}}$. 
\end{theo}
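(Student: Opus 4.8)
The plan is to reduce the statement to the description of $\partial\Sigma^c(L_G)$ as the lower-graph of the simplicial distance function $h_{\triangle,L_G}$, which is exactly Lemma~\ref{lem:lower-graph}, combined with the fact that the balls for the two metrics $d_\triangle$ and $d_{\bar\triangle}$ are the opposite simplices $\triangle$ and $\bar\triangle$. First I would unwind the two arrangements in terms of distance functions. A point $x\in H_0$ lies in $\B_t=\bigcup_{p\in L_G}\bar\triangle_t(p)$ if and only if there exists $p\in L_G$ with $x\in p+t\bar\triangle$, i.e.\ $d_{\bar\triangle}(p,x)\le t$, equivalently $d_\triangle(x,p)\le t$ by the anti-symmetry of these two metrics; so $x\in\B_t$ iff $h_{\triangle,L_G}(x)\le t$. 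Similarly $x\in\A_s=\bigcup_{c\in\Crit(L_G)}\triangle_s(c)$ iff there is a critical point $c$ with $d_\triangle(c,x)\le s$, equivalently $d_{\bar\triangle}(x,c)\le s$. Thus $\B_t=\{x\in H_0\mid h_{\triangle,L_G}(x)\le t\}$ is a sublevel set of $h_{\triangle,L_G}$, while $\A_s$ is a neighbourhood (in the $d_{\bar\triangle}$ sense) of $\Crit(L_G)$.

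The heart of the argument is then to show that, for the Laplacian lattice $L_G$ (and more generally for a uniform lattice), $\A_s$ is precisely the superlevel set $\{x\in H_0\mid h_{\triangle,L_G}(x)\ge g/(n{+}1)-s\}$, where I write $\Cov(L_G)=(g+n)/(n+1)$ using uniformity and the stated formula for the covering number. This is where Theorem~\ref{thm:main-laplacian} and its corollaries enter: by Theorem~\ref{thm:main-laplacian}(ii) together with Corollary~\ref{cor:crit}, $\Crit(L_G)=\pi_0(\Ext(L_G))$, and by uniformity (property (P2), $\deg(\nu^\pi)=-m$) every extremal point has the same degree, so every critical point $c$ satisfies $h_{\triangle,L_G}(c)=m/(n{+}1)=g/(n{+}1)+n/(n{+}1)$, which is exactly the single critical value of $h_{\triangle,L_G}$ on $H_0$; this maximum value equals $\Cov(L_G)$. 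Now I would use Lemma~\ref{lin_lem}: moving from a critical point $c$ towards a point $x$ at $d_\triangle$-distance $s$ decreases $h_{\triangle,L_G}$ by exactly $s$ along the segment (since $h_{\triangle,L_G}(c)$ is the global maximum and segments to $c$ are ``straight'' for the polyhedral gauge), so the points within $\triangle_s(c)$ are exactly those $x$ with $h_{\triangle,L_G}(x)\ge h_{\triangle,L_G}(c)-s=\Cov(L_G)-s$. Taking the union over all $c\in\Crit(L_G)$, and using that every point of $H_0$ lies on some descending gradient path of $h_{\triangle,L_G}$ ending at a critical point (here I would invoke the flow-complex structure from the proof of Theorem~\ref{theo:crit-n!}, or more elementarily the star-shapedness and the fact that along any ray from $x$ to a critical point $h_{\triangle,L_G}$ is affine), I get $\A_s=\{x\in H_0\mid h_{\triangle,L_G}(x)\ge\Cov(L_G)-s\}$.

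Given these two level-set descriptions, the theorem is immediate: with $s=\Cov(L_G)-t$ we have $\A_{\Cov(L_G)-t}=\{h_{\triangle,L_G}\ge t\}$, whose closed complement in $H_0$ is $\{h_{\triangle,L_G}\le t\}=\B_t$ (the function $h_{\triangle,L_G}$ being continuous, the complement of $\{h\ge t\}$ is $\{h<t\}$ and its closure is $\{h\le t\}$; one checks $\{h=t\}$ has empty interior away from flat spots, which is harmless for the closure identity). Intersecting the two closed sets $\{h\le t\}$ and $\{h\ge t\}$ gives $\partial\B_t=\partial\A_{\Cov(L_G)-t}=\{h_{\triangle,L_G}=t\}$, which is the last assertion. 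The main obstacle I anticipate is the clean identification $\A_s=\{h_{\triangle,L_G}\ge\Cov(L_G)-s\}$: one has to be careful that \emph{every} point of $H_0$ is within $d_\triangle$-distance $\Cov(L_G)-h_{\triangle,L_G}(x)$ of \emph{some} critical point, not merely of some lattice-boundary point; this is exactly where uniformity of $L_G$ (all critical values equal) is essential, and it is the step where the general non-uniform case fails. The rest is bookkeeping with Lemma~\ref{lin_lem}, Lemma~\ref{lem:lower-graph}, and the anti-symmetry $d_\triangle(p,q)=d_{\bar\triangle}(q,p)$.
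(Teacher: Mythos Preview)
Your level-set reformulation is correct and is essentially the same idea as the paper's proof: writing $\B_t=\{x\in H_0\mid h_{\triangle,L_G}(x)\le t\}$ and $\A_s=\{x\in H_0\mid h_{\triangle,L_G}(x)\ge \Cov(L_G)-s\}$ reduces the theorem to two inclusions. Your easy inclusion $\A_s\subseteq\{h\ge\Cov(L_G)-s\}$ via the triangle inequality for $d_\triangle$ is exactly what the paper does in its first paragraph (phrased there as ``$\B_t$ and $\A_{\Cov(L_G)-t}$ have disjoint interiors'').

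The gap is in the hard inclusion, which you correctly single out as the obstacle: given $x\in H_0$, you must produce a critical point $c$ with $d_\triangle(c,x)\le\Cov(L_G)-h_{\triangle,L_G}(x)$. Neither of the two mechanisms you propose does this. The claim that ``along any ray from $x$ to a critical point $h_{\triangle,L_G}$ is affine'' is false for a generic critical point (take $x$ near one lattice point and $c$ a critical point on the far side of another Voronoi cell). The flow-complex gesture is also insufficient: a gradient path of $h_{\triangle,L_G}$ (which must be \emph{ascending}, not descending, since critical points are local maxima) need not have $d_\triangle$-length equal to the increment in $h$, so reaching a critical point along the flow does not by itself give the needed distance bound.

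The paper obtains this step directly from Theorem~\ref{cdom_thm}: let $p\in L_G$ realise $h_{\triangle,L_G}(x)=d_\triangle(x,p)$; then $x-h_{\triangle,L_G}(x)(1,\dots,1)\in\partial\Sigma^c(L_G)$ by Lemma~\ref{lem:lower-graph}, so by Theorem~\ref{cdom_thm} there is $\nu\in\Ext^c(L_G)$ with $\nu\le x-h_{\triangle,L_G}(x)(1,\dots,1)$. Setting $c=\pi_0(\nu)$ and using uniformity ($h_{\triangle,L_G}(c)=\Cov(L_G)$) gives $c-(\Cov(L_G)-h_{\triangle,L_G}(x))(1,\dots,1)\le x$, hence $d_\triangle(c,x)\le\Cov(L_G)-h_{\triangle,L_G}(x)$ by the explicit formula in Lemma~\ref{d_form}. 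This is the missing ingredient in your plan; once you invoke Theorem~\ref{cdom_thm} here, the rest of your argument goes through. (Two small slips: your superlevel threshold should be $\Cov(L_G)-s=(g+n)/(n{+}1)-s$, not $g/(n{+}1)-s$; and the gradient flow to critical points is ascending.)
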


\begin{proof}(Sketch of the proof) 
Let $x \in \B_t \cap \A_{\Cov(L_G)-t}$. By the definition of the two arrangements $\B$ and $\A$, there exists a point $p\in L_G$ and a point $c \in \Crit(L_G)$ such that $x \in \bar{\triangle}_t(p)\cap \triangle_{\Cov(L_G)-t}(c)$. By the triangle inequality for $d_{\triangle}$ and the results of Section~\ref{sec:laplacian}, it follows that $d_{\triangle}(c,p) = \Cov(L_G)$, $d_\triangle(x,p) = t$, and $d_{\triangle}(c,x) = \Cov(L_G)-t$. Thus, we have $x \in \partial \B_t \cap \partial \A_{\Cov(L_G)-t}$. It follows that $\B_t$ and $\A_{\Cov(L_G)-t}$ have disjoint interiors, and so $\B_t \subseteq\Bigl(H_0\setminus \A_{\Cov(L_G)-t}\Bigr)^c.$ The other inclusion $H_0\setminus \A_{\Cov(L_G)-t} \subset \B_t$ follows from the structural theorem of the Sigma-Region, Theorem~\ref{dom_thm} (and Theorem~\ref{cdom_thm}). Namely, we claim that for every point $x\in H_0$, there exists a point $p\in L_G$ and a point $c \in \Crit(L_G)$, such that $d_\triangle(c,x) + d_\triangle(x,p)=d_\triangle(c,p) = \Cov(L_G)$, and this clearly implies the inclusion $H_0\setminus A_{\Cov(L_G)-t} \subset B_t$. Let $p$ be a point of $L_G$ such that $h_\triangle(x) = d_\triangle(x,p)$. By Proposition~\ref{lem:lower-graph}, the point $x - h_\triangle(x)(1,\dots,1)$ lies on the boundary of $\Sigma^c$. By Theorem~\ref{cdom_thm}, there exists an extremal point $\nu$ of $\Ext^c(L_G)$ such that $\nu \leq x - h_\triangle(x)(1,\dots,1)$. Let $c$ be the critical point $\pi_0(\nu) \in \Crit(L_G)$.  Note that $h_\triangle(c) = \Cov(L_G)$. By Proposition~\ref{lem:lower-graph}, we have $\nu = c -\Cov(L_G) (1,\dots,1)$. Thus, we have $c - h_\triangle(c) (1,\dots,1) \leq x - h_\triangle(x)(1,\dots,1)$, or equivalently $c - (\Cov(L_G)-h_\triangle(x)) (1,\dots,1) \leq x$. By the explicit definition of $d_\triangle$, we have $d_\triangle(c,x) \leq \Cov(L_G)-h_\triangle(x) = \Cov(L_G) - d_\triangle(x,p)$. Since $d(c,p) \geq \Cov(L_G)$, this shows that $d_\triangle(c,x) = \Cov(L_G) - d_\triangle(x,p)$ and the claim follows. 
\end{proof}

\paragraph*{Acknowledgments.} The presentation has been benefited very much from very helpful and interesting suggestions and remarks of an anonymous referee. The authors are extremely grateful to him/her. Special thanks to Ivan Popov for his help in programming issues which allowed to test some of the conjectures. Many thanks to Kurt Melhorn for his interest in this work, and to Jan van den Heuvel for interesting discussions.

\end{document}